\declaretheoremstyle[headfont=\normalsize\normalfont\bfseries,notefont=\mdseries,
notebraces={(}{)},bodyfont=\normalfont\itshape,postheadspace=0.5em]{italstyle}
\declaretheorem[style=italstyle,name=Theorem]{theorem}
\declaretheorem[style=italstyle,name=Lemma,numberwithin=section]{lemma}
\newcommand{\abs}[1]{|#1|}
\newcommand{\bd}{\partial}
\newcommand{\colim}{\mathop{\mathrm{colim}}}
\newcommand{\C}{\mathbb{C}}
\renewcommand{\d}{\mathrm{d}}
\newcommand{\id}{\mathrm{id}}
\newcommand{\intprod}{\mathbin{{\tikz{\draw(-0.1,0)--(0.1,0)--(0.1,0.2)}\hspace{0.5mm}}}}
\newcommand{\ip}[1]{\left\langle#1\right\rangle}
\newcommand{\pd}[2]{\frac{\partial #1}{\partial #2}}
\newcommand{\R}{\mathbb{R}}
\newcommand{\set}[1]{\left\{#1\right\}}
\renewcommand\section{\@startsection{section}{1}{0pt}{-3.5ex \@plus -1ex \@minus -.2ex}{2.3ex \@plus.2ex}{\centering\itshape}}
\def\@secnumfont{\normalfont\itshape}
\def\subsection{\@startsection{subsection}{2}%
  \z@{.7\linespacing\@plus\linespacing}{-.5em}%
  {\normalfont\itshape}}
\def\subsubsection{\@startsection{subsubsection}{3}%
  \z@{.5\linespacing\@plus.7\linespacing}{-.5em}%
  {\normalfont\itshape}}
\newcommand{\Z}{\mathbb{Z}}
\title{Remarks on eternal classes in symplectic cohomology}
\author{Dylan Cant}
\email{dylan@dylancant.ca}
\address{Institut de mathématique d'Orsay, Université Paris-Saclay, Bâtiment 307, rue Michel Magat, F-91405 Orsay Cedex, France}
\begin{document}
\begin{abstract}
  This paper studies special classes in the symplectic cohomology of a semipositive and convex-at-infinity symplectic manifold $W$. The classes under consideration lie in the image of every continuation map (for this reason, we call them eternal classes as they are never born and never die). Non-eternal classes in symplectic cohomology can be used to define spectral invariants for contact isotopies of the ideal boundary $Y$ of $W$. It is shown that the spectral invariants of non-eternal classes behave sub-additively with respect to the pair-of-pants product. This is used to define a spectral pseudo-metric on the universal cover of the group of contactomorphisms. We also give criteria for existence and non-existence of eternal classes. First, a compact monotone Lagrangian with odd Euler characteristic and minimal Maslov number at least $2$ implies the existence of non-zero eternal classes (e.g., $T^{*}\mathrm{RP}^{2n}$ has non-zero eternal classes). Second, no non-zero eternal classes exist if every compact set in $W$ is smoothly displaceable (e.g., $T^{*}T^{n}$ has no non-zero eternal classes).
\end{abstract}
\maketitle

\section{Introduction}
\label{sec:introduction}

\subsection{Eternal classes}
\label{sec:eternal-classes}

This paper is concerned with \emph{eternal classes} in symplectic cohomology. To state the definition, we first recall the framework we are working in: on a semipositive and convex-at-infinity symplectic manifold $W$ one can consider the \emph{contact-at-infinity Hamiltonian isotopies} $\psi_{t}$. Each such isotopy has a \emph{Floer cohomology} $\mathrm{HF}(\psi_{t})$, which is defined as the fixed point Floer cohomology of $\psi_{1}$. Throughout we work over the field $\Z/2$, and the ambient space $W$ is assumed to be connected.

Continuation maps from $\mathrm{HF}(\psi_{0,t})\to \mathrm{HF}(\psi_{1,t})$ are defined using \emph{continuation data}, namely squares $\set{\psi_{s,t}:(s,t)\in [0,1]^{2}}$ so that:
\begin{enumerate}
\item $\psi_{s,t}$ is an extension of $\psi_{0,t}$ and $\psi_{1,t}$,
\item the ideal restriction of $s\mapsto \psi_{s,1}$ is a non-negative path in the contactomorphism group of the ideal boundary of $W$, and,
\item $\psi_{s,0}=\id$ for all $s$.
\end{enumerate}

Continuation data can be composed in a manner similar to the concatenation of paths in such a way that $\mathrm{HF}(-)$ becomes a functor from the category whose objects are contact-at-infinity Hamiltonian systems and whose morphisms are homotopy classes of continuation data. The existence of this functorial structure is well-known in Floer theory (with varying conventions throughout the literature), and the precise formulation we consider here can be found in \cite{cant-arXiv-2023,cant-hedicke-kilgore} which we review in \S\ref{sec:revi-floer-cohom}. An important invariant extracted from this functor is its colimit, which is known as the \emph{symplectic cohomology}:
\begin{equation*}
  \mathrm{SH}(W)=\colim\mathrm{HF}.
\end{equation*}
Another important invariant is the limit, $\lim\mathrm{HF}$, whose elements are the natural transformations from $\Z/2$ to $\mathrm{HF}$.

This leads us to the main object considered in this paper. An \emph{eternal class} is an element $\mathfrak{e}\in \mathrm{SH}$ which lies in the image of the natural morphism:
\begin{equation*}
  \lim \mathrm{HF}\to \mathrm{SH};
\end{equation*}
the image of this natural morphism is denoted by $\mathrm{SH}_{e}\subset \mathrm{SH}$. The first goal of the paper is to convince the reader of the significance of eternal classes, and to prove various results about eternal classes. We begin by observing two facts which are immediate from the definition.

\subsubsection{Fully infinite bars in the Reeb flow barcode}
\label{sec:fully-infinite-bars}

Fix a Reeb vector field $R^{\alpha}$ on the ideal boundary and let $R^{\alpha}_{s}$ be its time-$s$ flow, extended arbitrarily to the filling. The persistence module $V_{s}^{\alpha}=\mathrm{HF}(R^{\alpha}_{s})$, with the aforementioned continuation maps $V_{s}^{\alpha}\to V_{s+\delta}^{\alpha}$, has a barcode decomposition. The colimit has a basis parametrized by the fully infinite bars of the form $\R$ and the half-infinite bars of the form $[a,\infty)$. It follows from the definition that $\mathrm{SH}_{e}$ is the subspace spanned by the fully infinite bars; see Lemma \ref{lemma:barcode-dcomp}.

Consequently, eternal classes in the symplectic cohomology are elements which do \emph{not} lead to a spectral invariant. We return to the discussion of spectral invariants in \S\ref{sec:spectr-invar-non-intro}.

\begin{figure}[h]
  \centering
  \begin{tikzpicture}
    \draw (-4,1)--+(4,0) (2,-1)--+(3,0);
    \draw (-5,0)--+(10,0)node[right]{$\mathfrak{e}_{2}$} (-5,0.5)--+(10,0)node[right]{$\mathfrak{e}_{1}$} (-5,-0.5)--+(10,0) node[right]{$\mathfrak{e}_{3}$};

  \end{tikzpicture}
  \caption{Eternal classes are linear combinations of the basis elements corresponding to fully infinite bars (e.g., in the figure $\mathfrak{e}_{1}+\mathfrak{e}_{2}+\mathfrak{e}_{3}$ is an eternal class).}
  \label{fig:pmod-barcode}
\end{figure}
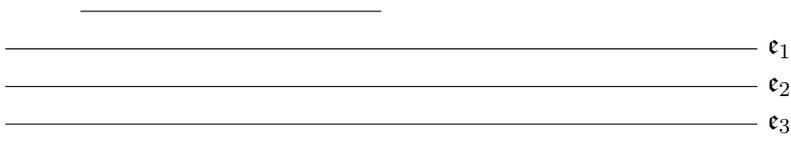

\subsubsection{Vanishing projection to Rabinowitz-Floer cohomology}
\label{sec:vanish-proj-RFH}

Because of the isomorphism in \cite{cieliebak_frauenfelder_oancea}, many modern approaches to Rabinowitz-Floer cohomology $\mathrm{RFH}(W)$ define it as a cone of the natural morphism: $$\lim \mathrm{HF}\to \colim\mathrm{HF}=\mathrm{SH}(W);$$
see, e.g., \cite{venkatesh18,venkatesh-quantitative-nature,bae-kang-kim,cieliebak-oancea-18,dahinden20}.

Therefore eternal classes in $\mathrm{SH}(W)$ project to zero in $\mathrm{RFH}(W)$ in the associated long-exact sequence for a cone; indeed, this property characterizes the eternal classes.

In other words, eternal classes realize Floer theoretic classes in $\mathrm{SH}(W)$ which are not captured by $\mathrm{RFH}(W)$. This phenomenon can be seen by comparing the work of \cite{ritter_negative_line_bundles,ritter_circle_actions}, where certain negative line bundles $W$ are shown to have non-zero symplectic cohomology, with \cite{albers-kang-vanishing-RFH} which shows negative line bundles have vanishing Rabinowitz-Floer cohomology.

\subsection{Statement of results}
\label{sec:statement-results}

We will now list some of the facts about eternal classes which we prove in this paper. A large part of the work is dedicated to exploring how eternal classes interact with the product structures on symplectic cohomology.

\subsubsection{When is the unit element eternal?}
\label{sec:when-unit-element}

In \S\ref{sec:unit-elem-mathrmsh} we recall the construction of the unit element $1\in \mathrm{SH}$. This is a distinguished element which can be characterized as the unit for the so-called pair-of-pants product which is recalled in \S\ref{sec:pair-pants-product}.

A contact-at-infinity Hamiltonian isotopy $\psi_{t}$ is said to lie in the \emph{negative cone} provided the ideal restriction of $\psi_{t}$ is a negative path.

Our first result characterizes when the unit is an eternal class:
\begin{theorem}\label{theorem:when-unit-eternal}
  Let $\psi_{t}$ lie in the negative cone. The unit lies in the image of the structure map $\mathfrak{c}:\mathrm{HF}(\psi_{t})\to\mathrm{SH}$ if and only if $1\in \mathrm{SH}_{e}$.
\end{theorem}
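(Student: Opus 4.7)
The plan is to prove the equivalence by splitting into the two implications. The forward direction is essentially formal; the converse uses iterates of $\psi_{t}$ and the pair-of-pants product, together with the negative cone hypothesis, to place $1$ in the image of every Reeb-persistence structure map, from which the barcode description will close the argument.

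For the forward direction, suppose $1 \in \mathrm{SH}_{e}$. By the very definition of $\mathrm{SH}_{e}$ there is a natural transformation $\tau\colon \Z/2 \to \mathrm{HF}$ whose image in $\mathrm{SH}$ is $1$. Evaluating $\tau$ at the object $\psi_{t}$ produces $\tau_{\psi_{t}} \in \mathrm{HF}(\psi_{t})$ with $\mathfrak{c}(\tau_{\psi_{t}}) = 1$, so $1$ lies in the image of $\mathfrak{c}$. I observe that this step makes no use of the negative cone hypothesis.

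For the converse, suppose $a \in \mathrm{HF}(\psi_{t})$ satisfies $\mathfrak{c}(a) = 1$. Using the pair-of-pants products of \S\ref{sec:pair-pants-product}, I form the iterated powers $a^{k} \in \mathrm{HF}(\psi_{t}^{k})$. Because the structure map intertwines the Floer products with the ring multiplication on $\mathrm{SH}$, one has $\mathfrak{c}(a^{k}) = \mathfrak{c}(a)^{k} = 1$ for every $k$. The negative cone assumption implies that the time-$1$ endpoint of the ideal restriction of $\psi_{t}^{k}$ escapes arbitrarily far in the negative Reeb direction as $k \to \infty$; consequently, for any Reeb system $R^{\alpha}_{s}$ one can choose $k$ large enough that continuation data from $\psi_{t}^{k}$ to $R^{\alpha}_{s}$ exists, and pushing $a^{k}$ along this continuation yields a class in $\mathrm{HF}(R^{\alpha}_{s})$ still projecting to $1$. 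For $s \geq 0$ the same conclusion follows from the standard Morse-theoretic lift of $1$ through $\mathrm{HF}(\id) \to \mathrm{SH}$, so $1$ lies in the image of $\mathrm{HF}(R^{\alpha}_{s}) \to \mathrm{SH}$ for every $s \in \R$. Lemma \ref{lemma:barcode-dcomp} identifies the intersection of these images with the span of the fully infinite bars, which is precisely $\mathrm{SH}_{e}$, and hence $1 \in \mathrm{SH}_{e}$.

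The principal technical input I expect to need is the multiplicativity $\mathfrak{c}(a^{k}) = \mathfrak{c}(a)^{k}$ of the structure map, i.e.\ the compatibility of the Floer pair-of-pants products with the ring structure on $\mathrm{SH}$. This compatibility is the defining property of the product on $\mathrm{SH}$ as the colimit of the Floer products, but it should be stated explicitly in the conventions of \cite{cant-arXiv-2023,cant-hedicke-kilgore} before use. The remaining geometric ingredient, that iterates of a negative cone system dominate any fixed Reeb system in the continuation ordering, is routine.
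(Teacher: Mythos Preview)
Your proposal is correct and follows essentially the same approach as the paper: iterate via the pair-of-pants product to place $1$ in the image of arbitrarily negative systems, then invoke Lemma~\ref{lemma:barcode-dcomp}. The paper's actual proof in \S\ref{sec:crit-unit-elem} makes one small simplification relative to your version (and to the sketch in the introduction): rather than iterating $\psi_t$ and then arguing that $\psi_t^k$ admits continuation to any $R_{st}^\alpha$, it first continues $\psi_t$ to a small negative Reeb flow $R_{-\epsilon t}^\alpha$ and then iterates \emph{that}, so the targets $R_{-k\epsilon t}^\alpha$ are manifestly Reeb flows and the step you call ``routine'' becomes trivial.
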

The rough idea of the proof is that if $\mathfrak{c}(x)=1$, then $\mathfrak{c}(x^{k})=1$ where $x^{k}\in \mathrm{HF}(\psi_{t}^{k})$ is the image of $x\otimes \dots\otimes x$ under a pair-of-pants product map:
\begin{equation*}
  \mathrm{HF}(\psi_{t})\otimes \dots\otimes \mathrm{HF}(\psi_{t})\to \mathrm{HF}(\psi_{t}^{k}).
\end{equation*}
Morally speaking, if $\psi_{t}$ is in the negative cone, then $\psi_{t}^{k}$ becomes more and more negative as $k\to\infty$. Taking a limit $k\to\infty$ will show that $1\in \mathrm{SH}_{e}$. The rigourous proof is given in \S\ref{sec:crit-unit-elem}.

One trivial example when the hypothesis holds is if $1=0\in \mathrm{SH}$. Non-trivial examples are the case of the total space of $\mathscr{O}(-1)\to \mathrm{CP}^{n}$ as proven in \cite{ritter_negative_line_bundles,ritter_circle_actions}.

One useful application of the theorem we will have occasion to use is that, if $1\not\in \mathrm{SH}_{e}$, then the unit does not lie in the image of $\mathrm{HF}(R^{\alpha}_{st})\to \mathrm{SH}$ for any negative number $s<0$. This has implications for the spectral invariants we define in \S\ref{sec:spectr-invar-non-intro}.

\subsection{The eternal elements forms an ideal}
\label{sec:claim-2}

Our second main result is:
\begin{theorem}\label{theorem:ideal-pop}
  The subspace $\mathrm{SH}_{e}\subset \mathrm{SH}$ generated by the eternal elements is an ideal with respect to the pair-of-pants product.
\end{theorem}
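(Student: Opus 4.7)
The plan is to show $x \cdot \mathfrak{e} \in \mathrm{SH}_{e}$ for every $x \in \mathrm{SH}$ and every $\mathfrak{e} \in \mathrm{SH}_{e}$; since we work over $\Z/2$, commutativity of the pair-of-pants product reduces left-multiplication to right-multiplication, so one direction suffices. By definition, the eternal class $\mathfrak{e}$ is the image of a natural transformation $\Z/2 \to \mathrm{HF}$, equivalently a coherent family $\{\mathfrak{e}_{\psi} \in \mathrm{HF}(\psi_{t})\}_{\psi}$ preserved by every continuation map, and the class $x$ admits a representative $x_{\phi} \in \mathrm{HF}(\phi_{t})$ with $\mathfrak{c}(x_{\phi}) = x$.

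First I would construct a candidate preimage of $x \cdot \mathfrak{e}$ in $\lim \mathrm{HF}$. For each contact-at-infinity isotopy $\chi_{t}$, set $\psi_{t} := \phi_{t}^{-1} \cdot \chi_{t}$ (again contact-at-infinity, since such isotopies form a group), and define
$$\tilde{\mathfrak{e}}_{\chi} := x_{\phi} \ast \mathfrak{e}_{\psi} \in \mathrm{HF}(\phi_{t} \cdot \psi_{t}) = \mathrm{HF}(\chi_{t}),$$
where $\ast$ denotes the pair-of-pants product $\mathrm{HF}(\phi_{t}) \otimes \mathrm{HF}(\psi_{t}) \to \mathrm{HF}(\phi_{t} \cdot \psi_{t})$. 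The central step is to verify that $\chi \mapsto \tilde{\mathfrak{e}}_{\chi}$ is a natural transformation $\Z/2 \to \mathrm{HF}$. This rests on two compatibility facts: (i) a continuation datum from $\chi_{t}^{0}$ to $\chi_{t}^{1}$ induces a continuation datum between the shifted systems $\psi_{t}^{i} = \phi_{t}^{-1} \cdot \chi_{t}^{i}$, the non-negativity condition being preserved because it is invariant under left multiplication by the fixed contact isotopy determined by $\phi_{t}$; and (ii) the pair-of-pants product commutes with continuation maps in the second slot. Combined with the naturality of $\{\mathfrak{e}_{\psi}\}$, these yield the naturality of $\{\tilde{\mathfrak{e}}_{\chi}\}$.

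Once $\tilde{\mathfrak{e}}$ is established as a natural transformation, it determines an element of $\lim \mathrm{HF}$ whose image in $\mathrm{SH}$ equals $x \cdot \mathfrak{e}$ by definition of the product on $\mathrm{SH}$ as the map induced by pair-of-pants on colimits. Hence $x \cdot \mathfrak{e} \in \mathrm{SH}_{e}$, which completes the argument.

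The main obstacle I expect is step (ii): concretely, that for a continuation map $c:\mathrm{HF}(\psi_{t}^{0}) \to \mathrm{HF}(\psi_{t}^{1})$ and its left-shift $\tilde c:\mathrm{HF}(\phi_{t} \cdot \psi_{t}^{0}) \to \mathrm{HF}(\phi_{t} \cdot \psi_{t}^{1})$, one has $\tilde c(x_{\phi} \ast y) = x_{\phi} \ast c(y)$ on cohomology. This is the standard TQFT-type compatibility, proven by attaching a continuation cylinder to one incoming end of the pair-of-pants and exhibiting a chain homotopy via a one-parameter family of domain-dependent Floer data; the main care lies in ensuring the resulting moduli spaces remain compact in the contact-at-infinity setting, which should follow from the maximum-principle methods reviewed in the framework of \cite{cant-arXiv-2023,cant-hedicke-kilgore}.
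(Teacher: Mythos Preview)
Your approach is different from the paper's and, in the paper's framework, runs into a genuine gap at step (ii). The problem is that the paper does \emph{not} establish a well-defined pair-of-pants product $\mathrm{HF}(\phi_t)\otimes\mathrm{HF}(\psi_t)\to\mathrm{HF}(\phi_t\psi_t)$: the map $\mathrm{P}(\mathfrak{H})$ depends on a choice of flat connection $\mathfrak{H}$ on the pair-of-pants, and the paper explicitly remarks in \S\ref{sec:pair-pants-product-1} that it was unable to determine whether the relevant space of such connections is connected. The compatibility proved in \S\ref{sec:comm-with-cont} therefore has the form $\mathfrak{c}_\infty\circ\mathrm{P}(\mathfrak{H}_0)=\mathrm{P}(\mathfrak{H}_1)\circ(\mathfrak{c}_0\otimes\mathfrak{c}_1)$ with \emph{different} connections on the two sides (this is visible in the proof of Lemma~\ref{lemma:compat}, where $\mathfrak{H}_\sigma$ is produced by part~\ref{Sc} of Lemma~\ref{lemma:pop-structural} and $\mathfrak{H}_1$ is not the connection you may have fixed in advance). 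Your naturality of $\{\tilde{\mathfrak{e}}_\chi\}$ would require a coherent family of connections indexed by all $\chi$, compatible with every morphism in $\mathscr{C}$; nothing in the paper furnishes that. The issue you flagged (compactness in the contact-at-infinity setting) is not the obstacle; the curvature estimates already handle that.

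The paper sidesteps the whole problem. Rather than exhibit an element of $\lim\mathrm{HF}$, it invokes the barcode criterion of Lemma~\ref{lemma:barcode-dcomp}: a class lies in $\mathrm{SH}_e$ if and only if it lies in the image of $V_s^\alpha(\id)=\mathrm{HF}(R^\alpha_{st})\to\mathrm{SH}$ for every $s$. Writing $\mathfrak{e}=\mathfrak{c}(\xi_1)$ with $\xi_1\in V^\alpha_{s_1}$ (for arbitrarily negative $s_1$, since $\mathfrak{e}$ is eternal) and $\zeta=\mathfrak{c}(\xi_2)$ with $\xi_2\in V^\alpha_{s_2}$, the $\mathrm{SH}$-valued product $\mathfrak{e}\zeta=\mathrm{P}(\xi_1,\xi_2)$ factors through $\mathrm{HF}(R^\alpha_{(s_1+s_2)t})$ by the remark at the end of \S\ref{sec:pair-pants-product-1}; since $s_1+s_2$ can be made arbitrarily negative, $\mathfrak{e}\zeta$ lies in the image of every $V^\alpha_s\to\mathrm{SH}$, and Lemma~\ref{lemma:barcode-dcomp} finishes. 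This uses only that the $\mathrm{SH}$-valued product factors through one specific $\mathrm{HF}$-group, never any coherence between $\mathrm{HF}$-level products. Your argument can be repaired the same way: drop the naturality claim entirely and just observe that, for each $\chi_t$, choosing any $\mathfrak{H}$ gives an element $\mathrm{P}(\mathfrak{H})(x_\phi,\mathfrak{e}_{\phi^{-1}\chi})\in\mathrm{HF}(\chi_t)$ whose image in $\mathrm{SH}$ is $x\cdot\mathfrak{e}$, then apply Lemma~\ref{lemma:barcode-dcomp}.
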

Consequently, $\mathrm{SH}=\mathrm{SH}_{e}$ if and only if $1\in \mathrm{SH}_{e}$. The proof is given in \S\ref{sec:subsp-etern-class}.

\subsection{On the spectral invariant of non-eternal classes}
\label{sec:spectr-invar-non-intro}

The next results concern the spectral invariants one can extract from a non-eternal element. These spectral invariants were introduced in \cite{djordjevic-uljarevic-zhang-arXiv-2023,cant-arXiv-2023} in the context of Floer theory in convex-at-infinity manifolds. Similar spectral invariants appear in \cite{albers-shelukhin-zapolsky} for ideal boundaries of negative line bundles, and in  \cite{albers-merry-JSG-2018} for spectrally finite classes in $\mathrm{RFH}$.

Let $R^{\alpha}_{s}$ be a Reeb flow, let $\varphi_{t}$ be a contact isotopy, and consider the persistence module:
\begin{equation*}
  V^{\alpha}_{s}(\varphi_{t})=\mathrm{HF}(\varphi_{t}^{-1}\circ R^{\alpha}_{s}).
\end{equation*}
The isomorphism class of the persistence module is independent of the extension of $\varphi_{t}$ and $R^{\alpha}_{s}$ to the filling $W$; see \cite{djordjevic-uljarevic-zhang-arXiv-2023,cant-arXiv-2023}.
The $V_{s_{0}}^{\alpha}\to V^{\alpha}_{s_{1}}$ structure maps in the persistence module are induced by continuation maps associated to the continuation data obtained by increasing the speed:
\begin{equation*}
  \varphi_{s,t}=\varphi_{t}^{-1}\circ R_{(1-s)s_{0}t+ss_{1}t}.
\end{equation*}
As usual with a persistence module, we can associate a \emph{spectral invariant}:
\begin{equation*}
  c_{\alpha}(\zeta;\varphi_{t}):=\inf\set{s:\zeta\in \mathrm{im}(V_{s}^{\alpha}(\varphi_{t})\to \mathrm{SH}(W))},
\end{equation*}
for any $\zeta\in \mathrm{SH}(W)$. The spectral invariants are supported on a special set of numbers and depend only on the image of $\varphi_{t}$ in the universal cover.
\begin{theorem}\label{theorem:spectral}
  Suppose that $\zeta\not\in\mathrm{SH}_{e}$, then $s=c_{\alpha}(\zeta;\varphi_{t})$ is finite and satisfies:
  \begin{equation*}
    \varphi_{1}^{-1}R_{s}^{\alpha}\text{ has a discriminant point};
  \end{equation*}
  in other words, the spectral invariant is the length of a translated point. Additionally, if $\varphi_{t}$ and $\phi_{t}$ represent the same element in the universal cover of the contactomorphism group, then $c_{\alpha}(\zeta;\varphi_{t})=c_{\alpha}(\zeta;\phi_{t}).$
\end{theorem}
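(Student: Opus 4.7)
The plan is to establish the three assertions of the theorem separately: (i) finiteness of $c_\alpha(\zeta;\varphi_t)$, (ii) identification of the infimum with a translated-point length, and (iii) invariance under homotopy in the universal cover.

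For (i), I would analyze the family $\{\varphi_t^{-1}\circ R_s^\alpha\}_{s\in\R}$ at both extremes in the variable $s$. As $s\to+\infty$, the family is cofinal in the diagram defining $\mathrm{SH}$, since large positive Reeb shifts dominate any fixed contact-at-infinity isotopy, so the union $\bigcup_s \mathrm{im}(V_s^\alpha(\varphi_t)\to\mathrm{SH})$ equals $\mathrm{SH}$, giving $c_\alpha(\zeta;\varphi_t)<\infty$. For the opposite bound I would show $\bigcap_s \mathrm{im}(V_s^\alpha(\varphi_t)\to\mathrm{SH})=\mathrm{SH}_e$, with the intersection taken as $s\to-\infty$. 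The inclusion $\mathrm{SH}_e\subseteq\bigcap_s\mathrm{im}(V_s^\alpha(\varphi_t)\to\mathrm{SH})$ is automatic, because a natural transformation in $\lim\mathrm{HF}$ produces a compatible preimage in every $V_s^\alpha(\varphi_t)$. The reverse inclusion uses cofinality of the family at $-\infty$: for each contact-at-infinity isotopy $\psi_t$, one finds $s$ negative enough that $\varphi_t^{-1}\circ R_s^\alpha$ admits continuation data to $\psi_t$. Combined with finite-dimensionality of each $V_s^\alpha(\varphi_t)$ (so that a Mittag-Leffler-type condition holds), this identifies the intersection with $\mathrm{im}(\lim\mathrm{HF}\to\mathrm{SH})=\mathrm{SH}_e$. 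Since $\zeta\notin\mathrm{SH}_e$, we conclude $c_\alpha(\zeta;\varphi_t)>-\infty$.

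For (ii), I would argue by contrapositive. Suppose $\varphi_1^{-1}R_{s_0}^\alpha$ has no discriminant point; a standard regularization/genericity argument then produces a small interval $(s_0-\delta,s_0+\delta)$ on which no Floer generator of $\varphi_t^{-1}\circ R_s^\alpha$ is created or destroyed, so that the structure map $V_{s_0-\delta}^\alpha(\varphi_t)\to V_{s_0+\delta}^\alpha(\varphi_t)$ is an isomorphism at the chain level. It follows that the infimum defining $c_\alpha(\zeta;\varphi_t)$ cannot be attained at $s_0$, and hence, since it is finite by (i), it is attained at some value where $\varphi_1^{-1}R_s^\alpha$ has a discriminant point; equivalently, at the length of a translated point of $\varphi_1$ for $\alpha$.

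For (iii), given two representatives $\varphi_t,\phi_t$ of the same class in the universal cover and a homotopy $\varphi_{\tau,t}$ between them with $\varphi_{\tau,1}=\varphi_1=\phi_1$ fixed, I would build continuation data $\psi_{\tau,t}:=\varphi_{\tau,t}^{-1}\circ R_s^\alpha$ for each fixed Reeb parameter $s$. Because $\varphi_{\tau,1}$ is constant in $\tau$, the boundary path $\tau\mapsto\psi_{\tau,1}$ is constant and trivially non-negative, and the other axioms for continuation data are immediate. The resulting isomorphisms $V_s^\alpha(\varphi_t)\cong V_s^\alpha(\phi_t)$ are compatible with the persistence structure maps in $s$ and with the projection to $\mathrm{SH}$ by a two-parameter homotopy-of-homotopies argument, so $c_\alpha(\zeta;\varphi_t)=c_\alpha(\zeta;\phi_t)$. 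The main technical obstacle, as I see it, lies in step (i), specifically the $-\infty$-cofinality claim, which reduces to a non-trivial assertion about non-negative paths in the contactomorphism group of the ideal boundary dominating arbitrary contact-at-infinity Hamiltonians; this is where care with the framework reviewed in \S\ref{sec:revi-floer-cohom} will be needed.
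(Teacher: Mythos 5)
Parts (i) and (iii) of your plan are correct and essentially follow the paper's own route. Your Mittag--Leffler argument (inverse system of nonempty affine cosets in finite-dimensional $\Lambda$-vector spaces over a cofinal sequence $s_n\to-\infty$) for $\bigcap_s \mathrm{im}(V_s^\alpha(\varphi_t)\to\mathrm{SH})=\mathrm{SH}_e$ is a repackaging of Lemma \ref{lemma:barcode-dcomp} combined with Lemma \ref{lemma:final-cofinal}; and the $\pm\infty$-cofinality that you single out as the main technical obstacle is exactly Lemma \ref{lemma:technical-filtering}, which is already part of the framework, so that is not where the real difficulty sits. Part (iii) is the same argument used in the paper: the homotopy with fixed endpoints gives continuation data in both directions whose ideal restriction at $t=1$ is constant, producing invertible continuation maps $\mathrm{HF}(\varphi_t^{-1}\circ R^\alpha_{st})\to\mathrm{HF}(\phi_t^{-1}\circ R^\alpha_{st})$ natural in $s$.

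The genuine gap is in step (ii). You justify the isomorphism $V^\alpha_{s_0-\delta}(\varphi_t)\to V^\alpha_{s_0+\delta}(\varphi_t)$ by claiming that, absent a discriminant point of $\varphi_1^{-1}R^\alpha_{s_0}$, ``no Floer generator is created or destroyed'' on a small interval, so the structure map is an isomorphism at the chain level. That is not the right mechanism, and the assertion is false in general: discriminant points live on the ideal boundary $Y$, whereas the generators of $\mathrm{CF}(\varphi_t^{-1}\circ R^\alpha_{st})$ are the fixed points of the time-one map of the \emph{extended} system on the filling $W$, and interior fixed points can be born or die as $s$ crosses $s_0$ even when nothing happens at infinity (conversely, constancy of the generator set would not by itself make a continuation map invertible). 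The correct input, and the one the paper cites, is the invariance theorem of \cite{uljarevic-zhang-JFPTA-2022}: the continuation map is an isomorphism on homology provided the interpolating family develops no discriminant points at infinity; this is a genuine Floer-theoretic theorem (proved by constructing an inverse via naturality/confinement arguments), not a genericity statement about generators. With that theorem in hand, your contrapositive does give spectrality --- the image of $V^\alpha_s(\varphi_t)\to\mathrm{SH}$ is locally constant in $s$ near $s_0$, contradicting infimality --- but as written this key step is unjustified.
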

\begin{proof}
  Both properties are established in \cite{cant-arXiv-2023}, and so we only recall the ideas behind their proofs. This first part follows from \cite{uljarevic-zhang-JFPTA-2022}; see also \cite{cant-arXiv-2023,djordjevic-uljarevic-zhang-arXiv-2023}. The idea is that continuation maps $V_{s}^{\alpha}(\varphi_{t})\to V_{s+\delta}^{\alpha}(\varphi_{t})$ are isomorphisms by \cite{uljarevic-zhang-JFPTA-2022} if there are no discriminant points which develop when interpolating from speed $s$ to speed $s+\delta$. The second part uses the fact that there are \emph{invertible} continuation maps $\varphi_{t}^{-1}\circ R_{st}^{\alpha}\to \phi_{t}^{-1}\circ R_{st}^{\alpha}$ which are natural with respect to speed $s$.
\end{proof}

\subsubsection{Subadditivity}
\label{sec:subadditivity}

We prove these spectral invariant are sub-additive with respect to the pair-of-pants product.
\begin{theorem}\label{theorem:sub-additive}
  For all contact isotopies $\varphi_{t},\phi_{t}$, and classes $\zeta_{0},\zeta_{1}$ we have:
  \begin{equation*}
    c_{\alpha}(\zeta_{0}\zeta_{1};\varphi_{t}\circ \phi_{t})\le c_{\alpha}(\zeta_{0};\varphi_{t})+c_{\alpha}(\zeta_{1};\phi_{t}).
  \end{equation*}
\end{theorem}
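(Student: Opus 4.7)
The plan is to realize $\zeta_{0}\zeta_{1}$ as the image of a pair-of-pants product at the level of finite Floer cohomology and then pass to $\mathrm{SH}$. Concretely, fix $\epsilon>0$ and set $s_{0}=c_{\alpha}(\zeta_{0};\varphi_{t})+\epsilon$, $s_{1}=c_{\alpha}(\zeta_{1};\phi_{t})+\epsilon$. By definition of the spectral invariant, there exist classes $x_{i}\in \mathrm{HF}(\psi^{(i)}_{t})$, where $\psi^{(0)}_{t}=\varphi_{t}^{-1}\circ R^{\alpha}_{s_{0}t}$ and $\psi^{(1)}_{t}=\phi_{t}^{-1}\circ R^{\alpha}_{s_{1}t}$, whose images under the colimit maps $\mathfrak{c}_{i}:\mathrm{HF}(\psi^{(i)}_{t})\to \mathrm{SH}$ are $\zeta_{0}$ and $\zeta_{1}$ respectively.

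The core geometric input is a finite-level pair-of-pants product
\[
\mathrm{HF}(\psi^{(0)}_{t})\otimes \mathrm{HF}(\psi^{(1)}_{t})\longrightarrow \mathrm{HF}(\chi_{t}),\qquad \chi_{t}:=(\varphi_{t}\circ\phi_{t})^{-1}\circ R^{\alpha}_{(s_{0}+s_{1})t},
\]
which I would construct by choosing Floer data on the pair-of-pants surface $\Sigma$ whose three asymptotic ends are modelled on $\psi^{(0)}$, $\psi^{(1)}$, and $\chi$, together with an interpolating family of contact-at-infinity Hamiltonians over $\Sigma$. At the ideal boundary the three isotopies have Reeb speeds $s_{0}$, $s_{1}$, and $s_{0}+s_{1}$; the fact that the output speed equals the sum of the input speeds is precisely the threshold condition that lets one select the interpolation over $\Sigma$ so that its ideal-boundary restriction satisfies the positivity property underlying the integrated maximum principle, confining Floer trajectories to a compact subset of $W$.

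The main obstacle is verifying that this finite-level operation is compatible with the pair-of-pants product on $\mathrm{SH}$, in the sense that if $x_{0}x_{1}\in \mathrm{HF}(\chi_{t})$ denotes the image of $x_{0}\otimes x_{1}$ under the map above, then $\mathfrak{c}(x_{0}x_{1})=\zeta_{0}\zeta_{1}$. This amounts to the standard TQFT compatibility between pair-of-pants operations and the continuation maps that define $\mathrm{SH}$ as the colimit of $\mathrm{HF}$, and should fit into the functorial framework recalled in \S\ref{sec:revi-floer-cohom}; the verification is a gluing/homotopy argument comparing ``continuation-then-product'' with ``product-then-continuation'' on $\Sigma$. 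Once this compatibility is in place the rest is formal: $\zeta_{0}\zeta_{1}\in \mathrm{im}(\mathrm{HF}(\chi_{t})\to \mathrm{SH})$, so
\[
c_{\alpha}(\zeta_{0}\zeta_{1};\varphi_{t}\circ\phi_{t})\le s_{0}+s_{1}=c_{\alpha}(\zeta_{0};\varphi_{t})+c_{\alpha}(\zeta_{1};\phi_{t})+2\epsilon,
\]
and sending $\epsilon\to 0$ completes the proof.
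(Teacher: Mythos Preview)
Your overall strategy is the right one, and it matches the paper's in spirit: realize $\zeta_{0}\zeta_{1}$ via a finite-level pair-of-pants operation and read off the spectral bound. But there is a concrete gap in the step where you posit a product
\[
\mathrm{HF}(\psi^{(0)}_{t})\otimes \mathrm{HF}(\psi^{(1)}_{t})\longrightarrow \mathrm{HF}(\chi_{t}),\qquad \chi_{t}=(\varphi_{t}\circ\phi_{t})^{-1}\circ R^{\alpha}_{(s_{0}+s_{1})t}.
\]
Your justification, that ``the output speed equals the sum of the input speeds'', treats the ideal restrictions as if they were Reeb flows. They are not: they are $\varphi_{t}^{-1}R^{\alpha}_{s_{0}t}$, $\phi_{t}^{-1}R^{\alpha}_{s_{1}t}$, and $\phi_{t}^{-1}\varphi_{t}^{-1}R^{\alpha}_{(s_{0}+s_{1})t}$. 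The curvature condition for a priori energy estimates involves the full contact Hamiltonians, and $\chi_{t}$ is \emph{not} the product $\psi^{(0)}_{t}\psi^{(1)}_{t}=\varphi_{t}^{-1}R^{\alpha}_{s_{0}t}\phi_{t}^{-1}R^{\alpha}_{s_{1}t}$, nor is it obviously conjugate to it in $\mathrm{UH}$. So neither the flat-connection framework (Lemma~\ref{lemma:pop-structural}\ref{Sa}) nor a naive nonpositive-curvature interpolation gives you this product directly.

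The paper resolves this noncommutativity issue with an extra ingredient you omit: the ``swap'' operations of \S\ref{sec:equiv-spectr-invar}. First one represents $\zeta_{1}$ in $\mathrm{HF}(R^{\alpha}_{s_{1}t}\circ\phi_{t}^{-1})$ rather than $\mathrm{HF}(\phi_{t}^{-1}\circ R^{\alpha}_{s_{1}t})$ (these carry the same spectral data by Lemma~\ref{lemma:swap}). Now the flat pair-of-pants product of \S\ref{sec:pair-pants-product-1} lands in
\[
\mathrm{HF}\bigl(\varphi_{t}^{-1}R^{\alpha}_{s_{0}t}\cdot R^{\alpha}_{s_{1}t}\phi_{t}^{-1}\bigr)=\mathrm{HF}\bigl(\varphi_{t}^{-1}R^{\alpha}_{(s_{0}+s_{1})t}\phi_{t}^{-1}\bigr),
\]
and a second application of Lemma~\ref{lemma:swap} moves this to $\mathrm{HF}(\phi_{t}^{-1}\varphi_{t}^{-1}R^{\alpha}_{(s_{0}+s_{1})t})=\mathrm{HF}(\chi_{t})$. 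The compatibility with the $\mathrm{SH}$-product is exactly the content of \S\ref{sec:acti-sympl-cohom} (the $\mathrm{C}(\mathfrak{H})$ operators act trivially on $\mathrm{SH}$). Once you insert these swap steps your argument goes through; without them, the existence of the finite-level product you need is unproven.
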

Note that spectral invariants are shown to be approximately sub-additive in \cite{djordjevic-uljarevic-zhang-arXiv-2023}, so our result is not too surprising. The proof is given in \S\ref{sec:subadd-spectr-invar}.

\subsubsection{Spectral oscillation energy}
\label{sec:spectral-oscillation}

Assuming $1\not\in \mathrm{SH}_{e}$, Theorem \ref{theorem:when-unit-eternal} implies that $c_{\alpha}(1;\id)=0$, and then Theorem \ref{theorem:sub-additive} implies the following quantity:
\begin{equation}\label{eq:spectral-oscillation-energy}
  \gamma_{\alpha}(\varphi_{t}):=c_{\alpha}(1;\varphi_{t})+c_{\alpha}(1;\varphi_{t}^{-1})
\end{equation}
is a pseudo-norm (i.e., is non-negative and sub-additive). We call this quantity the \emph{spectral oscillation energy} of $\varphi_{t}$. This designation is motivated by our next result:

\begin{theorem}\label{theorem:spectral-oscillation}
  Suppose that $1\not\in \mathrm{SH}_{e}$. Let $\varphi_{t}$ be a contact isotopy of the ideal boundary $Y$, let $\alpha$ be a choice of contact form. Then:
  \begin{equation}\label{eq:spectral-oscillation}
    0\le \gamma_{\alpha}(\varphi_{t})\le 2\inf_{s\in \R} \mathrm{dist}_{\alpha}(R^{\alpha}_{st},\varphi_{t}),
  \end{equation}
  where $\mathrm{dist}_{\alpha}$ is the Shelukhin-Hofer distance from \cite{shelukhin-JSG-2017} computed in the universal cover. In particular, $\gamma_{\alpha}(R^{\alpha}_{st})=0$ holds for all Reeb flows $R^{\alpha}_{st}$.
\end{theorem}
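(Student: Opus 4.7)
The plan is to reduce the bound to showing $\gamma_\alpha(\eta_t)\le 2\,\mathrm{dist}_\alpha(\mathrm{id},\eta_t)$ for every contact isotopy $\eta_t$, by first extracting a Reeb--shift invariance of $\gamma_\alpha$ from Theorem \ref{theorem:sub-additive} and then invoking the bi-invariance of the Shelukhin--Hofer distance.

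I would begin by recording the pseudo-norm properties of $\gamma_\alpha$. Sub-additivity follows from Theorem \ref{theorem:sub-additive} with $\zeta_0=\zeta_1=1$ (using $1\cdot 1=1$). Non-negativity has been noted: combine sub-additivity with $c_\alpha(1;\mathrm{id})=0$, which is a corollary of Theorem \ref{theorem:when-unit-eternal} under the hypothesis $1\notin \mathrm{SH}_e$. The direct computation $c_\alpha(1;R^\alpha_{ct})=c$ (by unwinding $\mathrm{id}^{-1}\circ R^\alpha_{(s+c)t}=R^\alpha_{(s+c)t}$ and shifting the indexing parameter) then gives $\gamma_\alpha(R^\alpha_{ct})=0$. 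Applying sub-additivity to the two decompositions $\varphi_t=R^\alpha_{st}\cdot(R^\alpha_{-st}\varphi_t)$ and $\varphi_t^{-1}=(R^\alpha_{-st}\varphi_t)^{-1}\cdot R^\alpha_{-st}$, the $\pm s$ contributions cancel in the sum of the resulting inequalities, yielding the key Reeb--shift invariance
\[
  \gamma_\alpha(\varphi_t)\;\le\;\gamma_\alpha(R^\alpha_{-st}\varphi_t)\qquad\text{for every }s\in\R,
\]
where I used $(R^\alpha_{-st}\varphi_t)^{-1}=\varphi_t^{-1}R^\alpha_{st}$. Combined with bi-invariance $\mathrm{dist}_\alpha(R^\alpha_{st},\varphi_t)=\mathrm{dist}_\alpha(\mathrm{id},R^\alpha_{-st}\varphi_t)$, this reduces the theorem to proving $\gamma_\alpha(\eta_t)\le 2\,\mathrm{dist}_\alpha(\mathrm{id},\eta_t)$ (applied to $\eta_t=R^\alpha_{-st}\varphi_t$) followed by an infimum over $s$.

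The main obstacle is this last estimate. Given $\epsilon>0$ and a contact Hamiltonian $h_t$ representing $\eta_t$ with $\int_0^1\mathrm{osc}_Y(h_t)\,dt\le\mathrm{dist}_\alpha(\mathrm{id},\eta_t)+\epsilon$, I would construct continuation data from a high-speed Reeb reference (in whose Floer cohomology the unit lies in the image of the structure map by definition) down to $\eta_t^{-1}\circ R^\alpha_{st}$, for $s$ slightly exceeding a Hofer-type integral of $h_t$. The technical heart is verifying that the ideal-boundary restriction of this data is non-negative in the Reeb direction, which translates directly into an integral inequality on the positive part of $h_t$. Once this is established, one obtains a bound of the form $c_\alpha(1;\eta_t)\le\mathrm{dist}_\alpha(\mathrm{id},\eta_t)+\epsilon$, and the symmetric argument applied to $\eta_t^{-1}$ (whose $\alpha$-contact Hamiltonian $-h_t\circ\eta_t$ has the same oscillation as $h_t$) yields the matching bound for $c_\alpha(1;\eta_t^{-1})$. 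Summing the two and letting $\epsilon\to 0$ gives $\gamma_\alpha(\eta_t)\le 2\,\mathrm{dist}_\alpha(\mathrm{id},\eta_t)$, completing the reduction. Finally, the vanishing $\gamma_\alpha(R^\alpha_{st})=0$ is immediate: taking $\varphi_t=R^\alpha_{s_0t}$ and $s=s_0$ in \eqref{eq:spectral-oscillation} makes the right-hand side zero.
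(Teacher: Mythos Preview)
Your argument is correct and is precisely the ``direct'' route the paper alludes to in its remark following \S\ref{sec:comp-with-order}, but it is \emph{not} the paper's actual proof. The paper instead proves Theorem~\ref{theorem:order-comp} (the sandwich $c_-^\alpha\le c_\alpha(1;-)\le c_+^\alpha$), which immediately gives $\gamma_\alpha\le\gamma_+^\alpha$, and then cites \cite{allais-arlove} for the fact that the order-based oscillation $\gamma_+^\alpha$ is bounded above by the Shelukhin--Hofer oscillation $2\inf_s\mathrm{dist}_\alpha(R^\alpha_{st},\varphi_t)$. Your approach bypasses the order measurements entirely: you reduce via sub-additivity and Reeb-shift invariance of $\gamma_\alpha$ to the Lipschitz bound $c_\alpha(1;\eta_t)\le\mathrm{dist}_\alpha(\mathrm{id},\eta_t)$, which is essentially \cite[Proposition~1.6]{cant-arXiv-2023}. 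The paper's route is shorter given the cited inputs; yours is more self-contained but reproves that proposition.

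Two small points to tighten. First, the invariance $\mathrm{dist}_\alpha(R^\alpha_{st},\varphi_t)=\mathrm{dist}_\alpha(\mathrm{id},R^\alpha_{-st}\varphi_t)$ is not bi-invariance (Shelukhin's norm is not conjugation-invariant in general); it holds specifically because $(R^\alpha_s)^*\alpha=\alpha$, so left-translation by the Reeb flow preserves contact Hamiltonians up to precomposition and hence preserves lengths. Second, Shelukhin's length is $\int_0^1\max_Y|h_t|\,dt$, not $\int_0^1\mathrm{osc}_Y(h_t)\,dt$, so your displayed inequality on the oscillation is stated the wrong way round. What the continuation-data construction actually yields is $c_\alpha(1;\eta_t)\le\int_0^1\max_Y h_t\,dt\le\int_0^1\max_Y|h_t|\,dt$; combining with the analogous bound for $\eta_t^{-1}$ (whose Hamiltonian has the same $\max|\cdot|$) and taking the infimum over representatives produces the factor of~$2$.
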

The quantity appearing on the right-hand side of \eqref{eq:spectral-oscillation} is the \emph{Shelukhin-Hofer oscillation energy}; see \cite{shelukhin-JSG-2017,allais-arlove,cant-hedicke-arXiv-2024}. The proof is given in \S\ref{sec:comp-with-order}.

\subsubsection{Contact isotopies with large oscillation}
\label{sec:cont-isot-sttn}

We follow \cite{nakamura-arXiv-2023,allais-arlove} which relates the oscillation of certain contact isotopies in $ST^{*}T^{n}$ to the \emph{shape invariant} studied in \cite{sikorav-duke-1989,eliashberg-JAMS-1991,eliashberg-polterovich-GAFA-2000,rosen-zhang-dedicata-2021,cant-intjmath-2023}. The precise statement is the following:
\begin{theorem}\label{theorem:shape-invar}
  Let $H:T^{*}T^{n}\to \R$ be a Hamiltonian function of the form $H(p)$, where:
  \begin{equation*}
    p:T^{*}T^{n}\to \R^{n}
  \end{equation*}
  is the projection to the cotangent fiber, and which satisfies $H(rp)=rH(p)$ for $r>0$. Let $\varphi_{t}$ be the contact isotopy obtained as the ideal restriction of the time-1 flow of $H$. Then:
  \begin{equation}\label{eq:equality}
    c_{\alpha}(1;\varphi_{t})=\max_{\abs{p}=1}H(p),
  \end{equation}
  where $\alpha=\lambda/\abs{p}$ is the contact form corresponding to the flat metric on $T^{n}$. In particular,
  \begin{equation*}
    \gamma_{\alpha}(\varphi_{t})=\max_{\abs{p}=1} H(p)-\min_{\abs{p}=1} H(p),
  \end{equation*}
  which can be made arbitrarily large.
\end{theorem}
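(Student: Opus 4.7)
The plan is to establish matching upper and lower inequalities for $c_{\alpha}(1;\varphi_{t})$, both equal to $c := \max_{|p|=1} H(p)$.

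\emph{Upper bound.} Set $G(p) := c|p| - H(p)$, which is positively 1-homogeneous and non-negative. Because $H$ and $|p|$ commute (both depend only on $p$), the time-$t$ flow $\phi_{t}$ of $G$ satisfies $\phi_{t}\circ\varphi_{t} = R^{\alpha}_{ct}$ on the ideal boundary, giving the factorization $\varphi_{t} = R^{\alpha}_{ct}\circ\phi_{t}^{-1}$. Directly from the definition of $V^{\alpha}_{s}$ one reads off $V^{\alpha}_{s}(R^{\alpha}_{ct}) \cong V^{\alpha}_{s-c}(\id)$, so $c_{\alpha}(1;R^{\alpha}_{ct}) = c + c_{\alpha}(1;\id)$. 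The smooth-displaceability hypothesis on $T^{*}T^{n}$, combined with the introductory results on eternal classes, forces $1\not\in \mathrm{SH}_{e}$, whence $c_{\alpha}(1;\id)=0$. Since $\phi_{t}^{-1}$ has non-positive generator $-G$, the explicit continuation data $\psi_{s,t} := \phi_{st}\, R^{\alpha}_{s_{0}t}$ is non-negative at fixed Reeb speed $s_{0}$, so monotonicity gives $c_{\alpha}(1;\phi_{t}^{-1}) \le c_{\alpha}(1;\id) = 0$. Sub-additivity (Theorem \ref{theorem:sub-additive}) then yields
\begin{equation*}
c_{\alpha}(1;\varphi_{t}) \le c_{\alpha}(1;R^{\alpha}_{ct}) + c_{\alpha}(1;\phi_{t}^{-1}) \le c.
\end{equation*}

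\emph{Reducing the lower bound to oscillation.} The same argument with $-H$ in place of $H$ gives the companion bound $c_{\alpha}(1;\varphi_{t}^{-1}) \le -\min_{|p|=1} H$; summing,
\begin{equation*}
\gamma_{\alpha}(\varphi_{t}) \le \max_{|p|=1} H - \min_{|p|=1} H.
\end{equation*}
If one independently establishes the matching oscillation lower bound $\gamma_{\alpha}(\varphi_{t}) \ge \max H - \min H$, then both companion upper bounds are forced to be equalities, and \eqref{eq:equality} follows at once, as does the stated formula for $\gamma_{\alpha}(\varphi_{t})$ and the observation that it can be made arbitrarily large (for instance by rescaling $H$).

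\emph{The main obstacle.} The crux is therefore the oscillation lower bound $\gamma_{\alpha}(\varphi_{t}) \ge \max H - \min H$. My plan is to adapt the shape-invariant approach of \cite{nakamura-arXiv-2023,allais-arlove} to the contact-at-infinity persistence-module framework of this paper. Since the flow of $H(p)$ preserves the Lagrangian fibration $\{p = \mathrm{const}\} \subset T^{*}T^{n}$, the contact lift of $\varphi_{t}$ acts on Legendrian translates of the zero section of $ST^{*}T^{n}$ in a manner detected by the shape invariant of \cite{sikorav-duke-1989,eliashberg-JAMS-1991,eliashberg-polterovich-GAFA-2000,rosen-zhang-dedicata-2021,cant-intjmath-2023}, and the associated Legendrian displacement has magnitude governed by the oscillation of $H$ on the unit sphere. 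An energy-capacity style argument then converts this displacement into the required lower bound on $\gamma_{\alpha}$. The main technical difficulty I anticipate is verifying that the shape / non-squeezing estimates, originally phrased in compactly supported or star-shaped settings, transfer intact to the contact-at-infinity persistence-module setup considered here; once this is done, the remaining steps are essentially bookkeeping.
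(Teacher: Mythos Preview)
Your upper bound $c_{\alpha}(1;\varphi_{t})\le c$ is correct, and is essentially the paper's ``easy direction'' (phrased there via the order comparison of Theorem~\ref{theorem:order-comp}: $\varphi_{t}\le R^{\alpha}_{ct}$ since the difference is generated by $G\ge 0$).

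The gap is the lower bound, which is the entire content of the theorem. You reduce it to an oscillation inequality $\gamma_{\alpha}(\varphi_{t})\ge \max H-\min H$ and then defer to an unspecified adaptation of the shape-invariant literature. This is not a proof: the cited works do not give such a bound for $\gamma_{\alpha}$ in the present framework, and ``energy-capacity style argument'' is a placeholder, not an argument. Nothing you have written constrains $c_{\alpha}(1;\varphi_{t})$ from below.

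The paper's approach is quite different and entirely self-contained. It proves directly that if $s<\max_{|p|=1}H(p)$, then the unit does \emph{not} lie in the image of $\mathrm{HF}(s-H(p))\to\mathrm{SH}$. The tool is Lemma~\ref{lemma:technical-contractible}: the unit lives in the contractible-orbit summand of Floer cohomology, and continuation maps act isomorphically on that summand whenever no contractible orbits develop at infinity. Setting $G=s-H$ (negative somewhere), the paper constructs an affine function $\ell(p)=a\cdot p+b$ with $|a|>b>0$ dominating a cutoff $f(G)$. The Hamiltonian $a\cdot p$ generates a translation on $T^{n}$, hence has no contractible orbits, so the unit is not hit from $\mathrm{HF}(a\cdot p)$. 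Interpolating linearly to $a\cdot p+b$, one checks that zero is never a critical value, so no contractible orbits appear at infinity and the unit is still not hit from $\mathrm{HF}(a\cdot p+b)$. Factoring through this, the unit is not hit from $\mathrm{HF}(f(G))$, and one more interpolation (again checking zero is a regular value throughout) shows it is not hit from $\mathrm{HF}(G)$. No appeal to external shape or non-squeezing results is needed; the whole argument runs on the free-homotopy-class decomposition and elementary facts about critical values of $1$-homogeneous functions.
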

The proof is given in \S\ref{sec:example-cont-isot}. See also \cite[Proposition 19]{shelukhin-JSG-2017} for another way to construct isotopies with large Shelukhin-Hofer oscillation.

\subsubsection{Systolic inequality and spectral invariants}
\label{sec:syst-ineq-spectr}

It is interesting to note that there is a simple argument relating the systole of a Reeb flow $R^{\alpha}$ and the spectral invariant of any positive loop $\phi_{t}$.
\begin{theorem}\label{theorem:pos-iter-sys}
  Let $\phi_{k,t}$ be a sequence of positive loops of contactomorphisms on the ideal boundary of a convex-at-infinity manifold $W$, and suppose $W$ satisfies $1\not\in\mathrm{SH}_{e}$. For any contact form $\alpha$, it holds that:
  \begin{equation*}
    c_{\alpha}(1;\phi_{1,t}\dots\phi_{k,t})\ge k\mathrm{sys}(R^{\alpha}),
  \end{equation*}
  where $\mathrm{sys}(R^{\alpha})$ is the minimal positive period of an orbit of $R^{\alpha}$. In particular, \cite[Proposition 1.6]{cant-arXiv-2023} implies $\mathrm{dist}_{\alpha}(\phi_{1,t}\dots\phi_{k,t},\id)\ge k\mathrm{sys}(R^{\alpha})$.
\end{theorem}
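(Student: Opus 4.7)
The plan is to first prove the bound for a single positive loop and then iterate via an additivity argument. For the base case $k=1$, let $\phi_t$ be a positive loop with $\phi_1=\id$. Since $\phi_1^{-1}R_s^\alpha=R_s^\alpha$ preserves $\alpha$, Theorem~\ref{theorem:spectral} identifies $c_\alpha(1;\phi_t)$ either with $0$ or with the period of a closed Reeb orbit; in the latter case $c_\alpha(1;\phi_t)\ge\mathrm{sys}(R^\alpha)$. To rule out the non-positive values I use the positivity of the contact Hamiltonian $h_t>0$ generating $\phi_t$: a short computation shows that the contact Hamiltonian of $\phi_t^{-1}\circ R_{st}^\alpha$ takes the form $s-h_t\circ\phi_t\circ R_{-st}^\alpha$, which is everywhere negative when $s<\min h_t$. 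For such $s$ the isotopy lies in the negative cone, so Theorem~\ref{theorem:when-unit-eternal} (applied under the hypothesis $1\notin\mathrm{SH}_e$) precludes $1$ from being in the image of $\mathrm{HF}(\phi_t^{-1}R_{st}^\alpha)\to\mathrm{SH}$. This forces $c_\alpha(1;\phi_t)\ge\min h_t>0$, and combined with the period characterization yields $c_\alpha(1;\phi_t)\ge\mathrm{sys}(R^\alpha)$.

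For $k\ge 2$, merely applying the base case to the composite positive loop $\Phi_t=\phi_{1,t}\cdots\phi_{k,t}$ yields only $c_\alpha(1;\Phi_t)\ge\mathrm{sys}$, which is insufficient. To recover the full factor $k$ I would instead aim for the additivity identity
\[
c_\alpha(1;\phi_{1,t}\cdots\phi_{k,t})=\sum_{j=1}^k c_\alpha(1;\phi_{j,t}),
\]
which combined with the base case applied to each factor gives $c_\alpha(1;\Phi_t)\ge k\cdot\mathrm{sys}(R^\alpha)$. The sub-additivity direction ($\le$) is Theorem~\ref{theorem:sub-additive} specialized to $\zeta_0=\zeta_1=1$; the remaining task is the reverse inequality.

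The super-additivity step is the main obstacle. My strategy would be to construct a Seidel-representation-style isomorphism of persistence modules
\[
V_s^\alpha(\phi_{1,t}\psi_t)\;\cong\;V_{s-c_\alpha(1;\phi_{1,t})}^\alpha(\psi_t),
\]
showing that the action of a positive loop on Floer cohomology is an exact shift by its spectral invariant rather than merely a map bounded above by it. The forward map is essentially the continuation underlying Theorem~\ref{theorem:sub-additive}; for the inverse map I would use the negative loop $\phi_{1,t}^{-1}$ together with the canonical homotopy $\phi_{1,t}\circ\phi_{1,t}^{-1}\sim\id$ to show that the composition of forward and backward continuation maps is the identity on each $V_s^\alpha(\psi_t)$. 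Once the shift isomorphism is in place, peeling off one positive loop at a time gives the additivity identity and hence the desired lower bound $k\cdot\mathrm{sys}(R^\alpha)$.
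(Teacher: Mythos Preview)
Your base case is fine, and in spirit it is very close to what the paper does. The gap is entirely in the inductive step. You reduce the theorem to an \emph{additivity identity} $c_\alpha(1;\phi_{1,t}\cdots\phi_{k,t})=\sum_j c_\alpha(1;\phi_{j,t})$, and then propose to establish the super-additive direction via a Seidel-type shift isomorphism $V_s^\alpha(\phi_{1,t}\psi_t)\cong V_{s-c_\alpha(1;\phi_{1,t})}^\alpha(\psi_t)$. This step is not carried out, and it is substantially harder than the rest of the argument: you would need to build the Seidel isomorphism in this convex-at-infinity setting, verify it is an isomorphism of persistence modules (not just of colimits), and identify the shift exactly as $c_\alpha(1;\phi_{1,t})$. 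None of this is available from the results you cite, and the additivity identity itself is stronger than what the theorem asserts.

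The paper sidesteps super-additivity entirely by running your base-case argument on the \emph{inverse} loop. Setting $\eta_t=\phi_{i,t}^{-1}$ (a negative loop), spectrality plus the monotonicity $c_\alpha(1;\eta_t)<c_\alpha(1;\id)=0$ force $c_\alpha(1;\eta_t)$ to be a \emph{negative} Reeb period, hence $c_\alpha(1;\eta_t)\le -\mathrm{sys}(R^\alpha)$. Now sub-additivity alone, applied to $\phi_{1,t}\cdots\phi_{i-1,t}=(\phi_{1,t}\cdots\phi_{i,t})\circ\eta_t$, gives
\[
c_\alpha(1;\phi_{1,t}\cdots\phi_{i-1,t})\le c_\alpha(1;\phi_{1,t}\cdots\phi_{i,t})+c_\alpha(1;\eta_t)\le c_\alpha(1;\phi_{1,t}\cdots\phi_{i,t})-\mathrm{sys}(R^\alpha),
\]
and induction finishes. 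The point is that sub-additivity with a negative summand is already a lower bound; no Seidel machinery is needed.
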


Similar results appear in \cite{albers-fuchs-merry-selecta-2017,sandon-unpublished-2016}. It is a consequence of monotonicity and spectrality of spectral invariants, both of which were established in \cite{cant-arXiv-2023}, and the subadditivity established in Theorem \ref{theorem:sub-additive}. The proof is short enough to be included in the introduction.
\begin{proof}
  Let $\eta_{t}^{-1}=\phi_{i,t}$ so that $\eta_{t}$ is a negative loop. Theorem \ref{theorem:spectral} implies that $c_{\alpha}(1;\eta_{t})$ is the length of an $\alpha$-translated point for $\eta_{t}$. Since $\eta_{1}=\id$, $c_{\alpha}(1;\eta_{t})$ is the period of an $\alpha$-Reeb orbit.

  It follows from \cite[Proposition 1.7]{cant-arXiv-2023} that, since $\eta_{t}$ is a negative path, $$c_{\alpha}(1;\eta_{t})<c_{\alpha}(1;\id)=0,$$ where we have used Theorem \ref{theorem:when-unit-eternal} on the right. Thus $c_{\alpha}(1;\eta_{t})\le -\mathrm{sys}(R^{\alpha})$. Apply Theorem \ref{theorem:sub-additive}:
  \begin{equation*}
    c_{\alpha}(1;\phi_{1,t}\dots\phi_{i,t})+c_{\alpha}(1;\eta_{t})\ge c_{\alpha}(1;\phi_{1,t}\dots\phi_{i-1,t}).
  \end{equation*}
  The desired result follows by induction on $i$.
\end{proof}

\subsubsection{Loops with large spectral oscillation}
\label{sec:loops-ideal-boundary}

It is also noteworthy that there are loops of contactomorphisms on $W\times T^{*}S^{1}$ with large spectral oscillation, assuming $W$ is a Liouville manifold with $\mathrm{SH}(W)\ne 0$. The construction is similar to Theorem \ref{theorem:shape-invar} in that it uses the Hamiltonian $H=p$ where $p$ is the vertical coordinate on $T^{*}S^{1}$.

\emph{Remark}. It follows from the K\"unneth formula for symplectic cohomology \cite{oancea-JSG-2008} and Theorem \ref{theorem:smooth-displaceability} below that $\mathrm{SH}_{e}(W\times T^{*}S^{1})\ne \mathrm{SH}(W\times T^{*}S^{1}),$ and hence the unit is not eternal in $W\times T^{*}S^{1}$ by Theorem \ref{theorem:ideal-pop}.

\begin{theorem}\label{theorem:systole-osc}
  Let $\alpha$ be any contact form on the ideal boundary of $W\times T^{*}S^{1}$, and let $\phi_{t}$ be the loop generated by the Hamiltonian $H=p$. Then for $k\ne 0$
  \begin{equation*}
    c_{\alpha}(1;\phi_{t}^{k})\ge \mathrm{sys}_{k}(R^{\alpha})>0,
  \end{equation*}
  where $\mathrm{sys}_{k}(R^{\alpha})$ is the smallest positive period of a Reeb orbit in the free homotopy class of the $k$th iterate of $\set{w}\times S^{1}$. In particular, for $k\ne 0$,
  \begin{equation*}
    \gamma_{\alpha}(\phi_{t}^{k})\ge \mathrm{sys}_{k}(R^{\alpha})+\mathrm{sys}_{-k}(R^{\alpha})>0,
  \end{equation*}
  and the right hand side tends to infinity as $k\to\infty$.
\end{theorem}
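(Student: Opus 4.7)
The plan is to combine Theorem~\ref{theorem:spectral} with a free-homotopy-class analysis of Floer generators. Since $\phi_t^k$ is a loop, $\phi_1^{-k}=\id$, and Theorem~\ref{theorem:spectral} gives that $s:=c_\alpha(1;\phi_t^k)$ is the period of a closed orbit $\gamma$ of the Reeb flow $R^\alpha$. The substantive task is to show that $\gamma$ must lie in the free homotopy class $k\cdot[\{w\}\times S^1]$ in $W\times T^*S^1$.

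Every Floer cohomology $\mathrm{HF}(\psi_t)$ appearing in the story decomposes as a direct sum indexed by free homotopy classes: a generator is a fixed point $x$ of $\psi_1$ together with the class of the loop $t\mapsto\psi_t(x)$. Continuation maps preserve this grading, since they are induced by continuous families of contactomorphisms along which the homotopy class is locally constant. The unit $1\in\mathrm{SH}$ lies in the contractible sector, because it is represented in any small autonomous Hamiltonian model by constant orbits. Consequently, if $1\in\mathrm{im}(V_s^\alpha(\phi_t^k)\to\mathrm{SH})$, there must exist a generator $x$ of $\mathrm{HF}(\phi_t^{-k}R_{st}^\alpha)$ whose orbit $t\mapsto\phi_t^{-k}(R_{st}^\alpha(x))$ is contractible.

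Because $\phi_t$ acts on $W\times T^*S^1$ by $(w,q,p)\mapsto(w,q+t,p)$, the loop $t\mapsto\phi_t^{-k}(y)$ represents $-k\cdot[\{w\}\times S^1]$ in $\pi_1$. A direct homotopy shows that the orbit $t\mapsto\phi_t^{-k}(R_{st}^\alpha(x))$ is homotopic to the concatenation of the closed Reeb orbit $t\mapsto R_{st}^\alpha(x)$ with the loop $t\mapsto\phi_t^{-k}(x)$. Contractibility then forces the Reeb orbit to represent $k\cdot[\{w\}\times S^1]$, yielding $s\ge\mathrm{sys}_k(R^\alpha)$. Positivity $\mathrm{sys}_k>0$ follows from Arzela-Ascoli: a sequence of Reeb orbits with periods tending to zero in a fixed non-trivial class would $C^0$-converge to a constant, a contradiction. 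Similarly $\mathrm{sys}_k\to\infty$, since a bounded sequence in classes $k\cdot[\{w\}\times S^1]$ would subconverge to a single limit orbit in one fixed class, yet the classes are all distinct. The oscillation inequality is obtained by applying the same analysis to $\phi_t^{-k}$, which winds $-k$ times around $S^1$.

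The main obstacle is the free-homotopy-class decomposition of the contact-at-infinity Floer cohomology and the verification that the PSS description of the unit and the continuation maps used to build $V_s^\alpha(\phi_t^k)$ respect it. These points are standard in the Liouville setting but must be spelled out for the non-autonomous, noncompactly supported Hamiltonians in play; in particular, one needs to rule out that Floer trajectories (and sphere bubbles, via semipositivity) change the homotopy class of the generator, which follows by standard dimension counting.
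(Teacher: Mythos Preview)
Your argument has a gap at the step where you extract a Reeb orbit in the class $k\cdot[\{w\}\times S^1]$ from a contractible Floer generator. The generator $x$ you produce is a fixed point of the time-$1$ map of the \emph{extended} system on the filling $W\times T^*S^1$, and the closed loop $t\mapsto R_{st}^\alpha(x)$ is an orbit of the (arbitrary) extension of the Reeb flow to the filling, not a Reeb orbit on the ideal boundary. So the conclusion $s\ge\mathrm{sys}_k(R^\alpha)$ does not follow from the existence of this loop. Your opening appeal to Theorem~\ref{theorem:spectral} does yield a genuine Reeb orbit of period $s$, but your subsequent free-homotopy argument never identifies \emph{that} orbit with the one your interior generator detects; the two pieces are not connected. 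The obstacle you flag in your last paragraph (trajectories preserving the homotopy class) is a separate, more routine issue.

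The paper's proof closes this gap by invoking Lemma~\ref{lemma:technical-contractible} (second part) rather than Theorem~\ref{theorem:spectral}: if the continuation data $s'\mapsto\phi_t^{-k}R_{s't}^\alpha$ for $s'\in[0,s]$ develops no contractible orbits \emph{outside a compact set} (equivalently, on the ideal boundary), then the continuation map acts isomorphically on the contractible summand and unit-hitting is preserved. This sector-wise refinement of the Uljarevi\'c--Zhang isomorphism is precisely what lets one ignore whatever interior orbits the extension may create and focus on genuine Reeb orbits. With it, the argument proceeds as you intend: $\phi_t^{-k}$ has no contractible orbits at all, so the unit is not hit at $s=0$; and your concatenation computation, applied now to orbits \emph{at infinity}, shows no contractible orbit appears on the ideal boundary for $s<\mathrm{sys}_k(R^\alpha)$, so the unit remains out of reach.
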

The proof is given in \S\ref{sec:proof-of-systole-osc}.

\subsubsection{Displaceability and the spectral invariant of the unit}
\label{sec:displ-spectr-invar}

The well-known displacement-energy inequality in symplectic geometry (see \cite[\S5.5]{hofer-zehnder-book-1994}) asserts that spectral capacity bounds the displacement energy from below. Similarly to results \cite{borman-zapolsky-GT-2015}, we have a contact analogue:
\begin{theorem}\label{theorem:unit-displace}
  Let $U\subset Y$ be an open set which is displaced by $\psi_{1}$, where $\psi_{t}$ is an isotopy with $c_{\alpha}(1;\psi_{t})\le 0$, and where $Y$ is the ideal boundary of $W$ satisfying $1\not\in \mathrm{SH}_{e}$. Then:
  \begin{equation*}
    c_{\alpha}(U)=\sup\set{c_{\alpha}(1;\varphi_{t}):\varphi_{t}\text{ is supported in }U}\le c_{\alpha}(1;\psi_{t}^{-1}).
  \end{equation*}
  If $U$ is non-empty then the left hand side is strictly positive.
\end{theorem}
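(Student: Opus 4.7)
My strategy is to combine the subadditivity of spectral invariants from Theorem \ref{theorem:sub-additive} with a \emph{displacement lemma} exploiting the geometric hypothesis $\psi_{1}(U)\cap U=\emptyset$, namely
\begin{equation*}
  c_{\alpha}(1;\psi_{t}\circ \varphi_{t})\le c_{\alpha}(1;\psi_{t})
\end{equation*}
whenever $\varphi_{t}$ is supported in $U$. Granting this lemma, the main bound follows by applying Theorem \ref{theorem:sub-additive} (with $\zeta_{0}=\zeta_{1}=1$) to the decomposition $\varphi_{t}=\psi_{t}^{-1}\circ(\psi_{t}\circ\varphi_{t})$:
\begin{equation*}
  c_{\alpha}(1;\varphi_{t})\le c_{\alpha}(1;\psi_{t}^{-1})+c_{\alpha}(1;\psi_{t}\circ\varphi_{t})\le c_{\alpha}(1;\psi_{t}^{-1})+c_{\alpha}(1;\psi_{t})\le c_{\alpha}(1;\psi_{t}^{-1}),
\end{equation*}
the last inequality invoking the hypothesis $c_{\alpha}(1;\psi_{t})\le 0$. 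Taking the supremum over $\varphi_{t}$ supported in $U$ then yields $c_{\alpha}(U)\le c_{\alpha}(1;\psi_{t}^{-1})$.

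To prove the displacement lemma I would deform $\varphi_{t}$ to the identity through the family $\varphi_{t}^{\tau}$ generated by $\tau H_{\varphi}$, $\tau\in[0,1]$. The key geometric observation is that for every $\tau\in[0,1]$, the map $\psi_{1}\circ\varphi_{1}^{\tau}\circ R^{\alpha}_{s}$ admits no discriminant points in $U$: any such point $x\in U$ would give $\varphi_{1}^{\tau}(x)\in U$, and $\psi_{1}$ would then send the result outside $U$ by the displacement property. Outside $U$, the discriminant points coincide with those of $\psi_{1}\circ R^{\alpha}_{s}$. Applying the isomorphism criterion for continuation maps of \cite{uljarevic-zhang-JFPTA-2022} to the $\tau$-family produces a natural isomorphism of persistence modules $V_{s}^{\alpha}(\psi_{t})\cong V_{s}^{\alpha}(\psi_{t}\circ\varphi_{t})$ compatible with the structure maps to $\mathrm{SH}$ and preserving the unit class, whence the equality (and in particular the inequality) of spectral invariants.

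For the positivity statement when $U\ne\emptyset$, I would construct a specific $\varphi_{t}$ supported in a small Darboux chart inside $U$, generated by a non-negative contact Hamiltonian whose time-$1$ map carries a translated point of positive action, giving $c_{\alpha}(1;\varphi_{t})>0$ via the spectrality established in Theorem \ref{theorem:spectral}. The main obstacle is the displacement lemma itself: translating the matching of discriminant points into an honest isomorphism of persistence modules that preserves the unit requires careful control of continuation trajectories under the localized $\tau$-deformation, together with verifying that the $s$-range across which discriminant points remain outside $U$ is sufficient to track the unit class across the spectral threshold $c_{\alpha}(1;\psi_{t})$.
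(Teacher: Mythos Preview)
Your overall architecture---subadditivity plus a displacement lemma bounding $c_{\alpha}(1;\psi_{t}\circ\varphi_{t})$---matches the paper's. The gap is in the displacement lemma, specifically in the geometric claim about discriminant points. Your assertion that ``outside $U$, the discriminant points coincide with those of $\psi_{1}\circ R^{\alpha}_{s}$'' is false: if $x\notin U$ but $R^{\alpha}_{s}(x)\in U$, then the fixed-point equation $\psi_{1}\varphi_{1}^{\tau}(R^{\alpha}_{s}(x))=x$ genuinely depends on $\tau$, because $\varphi_{1}^{\tau}$ acts nontrivially at $R^{\alpha}_{s}(x)$. The Reeb flow smears the support of $\varphi$ across all of $Y$, so the $\tau$-family does develop and destroy discriminant points and the Uljarevi\'c--Zhang isomorphism cannot be invoked for the full persistence module.

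The paper sidesteps this by never looking at the Reeb-twisted persistence module during the deformation. Instead it (i) perturbs $\psi_{t}$ by a small negative Reeb flow so that $\psi_{t}$ has \emph{no} orbits at infinity and $c_{\alpha}(1;\psi_{t})<0$; (ii) replaces $\varphi_{t}$ by a \emph{non-negative} isotopy $\rho_{t}$ supported in $U$ and dominating $\varphi_{t}$, so that $c_{\alpha}(1;\varphi_{t})\le c_{\alpha}(1;\rho_{t})$; and (iii) checks directly that the unit lies in the image of $\mathrm{HF}((\psi_{t}\rho_{t})^{-1})\to\mathrm{SH}$ by continuation along $s\mapsto(\psi_{t}\circ\rho_{(1-s)t})^{-1}$. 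The point of (iii) is that an orbit at infinity of this family is a fixed point of $\psi_{1}\circ\rho_{1-s}$---no Reeb flow appears---and the displacement hypothesis together with (i) forces there to be none. The non-negativity of $\rho_{t}$ in (ii) is what makes the family in (iii) valid continuation data; this is why one cannot simply use $\varphi_{t}$ itself. The conclusion is then $c_{\alpha}(1;\psi_{t}\rho_{t})\le 0$, and subadditivity finishes the bound exactly as you wrote.

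For the positivity of $c_{\alpha}(U)$, the paper does not build a local model. It takes any autonomous non-negative $\varphi_{t}$ supported in $U$ with nontrivial contact Hamiltonian, uses the ergodic trick of Eliashberg--Polterovich to produce a finite product of conjugates $\prod g_{i}\varphi_{t}g_{i}^{-1}$ whose Hamiltonian is strictly positive (hence has strictly positive spectral invariant by Theorems~\ref{theorem:when-unit-eternal} and~\ref{theorem:spectral}), and then combines subadditivity with a conjugation argument---via the flat-connection operators $\mathrm{C}(\mathfrak{H})$ of \S\ref{sec:hamilt-conn-cylind}---showing $c_{\alpha}(1;g\varphi_{t}g^{-1})>0\iff c_{\alpha}(1;\varphi_{t})>0$. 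Your Darboux-chart idea may also work, but spectrality alone does not give strict positivity without some further input ruling out $c_{\alpha}=0$.
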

The proof is given in \S\ref{sec:displ-spectr-invar-1}. One consequence of the result is a non-degeneracy type statement:
\begin{equation*}
  \psi_{1}\ne \id\implies \max\set{c_{\alpha}(1;\psi_{t}),c_{\alpha}(1;\psi_{t}^{-1})}>0.
\end{equation*}
Indeed:
\begin{equation*}
  \nu_{\alpha}(\psi_{t})=\max\set{c_{\alpha}(1;\psi_{t}),c_{\alpha}(1;\psi_{t}^{-1})},
\end{equation*}
is a pseudo-norm on the universal cover of the contactomorphism group, and the same argument used in Theorem \ref{theorem:spectral-oscillation} proves that $\nu_{\alpha}$ bounds the Shelukhin-Hofer norm from below; see \S\ref{sec:comp-with-order}.

\emph{Remark.} If $\psi_{1}$ displaces $U$, but $c_{\alpha}(1;\psi_{t})>0$, and there exists a positive loop $\phi_{t}$, then $c_{\alpha}(1;\phi_{t}^{-k}\psi_{t})\le 0$ for $k$ large enough. Clearly $\phi_{1}^{-k}\psi_{1}=\psi_{1}$ still displaces $U$. In this case, we have:
\begin{equation*}
  c_{\alpha}(U)\le c_{\alpha}(1;\psi_{1}^{-1}\phi_{1}^{k})\le c_{\alpha}(1;\psi_{-1})+c_{\alpha}(1;\phi_{1}^{k}).
\end{equation*}
In particular, the spectral capacity $c_{\alpha}(U)$ of any displaceable open set $U$ is bounded assuming $Y$ admits a positive loop of contactomorphisms (e.g., $W=T^{*}S^{n}$). Arguing in a similar vein, we can prove:
\begin{theorem}\label{theorem:disjoin}
  Suppose that $\psi_{t}$ is a contact isotopy of the ideal boundary $Y$ such that $\psi_{1}$ displaces $U$ from $R^{\alpha}_{s}(U)$ for every $s\in \R$ (i.e., $\psi_{1}$ displaces $U$ from its Reeb flow trace). Then:
  \begin{equation*}
    c_{\alpha}(U)\le \gamma_{\alpha}(\psi_{t}).
  \end{equation*}
  In particular, if $\psi_{1}$ displaces a non-empty open set from its Reeb flow trace, then its spectral oscillation is strictly positive.
\end{theorem}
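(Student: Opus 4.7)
The plan is to reduce Theorem \ref{theorem:disjoin} to Theorem \ref{theorem:unit-displace} by absorbing the ``Reeb defect'' of $\psi_t$ into a modified isotopy. Set $c := c_\alpha(1;\psi_t)$ and define $\tilde\psi_t := R^\alpha_{-ct}\circ \psi_t$. The key observation is that, specialised to $s = c$, the hypothesis ``$\psi_1$ displaces $U$ from $R^\alpha_s(U)$'' gives $\psi_1(U) \cap R^\alpha_c(U) = \emptyset$, which is equivalent to $\tilde\psi_1(U) \cap U = \emptyset$; thus $\tilde\psi_1$ still displaces $U$, and this is exactly the reason one is allowed to make such a shift.

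First I would record the elementary identity $c_\alpha(1;R^\alpha_{rt}) = r$ for every $r\in\R$. The normalisation $c_\alpha(1;\id) = 0$ follows from $1 \notin \mathrm{SH}_e$ together with Theorem \ref{theorem:when-unit-eternal}, which rules out $1 \in \mathrm{im}(\mathrm{HF}(R^\alpha_{st})\to\mathrm{SH})$ for every $s<0$, combined with the obvious fact that $1\in\mathrm{im}(\mathrm{HF}(\id)\to\mathrm{SH})$. The shift $c_\alpha(1;R^\alpha_{rt})=r$ is then a direct reparametrisation of the persistence module $V^\alpha_s(R^\alpha_{rt}) = \mathrm{HF}(R^\alpha_{(s-r)t})$. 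Combining this with Theorem \ref{theorem:sub-additive} applied to $\tilde\psi_t = R^\alpha_{-ct}\circ \psi_t$ yields
\begin{equation*}
  c_\alpha(1;\tilde\psi_t) \le c_\alpha(1;R^\alpha_{-ct}) + c_\alpha(1;\psi_t) = -c + c = 0.
\end{equation*}

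Now Theorem \ref{theorem:unit-displace} applies to the isotopy $\tilde\psi_t$, whose time-one map displaces $U$ and which satisfies $c_\alpha(1;\tilde\psi_t) \le 0$, giving $c_\alpha(U) \le c_\alpha(1;\tilde\psi_t^{-1})$. A second application of subadditivity to $\tilde\psi_t^{-1} = \psi_t^{-1}\circ R^\alpha_{ct}$ then produces
\begin{equation*}
  c_\alpha(1;\tilde\psi_t^{-1}) \le c_\alpha(1;\psi_t^{-1}) + c_\alpha(1;R^\alpha_{ct}) = c_\alpha(1;\psi_t^{-1}) + c_\alpha(1;\psi_t) = \gamma_\alpha(\psi_t).
\end{equation*}
Chaining the two inequalities delivers the claimed bound $c_\alpha(U) \le \gamma_\alpha(\psi_t)$. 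The final assertion, that $\gamma_\alpha(\psi_t) > 0$ whenever $\psi_1$ displaces a non-empty open set from its Reeb flow trace, follows immediately from the strict-positivity clause $c_\alpha(U) > 0$ built into Theorem \ref{theorem:unit-displace}.

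The only non-formal ingredient to verify is the computation $c_\alpha(1;R^\alpha_{rt}) = r$ for $r$ of either sign, which is a matter of unpacking the definition of $V^\alpha_s$; everything else is a purely formal manipulation combining Theorems \ref{theorem:sub-additive} and \ref{theorem:unit-displace} with the trick of pre-composing $\psi_t$ with a Reeb flow of the critical speed $-c$. No new Floer-theoretic input is required.
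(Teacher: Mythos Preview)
Your proof is correct and follows essentially the same route as the paper: shift $\psi_t$ by the Reeb flow at speed $-c_\alpha(1;\psi_t)$ so that the new isotopy has nonpositive spectral invariant yet still displaces $U$, then apply Theorem~\ref{theorem:unit-displace} and subadditivity. The only cosmetic difference is that the paper obtains $c_\alpha(1;R^\alpha_{\sigma t}\psi_t)=0$ directly from the shift in the persistence module rather than via subadditivity, but this does not affect the argument.
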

\begin{proof}[Proof of Theorem \ref{theorem:disjoin}]
  We use Theorem \ref{theorem:unit-displace}. Define $\sigma=-c_{\alpha}(1;\psi_{t})$. Then a straightforward manipulation yields $c_{\alpha}(R_{\sigma t}^{\alpha}\psi_{t})=0$. Thus $R_{\sigma t}^{\alpha}\psi_{t}$ satisfies the hypotheses of Theorem \ref{theorem:unit-displace}. In particular:
  \begin{equation*}
    c_{\alpha}(U)\le c_{\alpha}(1;\psi_{t}^{-1}\circ (R^{\alpha}_{\sigma t})^{-1})\le c_{\alpha}(1;\psi_{t}^{-1})-\sigma=\gamma_{\alpha}(\psi_{t}),
  \end{equation*}
  as desired.
\end{proof}

Combining Theorem \ref{theorem:unit-displace} with Theorem \ref{theorem:pos-iter-sys} yields the following:
\begin{theorem}
  If $U\subset Y$ is an open set, where $Y$ is the ideal boundary of a convex-at-infinity manifold $W$ with $1\not\in\mathrm{SH}_{e}$, and there is a non-constant non-negative loop of contactomorphisms $\phi_{t}$ supported in $U$, then:
  \begin{equation*}
    c_{\alpha}(U)=\infty
  \end{equation*}
  and so $U$ cannot be displaced by a contact isotopy.
\end{theorem}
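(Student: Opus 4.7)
The plan is to prove $c_\alpha(U)=\infty$ by iterating the given loop; the non-displaceability statement then follows immediately from Theorem \ref{theorem:unit-displace}, since that result would force $c_\alpha(U)$ to be finite were $U$ displaceable.

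First I would observe that each iterate $\phi_t^k$ is again a non-constant non-negative loop of contactomorphisms supported in $U$, so by the definition of $c_\alpha(U)$ recalled in Theorem \ref{theorem:unit-displace},
\begin{equation*}
  c_\alpha(U)\ge \sup_{k\ge 1} c_\alpha(1;\phi_t^k).
\end{equation*}
The goal then reduces to showing $c_\alpha(1;\phi_t^k)\to\infty$. The natural approach is to adapt the proof of Theorem \ref{theorem:pos-iter-sys}, which delivers $c_\alpha(1;\phi_{1,t}\dots\phi_{k,t})\ge k\cdot\mathrm{sys}(R^\alpha)$ for \emph{strictly positive} loops. Inspecting that argument, the induction relies only on subadditivity (Theorem \ref{theorem:sub-additive}) applied in the form $c_\alpha(1;\phi_t^{k-1})=c_\alpha(1;\phi_t^k\circ \phi_t^{-1})\le c_\alpha(1;\phi_t^k)+c_\alpha(1;\phi_t^{-1})$, together with the single bound $c_\alpha(1;\phi_t^{-1})\le -\mathrm{sys}(R^\alpha)$. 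So the entire task is to establish this one bound under our weaker hypothesis, after which the induction from Theorem \ref{theorem:pos-iter-sys} runs verbatim and gives $c_\alpha(1;\phi_t^k)\ge k\cdot\mathrm{sys}(R^\alpha)\to\infty$.

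The main obstacle is the strict inequality $c_\alpha(1;\phi_t^{-1})<0$, since monotonicity alone only yields $\le 0$ for the non-positive loop $\phi_t^{-1}$. The non-constancy hypothesis gives a non-empty open subset of $Y\times[0,1]$ on which the contact Hamiltonian of $\phi_t^{-1}$ is strictly negative, and I would exploit this by comparing $\phi_t^{-1}$ to a genuinely negative isotopy such as $R^\alpha_{-\epsilon t}\circ \phi_t^{-1}$ for small $\epsilon>0$ via the monotonicity of \cite[Proposition 1.7]{cant-arXiv-2023}, then letting $\epsilon\to 0$ while invoking spectrality from Theorem \ref{theorem:spectral} to prevent the limit from jumping back to zero. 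Since $\phi_t^{-1}$ is a loop, $\phi_1^{-1}=\id$, and Theorem \ref{theorem:spectral} forces $c_\alpha(1;\phi_t^{-1})$ to be a period of an $\alpha$-Reeb orbit; combined with the strict negativity just obtained, this delivers $c_\alpha(1;\phi_t^{-1})\le -\mathrm{sys}(R^\alpha)$ and completes the proof.
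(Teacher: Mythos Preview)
Your strategy of reducing to the bound $c_\alpha(1;\phi_t^{-1})\le -\mathrm{sys}(R^\alpha)$ and then running the induction from Theorem~\ref{theorem:pos-iter-sys} is sound in outline, but the step where you extract the \emph{strict} inequality $c_\alpha(1;\phi_t^{-1})<0$ does not go through as written. Perturbing to $R^\alpha_{-\epsilon t}\circ\phi_t^{-1}$ and using monotonicity yields only $c_\alpha(1;\phi_t^{-1})-\epsilon<0$, hence $c_\alpha(1;\phi_t^{-1})\le 0$ in the limit; and spectrality (Theorem~\ref{theorem:spectral}) cannot upgrade this to a strict inequality, because for a loop the time-$1$ map is the identity and so $s=0$ is always a length of a translated point (every point is a discriminant point for $R_0^\alpha=\id$). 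Thus $0$ lies in the spectrum, and nothing you have written excludes $c_\alpha(1;\phi_t^{-1})=0$. The non-constancy hypothesis, which you note but do not actually use, is precisely what is needed here.

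The paper closes this gap via the ergodic trick of \cite{eliashberg-polterovich-GAFA-2000}: since $\phi_t$ is non-negative and non-constant, some iterate $\phi_t^{\ell}$ is represented in the universal cover by a \emph{strictly} positive loop, to which Theorem~\ref{theorem:pos-iter-sys} applies directly, giving $c_\alpha(1;\phi_t^{k\ell})\ge k\,\mathrm{sys}(R^\alpha)$. This also fixes a second gap in your argument: to deduce non-displaceability from $c_\alpha(U)=\infty$ via Theorem~\ref{theorem:unit-displace}, one needs a displacing isotopy with $c_\alpha(1;\psi_t)\le 0$, which is not automatic; the paper instead invokes the remark following Theorem~\ref{theorem:unit-displace}, which requires the existence of a positive loop on $Y$, and the ergodic trick has just produced one.
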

\begin{proof}
  By the ergodic trick of \cite{eliashberg-polterovich-GAFA-2000}, the iterate $\phi_{t}^{\ell}$ is represented in the universal cover of the contactomorphism group by a strictly positive loop for $\ell$ sufficiently large. Thus, by Theorem \ref{theorem:pos-iter-sys} we have:
  \begin{equation*}
    c_{\alpha}(1;\phi_{t}^{k\ell})\ge k\mathrm{sys}_{\alpha}(R^{\alpha}).
  \end{equation*}
  This proves $c_{\alpha}(U)=\infty$. Moreover, $Y$ clearly admits a positive loop (one can take the aforementioned loop in the same homotopy class as $\phi_{t}^{\ell}$), and hence one can use the remark following Theorem \ref{theorem:unit-displace} to conclude that $U$ is not displaceable.
\end{proof}

\subsubsection{Conjugation invariant measurements}
\label{sec:conj-invar-meas}

As communicated to the author by Baptiste Serraille, when the Reeb flow is $1$-periodic, the spectral invariants should have conjugation invariant integer parts, similarly to the results of \cite{sandon-ann-inst-four-2011}. In this direction, we will show:
\begin{theorem}\label{theorem:conjugation}
  Suppose that $\zeta\in \mathrm{SH}$, and suppose that $\phi_{t}$ is a positive loop. Then, for any other contact isotopy $\varphi_{t}$, the quantity:
  \begin{equation*}
    \ell_{\phi_{t}}(\zeta;\varphi_{t})=\inf\set{k\in \Z:\mathrm{HF}(\varphi_{t}^{-1}\circ \phi_{t}^{k})\to \mathrm{SH}\text{ hits }\zeta},
  \end{equation*}
  is conjugation invariant, i.e., $\ell(g\varphi_{t}g^{-1})=\ell(\varphi_{t})$ for all $g\in \mathrm{Cont}_{0}(Y)$. In particular, if $\phi_{t}=R^{\alpha}_{t}$ and $R^{\alpha}_{t}$ is $1$-periodic, then $\ell_{\phi_{t}}(\zeta;\varphi_{t})=\lceil c_{\alpha}(\zeta;\varphi_{t})\rceil$ is conjugation invariant, in the above sense.
\end{theorem}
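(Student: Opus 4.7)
The plan is to prove the conjugation invariance by producing, for each $g\in\mathrm{Cont}_0(Y)$ and each integer $k$, a natural isomorphism $\mathrm{HF}(\varphi_t^{-1}\phi_t^k) \cong \mathrm{HF}((g\varphi_t g^{-1})^{-1}\phi_t^k)$ that commutes with the structure maps to $\mathrm{SH}$; once this is done, the images in $\mathrm{SH}$ coincide and the set of $k$ hitting $\zeta$ is the same on both sides. To this end, fix a contact isotopy $g_t$ from $\id$ to $g$, extended to a compactly supported contact-at-infinity Hamiltonian isotopy of $W$.

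I would construct the isomorphism in two steps. The first step is pushforward by the isotopy $g_t$: sending fixed points of $\psi_1$ by $x\mapsto g_1(x)$ and Floer trajectories by $u\mapsto g_t\circ u$ (with the pushed-forward almost complex structure) gives a natural chain-level isomorphism $\mathrm{HF}(\psi_t)\cong\mathrm{HF}(g_t\psi_t g_t^{-1})$, compatible with continuation data. Applied to $\psi_t = \varphi_t^{-1}\phi_t^k$, this yields $\mathrm{HF}(\varphi_t^{-1}\phi_t^k) \cong \mathrm{HF}(g_t\varphi_t^{-1}\phi_t^k g_t^{-1})$. The second step identifies the latter with $\mathrm{HF}(g\varphi_t^{-1}g^{-1}\phi_t^k)$ via universal-cover invariance. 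Both isotopies share the time-$1$ map $g\varphi_1^{-1}g^{-1}$ (using $\phi_1=\id$ since $\phi_t$ is a loop); moreover, the first can be deformed rel endpoints into $g\varphi_t^{-1}\phi_t^k g^{-1}$ via the family $(s,t)\mapsto g_{(1-s)t+s}\varphi_t^{-1}\phi_t^k g_{(1-s)t+s}^{-1}$, and the resulting path then differs from $g\varphi_t^{-1}g^{-1}\phi_t^k$ by a pointwise loop conjugate to the commutator $\phi_t^k g^{-1}\phi_t^{-k}g$, which itself is null-homotopic via the square $(s,t)\mapsto\phi_t^k g_s^{-1}\phi_t^{-k}g_s$. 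Since $\pi_1(\mathrm{Cont}_0(Y))$ is abelian, conjugation acts trivially on $\pi_1$, so the two isotopies represent the same universal-cover element, and the standard invariance of $\mathrm{HF}$ under such equivalences (as used in Theorem~\ref{theorem:spectral}) completes the step.

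The composed isomorphism intertwines the continuation maps induced by $\phi_t^k \leadsto \phi_t^{k+1}$, because each ingredient is natural with respect to continuation data. To conclude, I need the induced map $\mathrm{SH}\to\mathrm{SH}$ to be the identity: for any cofinal Reeb flow $R^\alpha_{st}$ dominating both $\varphi_t^{-1}\phi_t^k$ and its $g$-conjugate, the two continuation paths into $\mathrm{HF}(R^\alpha_{st})$ differ only by the same null-homotopic commutator already constructed, hence induce the same map to $\mathrm{SH}$; granted this, the images coincide and $\ell(g\varphi_t g^{-1}) = \ell(\varphi_t)$. For the periodic Reeb case $\phi_t = R^\alpha_t$ with $R^\alpha_1 = \id$, the $k$-fold iterate satisfies $\phi_t^k = R^\alpha_{kt}$ as an isotopy, so $\mathrm{HF}(\varphi_t^{-1}\phi_t^k) = V^\alpha_k(\varphi_t)$; since the infimum defining $c_\alpha(\zeta;\varphi_t)$ is attained at a discriminant point by Theorem~\ref{theorem:spectral}, $\ell_{\phi_t}(\zeta;\varphi_t)$ is the smallest integer $\ge c_\alpha(\zeta;\varphi_t)$, namely $\lceil c_\alpha(\zeta;\varphi_t)\rceil$. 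I expect the hardest step to be verifying that the composed isomorphism descends to the identity on $\mathrm{SH}$, which requires carefully tracking the continuation paths and showing that the null-homotopy of the commutator loop suffices to identify them at the level of the colimit.
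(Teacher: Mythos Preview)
Your strategy is sound and shares with the paper the key observation that a loop $\phi_t^k$ lies in the center of $\mathrm{UH}$, so that $g\varphi_t^{-1}g^{-1}\phi_t^k$ and $g\varphi_t^{-1}\phi_t^k g^{-1}$ represent the same element of $\mathrm{UH}$. However, the paper takes a different route from your two-step decomposition. Rather than separating into a pushforward by $g$ followed by a universal-cover identification, the paper invokes a single operation $\mathrm{C}(\mathfrak{H})$ built from flat Hamiltonian connections on cylinders (\S\ref{sec:hamilt-conn-cylind}): whenever two systems have \emph{conjugate} time-$1$ maps in $\mathrm{UH}$, there is a flat connection $\mathfrak{H}$ inducing a map $\mathrm{C}(\mathfrak{H})$ between their Floer cohomologies, and the technical lemma of \S\ref{sec:acti-sympl-cohom} shows that $\mathrm{C}(\mathfrak{H})$ commutes with the structure maps to $\mathrm{SH}$. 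This immediately gives the desired equality of images, exactly as in the displacement argument of \S\ref{sec:displ-spectr-invar-1}. The advantage of the paper's route is that the heavy lifting is done once in \S\ref{sec:acti-sympl-cohom} and then reused; the advantage of your route is that it avoids the flat-connection machinery, reducing the hard step to the more familiar statement that conjugation by a Hamiltonian diffeomorphism isotopic to the identity acts trivially on $\mathrm{SH}$.

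Two remarks on your execution. First, your formula in Step~1 is not quite right: sending $u\mapsto g_t\circ u$ with a time-dependent $g_t$ produces Floer trajectories for the system $g_t\psi_t$ (with generator $Y_t+(g_t)_*X_t$), not for $g_t\psi_t g_t^{-1}$. The correct naturality uses the \emph{fixed} symplectomorphism $g=g_1$, sending $u\mapsto g\circ u$, which lands in $\mathrm{CF}(g\psi_t g^{-1})$ with almost complex structure $g_*J$; one then continues back to $J$. Second, you are right that the crux is showing the composed isomorphism descends to the identity on $\mathrm{SH}$. In your decomposition this is concentrated entirely in Step~1, since the Step~2 map is an honest invertible continuation morphism in $\mathscr{C}$ (the ideal restriction of $s\mapsto\psi_{s,1}$ is constant, hence non-negative) and therefore commutes with the colimit structure maps automatically. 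So what you actually need is that pushforward by $g\in\mathrm{Ham}$ acts trivially on $\mathrm{SH}$; this follows from a parametric-moduli-space argument interpolating $g_s$ from $\mathrm{id}$ to $g$, but does require a proof, and your sketch via cofinal Reeb flows does not yet supply one.
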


\subsubsection{Comparison with order measurements}
\label{sec:comp-with-order}

In \cite{nakamura-arXiv-2023,allais-arlove} the following measurements are defined assuming the universal cover of the group of contactomorphisms is orderable:
\begin{equation*}
  \begin{aligned}
    c_{-}^{\alpha}(\varphi_{t})&:=\sup\set{s:R_{st}\le \varphi_{t}}\\
    c_{+}^{\alpha}(\varphi_{t})&:=\inf\set{s:\varphi_{t}\le R_{st}}
  \end{aligned}
\end{equation*}
The order relation is simple to state in the framework of our paper; the relation $\varphi_{0,t}\le \varphi_{1,t}$ holds if and only if there exists a continuation data $\varphi_{s,t}$ relating them (the extension to the filling is irrelevant in this discussion).

We have the following comparison with the spectral invariant of the unit:
\begin{theorem}\label{theorem:order-comp}
  If $1\not\in \mathrm{SH}_{e}$, then:
  \begin{equation*}
    -\infty<c_{-}^{\alpha}(\varphi_{t})\le c_{\alpha}(1;\varphi_{t})\le c_{+}^{\alpha}(\varphi_{t})<\infty
  \end{equation*}
  for every contact isotopy $\varphi_{t}$ of the ideal boundary.
\end{theorem}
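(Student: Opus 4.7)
The plan is to prove each of the four inequalities separately. The central inequality $c_\alpha(1;\varphi_t) \le c_+^\alpha(\varphi_t)$ is immediate: for any $s > c_+^\alpha(\varphi_t)$ the defining relation and monotonicity of the Reeb flow give $\varphi_t\le R_{st}^\alpha$, which by bi-invariance of the order $\le$ rearranges to $\id \le \varphi_t^{-1}\circ R_{st}^\alpha$. The induced continuation map $\mathrm{HF}(\id)\to V_s^\alpha(\varphi_t)$ intertwines the structure maps to $\mathrm{SH}$, so it carries the unit $1\in\mathrm{HF}(\id)$ to a preimage of $1\in\mathrm{SH}$ inside $V_s^\alpha(\varphi_t)$; taking the infimum over such $s$ gives the bound.

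The reverse inequality $c_-^\alpha(\varphi_t)\le c_\alpha(1;\varphi_t)$ is the main step and is the only place the hypothesis $1\notin\mathrm{SH}_e$ is used. I would argue by contradiction: suppose $c_\alpha(1;\varphi_t)<c_-^\alpha(\varphi_t)$, and choose $s_0<s_0'<c_-^\alpha(\varphi_t)$ with $s_0>c_\alpha(1;\varphi_t)$ and $R_{s_0't}^\alpha\le \varphi_t$. Set $\delta:=s_0'-s_0>0$ and $\psi_t:=\varphi_t^{-1}\circ R_{s_0t}^\alpha$. Writing $\psi_t = (\varphi_t^{-1}\circ R_{s_0't}^\alpha)\circ R_{-\delta t}^\alpha$ and applying bi-invariance to the inequality $\varphi_t^{-1}\circ R_{s_0't}^\alpha\le\id$ yields $\psi_t\le R_{-\delta t}^\alpha$. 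Since $s_0>c_\alpha(1;\varphi_t)$, some $x\in\mathrm{HF}(\psi_t)=V_{s_0}^\alpha(\varphi_t)$ satisfies $\mathfrak{c}(x)=1$, and the iterated pair-of-pants power $x^k\in\mathrm{HF}(\psi_t^k)$ satisfies $\mathfrak{c}(x^k)=1$ by multiplicativity of $\mathfrak{c}$. Iterating bi-invariance (and the standard homotopy between the $k$-fold concatenation of $R_{-\delta t}^\alpha$ with itself and $R_{-k\delta t}^\alpha$) gives $\psi_t^k\le R_{-k\delta t}^\alpha$, so functoriality of the structure maps forces $1\in\mathrm{im}(\mathrm{HF}(R_{-k\delta t}^\alpha)\to\mathrm{SH})$ for every $k\ge 1$. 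Since $R_{-k\delta t}^\alpha$ lies in the negative cone, Theorem \ref{theorem:when-unit-eternal} then forces $1\in\mathrm{SH}_e$, contradicting the standing hypothesis.

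For the outer finiteness statements, $c_-^\alpha(\varphi_t)<\infty$ and $c_+^\alpha(\varphi_t)>-\infty$ follow from the two inequalities just proved combined with the finiteness of $c_\alpha(1;\varphi_t)$ supplied by Theorem \ref{theorem:spectral}. The remaining bounds $c_-^\alpha(\varphi_t)>-\infty$ and $c_+^\alpha(\varphi_t)<\infty$ reduce to the standard observation that any contact isotopy of the compact ideal boundary $Y$ is generated by a bounded contact Hamiltonian $h_t$, and any constant $s$ with $|s|$ sufficiently large dominates $h_t$ uniformly in the appropriate direction; linear interpolation of contact Hamiltonians then produces continuation data witnessing $R_{st}^\alpha\le\varphi_t$ for $s$ very negative and $\varphi_t\le R_{st}^\alpha$ for $s$ very positive. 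The hardest step is the iterated pair-of-pants argument: one must verify that the concatenation convention used to define $\psi_t^k$ is compatible with both the multiplicativity of $\mathfrak{c}$ and the monotonicity of the order under concatenation, so that the homotopy class of $\psi_t^k$ genuinely witnesses $\psi_t^k\le R_{-k\delta t}^\alpha$ at the level of continuation data seen by $\mathrm{HF}$.
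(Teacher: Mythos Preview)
Your proposal is correct and follows essentially the same strategy as the paper's proof: both central inequalities rest on factoring the structure map $V_s^\alpha(\varphi_t)\to\mathrm{SH}$ through $\mathrm{HF}(R_{\pm\epsilon t}^\alpha)$ and then invoking Theorem~\ref{theorem:when-unit-eternal} (for the lower bound) or the PSS definition of the unit (for the upper bound).

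That said, your argument for $c_-^\alpha\le c_\alpha$ does more work than necessary. Once you have established $\psi_t\le R_{-\delta t}^\alpha$ and $1\in\mathrm{im}(\mathrm{HF}(\psi_t)\to\mathrm{SH})$, the continuation map $\mathrm{HF}(\psi_t)\to\mathrm{HF}(R_{-\delta t}^\alpha)$ already factors the structure map, so $1\in\mathrm{im}(\mathrm{HF}(R_{-\delta t}^\alpha)\to\mathrm{SH})$ follows immediately, and Theorem~\ref{theorem:when-unit-eternal} gives the contradiction without any iteration. The pair-of-pants powers $x^k$ and the bounds $\psi_t^k\le R_{-k\delta t}^\alpha$ are superfluous: you are re-deriving the content of Theorem~\ref{theorem:when-unit-eternal} inside an argument that already invokes it at the end. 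The paper's proof is exactly yours with this redundancy stripped out and phrased directly rather than by contradiction: from $R_{st}\le\varphi_t$ one has $\mathrm{HF}(\varphi_t^{-1}\circ R_{(s-\epsilon)t})\to\mathrm{HF}(R_{-\epsilon t})\to\mathrm{SH}$, and the middle term misses the unit by Theorem~\ref{theorem:when-unit-eternal}.

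For finiteness, the paper simply cites \cite{allais-arlove} together with the orderability of the ideal boundary, whereas you argue directly via boundedness of the contact Hamiltonian on the compact boundary; your approach is slightly more self-contained and equally valid.
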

The proof is straightforward and is given in \S\ref{sec:comp-with-order-1}.

One advantage of $c_{\alpha}(1;\varphi_{t})$ compared to $c_{\pm}^{\alpha}$ is that the spectral invariant is the length of a translated point of $\varphi_{t}$ (Theorem \ref{theorem:spectral}). It is conjectured that the order measurements also satisfy this, but this conjecture remains open at the time of writing.

We also note that an order based oscillation and pseudo-norm is defined in \cite[\S4]{allais-arlove} via:
\begin{equation*}
  \begin{aligned}
    \gamma_{+}^{\alpha}(\varphi_{t})&=c_{+}^{\alpha}(\varphi_{t})+c_{+}^{\alpha}(\varphi_{t}^{-1}),\\
    \nu_{+}^{\alpha}(\varphi_{t})&=\max\set{c_{+}^{\alpha}(\varphi_{t}),c_{+}^{\alpha}(\varphi_{t}^{-1}},
  \end{aligned}
\end{equation*}
similarly to our $\gamma_{\alpha}(\varphi_{t})$ and $\nu_{\alpha}(\varphi_{t})$. It is shown in \cite[\S4]{allais-arlove} that $\gamma_{\alpha}^{+}$ bounds the Shelukhin-Hofer oscillation from below, while $\nu_{\alpha}^{+}(\varphi_{t})$ bounds the Shelukhin-Hofer norm from below. Our Theorem \ref{theorem:order-comp} shows that $\gamma_{\alpha}\le \gamma_{\alpha}^{+}$ and $\nu_{\alpha}\le \nu_{\alpha}^{+}$. This completes the proof of Theorem \ref{theorem:spectral-oscillation}.\hfill$\square$

\emph{Remark.} It is also possible to give a direct proof of Theorem \ref{theorem:spectral-oscillation} using \cite[Proposition 1.6]{cant-arXiv-2023}, Theorem \ref{theorem:when-unit-eternal}, and various properties about the Shelukhin-Hofer distance from \cite{shelukhin-JSG-2017}, although we leave the details of such an argument to the reader.

\emph{Remark.} In the context of Theorem \ref{theorem:shape-invar} on contact isotopies $\varphi_{t}$ of $ST^{*}T^{n}$ generated by Hamiltonians $H(p)$, the result \cite[Proposition 4.11]{allais-arlove} yields: $$c_{+}^{\alpha}(\varphi_{t})=\max_{\abs{p}=1}H(p),$$ so in this case we have $c_{\alpha}(1;\varphi_{t})=c_{+}^{\alpha}(\varphi_{t})$.

\subsection{Non existence of eternal classes}
\label{sec:vanishing-eternal}

Our next result is a topological criterion for the non-existence of eternal classes:
\begin{theorem}\label{theorem:smooth-displaceability}
  If $W$ is any semipositive and convex-at-infinity manifold such that every compact subset is smoothly displaceable then $\mathrm{SH}_{e}(W)=0$.
\end{theorem}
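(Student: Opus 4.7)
The plan is to use the barcode characterization from Lemma~\ref{lemma:barcode-dcomp}: $\mathrm{SH}_{e}$ is spanned by the fully infinite bars of the Reeb persistence module $V_{s}^{\alpha}=\mathrm{HF}(R_{s}^{\alpha})$. It is therefore enough to rule out fully infinite bars, which I will do by showing that for every non-zero $x\in V_{s}^{\alpha}$ there is some $s_{0}<s$ with $x\notin\mathrm{im}(V_{s_{0}}^{\alpha}\to V_{s}^{\alpha})$, ruling out a bar that extends to $s=-\infty$.

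For such an $x$, the generating time-1 orbits of the extended Hamiltonian $R_{s}^{\alpha}$ are concentrated in a compact subset $K\subset W$ (the translated Reeb orbits together with the critical points of an interior perturbation). The smooth displaceability hypothesis, combined with a standard cutoff at the convex-at-infinity end of $W$, yields a compactly supported Hamiltonian isotopy $\phi_{t}$ with $\phi_{1}(K)\cap K=\emptyset$ and finite Hofer norm $E:=\norm{\phi_{t}}$. The core Floer-theoretic step is then a Hofer-Zehnder type displacement energy estimate: inserting $\phi_{t}$ into the continuation data for $V_{s_{0}}^{\alpha}\to V_{s}^{\alpha}$ produces a factorization whose Floer trajectories must cross the displaced region $K$, so their energies are bounded below by $E$; meanwhile, the action-theoretic energy upper bound coming from the continuation can be arranged to be strictly smaller than $E$ once $s_{0}$ is sufficiently negative, forcing the composite to vanish. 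A small-deformation argument transfers this vanishing to the unmodified continuation, so $x$ is not in its image and $x$ does not extend to a fully infinite bar.

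The principal obstacle will be implementing the displacement energy inequality in the contact-at-infinity (non-compact) setting, where the Hamiltonians grow linearly rather than having compact support. One must invoke the convex-at-infinity condition as a maximum principle to confine Floer trajectories to a compact region of $W$, and then arrange the continuation data interpolating between speeds $s_{0}$ and $s$ together with the insertion of the compactly supported $\phi_{t}$ so that the usual energy identity compares $E$ with a controllable action difference across the continuation. Once this is granted, the barcode reduction in the first paragraph immediately gives $\mathrm{SH}_{e}(W)=0$.
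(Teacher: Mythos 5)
There is a genuine gap at the very step your whole argument hinges on: you pass from ``every compact subset is smoothly displaceable'' to ``there is a compactly supported Hamiltonian isotopy $\phi_{t}$ with $\phi_{1}(K)\cap K=\emptyset$ and finite Hofer norm $E$.'' Smooth displaceability does not yield Hamiltonian displaceability, and no cutoff trick can repair this. This is precisely the point of the theorem: in $W=T^{*}T^{n}$ every compact set is displaced by a fiberwise translation, which is a non-Hamiltonian symplectomorphism (or just a diffeotopy), while the zero section is \emph{not} displaceable by any Hamiltonian isotopy, so there is no finite displacement energy $E$ to feed into a Hofer--Zehnder type estimate. Your proposed proof would therefore only establish vanishing under the much stronger hypothesis of Hamiltonian displaceability of every compact set, which misses exactly the examples ($T^{*}L$ with $\chi(L)=0$, $W\times T^{*}S^{1}$) the theorem is designed to cover. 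A secondary concern, even granting a Hamiltonian displacement: the claim that the continuation-energy upper bound ``can be arranged to be strictly smaller than $E$ once $s_{0}$ is sufficiently negative'' is not justified; as $s_{0}\to-\infty$ the relevant action window for the continuation from $V^{\alpha}_{s_{0}}$ to $V^{\alpha}_{s}$ grows rather than shrinks, so the comparison would have to be set up much more carefully.

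The paper avoids any energy quantification of the displacing isotopy, which is why mere smooth displaceability suffices. It deforms the continuation map $\mathrm{CF}(\psi_{0,t})\to\mathrm{CF}(\psi_{1,t})$ (from the negative cone to the positive cone) into a ``fiber product'' map counting pairs of PSS half-cylinders whose removable singularities are joined by a flow line of an \emph{arbitrary} smooth homotopy $q_{\tau}$ (\S\ref{sec:fiber-products-pss}--\S\ref{sec:cont-maps-are}). Compactness confines the two evaluation points, for any given area bound $A$, to a fixed compact set (Lemmas \ref{lemma:PSS-compactness-1}, \ref{lemma:PSS-compactness-2}); choosing $q$ to smoothly displace that compact set makes all incidences with $\omega\le A$ disappear, so the map is represented by chain maps of arbitrarily large Novikov weight, and the nonarchimedean argument of Usher then forces it to vanish on homology. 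Your barcode reduction via Lemma \ref{lemma:barcode-dcomp} is fine as a framing, but the analytic core must be replaced by an argument of this intersection-theoretic kind rather than a displacement-energy inequality.
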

One should note that the result does not assume Hamiltonian displaceability. E.g., if $W=T^{*}L$ and $L$ has Euler characteristic zero then $\mathrm{SH}_{e}=0$. Similarly, $W\times T^{*}S^{1}$, where $W$ is a Liouville manifold, always has $\mathrm{SH}_{e}=0$.

The method used to prove Theorem \ref{theorem:smooth-displaceability} is to show that the continuation map $\mathrm{HF}(\varphi_{0,t})\to \mathrm{HF}(\varphi_{1,t})$ vanishes if $\varphi_{0,t},\varphi_{1,t}$ lie in the negative, positive cones, respectively.

\subsection{Lagrangians and non-zero eternal classes}
\label{sec:cotang-bundl-with}

The next result shows that closed Lagrangians can sometimes be used to produce non-zero eternal classes.
\begin{theorem}\label{theorem:odd-euler-char}
  If $L\subset W$ is a compact monotone Lagrangian submanifold with minimal Maslov number at least $2$ and with odd Euler characteristic, then the class in $\mathrm{SH}(W)$ obtained from the PSS construction applied to $L$ is a non-zero eternal class.
\end{theorem}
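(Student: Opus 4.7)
The strategy has two independent parts. In (a), I show the PSS class arises from an element of $\lim\mathrm{HF}$, so that it is eternal by definition. In (b), I detect its non-vanishing via the closed-open map to the Lagrangian self-Floer cohomology of $L$.

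For (a), I would define, for each contact-at-infinity Hamiltonian isotopy $\psi_{t}$, a class $\mathfrak{p}_{L}(\psi_{t})\in\mathrm{HF}(\psi_{t})$ by counting rigid Floer maps from a disk with one interior puncture, boundary condition on $L$, and a cylindrical end asymptotic to a generator of $\mathrm{HF}(\psi_{t})$. Compactness of these moduli spaces rests on convexity-at-infinity together with the maximum principle (confining solutions to a compact subset of $W$), compactness of $L$, and the combination of semipositivity of $W$, monotonicity of $L$, and minimal Maslov number at least $2$ (which prevent sphere and disk bubbling in the low-dimensional moduli spaces used to define $\mathfrak{p}_{L}(\psi_{t})$ and to verify $d\mathfrak{p}_{L}(\psi_{t})=0$). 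A parametrized version of this construction, applied to any continuation data $\psi_{s,t}$, then shows that the continuation map $\mathrm{HF}(\psi_{0,t})\to \mathrm{HF}(\psi_{1,t})$ sends $\mathfrak{p}_{L}(\psi_{0,t})$ to $\mathfrak{p}_{L}(\psi_{1,t})$. This exhibits $\psi_{t}\mapsto \mathfrak{p}_{L}(\psi_{t})$ as a natural transformation $\Z/2\Rightarrow \mathrm{HF}$; its image in $\mathrm{SH}(W)$ is by definition the PSS class $\mathfrak{p}_{L}$, which therefore lies in $\mathrm{SH}_{e}$.

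For (b), I would introduce $\mathrm{HF}(L,L)$, which under our hypotheses is a well-defined unital $\Z/2$-algebra, the open-closed map $\mathcal{OC}:\mathrm{HF}(L,L)\to \mathrm{SH}(W)$ with $\mathcal{OC}(1_{L})=\mathfrak{p}_{L}$, and the closed-open map $\mathcal{CO}:\mathrm{SH}(W)\to \mathrm{HF}(L,L)$. A Cardy-type degeneration argument on an annulus with one interior puncture and one boundary circle on $L$ yields the identity
\begin{equation*}
  \langle \mathcal{CO}(\mathfrak{p}_{L}),1_{L}\rangle_{L}=\mathrm{str}(\id_{\mathrm{HF}(L,L)})\equiv \chi(L)\pmod 2,
\end{equation*}
where the last congruence uses the Morse--Bott spectral sequence $H^{*}(L;\Z/2)\Rightarrow \mathrm{HF}(L,L)$. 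The assumption that $\chi(L)$ is odd thus forces $\mathcal{CO}(\mathfrak{p}_{L})\ne 0$, whence $\mathfrak{p}_{L}\ne 0$ in $\mathrm{SH}(W)$.

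The principal obstacle is the Cardy-type identity of step (b): the standard references treat exact Liouville or closed monotone ambient settings, so one must verify that the annulus degeneration argument remains valid for the contact-at-infinity Hamiltonian systems used throughout this paper, and that the requisite moduli spaces admit the expected compactifications under the semipositivity plus minimal Maslov $\geq 2$ hypotheses. By contrast, step (a) is essentially a formal restatement of the PSS construction once the PSS moduli spaces and their parametrized variants are in place, since producing elements of $\lim\mathrm{HF}$ from natural auxiliary geometric data is precisely the pattern of the functorial construction of $\mathrm{HF}$ reviewed in \S\ref{sec:revi-floer-cohom}.
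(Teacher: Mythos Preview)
Your step (a) is exactly what the paper does: in \S\ref{sec:lagr-element}--\S\ref{sec:natur-lagr-elem} it constructs the Lagrangian element $\mathrm{LE}(\psi_{t},L)$ via half-infinite Floer cylinders with boundary on $L$ (a disk with one negative interior puncture), proves naturality under continuation maps, and in \S\ref{sec:extends-PSS} identifies it with $\mathrm{PSS}(\psi_{t},L)$ for $\psi_{t}$ in the positive cone. So your eternal-class argument matches.

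For step (b), however, the paper takes a more elementary route that avoids Lagrangian Floer cohomology, the closed--open map, and the Cardy relation entirely. Rather than pairing $\mathfrak{p}_{L}$ against $1_{L}\in\mathrm{HF}(L,L)$, the paper defines a direct evaluation map $\mathrm{EL}(L',\psi_{t}):\mathrm{CF}(\psi_{t})\to\Z/2$ by counting half-infinite cylinders with boundary on a second Lagrangian $L'$ (\S\ref{sec:lagrangian-evaluation-map}), and then shows by a finite-length cylinder cobordism (\S\ref{sec:moduli-space-finite}) that
\[
  \mathrm{EL}(L',\psi_{t})\bigl(\mathrm{LE}(\psi_{t},L)\bigr)=\#(L'\cap L)\pmod 2.
\]
Theorem \ref{theorem:odd-euler-char} is the special case $L'=L$ (after a transverse perturbation), where the self-intersection number is $\chi(L)$. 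Your Cardy argument is morally equivalent---the $\mathrm{EL}$ map is the ``plumbed'' version of your pairing $\langle\mathcal{CO}(-),1_{L}\rangle_{L}$, and the finite-length cylinder moduli space plays the role of your annulus degeneration---but the paper's version sidesteps the technical obstacle you identified: it never needs to construct $\mathrm{HF}(L,L)$ or verify its unitality in the semipositive convex-at-infinity setting, and the compactness analysis reduces to the same half-cylinder arguments already used in step (a). The trade-off is that your approach, if carried out, would situate the result within a broader open--closed formalism, whereas the paper's is self-contained and requires less infrastructure.
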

In particular the result applies to $W=T^{*}L$ when $L$ has odd Euler characteristic. Here \emph{monotone} means that $\omega=c\mu$ on $\pi_{2}(W,L)$ for some $c\ge 0$.

In fact the argument we will give easily generalizes to:
\begin{theorem}\label{theorem:odd-euler-char-1}
  If there exist two compact monotone transversally intersecting Lagrangians $L',L\subset W$ with minimal Maslov numbers at least $2$, and the intersection number $\#(L'\cap L)$ is odd, then the PSS class of $L$ is non-zero and lies in $\mathrm{SH}_{e}$.
\end{theorem}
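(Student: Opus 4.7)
The approach is to follow the strategy for Theorem \ref{theorem:odd-euler-char}, substituting the intersection number $\#(L'\cap L)$ in place of the Euler characteristic $\chi(L)=\#(L\cap L)$. The PSS class of $L$ will be constructed as a natural transformation (hence automatically eternal), and its non-vanishing detected by a pairing with $L'$.

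For the first half, for each contact-at-infinity Hamiltonian $\psi_{t}$ I would define $\mathrm{PSS}_{L}(\psi_{t})\in\mathrm{HF}(\psi_{t})$ by counting rigid pseudo-holomorphic half-cylinders (capped disks) with Lagrangian boundary on $L$, carrying Floer data that interpolates from the zero Hamiltonian near the boundary to $\psi_{t}$ at the interior puncture, and asymptotic to an orbit of $\psi_{t}$. Monotonicity and minimal Maslov $\ge 2$ for $L$ ensure transversality is achievable and disk bubbling occurs in codimension $\ge 1$ for the relevant parametrized moduli; these are the standard hypotheses for the Albers-style Lagrangian PSS. An $s$-parametrized cobordism argument then shows that for any continuation datum $\psi_{s,t}$ from $\psi_{0,t}$ to $\psi_{1,t}$, the continuation map sends $\mathrm{PSS}_{L}(\psi_{0,t})$ to $\mathrm{PSS}_{L}(\psi_{1,t})$. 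Thus $\set{\mathrm{PSS}_{L}(\psi_{t})}_{\psi_{t}}$ is an element of $\lim\mathrm{HF}$, and its image in $\mathrm{SH}(W)$ is the usual PSS class of $L$; by definition it lies in $\mathrm{SH}_{e}$.

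For the second half I would construct the dual $L'$-counting map $\epsilon_{L'}\colon\mathrm{HF}(\psi_{t})\to\Z/2$ by the symmetric procedure, counting half-cylinders with boundary on $L'$ and the puncture placed as a negative asymptote. Then $\epsilon_{L'}\circ\mathrm{PSS}_{L}\colon\Z/2\to\Z/2$ is computed by gluing the two half-cylinders along their common asymptotic orbit: the resulting moduli is a family of pseudo-holomorphic ``bigons'' with one boundary arc on $L$ and one on $L'$. A one-parameter deformation that dissolves the Hamiltonian (and any compatible almost complex structure back to the standard one) yields a cobordism between this count and the count of constant bigons at points of $L\cap L'$, which is $\#(L'\cap L)\bmod 2$. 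By hypothesis this is $1$, so $\mathrm{PSS}_{L}(\psi_{t})\ne 0$ in $\mathrm{HF}(\psi_{t})$, and hence its image in $\mathrm{SH}(W)$ is non-zero, completing the proof.

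The main technical obstacle is bubbling at minimal Maslov exactly $2$: Maslov-$2$ disks on $L$ or $L'$ can appear as codimension-$0$ degenerations in all three of the parametrized moduli spaces invoked above (naturality of $\mathrm{PSS}_{L}$, chain-map property of $\epsilon_{L'}$, and the cobordism identifying $\epsilon_{L'}\circ\mathrm{PSS}_{L}$ with the intersection count). The resolution, following the Biran--Cornea / Oh bubbling calculus, is that each such disk bubble contributes a term proportional to the $m_{0}$ obstruction class, which has positive degree and therefore annihilates the degree-$0$ Morse input (the fundamental class of $L$, resp.~$L'$) that we are feeding in. Verifying that these arguments transport cleanly to the symplectic-cohomology/contact-at-infinity framework of the paper is the principal bookkeeping step; the maximum principle for convex-at-infinity data confines all curves in question to a compact region, so the analytic issues are local and reduce to the known monotone Lagrangian Floer setup.
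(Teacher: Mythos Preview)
Your strategy matches the paper's: construct a Lagrangian element $\mathrm{LE}(\psi_t,L)$ via half-infinite Floer cylinders with boundary \emph{circle} on $L$, prove naturality under continuation (hence eternality), and detect non-vanishing by pairing against a dual element built from $L'$. Three points need correction.

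First, the glued objects are not bigons but finite-length Floer cylinders $[0,\sigma]\times\R/\Z$ with one boundary circle on $L'$ and the other on $L$; the paper's cobordism parameter is the length $\sigma$, not a deformation turning off the Hamiltonian. As $\sigma\to 0$ the cylinder collapses to a point of $L\cap L'$, while as $\sigma\to\infty$ it breaks into the pair of half-cylinders being composed. Your ``dissolve the Hamiltonian'' description does not obviously produce this degeneration to intersection points.

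Second, your bubbling discussion is both incorrect and unnecessary. With minimal Maslov at least $2$, any disk or sphere bubble carries Maslov index $\ge 2$, so the remaining half-cylinder lands in a component of Fredholm index $\le d-2$; for the $d=0,1$ components relevant to all three arguments this is negative and hence empty for generic data. Maslov-$2$ bubbles are therefore \emph{not} codimension-$0$ here, and no $m_0$-calculus is needed --- pure index counting suffices, as the paper does in \S\ref{sec:generic-compactness} and Lemma~\ref{lemma:proper-natur-LE}.

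Third, you conflate your half-cylinder class with the PSS class of $L$ as defined in \S\ref{sec:pss-elements} (planes with an incidence constraint on $L$). The paper treats the identification $\mathrm{LE}(\psi_t,L)=\mathrm{PSS}(\psi_t,L)$ for $\psi_t$ in the positive cone as a separate step (\S\ref{sec:extends-PSS}), requiring its own parametric moduli space and a limit analysis through simple nodal disks; without it you have shown some eternal class is nonzero, but not that it is the PSS class named in the theorem.
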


The result applies in the case when $W$ is a surface of positive genus with at least one puncture (with a symplectic structure which is convex-at-infinity).

\emph{Remark.} It seems to be an interesting question whether Theorem \ref{theorem:odd-euler-char-1} can be generalized to include more general Lagrangians. For instance, do the results of \cite{ritter_negative_line_bundles} (on the eternal classes in certain negative line bundles) follow from the existence of certain Lagrangian submanifolds? Let us note that (semipositive) negative line bundles always satisfy $\mathrm{SH}=\mathrm{SH}_{e}$; see \S\ref{sec:extens-posit-loops}.

\subsection{On the quantum homology product}
\label{sec:quantum-homology}

One interesting perspective on eternal classes is their relationship to the quantum homology product. Briefly, the idea is that there is a factorization of algebras:
\begin{equation}\label{eq:QH-factor}
  \lim \mathrm{HF}\to \mathrm{QH}_{*}\to \mathrm{QH}^{*}\to \mathrm{SH}=\colim \mathrm{HF};
\end{equation}
where:
\begin{enumerate}
\item $\mathrm{QH}_{*}$ is the \emph{quantum homology} (compact cycles with the quantum intersection product), and
\item $\mathrm{QH}^{*}$ is the \emph{quantum cohomology} (properly embedded cycles with the quantum intersection product).
\end{enumerate}
Detailed discussion can be found in \cite{ritter_negative_line_bundles,ritter_circle_actions}; see also the discussion in \cite{albers-kang-vanishing-RFH}. One can show, e.g., that the existence of a non-zero eternal unit element implies the quantum homology product is deformed. More generally, if $\mathrm{SH}_{e}$ has an element $x$ such that $x^{d}\ne 0$ for every $d=1,2,\dots$, then the quantum intersection product on $\mathrm{QH}_{*}$ is deformed. This is because every element in $\mathrm{QH}_{*}$ is nilpotent with the non-quantum intersection product.

We should also note that our Theorem \ref{theorem:smooth-displaceability} can be understood in terms of the factorization \eqref{eq:QH-factor}: in a convex-at-infinity manifold, the quantum Poincaré duality map $\mathrm{QH}_{*}\to \mathrm{QH}^{*}$ vanishes if every compact set is displaceable.

\subsection{Extensible positive loops}
\label{sec:extens-posit-loops}

The argument given in \cite{cant-hedicke-kilgore} and in \cite{ritter_negative_line_bundles,ritter_circle_actions,merry-uljarevic-israeljmath-2019} proves:
\begin{theorem}[{see \cite[\S1.2.6]{cant-hedicke-kilgore}}]
  If there exists a contact-at-infinity Hamiltonian isotopy $\phi_{t}$ such that $\phi_{1}=\phi_{0}$ and whose ideal restriction is a positive loop, then $\mathrm{SH}_{e}=\mathrm{SH}$.\hfill$\square$
\end{theorem}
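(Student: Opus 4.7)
The plan is to deduce the statement from the two previously established theorems, rather than to construct eternal representatives of every class by hand. By Theorem \ref{theorem:ideal-pop}, $\mathrm{SH}_{e}$ is an ideal, so it will suffice to prove that $1\in\mathrm{SH}_{e}$; by Theorem \ref{theorem:when-unit-eternal}, this reduces further to exhibiting some isotopy $\eta_{t}$ lying in the negative cone such that $1\in\mathrm{SH}$ lies in the image of the structure map $\mathfrak{c}:\mathrm{HF}(\eta_{t})\to\mathrm{SH}$.

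The natural candidate is $\eta_{t}=\phi_{t}^{-1}$, whose ideal restriction is a negative loop; in particular $\eta_{t}$ lies in the negative cone. The core geometric step is to write down explicit continuation data from $\eta_{t}$ to $\id$; I would take $\psi_{s,t}=\phi_{(1-s)t}^{-1}$. One reads off $\psi_{s,0}=\id$, $\psi_{0,t}=\eta_{t}$, and $\psi_{1,t}=\id$ immediately; the non-trivial condition to verify is the non-negativity of the ideal loop $s\mapsto \psi_{s,1}=\phi_{1-s}^{-1}$. If $\phi_{t}$ is generated by a positive contact Hamiltonian $h_{t}>0$, a short computation gives that $s\mapsto\phi_{1-s}^{-1}$ is generated by $h_{1-s}\circ\phi_{1-s}\ge 0$, so this path is non-negative (indeed positive) and the continuation data is valid.

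The induced continuation map $\mathrm{HF}(\eta_{t})\to\mathrm{HF}(\id)$ is unital with respect to the pair-of-pants product, so it sends the PSS unit of $\mathrm{HF}(\eta_{t})$ to the unit of $\mathrm{HF}(\id)$, which in turn projects to $1\in\mathrm{SH}$ under the colimit structure map (see \S\ref{sec:unit-elem-mathrmsh}). Consequently $1\in\mathrm{im}(\mathfrak{c}:\mathrm{HF}(\eta_{t})\to\mathrm{SH})$; Theorem \ref{theorem:when-unit-eternal} then yields $1\in\mathrm{SH}_{e}$, and Theorem \ref{theorem:ideal-pop} upgrades this to $\mathrm{SH}_{e}=\mathrm{SH}$. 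The step I expect to be most delicate is the book-keeping in the second paragraph: verifying non-negativity of the reparametrized-and-inverted ideal loop requires some care about how contact Hamiltonians transform under time reversal and inversion rather than a naive sign argument; unitality of continuation is standard but warrants a precise citation from the earlier sections.
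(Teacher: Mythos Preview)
Your reduction via Theorems~\ref{theorem:when-unit-eternal} and~\ref{theorem:ideal-pop} is a reasonable strategy, and the continuation data $\psi_{s,t}=\phi_{(1-s)t}^{-1}$ is indeed a valid morphism $\eta_t\to\id$ in $\mathscr{C}$. The gap is in the third paragraph: you invoke ``the PSS unit of $\mathrm{HF}(\eta_t)$,'' but the PSS construction of \S\ref{sec:defin-unit-elem} requires continuation data \emph{from} $\id$ \emph{to} $\eta_t$, i.e.\ a morphism $\id\to\eta_t$ in $\mathscr{C}$, and you have only built the reverse morphism. Reversing your square gives $\phi_{st}^{-1}$, whose top $s\mapsto\phi_s^{-1}$ has \emph{negative} ideal restriction, so it is not admissible. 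More generally, any filling of the square with $\psi_{0,t}=\id$, $\psi_{1,t}=\phi_t^{-1}$, $\psi_{s,0}=\id$ forces the top loop $\psi_{\cdot,1}$ to represent the class $[\phi^{-1}]$ in $\pi_1$ of the group, and there is no reason this class admits a representative with non-negative ideal restriction. So the existence of a PSS element in $\mathrm{HF}(\eta_t)$ is exactly the point in question, and the phrase ``continuation maps are unital'' does not supply an element to send.

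What is actually needed --- and what the argument cited from \cite{cant-hedicke-kilgore,ritter_negative_line_bundles,ritter_circle_actions,merry-uljarevic-israeljmath-2019} provides --- is a reason why the continuation map $\mathrm{HF}(\eta_t)\to\mathrm{HF}(\id)$ (or $\mathrm{HF}(\psi_t)\to\mathrm{HF}(\phi_t\psi_t)$ more generally) is an \emph{isomorphism}. This uses in an essential way that $\phi_1=\id$ as a diffeomorphism: one constructs a naturality (Seidel-type) isomorphism $\mathrm{HF}(\psi_t)\cong\mathrm{HF}(\phi_t\psi_t)$ coming from the loop, and checks it intertwines with continuation. Once that is in hand, surjectivity of $\mathrm{HF}(\eta_t)\to\mathrm{HF}(\id)$ gives you a preimage of the unit and your Theorems~\ref{theorem:when-unit-eternal}--\ref{theorem:ideal-pop} finish the job; but that surjectivity is the substance of the result, not a formality. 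This is also why the paper notes the referenced argument yields the stronger conclusion that $\lim\mathrm{HF}\to\mathrm{SH}$ is an isomorphism: once the $\phi$-shift continuation maps are isomorphisms, the sequence $\phi_t^{-k}\psi_t$ is both initial and cofinal with all structure maps invertible.
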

The argument actually shows that the map $\lim \mathrm{HF}\to \mathrm{SH}$ is an isomorphism, and hence $\mathrm{RFH}$ vanishes in the presence of such a positive extensible loop, generalizing the vanishing result for negative line bundles proved in \cite{albers-kang-vanishing-RFH}. Interestingly enough, the fact that $\mathrm{SH}=\mathrm{SH}_{e}$ holds for negative line bundles means our spectral invariants cannot be applied to them. This should be contrasted with \cite{albers-shelukhin-zapolsky} which constructs spectral invariants for contact isotopies of ideal boundaries of negative line bundles.

\subsection{Acknowledgements}
\label{sec:acknowledgements}

This project benefitted from insightful discussions with many people. The author wishes to thank Habib Alizadeh, Pierre-Alexandre Arlove, Marcelo Atallah, Paul Biran, Filip Brocic, Octav Cornea, Georgios Dimitroglou-Rizell, Jakob Hedicke, Eric Kilgore, Yong-Geun Oh, Baptiste Serraille, Egor Shelukhin, Vuka\v{s}in Stojisavljevi\'c, Igor Uljarevi\'c, and Jun Zhang. The author was supported in this research by funding from the Fondation Courtois and the ANR project CoSy.

\tableofcontents

\section{Review of the Floer cohomology of a contact isotopy}
\label{sec:revi-floer-cohom}

In this section we recall the Floer cohomology for contact-at-infinity isotopies of the ideal boundary $Y$ of a semipositive and convex-at-infinity symplectic manifold $W$. The rough outline of the construction is fairly standard, and the specific focus on contact-at-infinity isotopies is described in various settings in, e.g., \cite{djordjevic-uljarevic-zhang-arXiv-2023,cant-arXiv-2023,cant-hedicke-kilgore}. One difference in the present work is that we use Novikov coefficients, in a manner similar to \cite{ritter-2009,ritter_negative_line_bundles,ritter_circle_actions}; the reason for using these coefficients is to ensure various sums converge.

\subsection{Novikov field}
\label{sec:novikov-field}

Let $\Lambda$ denote the universal Novikov field over $\Z/2$. Recall that this means that $\Lambda$ is the set of functions $a:\R\to \Z/2$ so that $a$ has finite support in $(-\infty,L)$ for each $L$. For $a\in \R$, one defines an elementary field element to be any function $\tau^{a}:\R\to \Z/2$ so that $\tau^{a}(a')=1$ if $a=a'$ and zero otherwise. Then every $\lambda\in \Lambda$ can be written as an infinite sum:
\begin{equation*}
  \textstyle\lambda=\sum_{i} \tau^{a_{i}},
\end{equation*}
where $a_{i}\to \infty$ as $i\to\infty$. Addition is given by addition of functions, and multiplication is given by discrete convolution, i.e.,
\begin{equation*}
  (\lambda_{1}\lambda_{2})(a)=\sum_{b+c=a}\lambda(b)\lambda'(c);
\end{equation*}
this definition of multiplication agrees with usual one in terms of formal power series; this description can be found in, e.g., \cite[pp.\,4]{hutchings-zeta-24}.

\subsection{Floer cohomology of a contact-at-infinity Hamiltonian isotopy}
\label{sec:floer-cohom-cont}

Let us recall that a convex-at-infinity manifold $W$ can be presented as the completion of a compact symplectic manifold with a convex contact type boundary. This defines a vector field $Z$ on the non-compact end called the Liouville vector field. A contact-at-infinity Hamiltonian isotopy is one whose flow commutes with the flow by $Z$, outside of a compact set.

See, e.g., \cite{brocic-cant-JFPTA-2024,cant-arXiv-2023,alizadeh-atallah-cant} and \S\ref{sec:groups-of-ham-diff} below for further details on this class of Hamiltonian systems.

\subsubsection{Floer chain vector space}
\label{sec:floer-chain-vector}

Suppose that $\psi_{t}:W\to W$ is a Hamiltonian isotopy which is contact-at-infinity and suppose that $\psi_{1}$ has non-degenerate fixed points. The associated Floer chain vector space $\mathrm{CF}(\psi_{t})$ is the free $\Lambda$-vector space generated by the finite set of fixed points of $\psi_{1}$.

\subsubsection{On the choice of almost complex structure}
\label{sec:choice-almost-compl}

Fix an $\omega$-tame almost complex structure $J$ which is equivariant with respect to the Liouville flow in the non-compact end of $W$. Standard a priori estimates imply that all non-constant holomorphic spheres are contained in a fixed compact set of $W$; see, e.g., \cite[\S2.9]{alizadeh-atallah-cant}.

Because we use the semipositivity framework for ensuring compactness, we also suppose that the moduli space of simple $J$-holomorphic spheres $\mathscr{M}^{*}(J)$ is cut transversally, in the sense of, e.g., \cite{mcduffsalamon}.

Throughout we will keep $J$ fixed; we will perturb the Hamiltonian systems to ensure the various moduli spaces in Floer theory are cut transversally.\footnote{One caveat, in \S\ref{sec:non-zero-eternal} we will assume that $J$ is chosen generically enough that the moduli space of simple holomorphic disks on a compact Lagrangian is also cut transversally.} For this reason, we will typically suppress $J$ from the notation in moduli spaces.

\subsubsection{Floer differential moduli spaces}
\label{sec:floer-differential-moduli}

For any contact-at-infinity Hamiltonian isotopy $\psi_{t}$ whose time-1 map has non-degenerate fixed points, one can consider the space $\mathscr{M}(\psi_{t})$ of finite energy\footnote{The \emph{energy} in this case is the integral of $\omega(\bd_{s}u,\bd_{t}u-X_{t}(u))$ over the cylinder. We always assume such energy integrals are finite.} solutions to:
\begin{equation*}
  \left\{
    \begin{aligned}
      &u:\R\times \R/\Z\to W,\\
      &\bd_{s}u+J(u)(\bd_{t}u-X_{t}(u))=0,
    \end{aligned}
  \right.
\end{equation*}
Here $X_{t}\circ \psi_{\beta(3t-1)}=\bd_{t}\psi_{\beta(3t-1)},$ and $\beta(t)$ is a standard cut-off function satisfying $\beta(t)=0$ for $t\le 0$ and $\beta(t)=1$ for $t\ge 1$. The equation is a time-reparametrized version of the usual Floer's equation. To avoid over-complicating the notation, we use the notation $X_{t}$ for the generator of the isotopy $\psi_{\beta(3t-1)}$.

Let us define the \emph{evaluation map}:
\begin{equation}\label{eq:evaluation}
  (u,t)\in \mathscr{M}(\psi_{t})\times \R/\Z\to u(0,t);
\end{equation}
the transversality statement we require for ensuring compactness (up to breaking) of the moduli spaces used to define the Floer differential is:
\begin{lemma}\label{lemma:transversality-floer}
  Fix a pseudocycle $\Sigma$ in $W$. For a generic compactly supported system $\delta_{t}$, the moduli space $\mathscr{M}(\psi_{t}\delta_{t})$ is cut transversally, in the sense that the linearized operator is surjective at every solution, and the evaluation \eqref{eq:evaluation} is transverse to $\Sigma$.
\end{lemma}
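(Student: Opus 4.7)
The plan is to use the Sard--Smale theorem in a Banach space $\mathscr{P}$ of compactly supported Hamiltonian perturbations $\delta_{t}$, endowed with Floer's $C^{\varepsilon}$ norm so that $\mathscr{P}$ is separable and the relevant moduli-space operators depend smoothly on the perturbation. Since $\delta_{t}$ is compactly supported, $\psi_{t}\delta_{t}$ remains contact-at-infinity. First, I would form the universal moduli space
\begin{equation*}
  \mathscr{M}^{\mathrm{univ}}=\set{(u,\delta_{t}):u\in \mathscr{M}(\psi_{t}\delta_{t})}
\end{equation*}
and show it is a Banach manifold by checking that the universal linearization $(\xi,\delta)\mapsto D_{u}\xi+\mathcal{Y}(\delta)$ is surjective at every solution, where $D_{u}$ is the usual Floer linearization and $\mathcal{Y}$ records the infinitesimal variation produced by $\delta$. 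The standard argument applies: if some nonzero $\eta$ lay in the $L^{2}$-dual cokernel, then elliptic regularity would force $\eta$ to be smooth and to satisfy the formal adjoint equation, while the vanishing of $\langle \mathcal{Y}(\delta),\eta\rangle$ for all $\delta$ would force $\eta$ to vanish on the image of a suitable somewhere-injective open subset of the cylinder, whence unique continuation gives $\eta\equiv 0$.

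To incorporate the pseudocycle transversality, consider the augmented evaluation map
\begin{equation*}
  \mathrm{ev}^{\mathrm{univ}}:\mathscr{M}^{\mathrm{univ}}\times \R/\Z\to W,\quad (u,\delta_{t},t)\mapsto u(0,t).
\end{equation*}
On the open subset where $(s,t)=(0,t)$ is an injective point of $u$, the same type of localized perturbation argument (choosing $\delta$ supported near $\psi_{\beta(-1)}^{-1}(u(0,t))$ at time $t$) shows $\mathrm{ev}^{\mathrm{univ}}$ is a submersion, hence $(\mathrm{ev}^{\mathrm{univ}})^{-1}(\Sigma)$ is a Banach submanifold of $\mathscr{M}^{\mathrm{univ}}\times \R/\Z$. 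Because $\Sigma$ is a pseudocycle, its limit set has codimension at least two in $W$, so after a generic perturbation one may assume that evaluation avoids the limit set entirely. Finally, applying the Sard--Smale theorem to the Fredholm projections $\mathscr{M}^{\mathrm{univ}}\to \mathscr{P}$ and $(\mathrm{ev}^{\mathrm{univ}})^{-1}(\Sigma)\to \mathscr{P}$ and intersecting the two resulting comeager subsets yields a generic set of perturbations $\delta_{t}\in\mathscr{P}$ for which both transversality conclusions hold simultaneously.

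The main obstacle is the somewhere-injectivity lemma for Floer cylinders in this cut-off setup: since $\beta(3t-1)=0$ for $t\in [0,1/3]$, the equation degenerates to $\bar{\partial}_{J}u=0$ on this subcylinder, so the injectivity argument at $t=0$ relies on the $J$-holomorphic slice being non-constant. This ultimately follows from unique continuation together with the fact that $u$ is not entirely contained in the (finite) fixed-point locus of $\psi_{1}$. Once injective points are in hand, both parts of the lemma reduce to the standard localized-perturbation mechanism described above; compactness and bubbling-control along the way are handled by the semipositivity assumption and the $Z$-equivariance of $J$ as already recorded in the preceding setup.
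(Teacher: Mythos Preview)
Your approach is the same universal-moduli-space-plus-Sard--Smale argument the paper uses, and the overall structure is fine. There are two points where the paper's organization is sharper than yours and worth noting.

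First, the paper splits off the stationary case $\partial_s u\equiv 0$ explicitly: there $u$ is a non-degenerate orbit and $D_u$ is already surjective, so no perturbation is needed. For non-stationary $u$ it then invokes the Floer--Hofer--Salamon injective-point lemma to find $(s_0,t_0)$ with $\partial_s u(s_0,t_0)\neq 0$ and $u(\cdot,t_0)$ injective at $s_0$, and localizes the bump perturbation there. The paper also records the subtlety you omitted: one must rule out $u(s+\sigma,t)=u(s,t)$ for some $\sigma>0$, and this is where finite energy is used.

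Second, your diagnosis of the ``main obstacle'' is slightly misdirected. The issue with the holomorphic strip $t\in[0,1/3]\cup[2/3,1]$ is not whether one can find injective points there, but that the perturbation $\delta_t$ simply does not enter the equation on that strip (since $\delta_{\beta(3t-1)}=\delta_0=\id$). So localizing $\delta$ ``near $u(0,t)$ at time $t$'' does nothing when $t\notin(1/3,2/3)$. The paper's resolution is to observe that the perturbation $Y_t$ is supported in $t\in(1/3,2/3)$ and to localize there; the injective points one needs are the FHS ones in that active strip, not at $t=0$. For the evaluation-map part the paper then cites \cite[Theorem 3.1]{hofer-salamon-95} to conclude that the fixed-$t$ universal evaluation $(u,Y)\mapsto u(0,t)$ is a submersion for \emph{every} $t$, not just on the open set where $(0,t)$ happens to be injective. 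Once you reroute your localization into the active strip and invoke that reference, your argument coincides with the paper's.
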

\begin{proof}
  This follows from standard transversality results, e.g., \cite{floer_hofer_salamon_transversality,mcduffsalamon}. Observe that the perturbed equation changes in the following way:
  \begin{equation*}
    \bd_{s}u+J(u)(\bd_{t}u-X_{t}(u)-Y_{t}(\psi_{t}^{-1}(u)))=0,
  \end{equation*}
  where $Y_{t}$ is an arbitrary compactly Hamiltonian vector field which is compactly supported where $t\in (1/3,2/3)$. The total linearized operator is thus of the form:
  \begin{equation}\label{eq:total-lin-op}
    (\eta,Y)\mapsto D_{u}(\eta)-Y_{t}(\psi_{t}^{-1}(u)).
  \end{equation}
  There are two cases to consider. Either $\bd_{s}u=0$ holds identically, in which case it is known that $D_{u}$ is surjective, or $\bd_{s}u(s_{0},t_{0})\ne 0$ holds in at least one point. In this case, following, e.g., \cite{floer_hofer_salamon_transversality} (see also \cite[\S4]{brocic-cant-JFPTA-2024} and the references therein), there exists an open and dense set of points $(s_{0},t_{0})$ such that:
  \begin{enumerate}[label=(\roman*)]
  \item $\bd_{s}u(s_{0},t_{0})\ne 0$,
  \item $u(s,t_{0})=u(s_{0},t_{0})\implies s=s_{0}$.
  \end{enumerate}
  Using these points, one can define $Y_{t}$ as a bump function localized near the point $\psi_{t_{0}}^{-1}(u(s_{0},t_{0}))$ and $t=t_{0}$ and prove surjectivity of the linearized operator.

  Standard arguments then imply that a generic choice of $Y_{t}$ will define an equation whose linearized operator is surjective at all solutions.

  One subtlety in the argument is that one needs to exclude the case that $u(s+\sigma,t)=u(s,t)$ holds for some $\sigma$. This case is excluded using the finite energy condition.\footnote{It seems to be an interesting question whether transversality can be achieved for solutions of infinite energy.}

  The condition that the evaluation map is transverse to the pseudocycle $\Sigma$ can be simultaneously achieved. The trick is to show that the universal evaluation:
  \begin{equation*}
    \mathrm{ev}_{t}:(u,Y_{t})\mapsto u(0,t)\in W
  \end{equation*}
  is a submersion \emph{for every $t$}; see, e.g., \cite[Theorem 3.1]{hofer-salamon-95}. In particular, the universal total evaluation $\mathrm{ev}(u,Y_{t},t)=u(0,t)$ is transversal to $\Sigma$. Then, for a generic $Y_{t}$, the total evaluation is also transversal to $\Sigma$.
\end{proof}

Often we will simply say $\psi_{t}$ is a generic system, implicitly replacing $\psi_{t}$ by the perturbation $\psi_{t}\delta_{t}$.

By taking the pseudocycle $\Sigma$ to be the collection of points passing through simple $J$-holomorphic spheres (which defines a pseudochain because of the semipositivity assumption), one can preclude bubbling and therefore ensure compactness-up-to-breaking. For further discussion of how semipositivity is used in this manner, we defer to \cite{hofer-salamon-95,mcduffsalamon,alizadeh-atallah-cant}.

\subsubsection{Chern classes and Conley-Zehnder indices}
\label{sec:chern-classes-conley}

Let $\mathfrak{s}$ be a section of the determinant line bundle $\det_{\C}(TW)$ which is non-vanishing along each orbit $\gamma$ of the system $\psi_{t}$. Because $\mathfrak{s}|_{\gamma}$ is non-vanishing, it determines a canonical homotopy class of symplectic trivializations of $TW|_{\gamma}$. With respect to these trivializations, the linearization of the flow of $\psi_{t}$ along $\gamma$ is associated a \emph{Conley-Zehnder} index $\mathrm{CZ}_{\mathfrak{s}}(\gamma)$, as in, e.g., \cite{cant-thesis-2022}.

It is important to note that the Poincaré dual of zero set $\mathfrak{s}^{-1}(0)$ represents the Chern class. Indeed, it represents a lift of the Chern class to the cohomology of $W$ relative the set of orbits of $\psi_{t}$.

\subsubsection{The Floer differential}
\label{sec:floer-differential-1}

For a generic system $\psi_{t}$, counting the one-dimensional components of $\mathscr{M}(\psi_{t})$ defines a $\Lambda$-linear differential:
\begin{equation*}
  d:\mathrm{CF}(\psi_{t})\to \mathrm{CF}(\psi_{t}),
\end{equation*}
by the cohomological formula:
\begin{equation*}
  d(\tau^{b}y)=\sum_{x,a} \#(\mathscr{M}_{1}(x;a;y;\psi_{t})/\R)\cdot \tau^{a+b}x,
\end{equation*}
where $\mathscr{M}_{1}(x;a;y;\psi_{t})$ is the component of $\mathscr{M}(\psi_{t})$ satisfying:
\begin{enumerate}
\item $\gamma_{-}(0)=x$, $\gamma_{+}(0)=y$,
\item $\omega(u)=a$,
\item $\mathrm{CZ}_{\mathfrak{s}}(\gamma_{+})-\mathrm{CZ}_{\mathfrak{s}}(\gamma_{-})+2\mathfrak{s}^{-1}(0)\cdot[u]=1$.
\end{enumerate}

where $\gamma_{\pm}$ are the asymptotic orbits and $\omega(u)$ is the symplectic area of $u$. For generic system, the semipositivity assumptions imply that this sum converges, and that $d^{2}=0$.

\subsubsection{Continuation data}
\label{sec:continuation-data}

Continuation data refers to a square $\psi_{s,t}$ in the space of contact-at-infinity Hamiltonian diffeomorphisms so that:
\begin{enumerate}
\item $\psi_{s,0}=\id$,
\item $\psi_{s,1}$ has a non-negative ideal restriction.
\end{enumerate}
The second condition means that $\lambda(\bd_{s}\psi_{s,1}(x))\ge 0$ holds outside of a compact set, where $\lambda$ is the Liouville form in the non-compact end of $W$. We refer the reader to \cite{cant-arXiv-2023,cant-hedicke-kilgore} for further discussion.

\subsubsection{Continuation cylinders}
\label{sec:cont-cylind}

Let $\psi_{s,t}$ be continuation data, and suppose that $\psi_{0,t}$ and $\psi_{1,t}$ are generic so that the moduli spaces $\mathscr{M}(\psi_{i,t})$, $i=0,1$, can be used to define the Floer differential.

Consider the family $\xi_{s,t}$ defined for $s\in \R$ and $t\in [0,1]$ given by:
\begin{equation*}
  \xi_{s,t}=\psi_{\beta(1-s),\beta(3t-1)},
\end{equation*}
where $\beta$ is a standard non-decreasing cut-off function.

Note that $\xi_{s,t}$ satisfies the following properties:
\begin{enumerate}
\item $\xi_{s,t}=\psi_{0,\beta(3t-1)}$ for $s\ge 1$,
\item $\xi_{s,t}=\psi_{1,\beta(3t-1)}$ for $s\le 0$,
\item $\xi_{s,t}=\xi_{s,1}$ for $t\ge 2/3$,
\item $\xi_{s,t}=\id$ for $t\le 1/3$.
\end{enumerate}
Introduce the generators:
\begin{equation*}
  Y_{s,t}\circ \xi_{s,t}=\bd_{s}\xi_{s,t}\text{ and }X_{s,t}\circ \xi_{s,t}=\bd_{t}\xi_{s,t},
\end{equation*}
noting that $X_{+\infty,t}$ is the generator for $\psi_{0,\beta(3t-1)}$ (the input system) and $X_{-\infty,t}$ is the generator for $\psi_{1,\beta(3t-1)}$ (the output system).

Define $\mathscr{M}(\psi_{s,t})$ to be the moduli space of finite energy solutions to:
\begin{equation}\label{eq:cont-map}
  \left\{
    \begin{aligned}
      &u:\R\times \R/\Z\to W,\\
      &\bd_{s}u-\rho(t)Y_{s,t}(u)+J(u)(\bd_{t}u-X_{s,t}(u))=0,\\
    \end{aligned}
  \right.
\end{equation}
where $\rho(t)=\beta(3-3t)$.

The fact that $\psi_{s,t}$ is continuation data will imply that solutions to \eqref{eq:cont-map} satisfy an a priori energy estimate in terms of their asymptotics and the homology class of the connecting cylinder. Indeed, we have:
\begin{lemma}\label{lemma:energy-estimate}
  Let $u\in \mathscr{M}(\psi_{s,t})$. Then:
  \begin{equation*}
    0\le \int \omega(\bd_{s}u-\rho(t)Y_{s,t}(u),\bd_{t}u-X_{s,t}(u))\le \omega(u)+\mathrm{const}(\psi_{s,t});
  \end{equation*}
  here the constant term is independent of $u$ and depends only on the values of certain Hamiltonian functions on the compact part of $W$.
\end{lemma}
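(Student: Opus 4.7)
The plan is to prove the two inequalities separately. The lower bound is immediate from the $\omega$-tameness of $J$: setting $v := \bd_{s}u - \rho(t)Y_{s,t}(u)$, equation \eqref{eq:cont-map} rearranges to $\bd_{t}u - X_{s,t}(u) = J(u)\,v$, so the pointwise integrand equals $\omega(v, Jv) \ge 0$, which is precisely the squared norm of $v$ in the associated almost Hermitian metric.

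For the upper bound I would expand
\begin{equation*}
\omega(\bd_{s}u - \rho Y,\, \bd_{t}u - X) = \omega(\bd_{s}u, \bd_{t}u) - \omega(\bd_{s}u, X_{s,t}) - \rho(t)\,\omega(Y_{s,t}, \bd_{t}u) + \rho(t)\,\omega(Y_{s,t}, X_{s,t}).
\end{equation*}
The first term integrates over $\R \times \R/\Z$ to $\omega(u)$. For the three cross-terms I would introduce Hamiltonian generators $H_{s,t}$ for $X_{s,t}$ and $K_{s,t}$ for $Y_{s,t}$, taken as the natural degree-one extensions of the contact Hamiltonians on the non-compact end. Using $\iota_{X}\omega = dH$ and $\iota_{Y}\omega = dK$ together with the chain rule, each cross-term rewrites as a total $\bd_{s}$- or $\bd_{t}$-derivative along $u$ plus a curvature piece involving $(\bd_{s}H)(u)$, $(\bd_{t}K)(u)$, $\rho'(t)K(u)$, and $\{H, K\}(u)$. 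After integration the $\bd_{t}$-derivative vanishes by $\R/\Z$-periodicity, the $\bd_{s}$-derivative telescopes to $\int_{0}^{1}[H_{+\infty, t}(\gamma_{+}(t)) - H_{-\infty, t}(\gamma_{-}(t))]\,dt$ where $\gamma_{\pm}$ are the asymptotic fixed points, and the curvature pieces are supported in the compact rectangle $(s,t) \in [0,1] \times [1/3, 1]$: outside $s \in [0,1]$ the family $\xi_{s,t}$ is $s$-constant (killing $\bd_{s}H$ and $Y$), outside $t \in [1/3, 1]$ either $\xi_{s,t} = \id$ or $\rho = 0$.

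The main obstacle is the a priori confinement of the image of $u$ to a compact subset of $W$, which is needed so that the evaluations of the degree-one Hamiltonians $H, K$ along $u$ are bounded by constants depending only on $\psi_{s,t}$. Here I would invoke the standard maximum-principle argument in the contact-at-infinity setting, as in \cite{alizadeh-atallah-cant, cant-arXiv-2023}: Liouville-equivariance of $J$, together with the non-negativity of the ideal restriction of $\psi_{s,1}$ (the defining property of continuation data, which gives the correct sign to the boundary contribution of $(\bd_{s}H)(u)$ at infinity), forces the Liouville radial coordinate along $u$ to satisfy a subharmonic-type inequality on the non-compact end, hence to attain no interior maximum there. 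Combined with the fact that fixed points of the contact-at-infinity systems $\psi_{0,1}$ and $\psi_{1,1}$ lie in a compact set, this contains $u$ in a compact region whose size depends only on $\psi_{s,t}$; on this region all the Hamiltonian evaluations are uniformly bounded and assemble into the asserted $\mathrm{const}(\psi_{s,t})$, which involves only the values of the Hamiltonians on the compact part of $W$.
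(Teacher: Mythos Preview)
Your argument is correct, but it takes a somewhat different route from the paper's. The paper (which defers to \cite[\S2.2.5]{cant-hedicke-kilgore} and reviews the computation in \S\ref{sec:param-moduli-space}) exploits two structural features you do not use. First, since $H_{s,t}$ is supported where $t\in[1/3,2/3]$ and $\rho\equiv 1$ there, one has $\bd_{s}H=\rho\,\bd_{s}H$; combined with the flatness of the family $\xi_{s,t}$ (namely $\bd_{s}H-\bd_{t}K-\omega(X_H,X_K)=0$), \emph{all} of the curvature pieces you list cancel except the single term $-\rho'(t)K_{s,t}(u)$. Second, continuation data force $K_{s,t}\le 0$ outside a compact set of $W$, and since $-\rho'\ge 0$ this term is bounded above by $\sup_{s,t}\max_{W}K_{s,t}$ times a domain constant, with the supremum achieved on the compact part. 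Hence the energy estimate follows with no appeal to a $C^{0}$ bound on $u$.

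Your approach instead bounds $u$ itself via the maximum principle and then estimates each curvature term on the resulting compact set. This is valid and standard, but it imports an extra ingredient (the maximum principle, which is logically independent of the energy identity) and does not exploit the cancellation coming from flatness. The paper's route is more self-contained for this lemma and yields a cleaner constant; your route has the advantage of being insensitive to the precise algebraic form of the curvature, at the cost of invoking the confinement result up front.
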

\emph{Remark.} The integral appearing in the lemma is (by definition) the energy of a solution to the continuation map equation.

\begin{proof}
  This follows from the computation in \cite[\S2.2.5]{cant-hedicke-kilgore}. We review the computation in a slightly different context in \S\ref{sec:param-moduli-space} below.
\end{proof}

In order for the moduli spaces to satisfy the required compactness results, it is necessary to again appeal to transversality and the semipositivity condition. The required statement is the following:
\begin{lemma}\label{lemma:transversality-continuation}
  Let $\psi_{s,t}$ be continuation data. For a generic perturbation of the form $\psi_{s,t}\delta_{s,t}$ where $\delta_{s,t}$ is compactly supported in $W$ and $\delta_{s,t}=\id$ for $s=0,1$ and $t=0$, the moduli space $\mathscr{M}(\psi_{s,t}\delta_{s,t})$ is cut transversally. Moreover, for any fixed pseudochain $\Sigma$ in $W$, a generic perturbation of the above form will ensure the evaluation:
  \begin{equation*}
    (u,s,t)\in \mathscr{M}(\psi_{s,t}\delta_{s,t})\times \R\times \R/\Z\mapsto u(s,t)
  \end{equation*}
  is transverse to $\Sigma$.
\end{lemma}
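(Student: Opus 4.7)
The plan is to adapt the argument of Lemma \ref{lemma:transversality-floer} to the parametric setting, the key new feature being that the equation \eqref{eq:cont-map} explicitly depends on $s$ (no $\R$-symmetry) but the allowed perturbations must vanish on the portion $\{s=0,1\}\cup\{t=0\}$ of the boundary of the $(s,t)$-square. First I would set up the Banach manifold of perturbations: let $\mathscr{P}$ be a space of smooth families $\delta_{s,t}$ of compactly supported contact-at-infinity Hamiltonian diffeomorphisms with the stated vanishing, topologized by a Floer-type $C^{\varepsilon}$ norm. The perturbed equation is \eqref{eq:cont-map} with $X_{s,t}$ replaced by $X_{s,t}+Z_{s,t}(\xi_{s,t}^{-1}(\cdot))$ (plus the corresponding modification of $Y_{s,t}$), where $Z_{s,t}$ is the vector field generating $\delta_{s,t}$ in appropriate coordinates. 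The universal moduli space $\mathscr{M}^{\mathrm{univ}}$ consists of pairs $(u,\delta_{s,t})$ solving this equation; its linearization at $(u,0)$ is
\begin{equation*}
  (\eta,Z)\mapsto D_{u}\eta-J(u)Z_{s,t}(\xi_{s,t}^{-1}(u))+\text{lower order}.
\end{equation*}

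The main step is to show this universal operator is surjective at every solution. Unlike the Floer differential setting, the continuation equation depends explicitly on $s$, and therefore we do not have to rule out the case $\bd_{s}u\equiv 0$ in the same way: if $u$ were independent of $s$, then $X_{s,t}(u(t))$ would have to be independent of $s$ as well, which fails generically along the interpolation region. Consequently, for any solution $u$ one can find a dense open set of \emph{regular} points $(s_{0},t_{0})$ with $s_{0}\in(0,1)$ and $t_{0}\in(1/3,2/3)$ such that (i) $\bd_{s}u(s_{0},t_{0})\ne 0$ and (ii) the map $s\mapsto \xi_{s,t_{0}}^{-1}(u(s,t_{0}))$ is injective at $s_{0}$. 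Standard arguments (cf.\ \cite{floer_hofer_salamon_transversality,mcduffsalamon}) then show that bump-function perturbations $Z_{s,t}$ localized near $(s_{0},t_{0},\xi_{s_{0},t_{0}}^{-1}(u(s_{0},t_{0})))$ span the cokernel, which is enough for surjectivity. Crucially, the required localization is in the interior of the $(s,t)$-square, so the vanishing conditions at $s=0,1,t=0$ pose no obstruction; at the endpoints $s=0,1$ the underlying systems $\psi_{0,t},\psi_{1,t}$ have already been made generic by hypothesis, which handles the boundary terms of the index formula.

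To obtain the evaluation transversality, I would follow the template of Hofer--Salamon recalled in the proof of Lemma \ref{lemma:transversality-floer}: show that the total universal evaluation
\begin{equation*}
  \mathrm{ev}:(u,\delta_{s,t},s,t)\in \mathscr{M}^{\mathrm{univ}}\times[0,1]\times\R/\Z\mapsto u(s,t)\in W
\end{equation*}
is a submersion. This reduces to showing that, for fixed $(s,t)$ in the interior of the perturbation region, one can realize any tangent vector in $T_{u(s,t)}W$ by choosing $Z_{s,t}$ appropriately; a bump Hamiltonian supported near $u(s,t)$ suffices. For boundary values of $(s,t)$ where the perturbation is forced to vanish, the original genericity of $\psi_{0,t}$ and $\psi_{1,t}$ already yields the required submersion by Lemma \ref{lemma:transversality-floer}.

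The final step is the Sard--Smale argument: the universal moduli is a Banach manifold, the projection to $\mathscr{P}$ is Fredholm, and its regular values give a residual set of $\delta_{s,t}$ for which both $D_{u}$ is surjective at every solution and the evaluation is transverse to the fixed pseudocycle $\Sigma$. I expect the main obstacle to be the careful verification of the somewhere-injectivity condition in the parametric setting with the constrained perturbation space, particularly ensuring regular points can be found in the interior region $s\in(0,1)$, $t\in(1/3,2/3)$ where perturbations are permitted; once this is established the rest is routine functional analysis.
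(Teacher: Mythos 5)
Your overall Sard--Smale framework (universal moduli space over a Banach space of perturbations, surjectivity of the universal linearization, then regular values) is the standard route the paper has in mind, but the step you single out as ``the main step'' contains a genuine gap, and it is a self-inflicted one. You try to run the Floer--Hofer--Salamon somewhere-injectivity argument: find interior points $(s_{0},t_{0})$ with $\bd_{s}u(s_{0},t_{0})\ne 0$ and with $s\mapsto \xi_{s,t_{0}}^{-1}(u(s,t_{0}))$ injective at $s_{0}$, after dismissing the case $\bd_{s}u\equiv 0$ on the grounds that ``$X_{s,t}$ depends on $s$ generically along the interpolation region.'' This dismissal is not available: the continuation data $\psi_{s,t}$ is \emph{given}, not generic --- only the compactly supported correction $\delta_{s,t}$ is generic --- and for perfectly relevant data (e.g.\ $\psi_{s,t}$ independent of $s$, representing the identity morphism, whose continuation map is computed precisely by the $s$-independent orbit cylinders) solutions with $\bd_{s}u\equiv 0$ do exist and must be admitted. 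Your argument neither rules them out nor handles them (in Lemma \ref{lemma:transversality-floer} this case is treated separately via nondegeneracy of the asymptotic orbit), so the proof that the universal moduli space is a Banach manifold has a hole exactly at the pairs $(u,\delta)$ where your ``regular points'' fail to exist.

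The fix is that the entire somewhere-injectivity apparatus is unnecessary here, which is why the paper calls this lemma simpler than Lemma \ref{lemma:transversality-floer}. In the Floer-differential case the perturbation is forced to be $s$-independent (to preserve the translation-invariant form of the equation), and injectivity along $s$-lines is what lets a bump perturbation hit one point of the trajectory without interference from other $s$-values. For \eqref{eq:cont-map} the perturbation $\delta_{s,t}$ may depend on both $s$ and $t$, constrained only to vanish at $s=0,1$, $t=0$ and to have compact support in $W$. Hence, given a nonzero element $\eta$ of the cokernel of the universal linearization, one pairs it with perturbations localized near an arbitrary interior point $(s_{0},t_{0})$ of the allowed region \emph{and} near the corresponding point of $W$; since the value of the perturbing Hamiltonian vector field at $(s_{0},t_{0},u(s_{0},t_{0}))$ can be prescribed freely, this forces $\eta(s_{0},t_{0})=0$ pointwise on an open set, and unique continuation for the adjoint Cauchy--Riemann type operator gives $\eta\equiv 0$ --- with no case distinction on $\bd_{s}u$ and no injectivity hypothesis. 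The same freedom gives the submersion property of the universal evaluation at interior $(s,t)$, and the transversality at $(s,t)$ outside the perturbation region is again obtained from this by unique continuation (your appeal to ``the original genericity of $\psi_{0,t},\psi_{1,t}$'' does not literally apply, since the restriction of a continuation solution to the ends is not a Floer trajectory of the asymptotic data). With these corrections your outline becomes the intended proof.
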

\begin{proof}
  The proof is once again based on standard transversality techniques, similar (and in theory simpler) than the argument in Lemma \ref{lemma:transversality-floer}.
\end{proof}

As in the case of the Floer differential, the required compactness-up-to-breaking then holds for generic continuation data (i.e., we replace $\psi_{s,t}$ by the small perturbation $\psi_{s,t}\delta_{s,t}$), because the relevant evaluation maps miss all the simple $J$-holomorphic spheres.

\subsubsection{Continuation maps}
\label{sec:continuation-maps}

For a generic continuation data $\psi_{s,t}$, counting the zero-dimensional components of $\mathscr{M}(\psi_{s,t})$ defines a $\Lambda$-linear map:
\begin{equation*}
  \mathfrak{c}:\mathrm{CF}(\psi_{0,t})\to \mathrm{CF}(\psi_{1,t}),
\end{equation*}
similarly to the differential, by the cohomological formula:
\begin{equation*}
  \mathfrak{c}(\tau^{b}y)=\sum \#\mathscr{M}_{0}(x;a;y;\psi_{s,t})\tau^{a+b}x,
\end{equation*}
where $\mathscr{M}_{0}(x;a;y;\psi_{s,t})$ is the component of solutions where:
\begin{enumerate}
\item $\gamma_{-}(0)=x$, $\gamma_{+}(0)=y$,
\item $\omega(u)=a$,
\item $\mathrm{CZ}_{\mathfrak{s}}(\gamma_{+})-\mathrm{CZ}_{\mathfrak{s}}(\gamma_{-})+2\mathfrak{s}^{-1}(0)\cdot[u]=0$.
\end{enumerate}
The above energy bounds and the semipositivity assumptions imply that this sum converges, and that $\mathfrak{c}$ defines a chain map with respect to the Floer differential $d$; see, e.g., \cite{hofer-salamon-95} for further discussion.

The following standard lemma asserts that the resulting maps depend only on the homotopy class of the continuation data.
\begin{lemma}\label{lemma:deformation}
  Let $\psi_{\sigma,s,t}$, $\sigma\in [0,1]$, be a 1-parameter family of continuation data such that:
  \begin{enumerate}
  \item $\psi_{\sigma,0,t}=\psi_{0,t}$ and $\psi_{\sigma,1,t}=\psi_{1,t}$ are both in $\mathscr{C}^{\times}$,
  \item $\psi_{i,s,t}$, $i=0,1$, are generic for defining the continuation map: $$\mathfrak{c}_{i}:\mathrm{HF}(\psi_{0,t})\to\mathrm{HF}(\psi_{1,t}),$$ i.e., the relevant moduli spaces are cut transversally,
  \end{enumerate}
  then for a generic perturbation $\delta_{\sigma,s,t}$ so that $\delta_{\sigma,s,t}=\id$ for $\sigma=0,1$, $s=0,1$, and $t=0$, the parametric moduli space of pairs $(\sigma,u)$ where: $$u\in \mathscr{M}(\psi_{\sigma,s,t}\delta_{\sigma,s,t})$$
  is cut transversally and the 0- and 1-dimensional components are compact up to breaking of Floer cylinders. Consideration of 1-dimensional components proves the algebraic relation: $$\mathfrak{c}_{1}-\mathfrak{c}_{0}=\d \mathrm{K}+\mathrm{K}\d,$$ where $\mathrm{K}$ is an appropriate count of the 0-dimensional components of the parametric moduli space.
\end{lemma}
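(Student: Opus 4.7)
The plan is to carry out the standard parametric moduli space argument, mirroring the construction of $\mathfrak{c}$ itself but with one extra parameter $\sigma$. I would first assemble the universal moduli space
\begin{equation*}
\mathscr{M}^{\mathrm{par}}=\set{(\sigma,u):\sigma\in[0,1],\ u\in \mathscr{M}(\psi_{\sigma,s,t}\delta_{\sigma,s,t})},
\end{equation*}
where $\delta_{\sigma,s,t}$ is a compactly supported perturbation that is $\id$ whenever $\sigma\in\set{0,1}$, $s\in\set{0,1}$, or $t=0$. The boundary condition at $\sigma=0,1$ guarantees that the parametric moduli space restricts to the already transversally cut moduli spaces used to define $\mathfrak{c}_0$ and $\mathfrak{c}_1$, while the conditions at $s\in\set{0,1}$ and $t=0$ keep the asymptotic and reparametrization structure of the continuation data intact.

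Next, I would verify transversality of $\mathscr{M}^{\mathrm{par}}$ by the same technique used in Lemma \ref{lemma:transversality-continuation}: the universal linearized operator $(\eta,Y)\mapsto D_u\eta - Y_{\sigma,s,t}(\xi_{\sigma,s,t}^{-1}(u))$ is surjective at every solution, by choosing a point $(\sigma_0,s_0,t_0)$ where $\partial_s u\ne 0$ (or analogously a free direction if $u$ is $s$-independent) and localizing $Y$ there. A generic $\delta_{\sigma,s,t}$ of the allowed form then cuts $\mathscr{M}^{\mathrm{par}}$ transversally, and simultaneously ensures the total evaluation map misses the pseudocycle of simple $J$-holomorphic spheres, precluding bubbling by semipositivity. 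Because the energy estimate of Lemma \ref{lemma:energy-estimate} applies to each slice $\sigma$ with a constant depending continuously on $\sigma\in[0,1]$, the resulting uniform bound together with the absence of bubbling yields Gromov-type compactness up to Floer breaking of the 0- and 1-dimensional strata.

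I would then define $K:\mathrm{CF}(\psi_{0,t})\to \mathrm{CF}(\psi_{1,t})$ by counting the 0-dimensional components of $\mathscr{M}^{\mathrm{par}}$ with the appropriate Novikov weights:
\begin{equation*}
K(\tau^b y)=\sum \#\mathscr{M}^{\mathrm{par}}_0(x;a;y)\,\tau^{a+b}x,
\end{equation*}
where now the index condition is $\mathrm{CZ}_{\mathfrak{s}}(\gamma_+)-\mathrm{CZ}_{\mathfrak{s}}(\gamma_-)+2\mathfrak{s}^{-1}(0)\cdot[u]=-1$, reflecting the extra parameter. The standard analysis of the boundary of the 1-dimensional components of $\mathscr{M}^{\mathrm{par}}$ identifies four contributions: breaking off a Floer cylinder for $\psi_{0,t}$ at the positive end (contributing $Kd$), breaking off a Floer cylinder for $\psi_{1,t}$ at the negative end (contributing $dK$), and the two boundary strata at $\sigma=0,1$ (contributing $\mathfrak{c}_0$ and $\mathfrak{c}_1$ respectively). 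Since the signed count of boundary points of a compact 1-manifold vanishes, this yields
\begin{equation*}
\mathfrak{c}_1-\mathfrak{c}_0=dK+Kd,
\end{equation*}
as asserted.

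The main obstacle is really only one of bookkeeping: one must check that the a priori energy bound of Lemma \ref{lemma:energy-estimate} remains uniform as $\sigma$ varies, so that the constant term in the estimate is bounded on $[0,1]$, and that the transversality scheme accommodates the somewhat restrictive boundary conditions imposed on $\delta_{\sigma,s,t}$. Both are straightforward: the energy constant depends continuously on the Hamiltonian data and hence is uniformly bounded over the compact parameter interval, and the transversality argument is purely local in $(\sigma,s,t)$ so the restriction $\delta_{\sigma,s,t}=\id$ on a codimension-one subset causes no trouble away from it, which is where every non-constant solution admits an injective point by the finite energy condition.
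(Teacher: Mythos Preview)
Your proposal is correct and follows precisely the standard parametric moduli space argument that the paper defers to references (Floer, Salamon--Zehnder, Abouzaid, Hofer--Salamon) and to the related Lemma~\ref{lemma:CH-lag}. The paper gives no further detail beyond noting that the a priori estimates have already been established, so your outline is in fact more explicit than the paper's own treatment.
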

\begin{proof}
  Since we have already established the relevant a priori estimates, the argument is standard Floer theory; see, e.g., \cite[Theorem 4]{floer-comm-math-phys-1989}, \cite[Lemma 6.3]{salamon-zehnder-comm-pure-appl-math-1992}, \cite[Lemma 6.13]{abouzaid-monograph-2015}, \cite[Theorem 5.2]{hofer-salamon-95}. See also Lemma \ref{lemma:CH-lag} below for a related argument.
\end{proof}

Furthermore, if $\psi_{s,t}$ and $\eta_{s,t}$ are both continuation data with $\psi_{1,t}=\eta_{0,t}$, then we can concatenate the continuation data $\psi_{s,t}\#\eta_{s,t}$ by gluing the squares along their common vertical boundary (one should smooth the interface via a reparametrization in the $s$-variable so the result is smooth).
\begin{lemma}\label{lemma:gluing-lemma}
If $\mathfrak{c}_{0},\mathfrak{c}_{1},\mathfrak{c}_{2}$ represent the continuation maps associated to $\psi_{s,t},\eta_{s,t}$ and $\psi_{s,t}\#\eta_{s,t}$, respectively then $\mathfrak{c}_{2}=\mathfrak{c}_{1}\circ \mathfrak{c}_{0}$ holds on homology.
\end{lemma}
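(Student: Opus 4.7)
The plan is to deform the concatenated continuation data $\psi_{s,t}\#\eta_{s,t}$ through a one-parameter family of continuation data obtained by inserting a ``neck'' of length $R\ge 0$ at the common interface, and then to let $R\to\infty$. By Lemma \ref{lemma:deformation}, the continuation map on homology is independent of $R$, so it suffices to identify the count of rigid solutions for large $R$ with $\mathfrak{c}_1\circ \mathfrak{c}_0$. Concretely, I would construct $\Psi^R_{s,t}$ so that $\Psi^0$ agrees with $\psi_{s,t}\#\eta_{s,t}$ after the standard smoothing, and so that on the length-$R$ middle interval the underlying isotopy is the common boundary system $\psi_{1,t}=\eta_{0,t}$. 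This makes $R\mapsto \Psi^R_{s,t}$ a homotopy through continuation data in the sense of Lemma \ref{lemma:deformation}, whence the associated continuation maps $\mathfrak{c}^R$ all coincide with $\mathfrak{c}_2$ on $\mathrm{HF}$.

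The core analytic step is a standard neck-stretching and gluing argument. First, one must check that the energy estimate of Lemma \ref{lemma:energy-estimate} is uniform in $R$; this is automatic because the bounding constant depends only on Hamiltonian values on the compact part, and on the inserted neck the isotopy is the fixed middle system $\psi_{1,t}$. Then Gromov--Floer compactness forces a sequence $u_{R_n}$ of zero-dimensional solutions with $R_n\to\infty$ to Gromov-converge, after passing to a subsequence, to a broken configuration consisting of a solution $v_0\in \mathscr{M}(\psi_{s,t})$ and a solution $v_1\in \mathscr{M}(\eta_{s,t})$ whose asymptotics at the interface meet at a common $1$-periodic orbit of $\psi_{1,t}$. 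The index and area additivity formulas rule out any non-trivial Floer cylinders in the middle when both pieces are already rigid. Conversely, a standard gluing theorem of the flavour of \cite[Theorem 5.2]{hofer-salamon-95} and \cite{mcduffsalamon} produces, for each rigid matching pair $(v_0,v_1)$, a unique solution to $\Psi^R_{s,t}$ for all sufficiently large $R$, with area equal to the sum of the areas of the pieces (which is what the Novikov weighting demands). Counting these and unpacking the cohomological formulas defining $\mathfrak{c}_0$ and $\mathfrak{c}_1$ yields $\mathfrak{c}^R = \mathfrak{c}_1\circ \mathfrak{c}_0$ at the chain level for $R$ large, and combining with the $R$-invariance gives $\mathfrak{c}_2=\mathfrak{c}_1\circ \mathfrak{c}_0$ on homology.

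The hardest part will be the compactness-and-gluing package in the large-$R$ limit: uniformity of the energy bound, exclusion of spurious middle Floer cylinders in the zero-dimensional stratum, and the existence-and-uniqueness portion of the gluing theorem. None of these is new; each is of the same type used to establish functoriality of continuation maps in the standard references, and with the semipositivity, transversality (Lemma \ref{lemma:transversality-continuation}), and Novikov set-up of \S\ref{sec:revi-floer-cohom} already in place, one may appeal to the existing literature rather than redo the analysis from scratch.
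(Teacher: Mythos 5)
Your proposal is essentially the paper's argument: the paper also sets up a one-parameter family of continuation-type equations indexed by a gluing (neck-length) parameter, identifies the parameter-zero end with the moduli space defining $\mathfrak{c}_{2}$, and lets the parameter tend to $\infty$ so that solutions break into the configurations counted by $\mathfrak{c}_{1}\circ\mathfrak{c}_{0}$ (see Figure \ref{fig:gluing-two-cont} and the more detailed analogue in \S\ref{sec:cont-maps-are}). The only substantive difference is packaging: the paper treats the whole parametric moduli space at once, so that its one-dimensional components directly produce a chain homotopy between the two counts, whereas you first invoke Lemma \ref{lemma:deformation} to fix $R$ and then claim a chain-level identity $\mathfrak{c}^{R}=\mathfrak{c}_{1}\circ\mathfrak{c}_{0}$ for $R$ large. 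Be careful with that last claim over Novikov coefficients: for a fixed pair of asymptotic orbits there may be infinitely many area classes $a$ contributing, and the gluing threshold in $R$ is only uniform for each area bound $a\le A$, not for all $a$ simultaneously; so strictly speaking you get agreement modulo arbitrarily high powers of $\tau$ rather than on the nose. The parametric chain-homotopy formulation (which your family $\Psi^{R}_{s,t}$ together with the uniform energy estimate of Lemma \ref{lemma:energy-estimate} already supports) handles all areas at once and is the cleaner way to conclude equality on homology.
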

\begin{proof}
  The argument is again standard Floer theory. A similar but more complicated argument is given in \S\ref{sec:cont-maps-are}, with more details. One sets up a parametric moduli space of continuation cylinder type equations, where the parameter is a gluing parameter; see Figure \ref{fig:gluing-two-cont}. As the gluing parameter tends to $\infty$, the solutions break into configurations which represent the composition $\mathfrak{c}_{1}\circ \mathfrak{c}_{0}$. The parametric moduli space should be constructed so that the other end of the moduli space (when the parameter is zero, say) is exactly the moduli space used to define $\mathfrak{c}_{2}$. The details are left to the reader.
\end{proof}

\begin{figure}[h]
  \centering
  \begin{tikzpicture}
    \draw (0,0) coordinate(X) --+ (10,0) +(0,1) --+ (10,1);
    \draw (X)+(0,0.5) circle(0.2 and 0.5) +(3,0.5) circle (0.2 and 0.5) +(4,0.5) circle (0.2 and 0.5)
    +(6,0.5) circle (0.2 and 0.5) +(7,0.5) circle (0.2 and 0.5) +(10,0.5) circle (0.2 and 0.5);
    \draw (X)+(3.5,0)node[below]{$\eta_{s,t}\vphantom{\psi_{s,t}}$} +(6.5,0)node[below]{$\psi_{s,t}$}; \draw[<->] (X)+(4,-0.2)--node[below]{$\sigma$}+(6,-0.2);
    
  \end{tikzpicture}
  \caption{Rough schematic of the process of gluing two continuation data. The length $\sigma$ is the gluing parameter.}
  \label{fig:gluing-two-cont}
\end{figure}
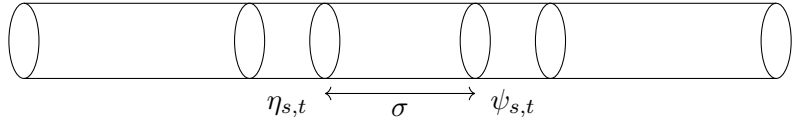

\subsubsection{Functorial structure of Floer cohomology}
\label{sec:funct-struct-floer}

As in, e.g., \cite{cant-arXiv-2023}, the Floer cohomology groups $\mathrm{HF}(\psi_{t})$ with continuation maps forms a functor valued in the category of $\Lambda$-vector spaces. The domain category $\mathscr{C}$ has:
\begin{enumerate}
\item objects equal to contact-at-infinity Hamiltonian systems $\psi_{t}$,
\item morphisms equal to homotopy classes of continuation data.
\end{enumerate}
Initially, the functor $\mathrm{HF}(-)$ is only defined on a full subcategory $\mathscr{C}^{\times}\subset \mathscr{C}$ of systems whose time-1 maps have non-degenerate fixed points and are sufficiently generic to achieve the relevant transversality conditions. Then the functor $\mathrm{HF}(-)$ is extended to all objects by the completion:
\begin{equation}\label{eq:extension}
  \mathrm{HF}(\psi_{t})=\lim \mathrm{HF}(\psi_{t}'),
\end{equation}
where the limit is over the slice category of objects $\psi_{t}'\in \mathscr{C}^{\times}$ equipped with a morphism $\psi_{t}\to \psi_{t}'$. The functorial structure of $\mathrm{HF}$ extends to $\mathscr{C}$ by abstract nonsense.

\begin{lemma}
  For any Reeb flow $R^{\alpha}_{s}$, there exists a sequence $\varphi_{n,t}$ so that:
  \begin{enumerate}[label=(\arabic*)]
  \item\label{L1} the ideal restriction of $\varphi_{n,t}$ is $R^{\alpha}_{s_{n}t}$,
  \item\label{L2} $0<s_{n+1}<s_{n}$, with $s_{n}\to 0$,
  \item\label{L3} $\varphi_{n,t}\psi_{t}\in \mathscr{C}^{\times}$.
  \end{enumerate}
  Moreover, the following morphism, guaranteed by universal property of \eqref{eq:extension},
  \begin{equation}\label{eq:induced}
    \mathrm{HF}(\psi_{t})\to \lim_{n\to\infty}\mathrm{HF}(\varphi_{n,t}\psi_{t}),
  \end{equation}
  is an isomorphism.
\end{lemma}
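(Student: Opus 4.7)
The plan is to construct the sequence by standard transversality and then deduce the isomorphism from a cofinality property of the resulting tower in the slice category $(\psi_{t} \downarrow \mathscr{C}^{\times})$.

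Fix any decreasing sequence $s_{n} \searrow 0$ with $s_{n+1} < s_{n}$. For each $n$, extend $R^{\alpha}_{s_{n}t}$ to a contact-at-infinity Hamiltonian isotopy $\widetilde{\varphi}_{n,t}$ on $W$ (e.g., by generating $R^{\alpha}_{s_{n}t}$ in the non-compact end and pasting to the identity using a cutoff in the Liouville coordinate). By Lemma~\ref{lemma:transversality-floer} applied to $\widetilde{\varphi}_{n,t}\psi_{t}$, a generic compactly supported perturbation $\delta_{n,t}$ equal to the identity at $t = 0$ yields $\varphi_{n,t} := \widetilde{\varphi}_{n,t}\delta_{n,t}$ with $\varphi_{n,t}\psi_{t} \in \mathscr{C}^{\times}$; since $\delta_{n,t}$ is compactly supported the ideal restriction of $\varphi_{n,t}$ remains $R^{\alpha}_{s_{n}t}$, so (1)--(3) hold. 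The natural morphisms $\psi_{t} \to \varphi_{n,t}\psi_{t}$ and $\varphi_{n+1,t}\psi_{t} \to \varphi_{n,t}\psi_{t}$ are obtained from continuation data interpolating the Reeb speed from $0$ (resp.\ $s_{n+1}$) up to $s_{n}$ in the $s$-variable and composing with $\psi_{t}$ on the right; the ideal restriction at $t = 1$ is $s\mapsto R^{\alpha}_{\beta(s)s_{n}}\phi_{t}$ (where $\phi_{t}$ is the ideal restriction of $\psi_{t}$), which is non-negative. Lemma~\ref{lemma:gluing-lemma} ensures these morphisms compose compatibly, producing an inverse system whose limit receives a canonical map from $\mathrm{HF}(\psi_{t})$ by the universal property of \eqref{eq:extension}.

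To see the comparison map is an isomorphism, the plan is to show that the subdiagram $(\varphi_{n,t}\psi_{t})_{n}$ is initial in the slice category. Given any $(\psi_{t}',\xi_{s,t})$ there, the ideal restriction of $\xi_{s,1}$ is a non-negative contact path from $\phi_{t}$ to the ideal restriction $\phi_{t}'$ of $\psi_{t}'$. When this path carries strictly positive Reeb displacement, one can homotope $\xi_{s,t}$ within its homotopy class so that it passes through $R^{\alpha}_{s_{n}}\phi_{t}$ at an intermediate value $s = s^{\ast}$ for $n$ sufficiently large, producing the required factorization $\psi_{t}\to \varphi_{n,t}\psi_{t}\to \psi_{t}'$ in the slice category.

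The main obstacle is the ``trivial ideal'' case in which $\phi_{t}=\phi_{t}'$ and $\psi_{t}'$ differs from $\psi_{t}$ only by an interior compactly supported deformation: no direct splitting through a more positive intermediate is available. I would handle this indirectly by arranging $\varphi_{n,t}\psi_{t}' \in \mathscr{C}^{\times}$ (by another application of Lemma~\ref{lemma:transversality-floer}) and invoking the commutative square
\[
\psi_{t} \to \psi_{t}' \to \varphi_{n,t}\psi_{t}' \quad \text{and} \quad \psi_{t} \to \varphi_{n,t}\psi_{t} \to \varphi_{n,t}\psi_{t}',
\]
whose two compositions agree up to homotopy by Lemma~\ref{lemma:gluing-lemma}, combined with the direct splitting of the bottom row (which falls under the previous case). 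This identifies the contribution of $\psi_{t}'$ to the full limit with that of the tower. Verifying the connectedness of the relevant comma categories to make this identification rigorous, and thereby concluding initiality of the tower, is the technical heart of the argument.
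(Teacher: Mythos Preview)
Your construction of the sequence $\varphi_{n,t}$ and the structure maps is fine; the gap is in the isomorphism argument. The subdiagram $(\varphi_{n,t}\psi_{t})_{n}$ is \emph{not} initial in the slice category $(\psi_{t}\downarrow\mathscr{C}^{\times})$, and your attempt to prove that it is cannot succeed. Each $\varphi_{n,t}\psi_{t}$ has ideal restriction $R^{\alpha}_{s_{n}t}\phi_{t}$ with $s_{n}>0$, which is strictly more positive than $\phi_{t}$. So for any $\psi_{t}'\in\mathscr{C}^{\times}$ whose ideal restriction equals $\phi_{t}$ (your ``trivial ideal'' case), there is simply no morphism $\varphi_{n,t}\psi_{t}\to\psi_{t}'$ in $\mathscr{C}$, since the ideal restriction of such continuation data at $t=1$ would have to be non-positive and non-constant. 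Your workaround of passing to $\varphi_{n,t}\psi_{t}'$ does not produce a factorization through $\psi_{t}'$ either; it only relates two different objects receiving maps from $\psi_{t}$, which is not what initiality requires.

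The paper does not argue via initiality at all. Given $\psi_{t}\to\psi_{t}'$ with $\psi_{t}'\in\mathscr{C}^{\times}$, the key observation is that $\psi_{1}'$ has no discriminant points, so neither does $R^{\alpha}_{s}\psi_{1}'$ for $s\in[0,\epsilon]$ small. One then picks $\varphi_{t}$ with ideal restriction $R^{\alpha}_{\epsilon t}$ so that $\varphi_{t}\psi_{t}'\in\mathscr{C}^{\times}$, and the crucial input is the result of Uljarevi\'c--Zhang that the continuation map $\mathrm{HF}(\psi_{t}')\to\mathrm{HF}(\varphi_{t}\psi_{t}')$ is an \emph{isomorphism} because no discriminant points develop along the interpolation. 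Inverting this isomorphism and precomposing with the map $\mathrm{HF}(\varphi_{n,t}\psi_{t})\to\mathrm{HF}(\varphi_{t}\psi_{t}')$ (which exists once $s_{n}<\epsilon$) produces a map $\lim_{n}\mathrm{HF}(\varphi_{n,t}\psi_{t})\to\mathrm{HF}(\psi_{t}')$ natural in $\psi_{t}'$, hence a map to the full limit $\mathrm{HF}(\psi_{t})$, which is the desired inverse. The Uljarevi\'c--Zhang isomorphism is exactly the ingredient missing from your proposal.
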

\begin{proof}
  It is clear that data $\varphi_{n,t}$ can always be chosen to satisfy \ref{L1}, \ref{L2}, and \ref{L3}. Moreover, there is an obvious sequence:
  \begin{equation}\label{eq:diagram}
    \begin{tikzcd}
      {\dots}\arrow[r,"{}"] &{\varphi_{n+1,t}\psi_{t}}\arrow[from=2-2,"{}"]\arrow[r,"{}"] &{\varphi_{n,t}\psi_{t}}\arrow[from=2-3,"{}"]\arrow[r,"{}"] &{\dots}\\
      {\dots}\arrow[r,"{}"] &{\psi_{t}}\arrow[r,"{\id}"] &{\psi_{t}}\arrow[r,"{}"] &{\dots}
    \end{tikzcd}
  \end{equation}
  induced by the canonical homotopy classes of continuation data whose ideal restrictions are affine reparametrizations of subintervals of the twisted Reeb flow $s\mapsto R_{st}\psi_{t}$.

  Thus, by the definition of the limit, there exists a morphism \eqref{eq:induced}. It remains to prove that this morphism is an isomorphism.

  Suppose there is a morphism $\psi_{t}\to \psi_{t}'$ in $\mathscr{C}$ with $\psi_{t}'\in \mathscr{C}^{\times}$. Since $\psi_{t}'\in \mathscr{C}^{\times}$, it follows that $\psi_{1}'$ has no discriminant points. Consequently, $R_{s}\psi_{1}'$ has no discriminant points for $s$ in some interval $s\in [0,\epsilon]$. Let $\varphi_{t}$ be a contact isotopy so that $\varphi_{t}\psi_{t}'\in \mathscr{C}^{\times}$ and the ideal restriction of $\varphi_{t}$ is $R_{\epsilon t}$. Then it is straightforward to construct a canonical factorization in $\mathscr{C}$:
  \begin{equation}\label{eq:square-psi-prime}
    \begin{tikzcd}
      {\varphi_{n,t}\psi_{t}}\arrow[from=2-1,"{}"]\arrow[r,"{}"] &{\varphi_{t}\psi_{t}'}\arrow[from=2-2,"{}"]\\
      {\psi_{t}}\arrow[r,"{}"] &{\psi_{t}'}
    \end{tikzcd}
  \end{equation}
  for $n$ sufficiently large. Here the left vertical morphism is from \eqref{eq:diagram}. It follows from \cite{uljarevic-zhang-JFPTA-2022} that the continuation map associated to the right vertical morphism is an isomorphism, since the continuation data never develops discriminant points.

  Thus, for $n$ large enough, we can apply $\mathrm{HF}$ to the top and right morphisms in \eqref{eq:square-psi-prime}, and invert the right morphism to obtain a map:
  \begin{equation*}
    \lim_{n\to\infty}\mathrm{HF}(\varphi_{n,t}\psi_{t})\to \mathrm{HF}(\psi_{t}').
  \end{equation*}
  Straightforward book-keeping (i.e., diagram chasing), implies the induced morphism is independent of $\epsilon$, and is natural, and thereby induces a map:
  \begin{equation*}
    \lim_{n\to\infty}\mathrm{HF}(\varphi_{n,t}\psi_{t})\to \lim\mathrm{HF}(\psi_{t}')=\mathrm{HF}(\psi_{t}),
  \end{equation*}
  where the right limit is over the slice category. This morphism is the desired inverse of the canonical morphism \eqref{eq:induced}.
\end{proof}

\subsection{Eternal classes in symplectic cohomology}
\label{sec:etern-class-sympl}

An element of $\lim \mathrm{HF}$ is a natural transformation $\Z/2\to \mathrm{HF}$. Such an natural transformation induces an element in the colimit $\mathfrak{e}\in \mathrm{SH}=\colim \mathrm{HF}$. As in the introduction, we let $\mathrm{SH}_{e}\subset \mathrm{SH}$ be the subspace spanned by these elements. In this section we will prove structural theorems which allow us to understand $\mathrm{SH}_{e}$ and $\mathrm{SH}$ in terms of sequence of isotopies, rather than the full category $\mathscr{C}$.

\subsubsection{Final and cofinal sequences}
\label{sec:finality-cofinality}

Let $R_{s}^{\alpha}$ be a Reeb flow and let $\varphi_{t}$ be a contact isotopy on the ideal boundary $Y$. We extend the Reeb vector field to a Hamiltonian vector field of $W$, and extend $\varphi_{t}$ to a contact-at-infinity isotopies of $W$. These choices induce a functor:
\begin{equation*}
  s\in \R\mapsto \varphi_{t}^{-1}\circ R_{st}^{\alpha}\in \mathscr{C},
\end{equation*}
sending a morphism $s_{1}\le s_{2}$ to the continuation data obtained by linearly interpolating the speed of the Reeb flow, as written down in \S\ref{sec:spectr-invar-non-intro}.

This functor is independent of the choice of extensions to $W$, up to natural isomorphism. The natural isomorphism is given by continuation data whose ideal restriction is independent of $s$; see \cite{cant-arXiv-2023,cant-hedicke-kilgore} for detailed discussion on this point.

We can then precompose $\mathrm{HF}:\mathscr{C}\to \mathrm{Vect}$ with the functor $\R\to \mathscr{C}$ to obtain the functor $V^{\alpha}(\varphi_{t}):\R\to \mathrm{Vect}$ given by:
\begin{equation*}
  s\mapsto V_{s}^{\alpha}(\varphi_{t}):=\mathrm{HF}(\varphi_{t}^{-1}\circ R_{st}^{\alpha});
\end{equation*}
such a functor is, by definition, a persistence module. The persistence module is independent of the choice of extension of $R^{\alpha}$ or $\varphi_{t}$ to the filling, up to natural isomorphism in the category of persistence modules.

The universal properties for limits and colimits imply there is a commutative square:
\begin{equation}\label{eq:final-cofinal-square}
  \begin{tikzcd}
    {\lim_{s} V_{s}^{\alpha}(\varphi_{t})}\arrow[from=2-1,"{}"]\arrow[r,"{}"] &{\colim_{s} V_{s}^{\alpha}(\varphi_{t})}\arrow[d,"{}"]\\
    {\lim \mathrm{HF}}\arrow[r,"{}"] &{\mathrm{SH}}
  \end{tikzcd}
\end{equation}
A key structural lemma is the following:
\begin{lemma}\label{lemma:final-cofinal}
  The vertical morphisms in \eqref{eq:final-cofinal-square} are isomorphisms.
\end{lemma}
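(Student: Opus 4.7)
The plan is to recognize the claim as an instance of the standard categorical fact that a \emph{cofinal} (respectively \emph{initial}) functor preserves colimits (respectively limits). Introducing $F:\R\to\mathscr{C}$ for the functor $s\mapsto \varphi_{t}^{-1}\circ R_{st}^{\alpha}$, both vertical arrows follow by verifying that $F$ is cofinal (this gives the right vertical iso) and $F$ is initial (this gives the left vertical iso).

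First I would verify non-emptiness of the slices and coslices. Given any $\psi_{t}\in\mathscr{C}$, let $k_{t}$ denote the contact Hamiltonian of its ideal restriction and let $h>0$ be the contact Hamiltonian of $R^{\alpha}$. For $s$ sufficiently large, continuation data $\psi_{t}\to F(s)$ can be produced by taking any path $\sigma\mapsto \theta_{\sigma}$ from $\psi_{1}|_{Y}$ to $\varphi_{1}|_{Y}^{-1}\circ R^{\alpha}_{s_{0}}$ and post-composing with $\sigma\mapsto R^{\alpha}_{\sigma(s-s_{0})}$, which shifts the contact Hamiltonian of the path by $(s-s_{0})h$; for $s-s_{0}$ large the resulting path in the contactomorphism group is non-negative, and the square is filled to a contact-at-infinity Hamiltonian isotopy in the usual way. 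Symmetrically, for $s$ sufficiently negative one produces continuation data $F(s)\to\psi_{t}$.

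The main step is connectedness of the slice $\psi_{t}\downarrow F$. Given two objects $(s_{i},\alpha_{i})$, I would take $s\ge\max(s_{1},s_{2})$ and set $\beta_{i}:=F(s_{i}\to s)\circ\alpha_{i}$, two morphisms $\psi_{t}\to F(s)$ in $\mathscr{C}$. For $s$ large enough I expect $\beta_{1}=\beta_{2}$ in $\mathscr{C}$, established by constructing a one-parameter family of continuation data interpolating between the underlying squares: convex-combining the Hamiltonians defining $\beta_{1}$ and $\beta_{2}$ and post-composing with a sufficiently long Reeb-flow segment produces such a family, each member of which is genuine continuation data because the Reeb contribution dominates any negative term created by the convex combination. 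Applying Lemma \ref{lemma:deformation} then finishes the argument. A symmetric argument establishes connectedness of the coslice $F\downarrow\psi_{t}$ using $s$ sufficiently negative, completing the verification that $F$ is both cofinal and initial.

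The main obstacle is this connectedness step, because the homotopy relation on continuation data is constrained by the non-negativity condition on the path of ideal restrictions, an inequality that is not automatically preserved by arbitrary convex combinations. The essential input is that taking $|s|$ arbitrarily large provides an arbitrarily large ``Reeb positivity buffer'' that absorbs any negative contributions arising during the homotopy; this is the same mechanism underlying the cofinality of Reeb flows in the standard construction of symplectic cohomology (cf.\ \cite[\S1.2.6]{cant-hedicke-kilgore}).
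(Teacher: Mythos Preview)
Your approach is essentially the paper's: it packages your four (co)finality conditions (non-emptiness and connectedness of slices and coslices) as a separate Lemma~\ref{lemma:technical-filtering}, defers its proof to \cite[\S2.2.11]{cant-hedicke-kilgore}, and then invokes the same cofinality/initiality ``abstract nonsense'' you spell out explicitly. The only minor slip is your appeal to Lemma~\ref{lemma:deformation} in the connectedness step---what you actually need there is that the two morphisms agree in $\mathscr{C}$ (i.e., the continuation data are homotopic through continuation data), which is a statement about the category itself rather than about chain homotopy of the induced Floer maps.
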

The key technical lemma needed to prove this is the following contact-geometric result:
\begin{lemma}\label{lemma:technical-filtering}
  Let $\psi_{t}$ be any contact-at-infinity isotopy.
  \begin{enumerate}[label=(\arabic*)]
  \item\label{f1} There exists morphisms in $\mathscr{C}$ from $\psi_{t}$ to $\varphi_{t}^{-1}\circ R_{st}$ for $s$ sufficiently positive.
  \item\label{f2} Any two morphisms in $\mathscr{C}$ from $\psi_{t}$ to $\varphi_{t}^{-1}\circ R_{s_{1}t}$ become equal when post-composed with the morphism $\varphi_{t}^{-1}\circ R_{s_{1}t}\to \varphi_{t}^{-1}\circ R_{s_{2}t}$ for $s_{2}$ sufficiently positive.
  \item\label{f3} There exists morphisms in $\mathscr{C}$ from $\varphi_{t}^{-1}\circ R_{st}$ to $\psi_{t}$ for $s$ sufficiently negative.
  \item\label{f4} Any two morphisms in $\mathscr{C}$ from $\varphi_{t}^{-1}\circ R_{s_{1}t}$ to $\psi_{t}$ become equal when pre-composed with the morphism $\varphi_{t}^{-1}\circ R_{s_{2}t}\to \varphi_{t}^{-1}\circ R_{s_{1}t}$ for $s_{2}$ sufficiently negative.
  \end{enumerate}
\end{lemma}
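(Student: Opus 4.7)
All four claims express that the twisted Reeb flow $\varphi_{t}^{-1}\circ R_{st}^{\alpha}$ becomes cofinal (resp.\ initial) in $\mathscr{C}$ as $s\to+\infty$ (resp.\ $s\to-\infty$). The plan is to derive them from a single contact-geometric principle: the Reeb flow generated by $\alpha$ has contact Hamiltonian identically $1$, so concatenating any given path in $\mathrm{Cont}_{0}(Y)$ whose contact Hamiltonian is bounded below by $-C$ with a Reeb flow traversed at speed greater than $C/c$ (where $c>0$ is a conjugation factor controlled by compactness) produces a non-negative path. Because extending non-negative paths in $\mathrm{Cont}_{0}(Y)$ to compactly supported perturbations of contact-at-infinity Hamiltonian isotopies of $W$ is standard (cut off the generating contact Hamiltonian using the Liouville coordinate), everything reduces to a statement about paths in $\mathrm{Cont}_{0}(Y)$.

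For \ref{f1}, pick any smooth path $\chi_{\sigma}$, $\sigma\in[0,1]$, in $\mathrm{Cont}_{0}(Y)$ from $\psi_{1}|_{Y}$ to $\varphi_{1}^{-1}|_{Y}$; its generating contact Hamiltonian is bounded below by some $-C$. The candidate ideal restriction of continuation data is $\sigma\mapsto \chi_{\sigma}\circ R^{\alpha}_{\sigma s}|_Y$. Computing the generating contact Hamiltonian in $\sigma$, the $\chi$-piece contributes its own Hamiltonian, while the $R^{\alpha}_{\sigma s}$-piece contributes $s$ times a conjugated constant-$1$ Hamiltonian, which by compactness is bounded below by $cs$ for some $c>0$. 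Choosing $s>C/c$ makes the path non-negative, and extending to $W$ is routine. Statement \ref{f3} follows from the same construction reversed: for $s\ll 0$, the path $\sigma\mapsto \chi_{\sigma}\circ R^{\alpha}_{\sigma s}|_Y$ runs from $\varphi_{1}^{-1}R_{s}^{\alpha}|_{Y}$ to $\psi_{1}|_{Y}$ and can be made non-negative by the same domination argument applied in reverse.

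For \ref{f2}, given two continuation data $\xi^{0},\xi^{1}\colon \psi_{t}\to \varphi_{t}^{-1}\circ R_{s_{1}t}^{\alpha}$, let $\eta$ denote the canonical Reeb speed-up morphism $\varphi_{t}^{-1}R_{s_{1}t}^{\alpha}\to \varphi_{t}^{-1}R_{s_{2}t}^{\alpha}$; the goal is a homotopy $\xi^{0}\#\eta\simeq \xi^{1}\#\eta$ through continuation data. Choose a $2$-parameter family $\rho^{r}_{\sigma}$, $r\in[0,1]$, in $\mathrm{Cont}_{0}(Y)$ interpolating the ideal restrictions $\xi^{0}_{\cdot,1}|_{Y}$ and $\xi^{1}_{\cdot,1}|_{Y}$, with endpoints $\rho^{r}_{0}=\psi_{1}|_{Y}$ and $\rho^{r}_{1}=\varphi_{1}^{-1}R_{s_{1}}^{\alpha}|_{Y}$ fixed. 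The $\sigma$-contact-Hamiltonian of this family is uniformly bounded below by some $-C$. Post-composing each $\rho^{r}_{\bullet}$ with a Reeb segment from $\varphi_{1}^{-1}R_{s_{1}}^{\alpha}|_{Y}$ to $\varphi_{1}^{-1}R_{s_{2}}^{\alpha}|_{Y}$ traversed at speed $s_{2}-s_{1}$ yields a $2$-parameter family of paths whose $\sigma$-Hamiltonians are non-negative, provided $s_{2}-s_{1}>C/c$. At $r=0,1$ this family agrees with $\xi^{i}\#\eta$ up to a reparametrization of the Reeb buffer, which is itself homotopic through continuation data since the paths in question are already non-negative there. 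Statement \ref{f4} is dual, using negative Reeb buffers.

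The main obstacle is the bookkeeping in \ref{f2} and \ref{f4}: one must verify uniformity of the bounds $C$ and $c$ over the interpolating family and ensure that the constructed homotopy matches, at $r=0,1$, the prescribed post-composition $\xi^{i}\#\eta$ up to continuation-data-homotopy. Both points follow from $C^{1}$-compactness of the $2$-parameter family and the fact that $\alpha(R^{\alpha})\equiv 1$ remains bounded away from zero under conjugation by any compact family of contactomorphisms of $Y$, but the notational overhead of writing down the three-parameter homotopy explicitly is substantial.
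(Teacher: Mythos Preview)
Your approach is essentially correct and is in fact the content that the paper's cited reference \cite[\S2.2.11]{cant-hedicke-kilgore} would contain: the dominance of the Reeb drift over any compact family of contact Hamiltonians is exactly the mechanism. One point of imprecision: in your argument for \ref{f2}, the phrase ``post-composing each $\rho^{r}_{\bullet}$ with a Reeb segment'' naturally reads as concatenation, which would \emph{not} render the $\sigma$-Hamiltonian of the interpolating family non-negative on the $\rho^{r}$-portion. What you need (and what your computation in \ref{f1} actually does) is to apply a Reeb drift throughout, i.e., replace $\rho^{r}_{\sigma}$ by $\rho^{r}_{\sigma}\circ R^{\alpha}_{\sigma(s_{2}-s_{1})}$; then the Hamiltonian becomes $h^{r}_{\sigma}+(s_{2}-s_{1})g^{r}_{\sigma}$ with $g^{r}_{\sigma}\ge c>0$, and non-negativity follows for $s_{2}-s_{1}$ large. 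Once stated this way, the endpoint-matching you flag as an obstacle is indeed just a homotopy between two non-negative paths (drift-added versus concatenated), which is easily written down.

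For comparison, the paper handles \ref{f3} and \ref{f4} differently: rather than repeating the direct construction, it invokes the fact that the order on $\mathscr{C}$ is reversed under inversion (a non-negative path from $A$ to $B$ yields a non-negative path from $B^{-1}$ to $A^{-1}$), so that \ref{f3} and \ref{f4} follow from \ref{f1} and \ref{f2} applied to $\psi_{t}^{-1}$ and $R_{-st}\circ\varphi_{t}$. Your direct argument is of course also valid; the inversion trick simply avoids the duplication.
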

It is a standard abstract nonsense to use Lemma \ref{lemma:technical-filtering} to prove Lemma \ref{lemma:final-cofinal}; see, e.g., \cite[\S2.2.8]{cant-hedicke-kilgore}.
\begin{proof}[Proof of Lemma \ref{lemma:technical-filtering}]
  Part \ref{f1} and \ref{f2} are a special case of \cite[\S2.2.11]{cant-hedicke-kilgore}. On the other hand, \ref{f3} and \ref{f4} follow from \cite[\S2.2.11]{cant-hedicke-kilgore} to conclude morphisms between the inverse objects $\psi_{t}^{-1}\to R_{-st}\circ \varphi_{t}$ for sufficiently negative $s$, noting that the existence and equality of morphisms in $\mathscr{C}$ behaves well with respect to inversion.
\end{proof}

\subsubsection{Barcode decomposition}
\label{sec:barc-decomp}

The significance of Lemma \ref{lemma:final-cofinal} is that it allows us to prove things about the subspace of eternal classes $\mathrm{SH}_{e}$ by arguing with the persistence module $V^{\alpha}(\varphi_{t})$ and its barcode decomposition. Such an argument will be used to prove Theorems \ref{theorem:when-unit-eternal} and \ref{theorem:ideal-pop} on the behaviour of eternal classes with respect to the unit and the pair-of-pants product.

Another use of Lemma \ref{lemma:final-cofinal} is that the persistence module $V^{\alpha}(\varphi_{t})$ is used to define the spectral invariants for $\varphi_{t}$. This implies that the right vertical morphism in \eqref{eq:final-cofinal-square} sends the subspace of $\mathrm{colim} V^{\alpha}(\varphi_{t})$ spanned by the fully infinite bars isomorphically onto $\mathrm{SH}_{e}$. On the other hand, every non-zero element in $\zeta\in \mathrm{SH}/\mathrm{SH}_{e}$ can be represented uniquely as a linear combination of finitely many half-infinite bars in $\mathrm{colim}V^{\alpha}(\varphi_{t})$; it follows that the spectral invariant $c_{\alpha}(\zeta;\varphi_{t})$ is the greatest endpoint appearing in this linear combination.

One final note is that the barcode decomposition implies the following structural lemma for eternal classes:
\begin{lemma}\label{lemma:barcode-dcomp}
  A class $\mathfrak{e}\in \mathrm{SH}$ lies in $\mathrm{SH}_{e}$ if and only if $\mathfrak{e}$ lies in the image of the structure map $V^{\alpha}_{s}(\varphi_{t})\to \mathrm{SH}$ for every $s$.
\end{lemma}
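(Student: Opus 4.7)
The plan is to reduce the statement to a direct argument about the barcode decomposition of the persistence module $V^\alpha(\varphi_t)$. By Lemma \ref{lemma:final-cofinal}, the canonical morphism $\colim_s V_s^\alpha(\varphi_t)\to \mathrm{SH}$ is an isomorphism which identifies $\mathrm{SH}_e$ with the image of the internal morphism $\lim_s V_s^\alpha(\varphi_t)\to \colim_s V_s^\alpha(\varphi_t)$. It therefore suffices to show that a class $\mathfrak{e}\in \colim_s V_s^\alpha$ lies in the image of every structure map $V_s^\alpha \to \colim_s V_s^\alpha$ if and only if it lies in the image of the limit morphism.

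The forward direction is immediate from the universal property of the limit: if $\mathfrak{e}$ is the image of a compatible family $(\zeta_s)\in \lim_s V_s^\alpha$, then by definition of such a family each $\zeta_s\in V_s^\alpha$ maps to $\mathfrak{e}$ in the colimit.

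For the reverse direction I would invoke the barcode decomposition $V^\alpha(\varphi_t)\cong \bigoplus_j I_j$ into interval modules, as recalled in \S\ref{sec:fully-infinite-bars}. The colimit has a distinguished basis indexed by those bars which extend to $+\infty$, namely the fully infinite bars ($I_j\cong \R$) and the half-infinite bars ($I_j\cong [a_j,\infty)$). Under the identifications from Lemma \ref{lemma:final-cofinal}, the subspace $\mathrm{SH}_e$ is precisely the span of the basis elements coming from fully infinite bars, since only those contribute non-trivially to the limit. On the other hand, the image of the structure map $V_s^\alpha\to \colim$ is spanned by all the fully infinite bars together with those half-infinite bars $[a_j,\infty)$ for which $a_j\le s$, while bars of type $[a_j,b_j)$ or $(-\infty,b_j)$ project to zero in the colimit. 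Writing $\mathfrak{e}$ uniquely in this basis, the hypothesis that $\mathfrak{e}$ belongs to every such image forces each contributing half-infinite bar to satisfy $a_j\le s$ for all $s\in \R$, which is impossible unless no half-infinite bar contributes at all. Hence $\mathfrak{e}$ is a linear combination of fully infinite bars, i.e.\ $\mathfrak{e}\in \mathrm{SH}_e$.

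The main obstacle is purely the tameness input required to justify the barcode decomposition: one must know that $V_s^\alpha(\varphi_t)$ is finite-dimensional over $\Lambda$ at generic $s$ and that only finitely many slopes of discontinuity occur on each compact interval, so that the standard decomposition theorem for persistence modules over $\R$ applies and produces interval modules of the four types above. This is part of the persistence-theoretic background already implicit in \S\ref{sec:fully-infinite-bars}, and once it is in hand the combinatorial characterization above is completely elementary.
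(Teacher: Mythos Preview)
Your proposal is correct and follows essentially the same approach as the paper's own proof: both invoke Lemma~\ref{lemma:final-cofinal} to reduce to the persistence module $V^\alpha(\varphi_t)$, use its barcode decomposition to write $\mathfrak{e}$ in the basis of fully infinite and half-infinite bars, and observe that lying in the image of every structure map forces the half-infinite contributions to vanish. The tameness caveat you raise is indeed taken for granted in the paper as part of the persistence background from \S\ref{sec:fully-infinite-bars}.
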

\begin{proof}
  The ``only if'' direction is easy, and so we show the ``if'' direction. The barcode decomposition implies that any element $\mathfrak{e}\in \mathrm{SH}$ can be expressed as a linear combination:
  \begin{equation*}
    \mathfrak{e}=\mathfrak{e}_{1}+\dots+\mathfrak{e}_{q}+\zeta_{1}+\dots+\zeta_{p}
  \end{equation*}
  where $\mathfrak{e}_{i}$ are basis elements corresponding to fully infinite bars, and $\zeta_{j}$ are basis elements corresponding to half-infinite bars.

  Suppose that $\mathfrak{e}\in \mathrm{SH}$ lies in the image of $V^{\alpha}_{s}(\varphi_{t})\to \mathrm{SH}$ for every $s$. Then, by the barcode decomposition, we must have $p=0$, i.e., $\mathfrak{e}=\mathfrak{e}_{1}+\dots+\mathfrak{e}_{q}$.

  However, the barcode decomposition implies $\mathfrak{e}_{i}$ are elements of the limit $\lim V^{\alpha}(\varphi_{t})$, i.e., they are natural transformations from $\Z/2$ to $V^{\alpha}(\varphi_{t})$. Thus $\mathfrak{e}$ actually lies in the image of $\lim V^{\alpha}(\varphi_{t})\to \mathrm{SH}$. Lemma \ref{lemma:final-cofinal} implies $\mathfrak{e}$ lies in the image of $\lim \mathrm{HF}\to \mathrm{SH}$, and this completes the proof.
\end{proof}

\section{Flat Hamiltonian connections and the pair-of-pants product}
\label{sec:ham-conn-pair-pants-product}

The goal of this section is to develop enough of the theory of Hamiltonian connections to prove some of the theorems from the introduction, in particular, the sub-additivity of the spectral invariants.

The first part of this section concerns results about Hamiltonian connections which will be needed for two reasons. First, Hamiltonian connections are used to construct the pair-of-pants operation on Floer theory. The pair-of-pants product has a long history, and is part of the larger framework of the TQFT structure on Floer cohomology; see, e.g., \cite{schwarz-thesis,piunikhin-salamon-schwarz-1996,salamon99-quantum-products,seidel-topology-2003,mcduffsalamon,abouzaid-monograph-2015,seidel-eq-pop,kislev-shelukhin,alizadeh-atallah-cant}.\footnote{Such a structure is attributed to Donaldson around the early 1990s}

Second, in our arguments concerning the persistence modules: $$V^{\alpha}_{s}(\psi_{t})=\mathrm{HF}(\psi_{t}^{-1}\circ R_{st}^{\alpha}),$$ we will need to ``switch'' the order from $\psi_{t}^{-1}\circ R_{st}^{\alpha}$ to $R_{st}^{\alpha}\circ \psi_{t}^{-1}$, and the tool used to relate their Floer cohomologies involves maps defined using Hamiltonian connections on cylinders.

\subsection{Hamiltonian connections}
\label{sec:basic-framework}

For the uniniated we recall the set-up of Floer's equation for Hamiltonian connections over surfaces, as in \cite{mcduffsalamon}. This section is primarily a recollection of results in \cite[\S A]{alizadeh-atallah-cant}.

\subsubsection{On the class of Hamiltonian functions}
\label{sec:hamilt-funct}

Fix a starshaped compact domain $\Omega$, so that $\bd \Omega$ lies in the convex-end and projects isomorphically to the ideal boundary. Let $\mathscr{H}(\Omega)$ be the set of smooth functions $H:W\to \R$ so that:
\begin{equation*}
  \d H(Z)=H+c
\end{equation*}
holds outside of $\Omega$, where $c$ is locally constant function. Define:
\begin{equation*}
  \textstyle\mathscr{H}=\bigcup_{\Omega}\mathscr{H}(\Omega).
\end{equation*}
For a manifold $P$, a \emph{smooth family} $H_{p}\in \mathscr{H}$, $p \in P$, is a defined to be a family which locally factors through some $\mathscr{H}(\Omega)$ as a smooth family. In particular, the Hamiltonian vector fields $X_{H_{p}}$ induce a smooth family of contact vector fields on the ideal boundary on $W$.

Introduce $\mathscr{H}_{0}$ be the space of \emph{normalized} functions. Since $W$ is open, we adopt the following normalization scheme. Pick a distinguished non-compact end of $W$ (i.e., pick a connected component of its ideal boundary). Let us say that $H\in \mathscr{H}$ is \emph{normalized} if $\d H(Z)=H$ holds in the distinguished compact end.

\subsubsection{Connection potentials}
\label{sec:conn-potent}

Consider the bundle $W\times \Sigma\to \Sigma$ over a punctured Riemann surface $\Sigma$. Let us declare a \emph{connection potential} to be a one-form $\mathfrak{a}$ on $W\times \Sigma$ whose restriction to $W\times U$ is of the form: $$\mathfrak{a}=K_{s,t}\d s+H_{s,t}\d t$$ for conformal $z=s+it$ coordinate patches $U$ on $\Sigma$, and such that $K_{s,t},H_{s,t}$ are smooth families in $\mathscr{H}$; if they lie in $\mathscr{H}_{0}$, then we say $\mathfrak{a}$ is \emph{normalized}.

Introduce the closed two-form $\Omega=\mathrm{pr}^{*}_{W}\omega-\d\mathfrak{a}$, and define the associated connection by:
\begin{equation*}
  \mathfrak{H}:=TW^{\perp \Omega}
\end{equation*}
This is a connection of the sense of Ehresmann, i.e., is a complement to the vertical distribution. Such connections are called \emph{Hamiltonian connections}. 

Importantly:
\begin{lemma}
  Every Hamiltonian connection $\mathfrak{H}$ is generated by a unique normalized connection potential $\mathfrak{a}$.  
\end{lemma}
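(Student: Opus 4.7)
I would start from the local picture in a conformal chart $z = s + it$, where the potential must have the form $\mathfrak{a} = K\,\d s + H\,\d t$ with $K, H \in \mathscr{H}$. A direct calculation of the symplectic complement $TW^{\perp\Omega}$ — pair a candidate horizontal vector $v_{W} + X\bd_{s} + Y\bd_{t}$ against an arbitrary $u \in TW$ under $\Omega = \mathrm{pr}^{*}_{W}\omega - \d\mathfrak{a}$ and apply non-degeneracy of $\omega$ — identifies the horizontal lifts of $\bd_{s}, \bd_{t}$ as $\bd_{s} - X_{K}$ and $\bd_{t} - X_{H}$ respectively. Consequently the connection $\mathfrak{H}$ records only the Hamiltonian vector fields $X_{K}, X_{H}$, and hence determines $K, H$ locally up to addition of functions of $(s,t)$ alone (using connectedness of $W$).

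\textbf{Uniqueness.} If $\mathfrak{a}, \mathfrak{a}'$ are both normalized potentials for $\mathfrak{H}$, then locally $\mathfrak{a} - \mathfrak{a}' = c_{K}\,\d s + c_{H}\,\d t$, where $c_{K}, c_{H}$ are functions of $(s,t)$ alone. Normalization of $K, K'$ gives $\d K(Z) = K$ and $\d K'(Z) = K'$ in the distinguished end of $W$; subtracting yields $\d c_{K}(Z) = c_{K}$ in that end. Since $c_{K}$ is independent of $w$ the left-hand side vanishes, forcing $c_{K} \equiv 0$. The same argument kills $c_{H}$, so $\mathfrak{a} = \mathfrak{a}'$ locally and therefore globally.

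\textbf{Existence.} Given any potential $\mathfrak{a}$, each $\xi \in T_{z}\Sigma$ produces a function $\mathfrak{a}(\xi) \in \mathscr{H}$ satisfying $\d(\mathfrak{a}(\xi))(Z) - \mathfrak{a}(\xi) = c(\xi)$ in the distinguished end, where $c(\xi) \in \R$ depends $\R$-linearly on $\xi$. Thus $\xi \mapsto c(\xi)$ is a 1-form $\mathfrak{a}^{0}$ on $\Sigma$. Letting $\mathrm{pr}_{\Sigma}: W\times\Sigma \to \Sigma$ denote the projection, the form $\mathfrak{a} + \mathrm{pr}_{\Sigma}^{*}\mathfrak{a}^{0}$ is normalized: its pairing with $\xi$ is $\mathfrak{a}(\xi) + c(\xi)$, and $\d(\mathfrak{a}(\xi) + c(\xi))(Z) = \d(\mathfrak{a}(\xi))(Z) = \mathfrak{a}(\xi) + c(\xi)$ in the distinguished end. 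Since $\mathrm{pr}_{\Sigma}^{*}\mathfrak{a}^{0}$ modifies $K, H$ only by functions of $(s,t)$, the Hamiltonian vector fields $X_{K}, X_{H}$ are unaffected, and the connection is preserved.

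\textbf{Main obstacle.} The argument is largely bookkeeping once the horizontal-lift formula is in hand. The one technical point meriting care is that $\mathfrak{a}^{0}$ is a genuine, globally defined 1-form on $\Sigma$ rather than merely a collection of chart-local data; this reduces to the observation that the normalization-defect map $\mathscr{H} \to \R$ sending $H \mapsto (\d H(Z) - H)|_{\text{distinguished end}}$ is $\R$-linear, so the defects transform tensorially under changes of conformal coordinates on $\Sigma$.
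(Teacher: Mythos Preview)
Your proof is correct and is essentially a detailed unpacking of what the paper gestures at: the paper's own argument is only ``This is a consequence of a computation and the fact that normalized constant functions are zero,'' together with a citation to \cite{alizadeh-atallah-cant}. Your uniqueness step is exactly that key fact (a $W$-constant $c$ satisfying $\d c(Z)=c$ must vanish), and your existence step---correcting an arbitrary potential by the pullback of the defect $1$-form $\mathfrak{a}^{0}$ from $\Sigma$---is the standard normalization procedure that the reference would supply.
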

\begin{proof}
  This is a consequence of a computation and the fact that normalized constant functions are zero. It is proved in \cite[\S A]{alizadeh-atallah-cant}.
\end{proof}

\subsubsection{Curvature of a connection}
\label{sec:curvature-connection}

Curvature can be defined for any Ehresmann connection. It is a tensor which takes two vectors $v_{1},v_{2}$ in $T\Sigma_{z}$ and returns a vertical vector field $\mathfrak{R}(v_{1},v_{2})$ on the fiber over $z$. The formula is:
\begin{equation*}
  \mathfrak{R}(v_{1},v_{2})=[v_{1}^{\mathfrak{H}},v_{2}^{\mathfrak{H}}]-[v_{1},v_{2}]^{\mathfrak{H}},
\end{equation*}
where the $\mathfrak{H}$-superscript denotes horizontal lift. One first extends $v_{i}$ to vector fields around $z$ but then shows the result is independent of the extension, similarly to how one proves the Riemann curvature is a tensor.

Hamiltonian connections are special in that $\mathfrak{R}(v_{1},v_{2})$ is always a Hamiltonian vector field. In fact, standard computations show the following holds when pulled back to the fiber over $z$:
\begin{equation*}
  \mathfrak{R}(v_{1},v_{2})\intprod \omega=-\d \mathfrak{r}(v_{1},v_{2}),
\end{equation*}
where $\mathfrak{r}(v_{1},v_{2})=v_{1}^{\mathfrak{H}}\intprod v_{2}^{\mathfrak{H}}\intprod (\mathrm{pr}^{*}\omega-\d \mathfrak{a})$.

\begin{lemma}
  If $\mathfrak{a}=K\d s+H\d t$ holds in a local coordinate chart then:
  \begin{equation*}
    \mathfrak{r}=(\pd{H}{s}-\pd{K}{t}-\omega(X_{H},X_{K}))\d s\wedge \d t.
  \end{equation*}
  In particular, if $\mathfrak{a}$ is normalized, then $\mathfrak{r}(v_{1},v_{2})$ is normalized for all pairs of vectors $v_{1},v_{2}$ in the base. Moreover, still assuming $\mathfrak{a}$ is normalized, $\mathfrak{R}=0$ if and only if $\mathfrak{r}=0$.
\end{lemma}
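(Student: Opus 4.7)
The plan is to compute $\mathfrak{r}(\bd_{s},\bd_{t})$ directly in a local coordinate chart where $\mathfrak{a}=K\,\d s+H\,\d t$, by unpacking the definition
\[
\mathfrak{r}(v_{1},v_{2})=v_{1}^{\mathfrak{H}}\intprod v_{2}^{\mathfrak{H}}\intprod(\mathrm{pr}^{*}\omega-\d\mathfrak{a})
\]
in coordinates, and then to deduce the normalization and vanishing statements as formal consequences of the resulting formula.

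The first step is to identify the horizontal lifts $\bd_{s}^{\mathfrak{H}},\bd_{t}^{\mathfrak{H}}$. Expanding
\[
\d\mathfrak{a}=\d_{W}K\wedge \d s+\d_{W}H\wedge \d t+\left(\pd{H}{s}-\pd{K}{t}\right)\d s\wedge \d t,
\]
the condition that $\bd_{t}^{\mathfrak{H}}=\bd_{t}+Y$ lies in $TW^{\perp\Omega}$ reduces, upon pairing with an arbitrary vertical vector and invoking nondegeneracy of $\omega$ on the fibres, to an identity which forces $Y$ to equal (up to the convention sign) $-X_{H}$; similarly for $\bd_{s}^{\mathfrak{H}}$. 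Substituting the two horizontal lifts into $\Omega(\bd_{s}^{\mathfrak{H}},\bd_{t}^{\mathfrak{H}})$, expanding $\d\mathfrak{a}$ term by term, and combining the mixed contributions $\d K(X_{H})$, $\d H(X_{K})$, and $\omega(X_{K},X_{H})$ via the defining relation between $X_{H}$ and $\d H$ then yields the asserted formula.

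For the normalization claim, observe that $\pd{H}{s}$ and $\pd{K}{t}$ are $s,t$-derivatives of families in $\mathscr{H}_{0}$ and are therefore themselves normalized, so it suffices to show $\omega(X_{H},X_{K})\in\mathscr{H}_{0}$. I would compute $L_{Z}(\omega(X_{H},X_{K}))$ via the Leibniz rule. The term $L_{Z}\omega=\omega$ appears because $Z$ is Liouville, while the standard fact that the Hamiltonian vector field of a normalized Hamiltonian is Liouville-invariant in the distinguished end (which follows by applying $(\psi_{s}^{Z})^{*}$ to $\iota_{X_{H}}\omega=\d H$ using $(\psi_{s}^{Z})^{*}\omega=e^{s}\omega$ and $(\psi_{s}^{Z})^{*}H=e^{s}H$) kills the two bracket terms. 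Together these give $L_{Z}(\omega(X_{H},X_{K}))=\omega(X_{H},X_{K})$ in the distinguished end, which is exactly the normalization condition.

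For the equivalence $\mathfrak{R}=0\iff\mathfrak{r}=0$, one direction is immediate from the identity $\mathfrak{R}(v_{1},v_{2})\intprod\omega=-\d\mathfrak{r}(v_{1},v_{2})$. Conversely, $\mathfrak{R}(v_{1},v_{2})=0$ forces $\d\mathfrak{r}(v_{1},v_{2})$ to vanish on each fibre, so $\mathfrak{r}(v_{1},v_{2})$ is a locally constant function on $W$, hence constant by connectedness of $W$; but the only normalized constant function is $0$ (the fact cited in the preceding lemma on uniqueness of the normalized connection potential), whence $\mathfrak{r}=0$. The one point requiring care in the whole argument is the sign bookkeeping in the first step: three distinct terms involving Hamiltonian vector fields ($\d K(X_{H})$, $\d H(X_{K})$, and $\omega(X_{K},X_{H})$) arise during the expansion, and they must be combined using the paper's sign convention for $\iota_{X_{H}}\omega$ so as to collapse to a single $-\omega(X_{H},X_{K})$ rather than to some other multiple of it.
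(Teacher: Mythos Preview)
Your proposal is correct and follows essentially the same approach as the paper. The paper defers the explicit computation of the curvature formula to \cite{alizadeh-atallah-cant} and does not spell out the normalization argument, whereas you supply both; your proof of the equivalence $\mathfrak{R}=0\iff\mathfrak{r}=0$ (constant on fibers plus normalized implies zero) is exactly the paper's argument.
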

\emph{Remark.} The two-form $\mathfrak{r}$ plays an important role in the energy identity for solutions to Floer's equation; see Lemma \ref{lemma:basic-facts-floers-equation}.
\begin{proof}
  The stated formula is straightforward computation; see \cite[\S A]{alizadeh-atallah-cant}. Now suppose $\mathfrak{R}=0$. The fact that $\d\mathfrak{r}(v_{1},v_{2})=0$ holds when pulled back to the fiber implies that $\mathfrak{r}(v_{1},v_{2})$ is constant on each fiber. Since $\mathfrak{r}(v_{1},v_{2})$ is normalized, it follows that $\mathfrak{r}(v_{1},v_{2})=0$, as desired.
\end{proof}

\subsubsection{Monodromy of a connection}
\label{sec:monodromy-connection}

Fix a Hamiltonian connection $\mathfrak{H}$, and let $\eta:[0,1]\to \Sigma$ be a smooth path. Let $v_{t}$ be a compactly supported time-dependent vector field on $\Sigma$ so that $v_{t}(\eta(t))=\eta'(t)$.

The horizontal lift $v_{t}^{\mathfrak{H}}$ is a complete vector field; this follows from the requirement that the Hamiltonian functions appearing in $\mathfrak{a}$ form a smooth family in $\mathscr{H}$. Thus the time-$t$ map of the flow of $v_{t}^{\mathfrak{H}}$ is a diffeomorphism taking $W\times \eta(0)$ to $W\times \eta(t)$, and is therefore identified with a smooth isotopy of $W$. The resulting isotopy is in fact a contact-at-infinity Hamiltonian isotopy, which we call the \emph{monodromy} of $\mathfrak{H}$ along $\eta$. For the proofs of these assertions, we refer the reader to \cite[\S A.2]{alizadeh-atallah-cant}.

\subsubsection{Groups of Hamiltonian diffeomorphisms}
\label{sec:groups-of-ham-diff}

Let $\mathrm{HI}(\Omega)$ be the group of Hamiltonian isotopies $\varphi_{t}$ generated by smooth families $H_{t}\in\mathscr{H}(\Omega)$, where $t\in [0,1]$. Similarly to the definition of $\mathscr{H}$, let:
\begin{equation*}
  \textstyle\mathrm{HI}=\bigcup_{\Omega}\mathrm{HI}(\Omega),
\end{equation*}
and declare a smooth family $\varphi_{p,t}\in \mathrm{HI}$, $p\in P$, to be one which locally factors through some $\mathrm{HI}(\Omega)$ as a smooth family.

A \emph{homotopy with fixed endpoints} is a smooth family $\varphi_{s,t}\in \mathrm{HI}$, $s\in [0,1]$, so the the time-1 map $\varphi_{s,1}$ is independent of $s$ (note that $\varphi_{s,0}=\id$ is always independent of $s$). Two elements in $\mathrm{HI}$ are said to equivalent if they differ by a homotopy with fixed endpoints. Let us denote by $\mathrm{UH}$ the group of equivalence classes. This group should be thought of as the universal cover of the group of contact-at-infinity Hamiltonian diffeomorphisms. A smooth family $\varphi_{p}\in \mathrm{UH}$ is one which locally admits a lift to a smooth family in the group $\mathrm{HI}$.

Let $\mathfrak{H}$ be a flat Hamiltonian connection on $W\times \Sigma$. The assignment which sends a path $\eta$ in $\Sigma$ to its monodromy in $\mathrm{UH}$ descends to a functor from the fundamental groupoid of $\Sigma$ to $\mathrm{UH}$ (note that $\mathrm{UH}$ is a group, and hence is a groupoid). This is specific to flat connections, i.e., the monodromy of non-flat connections is sensitive to the path $\eta$ and not just its homotopy class.

The \emph{monodromy representation} of a flat connection is the restriction of this functor to $\pi_{1}(\Sigma,z_{0})$, and should be thought of as a homomorphism: $$\pi_{1}(\Sigma,z_{0})\to \mathrm{UH}.$$ Different base points $z_{0}$ give conjugated homomorphisms, and so the monodromy representation is best thought of as a conjugacy class.

\subsubsection{Coordinate transformations}
\label{sec:coord-transf}

Let $\mathfrak{H}$ be a Hamiltonian connection on $W\times \Sigma$, and let $g:\Sigma\to \mathrm{UH}$ be a smooth family. The data of $g$ enables one to define a diffeomorphism $W\times \Sigma\to W\times \Sigma$ sending $(w,z)$ to $(g_{z}(w),z)$, and we let $g_{*}\mathfrak{H}$ denote the push-forward connection; this is a well-defined Ehresmann connection since the diffeomorphism preserves vertical tangent spaces.

\begin{lemma}
  If $\pi_{2}(\Sigma)=0$, then $g_{*}\mathfrak{H}$ is another Hamiltonian connection. Furthermore, if $\mathfrak{H}$ is flat then so is $g_{*}\mathfrak{H}$.
\end{lemma}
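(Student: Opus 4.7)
The plan is to reduce to a local computation in a conformal chart on $\Sigma$, write down the pushed-forward connection potential explicitly, verify it takes values in $\mathscr{H}$, and then treat the flatness statement by a direct curvature calculation. In a conformal chart $z=s+it$, write $\mathfrak{a}=K\,\d s+H\,\d t$ with $K,H\in \mathscr{H}$. Since $g:\Sigma\to \mathrm{UH}$ is a smooth family, it admits a local smooth lift $g_{z,\tau}\in \mathrm{HI}$, and one can introduce normalized Hamiltonian functions $A^{s}(z), A^{t}(z)\in \mathscr{H}_{0}$ generating the vector fields $\partial_{s}g_{z}\circ g_{z}^{-1}$ and $\partial_{t}g_{z}\circ g_{z}^{-1}$. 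These depend only on the time-$1$ diffeomorphism $g_{z}$, not on the chosen lift, so they assemble into globally defined smooth sections of $\mathscr{H}_{0}$ over $\Sigma$.

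The next step is to push the horizontal lifts through $F(w,z)=(g_{z}(w),z)$. Using that $g_{z}$ is symplectic, so that $\d g_{z}\cdot X_{K}(w)=X_{K\circ g_{z}^{-1}}(g_{z}(w))$, one obtains
\begin{equation*}
F_{*}(X_{K},\partial_{s})=\bigl(X_{K\circ g_{z}^{-1}+A^{s}},\partial_{s}\bigr),
\end{equation*}
and analogously in the $\partial_{t}$-direction. Reading off the potential,
\begin{equation*}
\mathfrak{a}'=(K\circ g_{z}^{-1}+A^{s})\,\d s+(H\circ g_{z}^{-1}+A^{t})\,\d t.
\end{equation*}
The coefficient $K\circ g_{z}^{-1}$ lies in $\mathscr{H}$ because $g_{z}$ is contact-at-infinity, while $A^{s},A^{t}\in \mathscr{H}_{0}\subset \mathscr{H}$ by construction, so $\mathfrak{a}'$ is an admissible normalized connection potential. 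Intrinsically, $\mathfrak{a}'=(F^{-1})^{*}\mathfrak{a}+\mathfrak{b}$, where $\mathfrak{b}$ is the globally defined $1$-form given by $\mathfrak{b}_{(w,z)}(v,\xi)=A^{\xi}(z)(w)$.

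For the flatness statement, the plan is to show $\mathfrak{r}'=(F^{-1})^{*}\mathfrak{r}$ by a direct computation using the formula for $\mathfrak{a}'$. The key algebraic input is the Maurer--Cartan identity $\partial_{s}A^{t}-\partial_{t}A^{s}-\{A^{s},A^{t}\}=0$ for the $\mathrm{UH}$-valued smooth map $g$; this follows from equality of mixed partial derivatives of $g_{z}$ together with the Lie-bracket formula $[X_{A^{s}},X_{A^{t}}]=X_{\{A^{s},A^{t}\}}$ for Hamiltonian vector fields. The cross terms appearing in $\omega(X_{H'},X_{K'})$ will cancel against the $\d\mathfrak{b}$-contribution via this identity, leaving only $(F^{-1})^{*}\mathfrak{r}$.

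The hypothesis $\pi_{2}(\Sigma)=0$ enters to guarantee that the local smooth lifts $g_{z,\tau}\in \mathrm{HI}$ assemble into a sufficiently coherent global object for the above constructions; the obstruction to patching local lifts of a $\mathrm{UH}$-valued map to $\mathrm{HI}$-valued ones across a surface is controlled by a class in $\pi_{2}(\Sigma)$, which vanishes here. The main technical obstacle I anticipate is the curvature computation itself: carefully tracking signs and normalizations through the push-forward and recognizing the Maurer--Cartan cancellation against the cross terms. Once the correct explicit formula for $\mathfrak{a}'$ is in hand, the flatness assertion is essentially formal.
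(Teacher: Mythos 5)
Your argument is correct in outline, but it takes a genuinely different route from the paper. The paper's proof is soft: since $\mathrm{UH}$ is simply connected and $\pi_{2}(\Sigma)=0$, the family $g:\Sigma\to \mathrm{UH}$ can be contracted to the constant identity family, and the Hamiltonian-ness of $g_{*}\mathfrak{H}$ is then quoted from \cite[\S A.2.4]{alizadeh-atallah-cant}; flatness is disposed of in one line, since flatness is equivalent to the local existence of flat sections, a property manifestly preserved by any fiber-preserving diffeomorphism. You instead compute the gauge-transformed potential explicitly, $\mathfrak{a}'=(F^{-1})^{*}\mathfrak{a}+\mathfrak{b}$ with $\mathfrak{b}$ built from the normalized generators $A^{s},A^{t}$ of $z\mapsto g_{z}$, check the coefficients stay in $\mathscr{H}$ using that $g_{z}$ is contact-at-infinity, and then plan to verify $\mathfrak{r}'=(F^{-1})^{*}\mathfrak{r}$ via the Maurer--Cartan identity. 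This buys you an explicit formula for the transformed potential and curvature (useful elsewhere, e.g.\ for the curvature estimates in \S\ref{sec:acti-sympl-cohom}), at the cost of the sign/normalization bookkeeping you acknowledge; the paper's route buys brevity at the cost of outsourcing the computation to a reference and of the contractibility hypothesis.

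Two points to fix. First, your account of where $\pi_{2}(\Sigma)=0$ enters is not right: your construction never uses a global lift of $g$ to $\mathrm{HI}$. The one-form $\mathfrak{b}$ is determined by the family of time-$1$ maps alone, and the local lifts needed to see that $A^{s},A^{t}$ are Hamiltonian with normalized generators in $\mathscr{H}_{0}$ exist by the very definition of a smooth family valued in $\mathrm{UH}$; there is no patching obstruction to worry about, and in any case such an obstruction would not live in $\pi_{2}(\Sigma)$. In the paper the hypothesis is used only to contract $g$ so that the cited result applies; your direct computation simply does not need it (so you are proving a slightly stronger statement), and you should say so rather than attribute a spurious role to the hypothesis. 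Second, for the flatness claim you should at least note the soft argument above, which replaces the entire Maurer--Cartan computation; if you do keep the computational route, the identity $\bd_{s}A^{t}-\bd_{t}A^{s}-\set{A^{s},A^{t}}=0$ holds only up to a locally constant function a priori, and it is the normalization (as in the curvature lemma of \S\ref{sec:curvature-connection}) that kills the constant --- make that step explicit.
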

\begin{proof}
  Since $\mathrm{UH}$ is simply connected (it is a universal cover), and we assume $\pi_{2}(\Sigma)=0$, it follows that the smooth family $g:\Sigma\to \mathrm{UH}$ can be contracted to a point; i.e., there exists a smooth family $g_{t}:\Sigma\to \mathrm{UH}$ so that $g_{1}=g$ and $g_{0}=\id$. The desired result then follows from \cite[\S A.2.4]{alizadeh-atallah-cant}. The statement about flatness is obvious; flat connections locally admit flat sections, and this property is invariant under push-forward by fiber-preserving diffeomorphisms.
\end{proof}

We will use this construction in the following way.
\begin{lemma}\label{lemma:coord-monod}
  Suppose $\mathfrak{H}_{1},\mathfrak{H}_{2}$ are two flat Hamiltonian connections, and their monodromy representations $\pi_{1}(\Sigma,z_{0})\to \mathrm{UH}$ are conjugate, then there exists a smooth family $g:\Sigma\to \mathrm{UH}$ so that $g_{*}\mathfrak{H}_{1}=\mathfrak{H}_{2}$.
\end{lemma}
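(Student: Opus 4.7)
The plan is to reduce to the case where the monodromy representations are literally equal, then exploit flatness by passing to the universal cover of $\Sigma$, where any flat Hamiltonian connection is trivializable by path-independent parallel transport. To handle the conjugation, if $\rho_2 = h \rho_1 h^{-1}$ for some $h \in \mathrm{UH}$, we first replace $\mathfrak{H}_1$ by $h_* \mathfrak{H}_1$: a direct monodromy computation shows that this constant gauge transformation shifts the monodromy to $h \rho_1(\cdot) h^{-1} = \rho_2$, so it suffices to prove the lemma when $\rho_1 = \rho_2$ and then compose with the constant gauge transformation $h$ at the end.

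Assuming $\rho_1 = \rho_2 =: \rho$, let $\pi : \tilde\Sigma \to \Sigma$ be the universal cover with a lifted base point $\tilde z_0$ over $z_0$, and let $\tilde{\mathfrak{H}}_i$ be the pullback connections on $W \times \tilde\Sigma$; these are flat and invariant under the deck action. Because $\tilde\Sigma$ is simply connected and $\tilde{\mathfrak{H}}_i$ is flat, parallel transport in $\tilde{\mathfrak{H}}_i$ from $\tilde z_0$ to $\tilde z$ is path-independent and defines a smooth family $\tilde g_i : \tilde\Sigma \to \mathrm{UH}$, with smoothness in $\tilde z$ coming from smooth dependence of horizontal lifts on parameters. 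By construction, pushing the trivial connection forward by $\tilde g_i$ produces $\tilde{\mathfrak{H}}_i$, so the pointwise product $\tilde g := \tilde g_2 \cdot \tilde g_1^{-1}$ satisfies $\tilde g_* \tilde{\mathfrak{H}}_1 = \tilde{\mathfrak{H}}_2$. For the descent, deck-invariance of $\tilde{\mathfrak{H}}_i$ lets us split parallel transport from $\tilde z_0$ to $\gamma \tilde z$ as the monodromy piece $\tilde z_0 \to \gamma \tilde z_0$ (equal to $\rho_i(\gamma)$) followed by transport $\gamma \tilde z_0 \to \gamma \tilde z$ (equal to $\tilde g_i(\tilde z)$ by deck-invariance), giving an equivariance formula of the form $\tilde g_i(\gamma \tilde z) = \tilde g_i(\tilde z) \cdot \rho_i(\gamma)$. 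Since $\rho_1 = \rho_2$, the monodromy factors cancel in the ratio, so $\tilde g$ is deck-invariant and descends to the required $g : \Sigma \to \mathrm{UH}$ with $g_* \mathfrak{H}_1 = \mathfrak{H}_2$.

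The main technical obstacle will be making precise the smoothness and equivariance of $\tilde g_i$ as a $\mathrm{UH}$-valued family: parallel transport along a specific path a priori only delivers an element of $\mathrm{HI}$ (an actual isotopy), and one must check that the equivalence relation defining $\mathrm{UH}$, combined with the path-independence from flatness and simple-connectedness, indeed produces a well-defined smooth family in the sense used in the paper. The normalization setup in $\mathscr{H}_0$ ensures that the horizontal lifts integrate to elements of $\mathrm{HI}$, and this is what makes the bookkeeping go through.
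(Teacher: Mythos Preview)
Your proof is correct and is essentially the same as the paper's: both define $g$ as the ratio of the parallel transports of $\mathfrak{H}_2$ and $\mathfrak{H}_1$ along an arbitrary path from the basepoint, reducing the conjugate case to the equal-monodromy case via a constant gauge transformation. The only cosmetic difference is that the paper writes $g_z$ directly on $\Sigma$ (noting path-independence from equal monodromy), whereas you pass to the universal cover to formalize the same path-independence; your equivariance computation with the $\rho_i(\gamma)$ factors cancelling is exactly the well-definedness check the paper leaves implicit.
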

\begin{proof}
  First we prove the case when their monodromy representations are the same. Fix a basepoint $z_{0}$. For each $z$, pick an arbitrary path $\eta$ joining $z_{0}$ to $z$, and define:
  \begin{equation*}
    g_{z}=(\text{monod.\@ of $\mathfrak{H}_{2}$ along $\eta$})(\text{monod.\@ of $\mathfrak{H}_{1}$ along $\eta$})^{-1};
  \end{equation*}
  where $\mathrm{monod.}$ stands for the monodromy valued in $\mathrm{UH}$.
  
  This is well-defined (independent of $\eta$), since we assume their monodromy representations are the same. It follows that $(w,z)\mapsto (g_{z}(w),z)$ takes any path which is flat for $\mathfrak{H}_{1}$ to one which is flat for $\mathfrak{H}_{2}$. Since tangent lines to flat paths span the horizontal subspaces follows that $\mathfrak{g}_{*}\mathfrak{H}_{1}=\mathfrak{H}_{2}$ as Ehresmann connections, as desired.

  In the case when the monodromy representations are merely conjugate, we can first replace $\mathfrak{H}_{1}$ by $g_{*}\mathfrak{H}_{1}$ where $g\in \mathrm{UH}$ is a constant.
\end{proof}

\subsubsection{Correcting flat connections on cylinders}
\label{sec:corr-flat-conn}

Let $\mathfrak{H}$ be a flat connection on $[0,\infty)\times \R/\Z$ or $(-\infty,0]\times \R/\Z$. Suppose the monodromy of $\mathfrak{H}$ along $0\times \R/\Z$ equals $\varphi\in \mathrm{UH}$, and suppose $H_{t}\in \mathscr{H}(W)$ is such that the time-one map in $\mathrm{UH}$ equals $c\varphi c^{-1}$ for some $c\in \mathrm{UH}$.

Then, by Lemma \ref{lemma:coord-monod}, there exists $g$ so that $g_{*}\mathfrak{H}$ is the Hamiltonian connection whose potential is $\mathfrak{a}=H_{t}\d t$. Moreover, $g$ is homotopic to the constant map valued at $\id$, since $\mathrm{UH}$ is simply connected. Let $g_{\tau}$, $\tau\in [0,1]$, be such a deformation so $g_{1}=g$ and $g_{0}=\id$.

Then the transformed connection: $$\mathfrak{H}'=g_{\beta(\abs{s})}^{*}\mathfrak{H}$$ is a connection which agrees with $\mathfrak{H}$ near $s=0$ and has connection potential $\mathfrak{a}=H_{t}\d t$ for $\abs{s}\ge 1$. In this fashion we are able to ``correct'' any connection $\mathfrak{H}$ so that it appears in a standard form in cylindrical ends.

\subsection{Floer's equation associated to a Hamiltonian connection}
\label{sec:floers-equat-assoc}

Let $\mathfrak{H}$ be a Hamiltonian connection on $W\times \Sigma\to \Sigma$, and let $J$ be an admissible almost complex structure on $W$, as in \S\ref{sec:choice-almost-compl}. This induces a unique almost complex structure $J_{\mathfrak{H}}$ on $W\times \Sigma$ so that:
\begin{enumerate}
\item $J_{\mathfrak{H}}|_{TW\times \set{z}}=J$,
\item $J_{\mathfrak{H}}(\mathfrak{H})\subset \mathfrak{H}$,
\item $\d\pi$ is $J_{\mathfrak{H}},j$ holomorphic.
\end{enumerate}
We define $\mathscr{M}(\mathfrak{H})$ to be the set of finite energy maps $u:\Sigma\to W$ whose graph in $W\times \Sigma$ is $J_{\mathfrak{H}}$-holomorphic. Let $w:\Sigma\to \Sigma\times W$ be the parametrization of the graph $w(z)=(z,u(z))$. The \emph{energy} of a solution is defined to be:
\begin{equation*}
  E(u)=\int (\Pi_{\mathfrak{H}}\d w)^{*}\omega,
\end{equation*}
where $\Pi_{\mathfrak{H}}$ is the projection on $T(W\times \Sigma)\to TW$ whose kernel is $\mathfrak{H}$. Let us note that the energy integrand $(\Pi_{\mathfrak{H}}\d w)^{*}\omega$ is a non-negative two-form, since the linear map $\Pi_{\mathfrak{H}}\d w$ is complex linear and $J$ is $\omega$-tame.

A bit more practically:
\begin{lemma}\label{lemma:basic-facts-floers-equation}
If $\mathfrak{H}$ is defined by a connection potential which locally appears as $\mathfrak{a}=K\d s+H\d t$ in a conformal coordinate chart $z=s+it$ on $\Sigma$, then $u\in \mathscr{M}(\mathfrak{H})$ if and only if:
\begin{equation*}
  \bd_{s}u-X_{K}(u)+J(u)(\bd_{t}u-X_{H}(u))=0,
\end{equation*}
and the local contribution to the energy from the coordinate chart equals:
\begin{equation*}
  E(u)=\int \omega(\bd_{s}u-X_{K}, \bd_{t}u-X_{H})ds dt.
\end{equation*}
Suppose that $\Omega\subset \Sigma$ is a compact domain with smooth boundary $\bd\Omega$. Then:
\begin{equation*}
  E(u|_{\Omega})=\omega(u|_{\Omega})-\int_{\bd\Omega}u^{*}\mathfrak{a}+\int_{\Omega}u^{*}\mathfrak{r}.
\end{equation*}
\end{lemma}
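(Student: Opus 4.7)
The plan is to reduce the three assertions to direct computations in conformal coordinates after identifying the horizontal lifts explicitly. A short computation using $\mathfrak{H}=TW^{\perp\Omega}$ together with the local formula $\mathfrak{a}=K\,\d s+H\,\d t$ shows that the horizontal lifts of $\bd_{s}$ and $\bd_{t}$ at a point $(w,z)$ are $\bd_{s}+X_{K}(w)$ and $\bd_{t}+X_{H}(w)$ respectively. Consequently, the vertical components of the graph tangents are $\Pi_{\mathfrak{H}}\d w(\bd_{s})=\bd_{s}u-X_{K}(u)$ and $\Pi_{\mathfrak{H}}\d w(\bd_{t})=\bd_{t}u-X_{H}(u)$.

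For the first claim I would use the three defining properties of $J_{\mathfrak{H}}$ (it restricts to $J$ on fibers, preserves $\mathfrak{H}$, and $\d\pi$ is $(J_{\mathfrak{H}},j)$-holomorphic) to split $J_{\mathfrak{H}}\d w(\bd_{s})=\d w(\bd_{t})$ into its horizontal and vertical parts. The horizontal equation is automatic (the horizontal lift of $\bd_{t}=j\bd_{s}$ is the $J_{\mathfrak{H}}$-image of the horizontal lift of $\bd_{s}$), and the vertical equation reads $J(\bd_{s}u-X_{K})=\bd_{t}u-X_{H}$, which is equivalent to the displayed PDE. The local energy formula then follows immediately because $(\Pi_{\mathfrak{H}}\d w)^{*}\omega$ evaluated on $(\bd_{s},\bd_{t})$ is precisely $\omega(\bd_{s}u-X_{K},\bd_{t}u-X_{H})$.

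For the global identity, the key observation is that $w^{*}\Omega$ can be computed in two different ways, where $w$ denotes the graph of $u$. On one side, $w^{*}\Omega=u^{*}\omega-\d(u^{*}\mathfrak{a})$ since $\Omega=\mathrm{pr}^{*}_{W}\omega-\d\mathfrak{a}$. On the other side, decomposing each of $\d w(\bd_{s})$, $\d w(\bd_{t})$ into horizontal plus vertical and using $\mathfrak{H}=TW^{\perp\Omega}$ to kill the cross terms, $w^{*}\Omega$ splits as the sum of $\Omega$ applied to the two horizontal lifts and $\omega$ applied to the two vertical parts. The first summand is $u^{*}\mathfrak{r}$ (up to the sign built into the interior-product definition of $\mathfrak{r}$), while the second is the energy integrand identified in the previous step. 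Equating the two expressions for $w^{*}\Omega$ yields a pointwise identity whose integral over $\Omega$, combined with Stokes' theorem applied to $\d(u^{*}\mathfrak{a})$, is the stated formula.

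The main obstacle is keeping the sign conventions straight, especially for the curvature two-form. The definition $\mathfrak{r}(v_{1},v_{2})=v_{1}^{\mathfrak{H}}\intprod v_{2}^{\mathfrak{H}}\intprod(\mathrm{pr}^{*}\omega-\d\mathfrak{a})$ produces $\Omega(v_{2}^{\mathfrak{H}},v_{1}^{\mathfrak{H}})$ rather than $\Omega(v_{1}^{\mathfrak{H}},v_{2}^{\mathfrak{H}})$, and this is exactly the sign flip needed to recover $+\int_{\Omega}u^{*}\mathfrak{r}$ on the right-hand side. The non-negativity of the energy integrand (which justifies the word \emph{energy}) comes from combining the first assertion with the $\omega$-tameness of $J$. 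For the detailed verification that the horizontal lifts of $\bd_{s}$ and $\bd_{t}$ take the stated form, and for the explicit form of $\mathfrak{r}$ in a local chart, I would follow the template of \cite[\S A]{alizadeh-atallah-cant}.
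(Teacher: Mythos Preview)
Your proposal is correct and follows the standard computation that the paper defers to \cite[\S A.4]{alizadeh-atallah-cant}; indeed the paper's own proof consists solely of that citation, so you have supplied the details it omits. Your handling of the sign in $\mathfrak{r}$ via the interior-product convention is the right observation for matching the stated identity.
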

\begin{proof}
  We refer the reader to \cite[\S A.4]{alizadeh-atallah-cant}.  
\end{proof}

\subsection{Operations on Floer theory via flat connections on cylinders}
\label{sec:hamilt-conn-cylind}

In this section, we explain how flat connections on cylinders can be used to define isomorphisms between Floer cohomology groups. These operations arise naturally when considering pair-of-pants products, and it will be important for us to show these isomorphisms act identically on $\mathrm{SH}$.

Let $\mathfrak{H}$ be a flat Hamiltonian connection over $\R\times \R/\Z$, and suppose that the normalized connection potential $\mathfrak{a}$ satisfies:
\begin{enumerate}[label=(F\arabic*)]
\item\label{item:F1} $\mathfrak{a}=H_{0,t}\d t$ on the region $\set{s>1}$,
\item\label{item:F2} $\mathfrak{a}=H_{1,t}\d t$ on the region $\set{s<0}$.
\end{enumerate}
The idea is that counting solutions in $\mathscr{M}(\mathfrak{H})$ will define a sort of generalized continuation map.

\subsubsection{Definition of the map}
\label{sec:definition-map}

Let $\mathfrak{H}$ be a connection satisfying \ref{item:F1} and \ref{item:F2} where $H_{i,t}$ is the generator of $\psi_{i,\beta(3t-1)},$ so that each solution $u\in \mathscr{M}(\mathfrak{H})$ solves the Floer differential equation from \S\ref{sec:floer-differential-moduli} for $\psi_{0,t}$ in the end $s>1$ and for $\psi_{1,t}$ in the end $s<0$.

\begin{lemma}
  Let $\delta=\delta^{1}_{s,t}\d s+\delta^{2}_{s,t}\d t$ where $(s,t,w)\mapsto \delta^{i}_{s,t}(w)$ is compactly supported in $(0,1)\times \R/\Z\times W$, and let $\mathfrak{H}_{\delta}$ be the connection whose potential is $\mathfrak{a}+\delta$. For generic $\delta$, the moduli space $\mathscr{M}(\mathfrak{H}_{\delta})$ is cut transversally and the total evaluation map is transverse to any given smooth map.
\end{lemma}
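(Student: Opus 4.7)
The plan is to apply the standard Sard-Smale transversality argument to a universal moduli space parametrized by the perturbations. First I would equip the space $\mathscr{P}$ of admissible perturbations $\delta=\delta^{1}\d s+\delta^{2}\d t$ (compactly supported in $(0,1)\times \R/\Z\times W$) with the structure of a separable Banach manifold via a Floer $C^{\varepsilon}$-norm construction. I would then form the universal moduli space
\begin{equation*}
  \mathscr{M}^{\mathrm{univ}}:=\set{(u,\delta):u\in \mathscr{M}(\mathfrak{H}_{\delta})},
\end{equation*}
and verify that the universal linearized operator is surjective at every solution. Sard-Smale applied to the projection $\mathscr{M}^{\mathrm{univ}}\to \mathscr{P}$ then produces a residual set of $\delta$ for which $\mathscr{M}(\mathfrak{H}_{\delta})$ is cut transversally.

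The key technical step is surjectivity of the universal linearization. In a local conformal chart where $\mathfrak{a}=K\d s+H\d t$, the perturbed Floer equation reads
\begin{equation*}
  \bd_{s}u-X_{K+\delta^{1}}(u)+J(u)(\bd_{t}u-X_{H+\delta^{2}}(u))=0,
\end{equation*}
and its total linearization at a solution is
\begin{equation*}
  (\eta,\epsilon^{1},\epsilon^{2})\mapsto D_{u}\eta-X_{\epsilon^{1}}(u)-J(u)X_{\epsilon^{2}}(u).
\end{equation*}
By $L^{p}$-duality, it suffices to show that any $\eta\in L^{q}$ which annihilates the image vanishes identically. Setting $(\epsilon^{1},\epsilon^{2})=0$ gives $\eta\in \ker D_{u}^{*}$; varying $\epsilon^{i}$ over bump functions localized near a point $(s_{0},t_{0},u(s_{0},t_{0}))$ forces $\eta(s_{0},t_{0})=0$ at every $(s_{0},t_{0})\in (0,1)\times \R/\Z$ satisfying the somewhere-injectivity condition $u(s,t_{0})=u(s_{0},t_{0})\implies s=s_{0}$. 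The main obstacle is proving that this somewhere-injective set is dense in $(0,1)\times \R/\Z$. Following the standard argument of Lemma \ref{lemma:transversality-floer} (adapted to the present non-translation-invariant setting), this reduces to excluding the degenerate case in which $u$ is a horizontal section of $\mathfrak{H}_{\delta}$ over the entire perturbation strip. Such a horizontal section would be determined by its restriction to the ends $\set{s<0}$ and $\set{s>1}$, where $u$ satisfies the ordinary Floer equation for $\psi_{i,t}$; the asymptotic non-degeneracy of $\psi_{i,1}$ together with finite energy then forces $u$ to have no one-parameter horizontal family, and unique continuation for $D_{u}^{*}$ extends $\eta=0$ from the dense somewhere-injective set to all of $\R\times \R/\Z$.

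For transversality of the total evaluation map $(u,s,t)\mapsto u(s,t)$, I would enlarge the universal moduli space by an additional factor of $\R\times \R/\Z$ and consider the universal evaluation $(u,\delta,s,t)\mapsto u(s,t)$. A variant of the same somewhere-injectivity argument shows that this map is a submersion at every point of $\mathscr{M}^{\mathrm{univ}}\times \R\times \R/\Z$. The parametric transversality theorem \cite[Theorem 3.1]{hofer-salamon-95} then yields, for a residual set of $\delta$, transversality of the restricted evaluation to any prescribed smooth map. Intersecting the two residual subsets of $\mathscr{P}$ gives the generic $\delta$ required by the lemma.
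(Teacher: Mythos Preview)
Your overall strategy (universal moduli space, Sard--Smale, duality with $\ker D_{u}^{*}$) is correct and matches the paper's implicit approach. However, you are working harder than necessary, and this is precisely the point of the paper's one-line proof: the argument here is \emph{simpler} than Lemma~\ref{lemma:transversality-floer}, not an adaptation of it.

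The reason is that $\delta$ depends freely on $s$, $t$, and $w$. In Lemma~\ref{lemma:transversality-floer} the perturbation $\delta_{t}$ depends only on $t$ (the Floer differential equation being $s$-translation-invariant), which is why one needs the somewhere-injectivity condition $u(s,t_{0})=u(s_{0},t_{0})\implies s=s_{0}$: a bump localized in $(t,w)$-space cannot distinguish between different $s$-values where $u$ hits the same target point. Here, by contrast, a bump in $\epsilon^{i}$ can be localized directly in $(s,t,w)$-space, so its contribution $X_{\epsilon^{1}}(u)+J(u)X_{\epsilon^{2}}(u)$ is supported in an arbitrarily small neighborhood of any chosen $(s_{0},t_{0})\in(0,1)\times\R/\Z$, with no constraint on $u$ whatsoever. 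This immediately forces $\eta(s_{0},t_{0})=0$ for \emph{every} such point, and unique continuation gives $\eta\equiv 0$. (Indeed, your own description already localizes the bump near $(s_{0},t_{0},u(s_{0},t_{0}))$, including the $s$-variable, so the injectivity condition you then impose is superfluous on its face.)

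Consequently your discussion of the ``degenerate case'' is unnecessary, and as written it is also somewhat muddled: the analogue of the $\bd_{s}u=0$ case from Lemma~\ref{lemma:transversality-floer} would be a horizontal section of $\mathfrak{H}_{\delta}$, but your sentence about ``no one-parameter horizontal family'' does not clearly exclude anything. Since none of this is needed, the issue is moot. The evaluation-map transversality follows by the same simplified pattern.
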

\begin{proof}
  The proof is simpler than the argument in Lemma \ref{lemma:transversality-floer}, because $\delta$ is not required to respect any symmetries, and the details are left to the reader.
\end{proof}

Let $\mathscr{M}_{d}(x;a;y;\mathfrak{H}_{\delta})\subset \mathscr{M}(\mathfrak{H}_{\delta})$ be the component of $u$ so that:
\begin{enumerate}
\item the asymptotics satisfy $\gamma_{-}(0)=x$, $\gamma_{+}(0)=y$,
\item $\omega(u)=a$,
\item $\mathrm{CZ}_{\mathfrak{s}}(\gamma_{+}) - \mathrm{CZ}_{\mathfrak{s}}(\gamma_{-}) + 2\mathfrak{s}^{-1}(0)\cdot[u]=0$,
\end{enumerate}
where $\mathfrak{s}$ is a section of the determinant line bundle; the set-up is essentially exactly the same as with the continuation map from \S\ref{sec:continuation-maps}.

Because $\mathfrak{H}_{\delta}$ has zero curvature outside of a compact set (the perturbation $\delta$ introduces some curvature, but only on the region where $\delta\ne 0$) solutions $u\in \mathscr{M}(\mathfrak{H}_{\delta})$ satisfy an a priori energy bound in terms of $\omega(u)$; see \S\ref{sec:floers-equat-assoc}.

In particular, for generic $\delta$, the usual arguments prove $\mathscr{M}_{0}(x;a;y;\mathfrak{H}_{\delta})$ is a finite set of points, and $\mathscr{M}_{1}(x;a;y;\mathfrak{H}_{\delta})$ is a 1-manifold which is compact up to the breaking of Floer differential cylinders. We therefore define:
\begin{equation*}
  \mathrm{C}(\mathfrak{H}_{\delta})(\tau^{b}y)=\sum \#\mathscr{M}_{0}(x;a;y;\mathfrak{H}_{\delta})\tau^{a+b}x.
\end{equation*}
This operation satisfies:
\begin{lemma}
  For generic $\delta$, the map $\mathrm{C}(\mathfrak{H}_{\delta}):\mathrm{CF}(\psi_{0,t})\to \mathrm{CF}(\psi_{1,t})$ is well-defined and is a chain map. Moreover, the chain homotopy class of the map is independent of $\delta$, and depends only on the connected component of $\mathfrak{H}$ in the space of connections satisfying \ref{item:F1} and \ref{item:F2}.
\end{lemma}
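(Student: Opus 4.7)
The plan is to follow the standard three-step template of Floer theory: first verify convergence of the sum defining $\mathrm{C}(\mathfrak{H}_{\delta})$ in the Novikov field, second prove the chain-map identity by analyzing boundaries of one-dimensional moduli spaces, and third establish deformation invariance via a parametric argument. The key analytic input throughout is a uniform a priori energy bound. Because $\mathfrak{H}$ is flat and $\delta$ is compactly supported in $(0,1) \times \R/\Z \times W$, the curvature two-form $\mathfrak{r}$ of $\mathfrak{H}_{\delta}$ is supported in a fixed compact region of the base, and integrating the energy identity of Lemma \ref{lemma:basic-facts-floers-equation} over exhausting cylindrical domains while using \ref{item:F1} and \ref{item:F2} to control the boundary term (just as in Lemma \ref{lemma:energy-estimate}) yields $E(u) \le \omega(u) + \mathrm{const}(\mathfrak{H}_{\delta})$. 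Combined with Gromov compactness and the semipositivity setup, this implies that for each pair of asymptotics $(x,y)$ the set $\mathscr{M}_{0}(x;a;y;\mathfrak{H}_{\delta})$ is finite for each $a$ and empty for $a$ sufficiently negative, which is the finiteness needed for the formal sum to define an element of $\mathrm{CF}(\psi_{1,t})$ over $\Lambda$.

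For the chain-map property, the standard argument analyzes the boundary of $\mathscr{M}_{1}(x;a;y;\mathfrak{H}_{\delta})$. Bubbling of simple $J$-holomorphic spheres is excluded by the same transversality-with-pseudocycle technique used in Lemma \ref{lemma:transversality-continuation}; breaking of the connection cylinder itself cannot occur because $\mathfrak{H}_{\delta}$ is not $s$-translation invariant. The only remaining degenerations are breakings of Floer differential cylinders at $s = \pm\infty$, whose counts contribute $\mathrm{C}(\mathfrak{H}_{\delta}) \circ d$ and $d \circ \mathrm{C}(\mathfrak{H}_{\delta})$ respectively; counting ends of the compactified 1-manifold modulo two then gives the chain-map identity. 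For the invariance statement, given two data $(\mathfrak{H}_{0}, \delta_{0})$ and $(\mathfrak{H}_{1}, \delta_{1})$ with $\mathfrak{H}_{0}, \mathfrak{H}_{1}$ in the same connected component of the space of connections satisfying \ref{item:F1} and \ref{item:F2}, I would pick a smooth one-parameter family $(\mathfrak{H}_{\sigma}, \delta_{\sigma})$ interpolating between them, chosen to be constant in $\sigma$ on the cylindrical ends. A parametric transversality argument exactly parallel to Lemma \ref{lemma:deformation} then produces a parametric moduli space whose zero-dimensional stratum defines a chain homotopy $K$ between the two maps.

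The one step which is genuinely non-routine, rather than routine bookkeeping, is the uniformity of the energy bound along the deformation in the invariance step: the intermediate connections $\mathfrak{H}_{\sigma}$ need not be flat, so the curvature term $\int u^{*}\mathfrak{r}_{\sigma}$ in the energy identity is truly present. The required uniformity reduces to verifying that the curvatures $\mathfrak{r}_{\sigma}$ are uniformly compactly supported with uniformly bounded pointwise norm, which is immediate once the interpolating family is chosen constant in $\sigma$ outside a fixed compact set in the base. Modulo this observation, every step is a direct adaptation of the arguments already sketched for the ordinary continuation map in \S\ref{sec:continuation-maps}.
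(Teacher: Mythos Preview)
Your proposal is correct and follows the same standard Floer-theory template the paper invokes (the paper's proof simply cites the analogous continuation-map argument in \S\ref{sec:continuation-maps}). One minor clarification: the space in question is that of \emph{flat} connections satisfying \ref{item:F1} and \ref{item:F2} (this is the space $\mathscr{C}(\psi_{1,t},\psi_{0,t})$ introduced immediately afterward), so the interpolating path $\mathfrak{H}_\sigma$ can itself be taken through flat connections, and the only curvature along the deformation comes from the compactly supported $\delta_\sigma$; your concern about non-flat intermediates is therefore unnecessary, though your remedy would cover that case as well.
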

\begin{proof}
  This is standard Floer theory, similar to the analogous facts about the continuation map from \S\ref{sec:continuation-maps}.
\end{proof}

Let us denote by $\mathrm{C}(\mathfrak{H})$ the induced map on homology (we drop the $\delta$ since the chain homotopy class is independent of the perturbation).

\subsubsection{On the space of connections satisfying \ref{item:F1} and \ref{item:F2}}
\label{sec:space-conn-satisfy}

Let $\mathscr{C}(\psi_{1,t},\psi_{0,t})$ be the space of flat connections satisfying \ref{item:F1} and \ref{item:F2}, where $H_{i,t}$ are the generators of $\psi_{i,\beta(3t-1)}$. First of all:
\begin{lemma}
  The time-1 maps of $\psi_{0,t},\psi_{1,t}$ are conjugate in $\mathrm{UH}$ if and only if $\mathscr{C}(H_{0,t},H_{1,t})$ is non-empty.
\end{lemma}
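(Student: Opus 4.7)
The plan is to use the monodromy of a flat connection around the circles $\set{s}\times \R/\Z$ as the fundamental bridge between the space $\mathscr{C}(H_{0,t},H_{1,t})$ and conjugacy classes in $\mathrm{UH}$. The whole argument rests on two facts already at hand: (i) on a cylinder, parallel transport under a flat Hamiltonian connection conjugates the monodromies around parallel circles, and (ii) the ``correction procedure'' of \S\ref{sec:corr-flat-conn}, which lets us convert a prescribed conjugacy datum in $\mathrm{UH}$ into a gauge transformation producing a standard form for the potential on one end.

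\textbf{Forward direction.} Suppose $\mathfrak{H}\in \mathscr{C}(H_{0,t},H_{1,t})$. By \ref{item:F1}, on the region $\set{s>1}$ the connection is generated by $\mathfrak{a}=H_{0,t}\d t$; the monodromy around $\set{s}\times \R/\Z$ is therefore the class in $\mathrm{UH}$ of the isotopy $t\mapsto \psi_{0,\beta(3t-1)}$, which agrees with the class of $\psi_{0,t}$ since the two differ only by a monotone reparametrization fixing endpoints. Analogously, \ref{item:F2} forces the monodromy around $\set{s}\times \R/\Z$ for $s<0$ to be the class of $\psi_{1,t}$ in $\mathrm{UH}$. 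Since $\mathfrak{H}$ is flat, parallel transport along any path from a basepoint in $\set{s}\times \R/\Z$, $s>1$, to one in $\set{s'}\times \R/\Z$, $s'<0$, conjugates one monodromy into the other. The time-1 maps of $\psi_{0,t}$ and $\psi_{1,t}$ are therefore conjugate in $\mathrm{UH}$.

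\textbf{Reverse direction.} Suppose the time-1 maps of $\psi_{0,t}$ and $\psi_{1,t}$ are conjugate in $\mathrm{UH}$. Consider first the flat connection $\mathfrak{H}_{0}$ on all of $\R\times \R/\Z$ whose potential is $\mathfrak{a}_{0}=H_{0,t}\d t$; its monodromy around every slice $\set{s}\times \R/\Z$ is the time-1 class of $\psi_{0,t}$ in $\mathrm{UH}$, call it $\varphi$. By hypothesis, the time-1 map of $\psi_{1,t}$ equals $c\varphi c^{-1}$ for some $c\in \mathrm{UH}$. Apply the correction procedure of \S\ref{sec:corr-flat-conn} to the restriction of $\mathfrak{H}_{0}$ to the half-cylinder $(-\infty,0]\times \R/\Z$ with the Hamiltonian $H_{1,t}$: this yields a flat connection $\mathfrak{H}_{1}$ on that half-cylinder which agrees with $\mathfrak{H}_{0}$ near $s=0$ and has potential $H_{1,t}\d t$ for $s\le -1$. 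Gluing $\mathfrak{H}_{1}$ on $(-\infty,0]\times \R/\Z$ with the unmodified $\mathfrak{H}_{0}$ on $[0,\infty)\times \R/\Z$ produces a smooth flat connection on $\R\times \R/\Z$ satisfying both \ref{item:F1} and \ref{item:F2}, hence an element of $\mathscr{C}(H_{0,t},H_{1,t})$.

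\textbf{Main obstacle.} The geometric content is entirely packaged in the correction procedure of \S\ref{sec:corr-flat-conn}, which in turn reduces via Lemma \ref{lemma:coord-monod} to the purely group-theoretic fact that conjugacy in $\mathrm{UH}$ can be realized by a gauge transformation homotopic to the identity. The only potentially delicate point is the bookkeeping that the monodromy of $\mathfrak{a}=H_{i,t}\d t$ around a circle in the cylinder recovers exactly the class of $\psi_{i,t}$ in $\mathrm{UH}$ rather than the class of the reparametrized isotopy $\psi_{i,\beta(3t-1)}$; since $\beta(3t-1)$ is a monotone surjection $[0,1]\to[0,1]$ fixing the endpoints, the two represent the same element of $\mathrm{UH}$, so this concern is harmless.
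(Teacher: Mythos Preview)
Your proof is correct and follows essentially the same approach as the paper: the forward direction uses flatness to show that parallel transport conjugates the monodromies around the two ends, and the reverse direction invokes the correction procedure of \S\ref{sec:corr-flat-conn} applied to one end of the constant connection $\mathfrak{a}_0=H_{0,t}\d t$. The only minor point is that your glued connection satisfies \ref{item:F2} on $\{s<-1\}$ rather than $\{s<0\}$, but a translation in $s$ fixes this immediately.
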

\begin{proof}
  If the time-1 maps are conjugate, then there exists a connection $\mathfrak{H}$ satisfying \ref{item:F1} and \ref{item:F2}; this follows from the construction in \S\ref{sec:corr-flat-conn}. On the other hand, because the connection is flat, the monodromy along the path $s\mapsto (s,0)$ conjugates the monodromy along the loops $t\mapsto (0,t)$ and $t\mapsto (1,t)$. These monodromies are $\psi_{\beta(3t-1),1}$ and $\psi_{\beta(3t-1),0}$, and hence the reverse implication holds.
\end{proof}

A priori, the map $\mathrm{C}(\mathfrak{H})$ is sensitive to the connected component of $\mathfrak{H}$ in the space $\mathscr{C}(\psi_{1,t},\psi_{0,t})$. However, we gain a rough understanding of the connectivity of this space from the following lemma:
\begin{lemma}\label{lemma:torsor}
  Let $\mathfrak{H}_{0}$ be the connection whose potential is $\mathfrak{a}=H_{0,t}\d t$. Then any $\mathfrak{H}\in \mathscr{C}(\psi_{1,t},\psi_{0,t})$ is of the form $g_{*}\mathfrak{H}_{0}$ where $g_{s,t}$ is the monodromy of $\mathfrak{H}$ along the path joining $(1,t)$ to $(s,t)$.
\end{lemma}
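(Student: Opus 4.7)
The plan is to define $g_{s,t}$ explicitly and then to check that $g_{*}\mathfrak{H}_{0}=\mathfrak{H}$ as Ehresmann connections by comparing parallel transport along a generating set of curves.

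First I would observe that the discussion in \S\ref{sec:monodromy-connection} produces, for each $(s,t)$, the monodromy of $\mathfrak{H}$ along the horizontal path $s'\in[1,s]\mapsto(s',t)$ as a contact-at-infinity Hamiltonian isotopy; the path depends smoothly on $(s,t)$, so it delivers a smooth family $g_{s,t}\in\mathrm{UH}$ with $g_{1,t}=\id$. Because the connection potential of $\mathfrak{H}$ has no $\d s$-component on $\set{s>1}$ (condition \ref{item:F1}), it automatically holds that $g_{s,t}=\id$ there; similarly, on $\set{s<0}$ the value $g_{s,t}$ equals the global $s$-independent ``jump'' between the $H_{1,t}\d t$ regime and the region $\set{s\geq 1}$. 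So $g$ is a well-defined smooth family in $\mathrm{UH}$.

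Next I would compare $\mathfrak{H}$ and $\mathfrak{H}':=g_{*}\mathfrak{H}_{0}$ by checking that parallel transport agrees along (a) the horizontal rays $s'\in[1,s]\mapsto(s',t)$ and (b) the vertical loop $t\mapsto(1,t)$. For (a), $\mathfrak{H}_{0}$ has trivial $\bd_{s}$-transport, so transporting $w$ from $(1,t)$ to $(s,t)$ under $\mathfrak{H}'$ yields $g_{s,t}(w)\circ g_{1,t}^{-1}(w)=g_{s,t}(w)$; by construction this also is parallel transport under $\mathfrak{H}$. For (b), since $g_{1,t}=\id$ the pushforward $\mathfrak{H}'$ agrees with $\mathfrak{H}_{0}$ over $\set{s=1}$, whose vertical monodromy is the time-$1$ flow of $H_{0,t}$; the same is true for $\mathfrak{H}$ because the potential of $\mathfrak{H}$ equals $H_{0,t}\d t$ on $\set{s\geq 1}$.

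To conclude $\mathfrak{H}=\mathfrak{H}'$, I would invoke flatness: both connections have the same $\bd_s$-horizontal direction at each point, and the same vertical monodromy $\phi_{s,t}$ at $s=1$; flatness then determines the vertical monodromy at general $s$ via the closed-loop identity $\phi_{s,t}=g_{s,t}\circ\psi_{0,\beta(3t-1)}\circ g_{s,0}^{-1}$, so the vertical distributions also agree everywhere. Thus the two Ehresmann connections coincide.

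The main technical obstacle is the final step of turning ``agreement on two families of generating curves'' into ``agreement of horizontal distributions everywhere''. This hinges on using flatness to express any horizontal direction as a Lie bracket of horizontal lifts and/or as parallel transport of a fixed local frame; equivalently, one passes to the simply connected cover where a flat connection is rigidly determined by its monodromy from a base point. The rest of the argument is bookkeeping about normalization and the compatibility of the pushforward operation with monodromy (which follows from the computation in Lemma \ref{lemma:coord-monod}).
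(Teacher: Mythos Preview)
Your proposal is correct and follows essentially the same approach as the paper. The paper's proof is a terse two-line version of what you wrote: it asserts that $g$ sends $\mathfrak{H}_{0}$-flat paths to $\mathfrak{H}$-flat paths and that ``it suffices to prove this for paths lying in the slice $s=1$ and the slices $t=\mathrm{const}$,'' which is exactly your checks (a) and (b); the flatness argument you spell out in your final paragraph is what the paper leaves implicit in the phrase ``it suffices.''
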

\begin{proof}
   As in \S\ref{sec:coord-transf}, $g$ sends paths which are flat for $\mathfrak{H}_{0}$ to paths which are flat for $\mathfrak{H}$; it suffices to prove this for paths lying in the slice $s=1$ and the slices $t=\mathrm{const}$, in which case it is obvious. This completes the proof.
\end{proof}

\emph{Remark}. While not needed for our subsequent arguments, it is noteworthy that $\mathrm{C}(\mathfrak{H})$ is always an isomorphism (after passing to homology). Indeed, the inverse is of the same form and is equal to $\mathrm{C}(\mathfrak{H}')$ where $\mathfrak{H}'$ is obtained by reflecting $\mathfrak{H}$ about $t=1/2$. To prove this, one uses standard Floer theory gluing to prove:
\begin{equation*}
  \mathrm{C}(\mathfrak{H}')\circ \mathrm{C}(\mathfrak{H})=\mathrm{C}(\mathfrak{H}'\#\mathfrak{H})
\end{equation*}
where $\mathfrak{H}'\#\mathfrak{H}$ is the flat connection in $\mathscr{C}(\psi_{0,t},\psi_{0,t})$ obtained by gluing:
\begin{equation}\label{eq:gluing-connections}
  (\mathfrak{H}'\#\mathfrak{H})_{s,t,w}=\left\{
    \begin{aligned}
      &\mathfrak{H}'_{s,t,w}&&\text{ if }s\le 1,\\
      &\mathfrak{H}_{s-2,t,w}&&\text{ if }s\ge 2.
    \end{aligned}
  \right.
\end{equation}
Using the same idea as Lemma \ref{lemma:torsor}, one proves that $\mathfrak{H}'\#\mathfrak{H}=g_{*}\mathfrak{H}_{0},$ where $g_{s,t}=\id$ for $s\not\in [0,3]$. Moreover, one uses the reflection symmetry to show that $g_{s,t}$ is homotopic to the constant map $\id$, preserving $g_{s,t}=\id$ for $s\not\in [0,3]$ during the homotopy. Thus $\mathfrak{H}'\#\mathfrak{H}$ lies in the same connected component of $\mathfrak{H}_{0}$, and so $\mathrm{C}(\mathfrak{H}'\#\mathfrak{H})=\mathrm{C}(\mathfrak{H}_{0})=\id,$ as desired.

\subsubsection{Action on symplectic cohomology}
\label{sec:acti-sympl-cohom}

It is important to show that $\mathrm{C}(\mathfrak{H})$ acts identically on the colimit, in the following sense:
\begin{lemma}
  Let $\mathfrak{H}\in \mathscr{C}(\psi_{1,t},\psi_{0,t})$. Then the following triangle commutes:
  \begin{equation*}
    \begin{tikzcd}
      &{\mathrm{SH}}\arrow[from=2-1,"{\mathfrak{c}}",out=90,in=180]\arrow[from=2-2,"{\mathfrak{c}}"]\\
      {\mathrm{HF}(\psi_{0,t})}\arrow[r,"{\mathrm{C}(\mathfrak{H})}"] &{\mathrm{HF}(\psi_{1,t})},
    \end{tikzcd}
  \end{equation*}
  where the maps to $\mathrm{SH}$ are the structure maps from the universal property of the colimit.
\end{lemma}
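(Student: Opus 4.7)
The plan is a gluing-and-deformation argument, reducing the identity $\mathfrak{c}\circ \mathrm{C}(\mathfrak{H})=\mathfrak{c}$ to a comparison of two chain-level maps defined by different Hamiltonian connections on a single cylinder. Using Lemma \ref{lemma:final-cofinal} together with Lemma \ref{lemma:technical-filtering}\ref{f1}, both structure maps to $\mathrm{SH}$ factor through $\mathrm{HF}(\varphi')$ for some $\varphi':=\varphi_{t}^{-1}\circ R^{\alpha}_{st}\in \mathscr{C}^{\times}$ with $s$ large; I would fix non-negative continuation data $\eta_{0}$ from $\psi_{0,t}$ to $\varphi'$ and $\eta_{1}$ from $\psi_{1,t}$ to $\varphi'$. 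It then suffices to verify $\mathfrak{c}_{\eta_{1}}\circ \mathrm{C}(\mathfrak{H})=\mathfrak{c}_{\eta_{0}}$ at the level of $\mathrm{HF}(\varphi')$.

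First I would apply standard Floer gluing, in the spirit of Lemma \ref{lemma:gluing-lemma}, to realize $\mathfrak{c}_{\eta_{1}}\circ \mathrm{C}(\mathfrak{H})$ at the chain level as the count $\mathrm{C}(\Xi_{1})$ associated to a single Hamiltonian connection $\Xi_{1}$ on $\R\times \R/\Z$ obtained by concatenating $\mathfrak{H}$ with the continuation connection underlying $\eta_{1}$. Its asymptotics are $\psi_{0,t}$ at $s\to +\infty$ and $\varphi'$ at $s\to -\infty$. The other side $\mathfrak{c}_{\eta_{0}}$ equals $\mathrm{C}(\Xi_{0})$, where $\Xi_{0}$ is the continuation connection for $\eta_{0}$, with the same asymptotics.

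Next I would construct a $1$-parameter family $\Xi_{\tau}$, $\tau\in [0,1]$, of Hamiltonian connections interpolating $\Xi_{0}$ and $\Xi_{1}$ with fixed asymptotics. Because the asymptotic monodromies of $\Xi_{0}$ and $\Xi_{1}$ coincide in $\mathrm{UH}$, an extension of Lemma \ref{lemma:torsor} expresses $\Xi_{1}=g_{*}\Xi_{0}$ for a smooth family $g$ valued in $\mathrm{UH}$ that equals $\id$ at the cylindrical ends. Simple connectivity of $\mathrm{UH}$ furnishes a homotopy of $g$ to $\id$ relative to the ends, which transports to the required family $\Xi_{\tau}$. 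By construction, the curvature of $\Xi_{\tau}$ stays supported in a fixed compact region, so Lemma \ref{lemma:basic-facts-floers-equation} produces uniform a priori energy bounds, and semipositivity precludes bubbling. A parametric moduli space argument modeled on Lemma \ref{lemma:deformation} then yields a chain homotopy $\mathrm{K}$ with $\mathrm{C}(\Xi_{1})-\mathrm{C}(\Xi_{0})=\d\mathrm{K}+\mathrm{K}\d$, giving the required identity after passing to homology.

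The main obstacle is the construction of the interpolation $\Xi_{\tau}$ while keeping the Floer problem well-posed. Although $\Xi_{0}$ is continuation data in the usual non-negative sense and $\Xi_{1}$ contains a long flat segment where the asymptotic system changes, the simple connectivity of $\mathrm{UH}$ bridges this gap; the delicate point is ensuring that the monodromy and curvature of $\Xi_{\tau}$ remain compactly supported throughout the homotopy, so that the energy estimates of Lemma \ref{lemma:basic-facts-floers-equation} hold uniformly in $\tau$ and the parametric transversality package (as in Lemma \ref{lemma:transversality-continuation}) can be brought to bear. Once these estimates are in place, the rest is a routine application of the machinery already developed in this section.
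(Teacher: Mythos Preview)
Your outline correctly identifies the shape of the argument, but the central step is not actually carried out, and the way you frame it hides a genuine difficulty.

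The problematic step is your claim that ``an extension of Lemma \ref{lemma:torsor} expresses $\Xi_{1}=g_{*}\Xi_{0}$'' and that the resulting deformation $\Xi_{\tau}$ has curvature supported in a fixed compact region. Two issues arise. First, Lemma \ref{lemma:torsor} is a statement about \emph{flat} connections: the construction of $g$ uses that monodromy depends only on homotopy classes of paths. Your $\Xi_{0}$ and $\Xi_{1}$ are not flat---each contains a continuation piece with nontrivial (non-positive, but nonzero) curvature outside a compact set---so there is no obvious extension of the torsor lemma. Second, and more seriously, even $\Xi_{0}$ itself does not have compactly supported curvature, so your stated criterion for uniform energy bounds is already false at $\tau=0$. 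What is actually needed is that the curvature remains \emph{non-positive} outside a compact set throughout the homotopy, and there is no reason a naive deformation of $g$ in $\mathrm{UH}$ should preserve this sign condition. You acknowledge this as ``the main obstacle'' but do not resolve it.

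The paper's proof addresses exactly this point, and the mechanism is worth noting. Rather than deforming the glued connection directly, it separates the two pieces and deforms them in tandem: the flat piece $g^{\sigma}_{*}\mathfrak{H}_{0}$ is deformed through flat connections (so its curvature stays zero), while the continuation piece $\mathfrak{K}^{\sigma}$ is rebuilt at each stage with a \emph{varying} output system $\eta^{\sigma}_{t}$ (the monodromy of the deformed flat piece). The key trick is to choose the target Reeb speed $c$ large enough that $s\mapsto R^{\alpha}_{cs}(\eta^{\sigma}_{s})^{-1}\eta^{\sigma}_{1}$ is positive for \emph{every} $\sigma$; an explicit curvature computation then shows each $\mathfrak{K}^{\sigma}$ is non-positively curved outside a compact set. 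This is precisely the missing ingredient in your sketch: one must let the intermediate target move and dominate it uniformly by a fast enough Reeb flow.
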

\begin{proof}
  The idea of the proof is to show that $\mathfrak{c}\circ \mathrm{C}(\mathfrak{H})$ is chain homotopic to $\mathfrak{c}$ via the usual parametric moduli space idea. The proof is unfortunately rather long and technical, as we need to explicitly construct a path between two connections being careful to never introduce any positive curvature in the non-compact end.

  The first step is a small trick; we will correct $\mathfrak{H}$ by a time reparametrization. Let $F(s,t,w)=(s,f(t),w)$ where $f:[0,1]\to [0,1]$ is the function illustrated in Figure \ref{fig:deforming-function}. Note that $F$ is homotopic to the identity. Redefine:
  \begin{equation*}
    \mathfrak{H}=F^{*}\mathfrak{H},
  \end{equation*}
  which is easily seen to be a flat Hamiltonian connection in the same connected component of $\mathfrak{H}$, so $\mathrm{C}(\mathfrak{H})$ is unchanged under this replacement.

  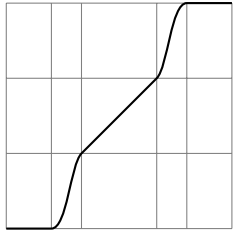
\begin{figure}[h]
    \centering
    \begin{tikzpicture}
      \draw[black!50!white] (0,0) grid (3,3) (0.6,0)--+(0,3) (2.4,0)--+(0,3);
      \draw[line width=0.8pt] (0,0)to(0.6,0)to[out=0,in=225,looseness=0.5](1,1)to(2,2)to[out=45,in=180,looseness=0.5](2.4,3)to(3,3);
    \end{tikzpicture}
    \caption{Function $f(t)$ used to deform the initial connection $\mathfrak{H}$. It is important that $f$ is increasing, $f(t)=1$ holds on $[2/3+\delta,1]$, $f(t)=0$ on $[0,1/3-\delta]$, and that $f(t)=t$ for all $t\in [1/3,2/3]$. In particular, $\beta(3f(t)-1)=\beta(3t-1)$.}
    \label{fig:deforming-function}
  \end{figure}
  
  From Lemma \ref{lemma:torsor}, we know that $\mathfrak{H}=g_{*}\mathfrak{H}_{0}$ where $g_{s,t}$ is the monodromy of $\mathfrak{H}$ from $(1,t)$ to $(s,t)$. Because of our earlier replacement, we know that:
  \begin{enumerate}
  \item\label{gst1} $g_{s,t}=g_{s,0}$ for $t\le 1/3-\delta$ and $g_{s,t}=g_{s,1}$ for $t\ge 2/3+\delta$, and,
  \item\label{gst2} $\bd_{s}g_{s,t}=0$ for $s\le 0$ and $g_{s,t}=\id$ for $s\ge 1$.
  \end{enumerate}

  Next we deform:
  \begin{equation*}
    g^{\sigma}_{s,t}=\left\{
      \begin{aligned}
        &g_{\sigma\beta(s)+(1-\sigma)s,t}&&\text{ for }\sigma\in [0,1],\\
        &g_{(2-\sigma)\beta(s)+(\sigma-1),t}&&\text{ for }\sigma\in [1,2],
      \end{aligned}
    \right.
  \end{equation*}
  noting that properties \eqref{gst1} and \eqref{gst2} are preserved along the deformation. Note that $g^{2}_{s,t}=\id$ holds identically.

  Let $\mathfrak{H}^{\sigma}=(g^{\sigma})_{*}\mathfrak{H}_{0}$, so that $\mathfrak{H}^{\sigma}=\mathfrak{H}$ for $\sigma=0$ while $\mathfrak{H}^{\sigma}=\mathfrak{H}_{0}$ for $\sigma=2$. The monodromy isotopy of $\mathfrak{H}^{\sigma}$ over the loop $\set{1}\times \R/\Z$ is equal to $\psi_{0,\beta(3t-1)}$, while the monodromy isotopy over $\set{0}\times \R/\Z$ is equal to:
  \begin{equation*}
    \eta^{\sigma}_{t}=g_{0,t}^{\sigma}\psi_{0,\beta(3t-1)}(g_{0,0}^{\sigma})^{-1}
  \end{equation*}
  By construction, $\eta^{\sigma}_{t}=\psi_{1,\beta(3t-1)}$ for $\sigma=0$ and $\eta_{t}^{\sigma}=\psi_{0,\beta(3t-1)}$ for $\sigma=2$.

  Pick an auxiliary Reeb flow $R^{\alpha}$ and pick a speed $c$ large enough that: $$s\mapsto R_{c s}^{\alpha}(\eta_{s}^{\sigma})^{-1}\eta_{1}^{\sigma}$$ has a positive ideal restriction, for each $\sigma\in [0,2]$. Define:
  \begin{equation*}
    \xi^{\sigma}_{s,t}=R_{c \beta(-s)\beta(3t-1)}^{\alpha}(\eta_{\beta(-s)t}^{\sigma})^{-1}\eta_{t}^{\sigma},
  \end{equation*}
  which plays the role of the reparametrized continuation data from $\eta_{t}^{\sigma}$, for $s\ge 1$, to $R_{c\beta(3t-1)}^{\alpha}$, for $s\le 0$, for each $\sigma$, similarly to \S\ref{sec:cont-cylind}.

  Let $Y_{s,t}^{\sigma},X_{s,t}^{\sigma}$ be the generators of $\xi_{s,t}^{\sigma}$, as in \S\ref{sec:cont-cylind}, and let $K_{s,t},H_{s,t}$ be their normalized generators. Define the Hamiltonian connection:
  \begin{equation*}
    \mathfrak{K}^{\sigma}=\rho(t)K_{s,t}^{\sigma}\d s+H_{s,t}^{\sigma}\d t,
  \end{equation*}
  over the cylinder, where $\rho(t)=\beta(3-3t)$ as in \S\ref{sec:cont-cylind}. Pick $\delta$ small enough in the definition of the function $f(t)$ so that $\rho'(t)=0$ holds on $[0,2/3+\delta]$.

  Importantly, by construction, the moduli space $\mathscr{M}(\mathfrak{K}^{0})$ is precisely the moduli space of continuation cylinders for the continuation data $R^{\alpha}_{cst}\psi_{1,st}^{-1}\psi_{1,t}$, as in \S\ref{sec:cont-cylind}. Similarly, $\mathscr{M}(\mathfrak{K}^{2})$ is the moduli space of continuation cylinders for the continuation data $R^{\alpha}_{cst}\psi_{0,st}^{-1}\psi_{0,t}$.

  For $\sigma\in (0,2)$, $\mathscr{M}(\mathfrak{K}^{\sigma})$ is not exactly a moduli space of continuation cylinders, however, it is extremely similar to one. In particular, $\mathfrak{K}^{\sigma}$ has nonpositive curvature outside of a compact set, which we will demonstrate momentarily, which implies solutions of $\mathscr{M}(\mathfrak{K}^{\sigma})$ satisfy a priori energy estimates like solutions of the continuation cylinder equation (see \S\ref{sec:cont-cylind}).

  To see that $\mathfrak{K}^{\sigma}$ has nonpositive curvature $\mathfrak{r}=r\d s\wedge \d t$, we compute:
  \begin{equation*}
    \begin{aligned}
      r      &=\textstyle\bd_{s}H^{\sigma}-\rho(t)\bd_{t}K^{\sigma}-\rho'(t)K^{\sigma}-\rho(t)\omega(X_{H},X_{K})\\      &=-\rho'(t)K^{\sigma}_{s,t}+\rho(t)(\bd_{s}H^{\sigma}-\bd_{t}K^{\sigma}-\omega(X_{H^{\sigma}},X_{K^{\sigma}}))\\
      &=-\rho'(t)K^{\sigma}_{s,1},
    \end{aligned}
  \end{equation*}
  where we have used $\pd{H^{\sigma}}{s}=\rho(t)\pd{H^{\sigma}}{s}$ since $H^{\sigma}=0$ holds when $\rho(t)\ne 1$, $K^{\sigma}_{s,t}=K^{\sigma}_{s,1}$ when $\rho'(t)\ne 0$, and the well-known curvature identity when $K^{\sigma},H^{\sigma}$ are the generators for a family $\xi_{s,t}^{\sigma}$.

  Since $K^{\sigma}_{s,1}$ is the generator of $s\mapsto \xi_{s,1}^{\sigma}$, which is a non-positive path in the contactomorphism group, and hence $K^{\sigma}_{s,1}\le 0$ holds outside of a compact set, and hence $-\rho'(t)K^{\sigma}_{s,t}\le 0$ since $\rho'(t)\le 0$. Thus the curvature is non-positive outside of a compact set, and hence the Lemma \ref{lemma:basic-facts-floers-equation} implies solutions will satisfy an an a priori energy estimate.
  
  Finally, consider the family of connections: $$\mathfrak{G}^{\sigma}=(\mathfrak{K}^{\sigma})\# (g^{\sigma}_{*}\mathfrak{H}_{0}),$$ where the $\#$ operation is defined in \eqref{eq:gluing-connections}. This family of connections still has non-positive curvature outside of a compact set, since $g^{\sigma}_{*}\mathfrak{H}_{0}$ and $\mathfrak{K}^{\sigma}$ both have this property.

  The rest of the proof is based on the usual Floer theory argument fact that counting rigid elements in $\mathscr{M}(\mathfrak{G}^{0})$ and $\mathscr{M}(\mathfrak{G}^{2})$ define chain homotopic maps $\mathrm{HF}(\psi_{0,t})\to \mathrm{HF}(R_{ct})$. Since the proof is already long enough, we will be extremely brief for this part of the proof, as it is quite similar to many other arguments in this paper and in Floer theory in general.
  
  Note that $\mathfrak{G}^{2}=\mathfrak{K}^{2}$, and hence counting the rigid elements in $\mathscr{M}(\mathfrak{G}^{2})$ in the usual way defines a continuation map:
  \begin{equation*}
    \mathfrak{c}:\mathrm{HF}(\psi_{0,t})\to \mathrm{HF}(R_{ct}^{\alpha})
  \end{equation*}
  On the other hand, $\mathfrak{G}^{0}=\mathfrak{K}^{0}\# \mathfrak{H}$, and hence counting the rigid elements in $\mathscr{M}(\mathfrak{G}^{0})$ in the usual way defines the composite map $\mathfrak{c}\circ \mathrm{C}(\mathfrak{H})$:
  \begin{equation*}
    \mathrm{HF}(\psi_{0,t})\to \mathrm{HF}(\psi_{1,t})\to \mathrm{HF}(R_{ct}).
  \end{equation*}
  Thus, by the usual chain homotopy invariance of operations defined in Floer theory using homotopic data, it holds that $\mathfrak{c}=\mathfrak{c}\circ \mathrm{C}(\mathfrak{H})$, where the continuation maps land in $\mathrm{HF}(R_{ct})$. The desired result then follows, since the continuation map into $\mathrm{SH}$ factors through the continuation into $\mathrm{HF}(R_{ct})$. This completes the proof.
\end{proof}

\subsubsection{Equivalence of spectral invariants}
\label{sec:equiv-spectr-invar}

In this section we will apply \S\ref{sec:acti-sympl-cohom} to deduce two variations on the spectral invariants of a contact isotopy are actually the same.

For a contact isotopy $\varphi_{t}$ and a choice of Reeb flow $R^{\alpha}_{s}$ of the ideal boundary, both extended to the filling $W$, consider the two Floer cohomologies:
\begin{enumerate}
\item $V_{s}=\mathrm{HF}(\varphi_{t}^{-1}\circ R^{\alpha}_{st})$,
\item $V_{s}'=\mathrm{HF}(R^{\alpha}_{st}\circ \varphi_{t}^{-1})$.
\end{enumerate}
The structure maps $V_{s}\to \mathrm{SH}$ and $V_{s}'\to \mathrm{SH}$ enable us to define spectral invariants for non-zero classes $\zeta\in \mathrm{SH}/\mathrm{SH}_{e}$, as in \S\ref{sec:spectr-invar-non-intro} and \S\ref{sec:finality-cofinality}. Namely, we consider the infimal $s$ for which $\zeta$ lies in the image of the map to $\mathrm{SH}$. Let us call these two spectral invariants $c_{\alpha}(\zeta;\varphi_{t})$ and $c_{\alpha}'(\zeta;\varphi_{t})$.

We claim $c_{\alpha}(\zeta;\varphi_{t})=c_{\alpha}'(\zeta;\varphi_{t})$. The key lemma is:
\begin{lemma}
  For any two isotopies $\varphi_{t},\phi_{t}$, the isotopies $\varphi_{t}\circ \phi_{t}$ and $\phi_{t}\circ \varphi_{t}$ have conjugate time-1 maps in the universal cover.
\end{lemma}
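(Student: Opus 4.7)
The plan is to observe that this is a purely group-theoretic identity: in any group one has $a^{-1}(ab)a=ba$, so $ab$ and $ba$ are automatically conjugate via $a$. The only real work is to identify the group operation on $\mathrm{UH}$ in an explicit form and then apply the observation with $a=[\varphi_t]$ and $b=[\phi_t]$.

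First I would recall from \S\ref{sec:groups-of-ham-diff} that the group operation on $\mathrm{UH}$ is represented by \emph{pointwise} composition of isotopies, i.e.\ $[\varphi_t]\cdot[\psi_t]=[\varphi_t\psi_t]$. Well-definedness on equivalence classes is immediate: if $\varphi_{s,t}$ and $\psi_{s,t}$ are homotopies in $\mathrm{HI}$ with fixed endpoints, then $(s,t)\mapsto \varphi_{s,t}\psi_{s,t}$ is also a homotopy with fixed endpoints, and its time-1 map is $\varphi_1\psi_1$, independent of $s$. Similarly the inverse in $\mathrm{UH}$ is given by pointwise inversion, $[\varphi_t]^{-1}=[\varphi_t^{-1}]$, since the path $t\mapsto \varphi_t\varphi_t^{-1}=\id$ is the constant isotopy at the identity, which represents the identity element of $\mathrm{UH}$.

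With these identifications, the desired conjugacy follows from the pointwise identity
\begin{equation*}
  \varphi_t^{-1}\cdot(\varphi_t\phi_t)\cdot\varphi_t=\phi_t\varphi_t,
\end{equation*}
which holds at every $t$ on the nose. Passing to equivalence classes yields $[\varphi_t]^{-1}\,[\varphi_t\phi_t]\,[\varphi_t]=[\phi_t\varphi_t]$ in $\mathrm{UH}$, which is precisely the statement that the time-1 elements of $\varphi_t\phi_t$ and $\phi_t\varphi_t$ in the universal cover are conjugate, with explicit conjugating element $[\varphi_t]$.

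I do not anticipate any genuine obstacle. The only small checks are that all isotopies written above really do lie in $\mathrm{HI}$, which is automatic since $\mathrm{HI}$ is closed under pointwise composition and inversion; and that one uses the pointwise product rather than concatenation to define the group law on $\mathrm{UH}$. The two conventions give the same group, but the pointwise product makes the identity $\varphi_t^{-1}\varphi_t\phi_t\varphi_t=\phi_t\varphi_t$ literal rather than requiring a fixed-endpoint homotopy.
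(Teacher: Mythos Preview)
Your argument is correct and, in fact, cleaner than the paper's. Both proofs yield the same conjugating element $[\varphi_t]\in\mathrm{UH}$, but the routes differ. You observe directly that with the pointwise product on $\mathrm{UH}$ the identity $\varphi_t^{-1}(\varphi_t\phi_t)\varphi_t=\phi_t\varphi_t$ holds on the nose, so the conjugacy is immediate once one knows the group law. The paper instead proceeds in two steps: first it invokes the ``clear'' fact that $\varphi_t\phi_t$ and $\varphi_1^{-1}\varphi_t\phi_t\varphi_1$ are conjugate in $\mathrm{UH}$ (conjugation by the time-$1$ diffeomorphism), and then exhibits the explicit fixed-endpoint homotopy
\[
  (\sigma,t)\mapsto \varphi_\sigma^{-1}\varphi_{\sigma t}\,\phi_t\,\varphi_\sigma\varphi_{\sigma t}^{-1}\varphi_t
\]
between $\phi_t\varphi_t$ (at $\sigma=0$) and $\varphi_1^{-1}\varphi_t\phi_t\varphi_1$ (at $\sigma=1$). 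Your approach buys simplicity by appealing to the standard group structure on the universal cover; the paper's approach makes the homotopy explicit, which is perhaps more self-contained given that \S\ref{sec:groups-of-ham-diff} does not spell out the product on $\mathrm{UH}$.
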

\begin{proof}
  Clearly $\varphi_{t}\circ \phi_{t}$ and $\varphi_{1}^{-1}\circ \varphi_{t}\circ \phi_{t}\circ \varphi_{1}$ have conjugate time-1 maps in the universal cover. However, the deformation:
  \begin{equation*}
    \varphi_{\sigma}^{-1}\circ \varphi_{\sigma t}\circ \phi_{t}\circ \varphi_{\sigma}\circ \varphi_{\sigma t}^{-1}\circ \varphi_{t}
  \end{equation*}
  has fixed endpoints, namely $\id$ at $t=0$ and $\phi_{1}\circ \varphi_{1}$ at $t=1$. Thus it follows that the isotopies at $\sigma=0$ and $\sigma=1$ have the same time-1 map in the universal cover. The desired result follows.
\end{proof}

By \S\ref{sec:space-conn-satisfy}, there is some flat connection $\mathfrak{H}\in \mathscr{C}(\varphi_{t}\circ \phi_{t},\phi_{t}\circ \varphi_{t})$. Then the map $\mathrm{C}(\mathfrak{H}):\mathrm{HF}(\varphi_{t}\circ \phi_{t})\to \mathrm{HF}(\phi_{t}\circ \varphi_{t})$ is defined, and by \S\ref{sec:acti-sympl-cohom} we have:
\begin{lemma}\label{lemma:swap}
  If $\zeta\in \mathrm{SH}$ lies in the image of $\mathrm{HF}(\phi_{t}\circ \varphi_{t})\to \mathrm{SH}$, then it is also lies in the image of $\mathrm{HF}(\varphi_{t}\circ \phi_{t})\to \mathrm{SH}$.\hfill$\square$
\end{lemma}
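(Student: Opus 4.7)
The plan is to combine the three results established immediately above in this section. First, by the commutator lemma just proved, the isotopies $\phi_{t}\circ \varphi_{t}$ and $\varphi_{t}\circ \phi_{t}$ have conjugate time-$1$ maps in the universal cover $\mathrm{UH}$. Second, the existence criterion in \S\ref{sec:space-conn-satisfy} (flat connections on the cylinder exist interpolating between any two systems whose time-$1$ maps are conjugate in $\mathrm{UH}$) then guarantees that the space $\mathscr{C}(\varphi_{t}\circ\phi_{t},\phi_{t}\circ\varphi_{t})$ is nonempty. I would pick any $\mathfrak{H}$ in this space.

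The construction of \S\ref{sec:definition-map} then produces an operation
\begin{equation*}
  \mathrm{C}(\mathfrak{H}):\mathrm{HF}(\phi_{t}\circ\varphi_{t})\to \mathrm{HF}(\varphi_{t}\circ \phi_{t}),
\end{equation*}
and the triangle established in \S\ref{sec:acti-sympl-cohom} shows that $\mathfrak{c}\circ \mathrm{C}(\mathfrak{H})=\mathfrak{c}$ as maps into $\mathrm{SH}$. Thus, writing $\zeta=\mathfrak{c}(x)$ with $x\in \mathrm{HF}(\phi_{t}\circ\varphi_{t})$, the class $y=\mathrm{C}(\mathfrak{H})(x)\in \mathrm{HF}(\varphi_{t}\circ\phi_{t})$ satisfies $\mathfrak{c}(y)=\zeta$, which exhibits $\zeta$ in the image of the structure map from $\mathrm{HF}(\varphi_{t}\circ \phi_{t})$.

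There is no genuine obstacle at this point — all of the analytic and geometric work has already been absorbed into the lemmas of \S\ref{sec:space-conn-satisfy} and \S\ref{sec:acti-sympl-cohom}, the latter being where the real difficulty lay (constructing a homotopy of connections with controlled nonpositive curvature in the non-compact end, so that the a priori energy estimates persist along the deformation). Granted those ingredients, Lemma \ref{lemma:swap} is a bookkeeping consequence, and in particular the symmetric argument (swapping the roles of $\varphi_{t}$ and $\phi_{t}$) yields the reverse inclusion, so that the images of $\mathrm{HF}(\varphi_{t}\circ\phi_{t})$ and $\mathrm{HF}(\phi_{t}\circ\varphi_{t})$ in $\mathrm{SH}$ coincide, which in turn implies the equality $c_{\alpha}(\zeta;\varphi_{t})=c_{\alpha}'(\zeta;\varphi_{t})$ that was the original motivation.
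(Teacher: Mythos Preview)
Your proposal is correct and matches the paper's approach exactly: the paper also invokes the commutator lemma to get conjugate time-$1$ maps, picks a flat connection $\mathfrak{H}\in\mathscr{C}(\varphi_{t}\circ\phi_{t},\phi_{t}\circ\varphi_{t})$ via \S\ref{sec:space-conn-satisfy}, and then applies the commutative triangle of \S\ref{sec:acti-sympl-cohom} to conclude. Your write-up is in fact more explicit than the paper's, which simply places a $\square$ after stating these ingredients.
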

Applying this in an obvious way yields the equality
$c_{\alpha}(\zeta;\varphi_{t})= c_{\alpha}'(\zeta;\varphi_{t}),$ and the claim is proved.\hfill$\square$

\subsection{The pair-of-pants product}
\label{sec:pair-pants-product}

In this section we define pair-of-pants products using flat Hamiltonian connections in a similar manner to how we defined the operations $\mathrm{C}(\mathfrak{H})$. The connections used are explained in \S\ref{sec:flat-conn-pair}. We will show in \S\ref{sec:pair-pants-product-1} and \S\ref{sec:comm-with-cont} that these operations induce a well-defined product $\mathrm{SH}\otimes \mathrm{SH}\to \mathrm{SH}$. In \S\ref{sec:subsp-etern-class} we prove Theorem \ref{theorem:ideal-pop} that $\mathrm{SH}_{e}\subset \mathrm{SH}$ is an ideal, and in \S\ref{sec:subadd-spectr-invar} we prove Theorem \ref{theorem:sub-additive} on the sub-additivity of spectral invariants.

\subsubsection{Flat connections on the pair-of-pants surface}
\label{sec:flat-conn-pair}

Let $\Sigma=\C\setminus \set{0,1}$ be the pair-of-pants surface. Let $C_{0},C_{1},C_{\infty}$ be the cylindrical ends:
\begin{equation*}
  C_{0}=D(r)^{\times}\hspace{1cm}C_{1}=1+D(r)^{\times}\hspace{1cm}C_{\infty}=\C\setminus D(R),
\end{equation*}
where $D(r)^{\times}$ is the punctured disk and $r<1/2$ and $R>3/2$ are chosen so that the cylindrical ends are disjoint. We think of $C_{0},C_{1}$ as positive ends, conformally equivalent to $[0,\infty)\times \R/\Z$ via $re^{-2\pi (s+it)}$ and $1-re^{-2\pi(s+it)}$, respectively, and (the closure of) $\C\setminus D(R)$ as a negative end, conformally equivalent to $(-\infty,0]\times \R/\Z$ via $Re^{-2\pi (s+it)}$.

\begin{figure}[h]
  \centering
  \begin{tikzpicture}
    \path[every node/.style={fill,circle,inner sep=1pt}] (0,0)node{}--(1,0)node{};

    \draw[line width=.8pt] (1/3,0)--(2/3,0) (0,0) circle (1/3) (1,0) circle (1/3) (0,0) circle ({3/2});

  \end{tikzpicture}
  \caption{Pair of pants surface $\Sigma=\C\setminus \set{0,1}$. Shown are the three circles $\bd D(r)$, $1+\bd D(r)$, and $\bd D(R)$ and the connecting arc $[r,1-r]$.}
  \label{fig:popsurface}
\end{figure}
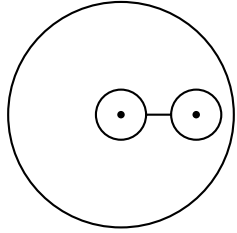

Let us introduce $\mathscr{C}(\psi_{\infty,t};\psi_{0,t},\psi_{1,t})$ as the space of flat Hamiltonian connections $\mathfrak{H}$, with normalized connection potential $\mathfrak{a}$, such that:
\begin{enumerate}
\item $\mathfrak{a}=H_{i,t}\d t$ in the cylindrical end $C_{i}$, where $H_{i,t}$ is the normalized generator for $\psi_{i,\beta(3t-1)}$,
\item the monodromy of $\mathfrak{H}$ along the arc $[r,1-r]$ joining $\bd C_{0}$ to $\bd C_{1}$ is the identity isotopy;
\end{enumerate}
see Figure \ref{fig:popsurface} for an illustration of the relevant paths on the pair-of-pants surface. Then we have the following structural result:
\begin{lemma}\label{lemma:pop-structural}
  The following holds:
  \begin{enumerate}[label=(\alph*)]
  \item\label{Sa} The space $\mathscr{C}(\psi_{\infty,t};\psi_{0,t},\psi_{1,t})$ is non-empty if and only if the time-1 map of $\psi_{\infty,t}$ is conjugate to the time-1 map of the product $\psi_{0,t}\psi_{1,t}$, where the time-1 maps are taken in $\mathrm{UH}$.

  \item\label{Sb} Moreover, if $\mathfrak{H}_{0},\mathfrak{H}_{1}$ both lie in $\mathfrak{C}(\psi_{\infty,t};\psi_{0,t},\psi_{1,t})$, then there is a smooth family interpolating of flat connections between $\mathfrak{H}_{0}$ and a connection of the form:
    \begin{equation*}
      (\iota_{*}\mathfrak{H}\#\mathfrak{H}_{1})_{z}=\left\{
        \begin{aligned}
          &(\iota_{*}\mathfrak{H})_{z}\text{ if }z\in C_{\infty},\\ 
          &(\mathfrak{H}_{1})_{z}\text{ if }z\in D(R),
        \end{aligned}
      \right.
    \end{equation*}
    where $\mathfrak{H}\in \mathscr{C}(\psi_{\infty,t},\psi_{\infty,t})$ and $\iota$ is the diffeomorphism:
    \begin{equation*}
      (w,s,t)\in W\times (-\infty,1]\times \R/\Z\mapsto (w,Re^{-2\pi (s-1+it)})\in W\times C_{\infty}.
    \end{equation*}
    The smooth family can be taken to be fixed in the ends $C_{0},C_{1},e^{2\pi}C_{\infty}$.
    
  \item\label{Sc} Finally, if $\mathfrak{H}_{0}\in \mathscr{C}(\psi_{0,t}^{0}\psi_{1,t}^{0};\psi_{0,t}^{0},\psi_{1,t}^{0})$, and $\psi_{0,t}^{\sigma},\psi_{1,t}^{\sigma}$ are smooth families, then there is a smooth family $\mathfrak{H}_{\sigma}\in \mathscr{C}(\psi_{0,t}^{\sigma}\psi_{1,t}^{\sigma};\psi_{0,t}^{\sigma},\psi_{1,t}^{\sigma})$ extending $\mathfrak{H}_{0}$.
  \end{enumerate}
\end{lemma}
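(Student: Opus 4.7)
The plan is to reduce everything to the monodromy representation of a flat Hamiltonian connection. Fix a basepoint $z_{0}$ on the gauge-fixing arc $[r,1-r]$, and let $\gamma_{0},\gamma_{1},\gamma_{\infty}$ denote loops at $z_{0}$ encircling the punctures $0,1,\infty$. The group $\pi_{1}(\Sigma,z_{0})$ is free on $\gamma_{0},\gamma_{1}$, with $\gamma_{\infty}$ expressible (up to orientation) as $(\gamma_{0}\gamma_{1})^{-1}$. Any flat Hamiltonian connection $\mathfrak{H}$ on $W\times\Sigma$ determines a monodromy representation $\pi_{1}(\Sigma,z_{0})\to\mathrm{UH}$, and by Lemma~\ref{lemma:coord-monod} two flat connections with the same monodromy representation differ by a global gauge transformation valued in $\mathrm{UH}$.

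For part~\ref{Sa}, the forward direction is essentially unpacking the definition: the constraint $\mathfrak{a}=H_{i,t}\d t$ on $C_{i}$ forces the monodromy around $\gamma_{i}$ to equal $\psi_{i,1}$ in $\mathrm{UH}$, and the relation in $\pi_{1}(\Sigma,z_{0})$ forces the monodromy around $\gamma_{\infty}$ to equal a conjugate of $(\psi_{0,1}\psi_{1,1})^{-1}$; the cylindrical form on $C_{\infty}$ simultaneously makes this monodromy conjugate to $\psi_{\infty,1}^{-1}$, from which the stated conjugacy follows. For the reverse direction, I would build a connection with potential $H_{i,t}\d t$ on $C_{0}\cup C_{1}$, with zero potential on a neighborhood of the arc $[r,1-r]$ (which automatically makes the arc flat), and extend it to a flat connection on a neighborhood of $\Sigma\setminus C_{\infty}$; the resulting connection has the prescribed monodromy around $\gamma_{0},\gamma_{1}$, and by hypothesis its monodromy around $\gamma_{\infty}$ is conjugate to $\psi_{\infty,1}$. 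The correcting construction of \S\ref{sec:corr-flat-conn}, applied inside $C_{\infty}$, then modifies the potential there to equal $H_{\infty,t}\d t$ without disturbing the rest.

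For part~\ref{Sb}, note that $\mathfrak{H}_{0}$ and $\mathfrak{H}_{1}$ have identical monodromy data along $\gamma_{0},\gamma_{1}$ and along the arc $[r,1-r]$, since both satisfy the same end normalizations and gauge-fixing condition. Applying Lemma~\ref{lemma:coord-monod} on the region $D(R)\setminus(C_{0}\cup C_{1})$ yields a smooth family $g$ there, valued in $\mathrm{UH}$ and equal to $\id$ on the cylindrical ends $C_{0},C_{1}$, such that $g_{*}\mathfrak{H}_{0}=\mathfrak{H}_{1}$. The restriction of $g$ to $\partial D(R)$ is a loop in $\mathrm{UH}$, which is contractible because $\mathrm{UH}$ is simply connected, so I can extend $g$ over $C_{\infty}$ with $g\to\id$ at infinity. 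The connection $g_{*}\mathfrak{H}_{0}$ then agrees with $\mathfrak{H}_{1}$ on $D(R)$, and its restriction to $C_{\infty}$ pulls back via $\iota$ to a cylinder connection $\mathfrak{H}\in\mathscr{C}(\psi_{\infty,t},\psi_{\infty,t})$. Contracting $g$ to the identity through $\mathrm{UH}$ furnishes the required smooth family interpolating $\mathfrak{H}_{0}$ and $\iota_{*}\mathfrak{H}\#\mathfrak{H}_{1}$.

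For part~\ref{Sc}, I would rerun the construction of \ref{Sa} in families: the smooth families $\psi_{i,t}^{\sigma}$ produce smooth families of normalized generators $H_{i,t}^{\sigma}$, and the conjugating elements in $\mathrm{UH}$ can be chosen smoothly in $\sigma$ using simple-connectedness of $\mathrm{UH}$, so each ingredient (the end potentials, the gauge along the arc, and the correction on $C_{\infty}$) depends smoothly on $\sigma$ by inspection. The main delicate point throughout is to perform the correcting construction on $C_{\infty}$ while preserving both the cylindrical normalization on $C_{0},C_{1}$ and the triviality of monodromy along the arc, which I would handle by localizing every gauge transformation via the cutoffs of \S\ref{sec:corr-flat-conn} and tracking supports carefully.
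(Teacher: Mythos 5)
Your overall strategy (reduce everything to monodromy representations and gauge transformations via Lemma~\ref{lemma:coord-monod}) is the same as the paper's, and your treatment of the ``only if'' half of \ref{Sa} matches the paper's argument. The genuine gap is in \ref{Sb}: your interpolating family is $(g_{\tau})_{*}\mathfrak{H}_{0}$ for a contraction $g_{\tau}$ of $g$ inside $\mathrm{UH}$, justified only by simple connectedness. An arbitrary null-homotopy will not keep $g_{\tau}=\id$ on $C_{0}\cup C_{1}$ and near infinity, so the intermediate connections are not of the standard form $H_{i,t}\,\d t$ in the ends and, in particular, the family is not fixed on $C_{0},C_{1},e^{2\pi}C_{\infty}$ --- which is part of the statement and is exactly what the later argument for $\mathrm{P}(\mathfrak{H}_{0})=\mathrm{C}(\mathfrak{H})\circ\mathrm{P}(\mathfrak{H}_{1})$ needs (fixed asymptotic systems, hence fixed Floer complexes, along the deformation). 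If you instead try to contract $g$ \emph{relative} to the ends, simple connectedness is not enough: collapsing the three ends of the pair-of-pants gives a sphere, and the obstruction to a rel-ends null-homotopy is a class in $\pi_{2}(\mathrm{UH})$, which is not known to vanish. The paper avoids this entirely by homotoping the domain rather than the target: with $g=\id$ on $C_{0}\cup C_{1}\cup[r,1-r]$, it chooses a squashing homotopy $f_{\sigma}:\Sigma\to\Sigma$ with $f_{1}(D(R))$ contained in that region and $f_{\sigma}=\id$ outside $D(e^{2\pi}R)$, and uses the family $(g\circ f_{\sigma})_{*}$; this stays flat, never touches the ends, and at $\sigma=1$ yields exactly $\iota_{*}\mathfrak{H}\#\mathfrak{H}_{1}$.

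Two smaller points. In \ref{Sc} the issue is not only smoothness in $\sigma$: the family must \emph{extend} the arbitrarily given $\mathfrak{H}_{0}$ at $\sigma=0$, and rerunning your \ref{Sa} recipe in families produces a family whose $\sigma=0$ member is whatever your recipe builds, not $\mathfrak{H}_{0}$. You still need the matching step the paper performs: take an auxiliary family $\mathfrak{H}'_{\sigma}$ (from \cite{alizadeh-atallah-cant} or the Kislev--Shelukhin strip trick), push it forward by a $g$ with $g_{*}\mathfrak{H}'_{0}=\mathfrak{H}_{0}$ and $g=\id$ on the ends and the arc, and then correct near $C_{\infty}$. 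Finally, in the ``if'' half of \ref{Sa}, the sentence ``extend it to a flat connection on a neighborhood of $\Sigma\setminus C_{\infty}$'' is precisely the nontrivial existence statement (the paper derives it from \ref{Sc} applied to the trivial connection); it can be made to work, e.g.\ using that $H_{i,t}$ vanishes for $t$ near $0$ and $1$ because of the $\beta(3t-1)$ reparametrization, but as written it is an assertion rather than a construction.
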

\begin{proof}
  First of all, the monodromy representation based at $(r,0)$ of: $$\mathfrak{H}\in \mathscr{C}(\psi_{\infty,t};\psi_{0,t},\psi_{1,t}),$$ valued in $\mathrm{UH}$ as in \S\ref{sec:groups-of-ham-diff}, sends the loop winding clockwise around $\bd C_{0}$ to $\psi_{0,t}$, and the loop going from $[r,1-r]$, then around $\bd C_{1}$, then back along $[r,1-r]$, to $\psi_{1,t}$, in $\mathrm{UH}$ (the actual monodromy isotopies will be time reparametrized versions, but time reparametrization does not affect their image in $\mathrm{UH}$). Note that it is for this reason that we require that the monodromy along the arc $[r,1-r]$ is the identity. The concatenation of these loops is freely homotopic to the loop winding around $\bd C_{\infty}$. Since $\mathfrak{H}$ is a flat Hamiltonian connection, it follows that $\psi_{\infty,t}$ is conjugate to $\psi_{0,t}\psi_{1,t}$ in $\mathrm{UH}$. This proves the ``only if'' part of \ref{Sa}. The ``if'' part of \ref{Sa} follows from \ref{Sc}, since one can take $\mathfrak{H}$ to the be trivial connection $\mathfrak{a}=0$ and $\psi_{0,t}^{0}=\psi_{1,t}^{0}=\id$, and the construction in \S\ref{sec:corr-flat-conn}.

  Moving on to \ref{Sb}, we argue as follows. By the above paragraph, $\mathfrak{H}_{0},\mathfrak{H}_{1}$ both have the same monodromy representation, and so by Lemma \ref{lemma:coord-monod} there exists a map $g:\Sigma\to \mathrm{UH}$ so that $g_{*}\mathfrak{H}_{1}=\mathfrak{H}_{0}$. Moreover, by the formula for $g$ given in the proof of Lemma \ref{lemma:coord-monod}, we may suppose that:
  \begin{equation}\label{eq:g-is-id-somewhere}
    \text{$g_{z}$ equals to the identity for $z\in C_{0}\cup C_{1}\cup [r,1-r]$.}
  \end{equation}
  There is a smooth homotopy of maps $f_{\sigma}$ satisfying:
  \begin{enumerate}
  \item $f_{0}=\id:\Sigma\to \Sigma$,
  \item $f_{\sigma}(C_{0}\cup C_{1}\cup [r,1-r])\subset C_{0}\cup C_{1}\cup [r,1-r]$ for all $\sigma$,
  \item $f_{1}(D(R))\subset C_{0}\cup C_{1}\cup [r,1-r]$,
  \item $f_{\sigma}(z)=z$ for $z\not\in D(e^{2\pi}R)$.
  \end{enumerate}
  For instance, one can ``squash'' $D(R)$ horizontally, and then squash vertically, and cut-off this homotopy outside a slightly larger disk.

  Then $g_{\sigma}=g\circ f_{\sigma}$ is a smooth family of maps $\Sigma\to \mathrm{UH}$, and we consider the family of connections $\mathfrak{H}_{\sigma}=(g_{\sigma})_{*}\mathfrak{H}_{1}$. This family satisfies the conclusion of \ref{Sb}, as can be easily verified by the reader. The key is that $g_{\sigma}\ne g_{0}$ holds only on $D(e^{2\pi}R)\setminus D(R)$.

  Finally we turn to \ref{Sc}. First we claim there is some family: $$\mathfrak{H}_{\sigma}'\in \mathscr{C}(\psi_{0,t}^{\sigma}\psi_{1,t}^{\sigma};\psi_{0,t}^{\sigma},\psi_{1,t}^{\sigma}),$$ not necessarily one which extends $\mathfrak{H}_{0}$. The construction of such a family $\mathfrak{H}_{\sigma}'$ is perfomed in \cite[\S A.3]{alizadeh-atallah-cant}, and also follows from the trick of \cite{kislev-shelukhin} which uses the technique of holomorphic embeddings of strips into $\Sigma$, together with the correction trick of \S\ref{sec:corr-flat-conn}.

  Since $\mathfrak{H}_{0},\mathfrak{H}_{0}'$ lie in $\mathscr{C}(\psi_{0,t}^{0}\psi_{1,t}^{0};\psi_{0,t}^{0},\psi_{1,t}^{0})$, there is some $g$ so $g_{*}\mathfrak{H}_{0}'=\mathfrak{H}_{0}$ and so that $g(z)=\id$ holds on $C_{0}\cup C_{1}\cup [r,1-r]$, as in the proof of \ref{Sb}. Then the modification $\mathfrak{H}''_{\sigma}=g_{*}\mathfrak{H}_{\sigma}'$ extends $\mathfrak{H}_{0}$. A final application of the correction trick in \S\ref{sec:corr-flat-conn} yields $\mathfrak{H}_{\sigma}=\gamma_{\sigma,*}\mathfrak{H}_{\sigma}''$ where $\gamma_{\sigma}$ is supported in a small neighborhood of $C_{\infty}$ and so that $\mathfrak{H}_{\sigma}$ actually lies in $\mathscr{C}(\psi_{0,t}^{\sigma}\psi^{\sigma}_{1,t};\psi^{\sigma}_{0,t},\psi^{\sigma}_{1,t})$. This completes the proof.
\end{proof}

\subsubsection{The pair-of-pants product valued in $\mathrm{SH}$}
\label{sec:pair-pants-product-1}

Pick a connection: $$\mathfrak{H}\in \mathscr{C}(\psi_{0,t}\psi_{1,t};\psi_{0,t},\psi_{1,t}),$$ and let $\mathfrak{H}_{\delta}$ be a generic perturbation supported in a compact region of $\Sigma\times W$. Pick a section $\mathfrak{s}$ of the determinant line bundle whose zero section is disjoint from the orbits of the input and output systems. We consider the moduli space:
\begin{equation*}
  \mathscr{M}_{d}(z;a;x,y;\mathfrak{H}_{\delta})\subset \mathscr{M}(\mathfrak{H}_{\delta})
\end{equation*}
of finite energy solutions $u$ whose restrictions to the cylindrical ends are asymptotic to orbits $\gamma_{\infty},\gamma_{0},\gamma_{1}$ such that:
\begin{enumerate}
\item $\gamma_{\infty}(0)=z$, $\gamma_{0}(0)=x$, $\gamma_{1}(0)=y$,
\item $\omega(u)=a$,
\item $d=-n+2\mathfrak{s}^{-1}(0)\cdot [u]+\mathrm{CZ}_{\mathfrak{s}}(\gamma_{0})+\mathrm{CZ}_{\mathfrak{s}}(\gamma_{1})-\mathrm{CZ}_{\mathfrak{s}}(\gamma_{\infty})$.
\end{enumerate}
For generic perturbation, $\mathscr{M}_{0}(z;a;x,y;\mathfrak{H}_{\delta})$ is a transversally cut-out finite set, and $\mathscr{M}_{1}(z;a;x,y;\mathfrak{H}_{\delta})$ is compact up to the breaking of Floer cylinders at the cylindrical ends. The details of this assertion follow the same lines as those in previous sections, e.g., \S\ref{sec:floer-differential-moduli}, \S\ref{sec:cont-cylind}, and \S\ref{sec:definition-map}. The statement about the dimension of the moduli space follows from the general index formula for Cauchy-Riemann operators and appears in the present form in, e.g., \cite{cant-thesis-2022}. The $-n$ is because a pair-of-pants has Euler characteristic $-1$.

Define the operation:
\begin{equation*}
  \mathrm{P}(\mathfrak{H}_{\delta})(\tau^{b_{1}}x,\tau^{b_{2}}y)=\sum_{a,z}\#\mathscr{M}_{0}(z;a;x,y;\mathfrak{H}_{\delta})\tau^{a+b_{1}+b_{2}}z,
\end{equation*}
which is extended to all semi-infinite sums in $\mathrm{CF}(\psi_{0,t})\otimes \mathrm{CF}(\psi_{1,t})$ in the obvious way. Let us comment briefly on why the map is well-defined. In any tensor product of semi-infinite sums, the quantity $b_{1}+b_{2}$ is bounded from below, say by $-B$. By non-negativity of energy, there is a minimal $a$, say $-A$, such that:
\begin{equation*}
  \mathscr{M}_{0}(z;a;x,y;\mathfrak{H}_{\delta})\ne \emptyset,
\end{equation*}
which is uniform as $z,x,y$ vary over all possible orbits. Thus the exponent $a+b_{1}+b_{2}$ appearing in non-zero terms is bounded from below by $-A-B$ and so the product is indeed valued in the correct vector space.

The operation $\mathrm{P}(\mathfrak{H}_{\delta})$ is also a chain map, when the source is endowed with the tensor product differential. This fact is proved by examination of the non-compact ends of $\mathscr{M}_{1}(z;a;x,y;\mathfrak{H}_{\delta})$.

Finally, the usual argument (see Lemma \ref{lemma:deformation}) shows the chain homotopy class of the map is independent of the perturbation and the connected component of $\mathfrak{H}$ in the space $\mathscr{C}(\psi_{0,t}\psi_{1,t};\psi_{0,t},\psi_{1,t})$. We denote the resulting map on homology by:
\begin{equation*}
  \mathrm{P}(\mathfrak{H}):\mathrm{HF}(\psi_{0,t})\otimes \mathrm{HF}(\psi_{1,t})\to \mathrm{HF}(\psi_{\infty,t}).
\end{equation*}
\emph{Remark.} It is important to note that we do not show that $\mathrm{P}(\mathfrak{H})$ is independent of $\mathfrak{H}$, since the author was unable to determine whether the relevant space of flat connections is connected.

Let us now consider two connections $\mathfrak{H}_{0},\mathfrak{H}_{1}\in \mathscr{C}(\psi_{0,t}\psi_{1,t};\psi_{0,t},\psi_{1,t})$. By \ref{Sb} in Lemma \ref{lemma:pop-structural}, we know that $\mathfrak{H}_{0}$ can be deformed to $\iota_{*}\mathfrak{H}\#\mathfrak{H}_{1}$ remaining in the space of flat connections. It then follows from the usual TQFT structure of Floer theory that:
\begin{equation}\label{eq:compo-formula}
  \mathrm{P}(\mathfrak{H}_{0})=\mathrm{C}(\mathfrak{H})\circ \mathrm{P}(\mathfrak{H}_{1}).
\end{equation}
We therefore define:
\begin{equation}\label{eq:pop-valued-in-SH}
  \mathrm{P}:\mathrm{HF}(\psi_{0,t})\otimes\mathrm{HF}(\psi_{1,t})\to \mathrm{SH}
\end{equation}
by the formula $\mathrm{P}=\mathfrak{c}\circ \mathrm{P}(\mathfrak{H}_{0})$ where $\mathfrak{c}$ is the map $\mathrm{HF}(\psi_{0,t}\psi_{1,t})\to \mathrm{SH}$. It follows from the technical lemma in \S\ref{sec:acti-sympl-cohom} and \eqref{eq:compo-formula} that $\mathrm{P}$ is independent of the connection used. This is the desired product valued in $\mathrm{SH}$.

\emph{Remark.} It is important to note that $\mathrm{P}$ factors through $\mathrm{HF}(\psi_{0,t}\psi_{1,t})$; this observation will be used in \S\ref{sec:subsp-etern-class} and \S\ref{sec:subadd-spectr-invar}.

\subsubsection{Compatibility with continuation maps}
\label{sec:comm-with-cont}

The goal in this section is to prove that the $\mathrm{SH}$-valued product $\mathrm{P}$ commutes with continuation maps, in the following sense:
\begin{lemma}\label{lemma:compat}
  Let $\psi_{0,s,t},\psi_{1,s,t}$ be continuation data, considered as a morphism between generic objects in $\mathscr{C}^{\times}$, used as in \S\ref{sec:continuation-maps} to define continuation maps:
  \begin{equation*}
    \mathfrak{c}_{i}:\mathrm{HF}(\psi_{i,0,t})\to \mathrm{HF}(\psi_{i,1,t}).
  \end{equation*}
  Then it holds that $\mathrm{P}\circ \mathfrak{c}_{0}\otimes \mathfrak{c}_{1}=\mathrm{P}$.
\end{lemma}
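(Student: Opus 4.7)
The plan is to produce a chain homotopy between the chain-level maps computing $\mathrm{P}$ and $\mathrm{P}\circ(\mathfrak{c}_{0}\otimes\mathfrak{c}_{1})$ via a one-parameter family of Floer data on the pair-of-pants surface $\Sigma$, following the standard TQFT cobordism recipe. Using Lemma \ref{lemma:pop-structural}\ref{Sc}, I would first extend a given $\mathfrak{H}_{0}\in\mathscr{C}(\psi_{0,0,t}\psi_{1,0,t};\psi_{0,0,t},\psi_{1,0,t})$ to a smooth family $\mathfrak{H}_{\sigma}\in\mathscr{C}(\psi_{0,\sigma,t}\psi_{1,\sigma,t};\psi_{0,\sigma,t},\psi_{1,\sigma,t})$ for $\sigma\in[0,1]$.

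Next, I would modify the Floer datum in the cylindrical ends of $\Sigma$ in a $\sigma$-dependent fashion. Using the $\rho(t)$-cutoff device of \S\ref{sec:cont-cylind}, I would glue into each positive end $C_{i}$ a continuation cylinder realizing the reparametrized isotopy $\psi_{i,\sigma(1-\beta(s)),t}$, which equals $\psi_{i,\sigma,t}$ at the interface with $\mathfrak{H}_{\sigma}$ and stabilizes to the $\sigma$-independent $\psi_{i,0,t}$ deep in the end; analogously, I would glue into $C_{\infty}$ a continuation cylinder for the product continuation $\psi_{0,s,t}\psi_{1,s,t}$ interpolating from $\psi_{0,\sigma,t}\psi_{1,\sigma,t}$ at the interface to the $\sigma$-independent $\psi_{0,1,t}\psi_{1,1,t}$ at the puncture. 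The curvature computation from the proof in \S\ref{sec:acti-sympl-cohom} yields $r=-\rho'(t)K_{s,1}$ in each end, and the non-negative monotonicity of the original continuation data, combined with the sign of the reparametrization, forces $K_{s,1}\le 0$ outside a compact set, so the total curvature is non-positive in the non-compact end uniformly in $\sigma$. Lemma \ref{lemma:basic-facts-floers-equation} then provides uniform a priori energy bounds on solutions, and a generic perturbation supported in a compact subset of $\Sigma\times W$ and vanishing at $\sigma=0,1$ furnishes transversality in the style of Lemma \ref{lemma:transversality-continuation}.

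The endpoints of the parametric moduli space should compute, respectively, $\mathfrak{c}'\circ\mathrm{P}(\mathfrak{H}_{0})$ at $\sigma=0$ (where the positive-end reparametrization is trivial and $\mathfrak{c}':\mathrm{HF}(\psi_{0,0,t}\psi_{1,0,t})\to\mathrm{HF}(\psi_{0,1,t}\psi_{1,1,t})$ is the continuation map induced by the product continuation data $\psi_{0,s,t}\psi_{1,s,t}$) and $\mathrm{P}(\mathfrak{H}_{1})\circ(\mathfrak{c}_{0}\otimes\mathfrak{c}_{1})$ at $\sigma=1$ (where the negative-end reparametrization is trivial). The boundary configurations of one-dimensional components of the parametric moduli space would then yield the chain homotopy
\begin{equation*}
\mathfrak{c}'\circ\mathrm{P}(\mathfrak{H}_{0})\sim\mathrm{P}(\mathfrak{H}_{1})\circ(\mathfrak{c}_{0}\otimes\mathfrak{c}_{1}).
\end{equation*}
Post-composing with $\mathfrak{c}:\mathrm{HF}(\psi_{0,1,t}\psi_{1,1,t})\to\mathrm{SH}$ and using functoriality of the structure maps into the colimit (so that $\mathfrak{c}\circ\mathfrak{c}'$ equals the structure map from $\mathrm{HF}(\psi_{0,0,t}\psi_{1,0,t})$) would yield the desired equality in view of the definition $\mathrm{P}=\mathfrak{c}\circ\mathrm{P}(\mathfrak{H})$ from \S\ref{sec:pair-pants-product-1}.

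The principal obstacle I anticipate is the simultaneous control of curvature signs in all three cylindrical ends as $\sigma$ varies; this requires careful coordination of cutoff functions and reparametrizations analogous to the bookkeeping performed in the proof in \S\ref{sec:acti-sympl-cohom}, but now with three ends to track at once. Once this curvature analysis is in hand, the remaining steps are a routine application of the parametric cobordism template already used several times in the paper.
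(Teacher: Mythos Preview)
Your proposal is correct and follows essentially the same approach as the paper: extend $\mathfrak{H}_{0}$ to a family $\mathfrak{H}_{\sigma}$ via Lemma \ref{lemma:pop-structural}\ref{Sc}, glue the $\sigma$-dependent split pieces of the continuation data (the paper writes them as $\psi_{i,s,t}^{\sigma,-}=\psi_{i,\sigma s,t}$ on the positive ends and $\psi_{0,s,t}^{\sigma,+}\psi_{1,s,t}^{\sigma,+}$ on the $\infty$-end), and read off the two endpoints as $\mathfrak{c}_{\infty}\circ\mathrm{P}(\mathfrak{H}_{0})$ and $\mathrm{P}(\mathfrak{H}_{1})\circ(\mathfrak{c}_{0}\otimes\mathfrak{c}_{1})$. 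Your anticipated ``principal obstacle'' is milder than you suggest: each cylindrical piece is just an honest continuation cylinder (a reparametrization of a sub-path of the given non-negative data), hence already nonpositively curved by the standard computation, and the central pair-of-pants is flat, so no delicate coordination among the three ends is needed.
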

As a consequence of this compatibility, we can extend $\mathrm{P}$ to all objects of $\mathscr{C}$ following similar arguments to \S\ref{sec:funct-struct-floer}, although the details of this extension are not so deep and are left to the reader.

More importantly for us, Lemma \ref{lemma:compat} implies that $\mathrm{P}$ extends to a product $\mathrm{SH}\otimes \mathrm{SH}\to \mathrm{SH}$, as follows: given objects $\zeta_{1},\zeta_{2}\in \mathrm{SH}$ pick any pair $\psi_{0,t},\psi_{1,t}$ so that $\zeta_{i}$ is the image of $\xi_{i}
\in \mathrm{HF}(\psi_{i,t})$ under the structure map to $\mathrm{SH}$. Define $\zeta_{1}\zeta_{2}=\mathrm{P}(\xi_{1},\xi_{2})$. This definition is independent of the choice of $\xi_{i}$. Indeed, if $\psi_{i,t}',\xi_{i}'$ is another choice, then there are continuation maps:
\begin{equation*}
  \mathfrak{c}:\mathrm{HF}(\psi_{i,t})\to \mathrm{HF}(\psi_{i,t}'')\text{ and }\mathfrak{c}':\mathrm{HF}(\psi_{i,t}')\to \mathrm{HF}(\psi_{i,t}''),
\end{equation*}
so that $\mathfrak{c}(\xi_{i})=\mathfrak{c}'(\xi_{i}')$, so it holds that $\mathrm{P}(\xi_{1},\xi_{2})=\mathrm{P}(\xi_{1}',\xi_{2}')$. To see why there must exist $\psi_{i,t}''$ and $\mathfrak{c},\mathfrak{c}'$, we can take $\psi_{i,t}''$ to be a sufficiently fast Reeb flow, and appeal to \S\ref{sec:finality-cofinality} to reduce to a statement about direct limits, where it becomes an exercise in undergraduate algebra.

\begin{proof}[Proof of Lemma \ref{lemma:compat}]
  The idea is summarized in Figure \ref{fig:pop-com}. We begin with two continuation data $\psi_{i,s,t}$, $i=0,1$, which we split into two pieces:
  \begin{equation*}
    \psi_{i,s,t}^{\sigma,-}=\psi_{i,\sigma s,t}\text{ and }\psi_{i,s,t}^{\sigma,+}=\psi_{i,\sigma+(1-\sigma)s,t};
  \end{equation*}
  i.e., $\psi_{i,s,t}^{\sigma,-}$ goes from $\psi_{i,0,t}$ to $\psi_{i,\sigma,t}$, and $\psi_{i,s,t}^{\sigma,+}$ goes from $\psi_{i,\sigma,t}$ to $\psi_{i,1,t}$.

  Part \ref{Sc} of Lemma \ref{lemma:pop-structural} furnishes a smooth family of flat connections:
  \begin{equation*}
    \mathfrak{H}_{\sigma}\in \mathscr{C}(\psi_{0,\sigma,t}\psi_{1,\sigma,t};\psi_{0,\sigma,t},\psi_{1,\sigma,t})
  \end{equation*}
  on the pair of pants. Using a similar gluing construction to the one used in Lemma \ref{lemma:pop-structural}, glue the nonpositively curved continuation cylinder connections onto the ends, producing a family $\mathfrak{G}_{\sigma}$ of connections on the pair-of-pants, with nonpositive curvature, of the form $H_{i,0,t}\d t$ in the ends $i=0,1$, and of the form $(H_{0,1,t}\# H_{1,1,t})\d t$ in the end $i=\infty$; see Figure \ref{fig:pop-com}.

  \begin{figure}[h]
    \centering
    \begin{tikzpicture}
      \draw[draw=none] (-2,0)coordinate(Z0)--(0,0)coordinate(Z1)--(2,0)--+(1,1)coordinate(X0)--+(3,1)coordinate(X1)--+(1,-1)coordinate(Y0)--+(3,-1)coordinate(Y1);

      \draw (Z0) circle (0.2 and 0.4) (Z1) circle (0.2 and 0.4) (X0) circle (0.2 and 0.4) (X1) circle (0.2 and 0.4) (Y0) circle (0.2 and 0.4) (Y1) circle (0.2 and 0.4);
      \foreach \x in {X,Y,Z} {
        \draw (\x0)+(0,0.4)coordinate(\x2)--coordinate(\x6)+(2,0.4)coordinate(\x3) +(0,-0.4)coordinate(\x4)--+(2,-0.4)coordinate(\x5);
      }
      \draw (Z3)to[out=0,in=180](X2) (X4)to[out=180,in=180](Y2) (Z5)to[out=0,in=180](Y4);

      \node at (X6) [above] {$\psi_{0,s,t}^{\sigma,-}$};
      \node at (Y6) [above] {$\psi_{1,s,t}^{\sigma,-}$};
      \node at (Z6) [above] {$\psi_{0,s,t}^{\sigma,+}\psi_{1,s,t}^{\sigma,+}$};
      
    \end{tikzpicture}
    \caption{Configuration of three cylinders and a pair-of-pants used to prove $\mathrm{P}$ commutes with continuation maps}
    \label{fig:pop-com}
  \end{figure}
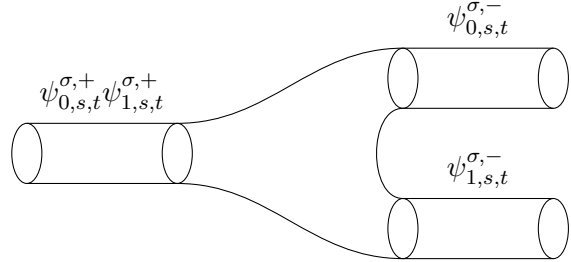

  Floer theoretic gluing/breaking arguments imply that:
  \begin{enumerate}
  \item the count of rigid elements in $\mathscr{M}(\mathfrak{G}_{0})$ equals:
    \begin{equation*}
      \mathfrak{c}_{\infty}\circ \mathrm{P}(\mathfrak{H}_{0}),
    \end{equation*}
    where $\mathfrak{c}_{\infty}$ is the continuation map associated to $\psi_{0,s,t}\psi_{1,s,t}$,
  \item the count of rigid elements in $\mathscr{M}(\mathfrak{G}_{1})$ equals:
    \begin{equation*}
      \mathrm{P}(\mathfrak{H}_{1})\circ \mathfrak{c}_{0}\otimes \mathfrak{c}_{1}.
    \end{equation*}    
  \end{enumerate}  
  The usual deformation arguments, as in Lemma \ref{lemma:deformation}, imply the 1-dimensional components of the parametric moduli space of solutions $(\sigma,u)$, $u\in \mathscr{M}(\mathfrak{G}_{\sigma})$, give a chain homotopy between the above two counts. The relevant compactness holds along the deformation since we have ensured the curvature remains non-positive.

  Since the structure maps to $\mathrm{SH}$ commutes with the continuation map $\mathfrak{c}_{\infty}$, by definition of the colimit, we conclude the desired result.
\end{proof}

\emph{Remark.} It can be shown that $\mathrm{P}:\mathrm{SH}\otimes \mathrm{SH}\to \mathrm{SH}$ is actually a commutative and associative product; see, e.g., \cite[\S5.5.1.3]{schwarz-thesis} and \cite{ritter_tqft}. Since the arguments proving this are well-known, we will omit the verification in this paper. These properties can be verified by working entirely with systems whose ideal restriction is a Reeb flow $R^{\alpha}_{st}$, which is the setting in which \cite{ritter_tqft} works.

\subsubsection{The subspace of eternal classes is an ideal}
\label{sec:subsp-etern-class}

In this section we prove Theorem \ref{theorem:ideal-pop} that $\mathrm{SH}_{e}\subset \mathrm{SH}$ is an ideal. A convenient simplification is obtained by working with the persistence module:
\begin{equation*}
  V_{s}^{\alpha}(\id)=\mathrm{HF}(R^{\alpha}_{st})
\end{equation*}
which is the precomposition of $\mathrm{HF}$ to final and cofinal functor by \S\ref{sec:finality-cofinality}.

Indeed, let $\mathfrak{e}\in \mathrm{SH}_{e}$ and $\zeta\in \mathrm{SH}$, and write:
\begin{equation*}
  \mathfrak{e}=\mathfrak{c}(\xi_{1})\text{ where }\xi_{1}\in V_{s_{1}}^{\alpha}(\id)\text{ and }\zeta=\mathfrak{c}(\xi_{2})\text{ where }\xi_{2}\in V_{s_{2}}^{\alpha}(\id),
\end{equation*}
where $\mathfrak{c}$ stands for the structure map to $\mathrm{SH}$. By the remark at the end of \S\ref{sec:pair-pants-product-1}, it follows that:
\begin{equation*}
  \mathfrak{e}\zeta=\mathrm{P}(\xi_{1},\xi_{2})=\mathfrak{c}(\xi_{3})\text{ where }\xi_{3}\in V_{s_{1}+s_{2}}^{\alpha}(\id).
\end{equation*}
Since $s_{1}$ can be made arbitrarily negative, it follows from Lemma \ref{lemma:barcode-dcomp} that $\mathfrak{e}\zeta$ is eternal, as desired.\hfill$\square$

\subsubsection{Subadditivity of spectral invariants}
\label{sec:subadd-spectr-invar}

The pair-of-pants product defined using zero curvature connections will be used in this section to prove Theorem \ref{theorem:sub-additive}.

Pick two contact isotopies $\varphi_{t},\phi_{t}$, a contact form $\alpha$, and classes $\zeta_{0},\zeta_{1}\in \mathrm{SH}$. By definition of the spectral invariants and the equivalence of the two spectral invariants in \S\ref{sec:equiv-spectr-invar}, we know:
\begin{enumerate}
\item for any $s_{0}>c(\zeta_{0};\varphi_{t})$, $\zeta_{0}$ lies in the image of:
  \begin{equation*}
    \mathrm{HF}(\varphi_{t}^{-1}\circ R_{s_{0}t})\to \mathrm{SH};
  \end{equation*}
\item for any $s_{1}>c(\zeta_{1};\phi_{t})$, $\zeta_{1}$ lies in the image of:
  \begin{equation*}
    \mathrm{HF}(R_{s_{1}t}\circ \phi_{t}^{-1})\to \mathrm{SH}.
  \end{equation*}
\end{enumerate}
Using a flat connection from \S\ref{sec:flat-conn-pair}: $$\mathfrak{H}\in \mathscr{C}(\varphi_{t}^{-1}\circ R_{(s_{0}+s_{1})t}\circ \phi_{t}^{-1};\varphi_{t}^{-1}\circ R_{s_{0}t},R_{s_{1}t}\circ \phi_{t}^{-1}),$$ and the operation $\mathrm{P}(\mathfrak{H})$ from \S\ref{sec:pair-pants-product-1}, we conclude that:
\begin{enumerate}[resume]
\item $\zeta_{0}\zeta_{1}$ lies in the image of:
  \begin{equation*}
    \mathrm{HF}(\varphi_{t}^{-1}\circ R_{(s_{0}+s_{1})t}\circ \phi_{t}^{-1})\to \mathrm{SH}.
  \end{equation*}
\end{enumerate}
Applying Lemma \ref{lemma:swap} from \S\ref{sec:equiv-spectr-invar}, we conclude that:
\begin{enumerate}[resume]
\item $\zeta_{0}\zeta_{1}$ lies in the image of:
  \begin{equation*}
    \mathrm{HF}(\phi_{t}^{-1}\circ \varphi_{t}^{-1}\circ R_{(s_{0}+s_{1})t})\to \mathrm{SH}.
  \end{equation*}
\end{enumerate}
and hence it follows that $c(\zeta_{0}\zeta_{1};\varphi_{t}\circ \phi_{t})<s_{0}+s_{1}$. Taking the infimum over $s_{0},s_{1}$ yields the desired sub-addivity.\hfill$\square$

\subsection{The unit element}
\label{sec:unit-elem-mathrmsh}

In this section we recall in \S\ref{sec:defin-unit-elem} the definition of the unit element using the PSS construction of \cite{piunikhin-salamon-schwarz-1996}. We will explain in \S\ref{sec:naturality-unit-elem} why it is sufficiently natural with respect to continuation maps so as to define a distinguished element $1\in \mathrm{SH}$, and in \S\ref{sec:unitality-unit} why it is actually the unit for the pair-of-pants product in $\mathrm{SH}$. Finally in \S\ref{sec:crit-unit-elem} we will prove Theorem \ref{theorem:when-unit-eternal} giving the criterion for the unit to be eternal in terms of continuation maps from the negative cone.

\subsubsection{PSS of the fundamental class}
\label{sec:defin-unit-elem}

Let $\psi_{s,t}$ be a continuation datum satisfying $\psi_{0,t}=\id$ and $\psi_{1,t}\in \mathscr{C}^{\times}$. Such continuation data are called \emph{PSS continuation data}. One obvious way to obtain such data is $\psi_{s,t}=\psi_{st}$ where $\sigma\mapsto \psi_{\sigma}$ is a positive path.

As in the definition of the continuation map in \S\ref{sec:cont-cylind}, we will consider perturbations of the continuation data of the form $\psi_{s,t}\delta_{s,t}$ where $\delta_{s,t}$ is compactly supported in $W$ and $\delta_{s,t}=\id$ for $s=0,1$ and $t=0$.

Consider the moduli space $\mathscr{M}(\psi_{s,t}\delta_{s,t})$ of finite energy continuation cylinders, solving the same exact equation as in \S\ref{sec:cont-cylind}. One notable difference with the present set-up is that $u\in \mathscr{M}(\psi_{s,t}\delta_{s,t})$ is actually holomorphic near in the right half of the cylinder region $s\ge 1$, and therefore has a removable singularity as $s\to\infty$. For the purposes of having a Fredholm linearization we will actually think of $\mathscr{M}(\psi_{s,t}\delta_{s,t})$ as the set of maps $v:\C\to W$ so that:
\begin{equation*}
  u(s,t)=v(e^{-2\pi (s+it)})
\end{equation*}
solves the continuation map equation \eqref{eq:cont-map}.

Let $\mathscr{M}_{d}(x;a;\psi_{s,t}\delta_{s,t})\subset \mathscr{M}(\psi_{s,t}\delta_{s,t})$ be the component of solutions $u$, with left asymptotic $\gamma$, satisfying:
\begin{enumerate}
\item $\gamma(0)=x$,
\item $\omega(u)=a$,
\item $d=n+2\mathfrak{s}^{-1}(0)\cdot [u]-\mathrm{CZ}_{\mathfrak{s}}(\gamma)$,
\end{enumerate}
where $\mathfrak{s}$ is, as usual, a section of the determinant line bundle whose zero set is disjoint from the orbits of $\psi_{1,t}$. Standard arguments similar to those in, e.g., \S\ref{sec:cont-cylind} imply that $\mathscr{M}_{d}(x;a;\psi_{s,t}\delta_{s,t})$ is a transversally cut-out $d$-manifold, and is compact for $d=0$ and compact up-to-breaking of Floer cylinders at the negative end for $d=1$.

We define, for generic perturbation term,
\begin{equation*}
  \mathrm{PSS}(\psi_{s,t}\delta_{s,t};W):=\sum_{x,a}\#\mathscr{M}_{0}(x;a;\psi_{s,t}\delta_{s,t})\tau^{a}x\in \mathrm{CF}(\psi_{1,t}).
\end{equation*}
By analysis of the moduli space with $d=1$, one shows that $\mathrm{PSS}(\psi_{s,t}\delta_{s,t};W)$ is a cycle with respect to the Floer differential. Moreover, standard deformation arguments similar to Lemma \ref{lemma:deformation} show that the homology class of the cycle is independent of $\delta_{s,t}$ and the homotopy class of the continuation data $\psi_{s,t}$ (within the space of continuation data from $\id$ to $\psi_{1,t}$). We therefore drop $\delta_{s,t}$ from the notation and write: $$\mathrm{PSS}(\psi_{s,t};W)\in \mathrm{HF}(\psi_{1,t})$$ for the resulting homology class. 

\emph{Remark.} The reason $W$ appears in the notation $\mathrm{PSS}(\psi_{s,t};W)$ is because it represents the PSS construction applied to the fundamental class Poincaré dual to $W$. We will analyze the PSS construction in greater detail in \S\ref{sec:vanish-etern-class} where we will define other cycles; in particular, we will define $\mathrm{PSS}(\psi_{s,t};L)$, where $L$ is a compact Lagrangian, which appears in the statements of Theorems \ref{theorem:odd-euler-char} and \ref{theorem:odd-euler-char-1}.

\subsubsection{Weak naturality}
\label{sec:naturality-unit-elem}

The goal in this section is to prove the following:
\begin{lemma}
  Let $\psi_{0,s,t}$ and $\psi_{1,s,t}$ be two PSS continuation data, and let $$\mathfrak{c}_{i}:\mathrm{HF}(\psi_{i,1,t})\to \mathrm{SH}$$ be the structure maps. Then: $$\mathfrak{c}_{0}(\mathrm{PSS}(\psi_{0,s,t};W))=\mathfrak{c}_{1}(\mathrm{PSS}(\psi_{1,s,t};W));$$ i.e., the image of the PSS element in $\mathrm{SH}$ is independent of the choice of PSS continuation data.
\end{lemma}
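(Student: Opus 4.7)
The plan is to reduce the comparison to the Floer cohomology of a single sufficiently fast Reeb flow, where both PSS cycles admit a common representative. The idea is to use the cofinality of Reeb flows from Lemma \ref{lemma:technical-filtering} to homotope the two PSS continuation data after concatenation.

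Concretely, I would first fix a Reeb flow $R^{\alpha}$ and a common speed $c>0$ large enough that there exist continuation data $\eta_{i,s,t}$ from $\psi_{i,1,t}$ to $R^{\alpha}_{c\beta(3t-1)}$ for both $i=0,1$. By item \ref{f2} of Lemma \ref{lemma:technical-filtering}, after possibly enlarging $c$, the two concatenated PSS continuation data
\begin{equation*}
  \Psi_{i,s,t}:=\psi_{i,s,t}\#\eta_{i,s,t},
\end{equation*}
both going from $\id$ to $R^{\alpha}_{c\beta(3t-1)}$, become equal as morphisms in $\mathscr{C}$; i.e., there is a smooth family $\Psi_{\sigma,s,t}$, $\sigma\in[0,1]$, of PSS continuation data with fixed endpoints $\Psi_{\sigma,0,t}=\id$ and $\Psi_{\sigma,1,t}=R^{\alpha}_{c\beta(3t-1)}$, and with $\Psi_{\sigma,s,t}=\Psi_{i,s,t}$ at $\sigma=i$.

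Next, by a standard Floer-theoretic gluing argument modeled on Lemma \ref{lemma:gluing-lemma}, the continuation map $\mathfrak{c}_{i,c}:\mathrm{HF}(\psi_{i,1,t})\to\mathrm{HF}(R^{\alpha}_{c\beta(3t-1)})$ induced by $\eta_{i,s,t}$ sends $\mathrm{PSS}(\psi_{i,s,t};W)$ to $\mathrm{PSS}(\Psi_{i,s,t};W)$. By functoriality of the colimit structure maps, $\mathfrak{c}_{i}=\mathfrak{c}_{c}\circ \mathfrak{c}_{i,c}$, where $\mathfrak{c}_{c}:\mathrm{HF}(R^{\alpha}_{c\beta(3t-1)})\to\mathrm{SH}$ is the structure map. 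Thus the lemma reduces to showing $\mathrm{PSS}(\Psi_{0,s,t};W)=\mathrm{PSS}(\Psi_{1,s,t};W)$ in $\mathrm{HF}(R^{\alpha}_{c\beta(3t-1)})$. For this I would run the parametric moduli space argument directly analogous to Lemma \ref{lemma:deformation} on the PSS moduli spaces associated to $\Psi_{\sigma,s,t}$: the count of rigid elements of the parametric moduli space produces a chain whose Floer differential equals the difference of the two PSS cycles. The required a priori energy estimates hold uniformly in $\sigma$ by Lemma \ref{lemma:energy-estimate}, since non-negativity at infinity is preserved along the homotopy.

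The main obstacle is the first step: upgrading item \ref{f2} of Lemma \ref{lemma:technical-filtering} to a one-parameter family of genuine \emph{PSS} continuation data (so that $\Psi_{\sigma,s,0}=\id$ holds throughout), rather than merely a homotopy of continuation data. This should follow from a direct inspection of the construction in \cite[\S2.2.11]{cant-hedicke-kilgore}, since the boundary condition at $t=0$ is automatic under the reparametrization-based homotopies used there; once this is verified, the remainder is routine Floer theory.
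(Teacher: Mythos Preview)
Your proposal is correct and is essentially the paper's own argument: push both PSS classes forward to a common $\mathrm{HF}(R^{\alpha}_{ct})$ via continuation maps, use gluing (Lemma \ref{lemma:gluing-lemma}) to identify $\mathfrak{c}_{i,c}(\mathrm{PSS}(\psi_{i,s,t};W))$ with $\mathrm{PSS}$ of the concatenated data, and then use Lemma \ref{lemma:technical-filtering}\ref{f2} plus the deformation argument of Lemma \ref{lemma:deformation} to conclude. Your stated ``obstacle'' is not one: by definition any continuation datum already has $\psi_{s,0}=\id$, and a homotopy of continuation data in $\mathscr{C}$ fixes the endpoints $\Psi_{\sigma,0,t}=\id$ and $\Psi_{\sigma,1,t}=R^{\alpha}_{ct}$, so the family $\Psi_{\sigma,s,t}$ is automatically a family of PSS continuation data with no extra verification needed.
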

The resulting element $1=\mathfrak{c}(\mathrm{PSS}(\psi_{s,t};W))$ in $\mathrm{SH}$ is called the unit element.
\begin{proof}
  The PSS continuation data $\psi_{i,s,t}$ represents a morphism in $\mathscr{C}$ from $\id$ to $\psi_{i,1,t}$. By Lemma \ref{lemma:technical-filtering}, there exists a speed $s$ and morphisms $\eta_{i,s,t}$ going from $\psi_{i,1,t}\to R_{st}^{\alpha}$ in $\mathscr{C}$ so that the square commutes:
  \begin{equation*}
    \begin{tikzcd}
      {\id}\arrow[d,"{\psi_{0,s,t}}"]\arrow[r,"{\psi_{1,s,t}}"] &{\psi_{1,1,t}}\arrow[d,"{\eta_{1,s,t}}"]\\
      {\psi_{0,1,t}}\arrow[r,"{\eta_{0,s,t}}"] &{R_{st}^{\alpha}}.
    \end{tikzcd}
  \end{equation*}
  Let $\mathfrak{c}_{i}:\mathrm{HF}(\psi_{i,1,t})\to \mathrm{HF}(R_{st}^{\alpha})$ be the continuation morphism induced by $\eta_{i,s,t}$, as in \S\ref{sec:continuation-maps}. Straightforward Floer theory arguments show that:
  \begin{equation*}
    \begin{aligned}
      \mathfrak{c}_{0}(\mathrm{PSS}(\psi_{0,s,t};W))&=\mathrm{PSS}(\eta_{0,s,t}\#\psi_{0,s,t};W)\\
      &=\mathrm{PSS}(\eta_{1,s,t}\#\psi_{1,s,t};W)=\mathfrak{c}_{1}(\mathrm{PSS}(\psi_{1,s,t};W))
    \end{aligned}
  \end{equation*}
  The first and last inequality arguments are proved by Floer theory gluing similarly to Lemma \ref{lemma:gluing-lemma}, and the middle equality is proved by a deformation argument similar to the one in Lemma \ref{lemma:deformation}. Here $\eta_{i,s,t}\#\psi_{i,s,t}$ is the concatenated continuation data, and the homotopy class of the continuation data is independent of $i$, by assumption that the above square commutes in $\mathscr{C}$. This completes the proof.
\end{proof}

\subsubsection{Unitality}
\label{sec:unitality-unit}

In this section we prove that the unit element is unital. We will show:
\begin{lemma}
  Let $R_{t}$ be a generic contact isotopy in the positive cone (e.g., a Reeb flow) and let $\xi_{1}=\mathrm{PSS}(R_{st};W)\in \mathrm{HF}(R_{t})$. Let $\psi_{t}$ be any contact isotopy, and let $\xi_{0}\in \mathrm{HF}(\psi_{t})$. Then:
  \begin{equation*}
    \mathrm{P}(\xi_{0},\xi_{1})=\mathfrak{c}(\xi_{0}),
  \end{equation*}
  where $\mathfrak{c}:\mathrm{HF}(\psi_{t})\to \mathrm{SH}$ is the structure map. Since $1=\mathfrak{c}(\xi_{1})$, it follows from the definition of $\mathrm{P}:\mathrm{SH}\otimes \mathrm{SH}\to \mathrm{SH}$ that $1$ is the unit element.
\end{lemma}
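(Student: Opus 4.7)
The plan is to exhibit $\mathrm{P}(\xi_{0},\xi_{1})$ as the image under $\mathfrak{c}$ of a continuation morphism applied to $\xi_{0}$, by gluing the PSS cap defining $\xi_{1}$ onto one input puncture of a pair-of-pants. First I would select a flat Hamiltonian connection $\mathfrak{H}\in \mathscr{C}(\psi_{t}R_{t};\psi_{t},R_{t})$; its existence follows from Lemma \ref{lemma:pop-structural} since the time-1 maps tautologically match. By the construction in \S\ref{sec:pair-pants-product-1}, $\mathrm{P}(\xi_{0},\xi_{1})=\mathfrak{c}\circ \mathrm{P}(\mathfrak{H})(\xi_{0}\otimes \xi_{1})$, so the task reduces to identifying this $\mathrm{SH}$-valued quantity with $\mathfrak{c}(\xi_{0})$.

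Next I would set up a parametric moduli space of Floer-type solutions indexed by a gluing length $\sigma\in [0,\infty)$, whose fiber at $\sigma$ consists of finite energy maps on the pair-of-pants with its $R_{t}$-input cylinder truncated at length $\sigma$ and replaced by the PSS plane carrying the continuation datum $R_{st}$. For $\sigma\to\infty$, Floer-theoretic gluing/breaking (directly parallel to Lemma \ref{lemma:gluing-lemma}) identifies the rigid configurations with broken ones computing the composite $\mathrm{P}(\mathfrak{H})(\xi_{0}\otimes \mathrm{PSS}(R_{st};W))=\mathrm{P}(\mathfrak{H})(\xi_{0}\otimes \xi_{1})$. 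For $\sigma=0$, the domain is conformally a cylinder, and the amalgamated connection puts us squarely in the framework of \S\ref{sec:cont-cylind}: the interpolation is continuation data from $\psi_{t}$ to $\psi_{t}R_{t}$, because its ideal restriction concatenates $\psi_{t}$ with an increasing family of sub-arcs of the positive path $R_{t}$. Consequently, the count at $\sigma=0$ realizes the continuation morphism $\mathfrak{c}':\mathrm{HF}(\psi_{t})\to \mathrm{HF}(\psi_{t}R_{t})$ evaluated on $\xi_{0}$.

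The 1-dimensional components of the parametric moduli space then give (as in Lemma \ref{lemma:deformation}) a chain homotopy between the two extreme counts, so $\mathrm{P}(\mathfrak{H})(\xi_{0}\otimes \xi_{1})=\mathfrak{c}'(\xi_{0})$ in $\mathrm{HF}(\psi_{t}R_{t})$. Applying $\mathfrak{c}$ and invoking the universal property of the colimit, which forces $\mathfrak{c}\circ \mathfrak{c}'=\mathfrak{c}$, I would then conclude $\mathrm{P}(\xi_{0},\xi_{1})=\mathfrak{c}(\xi_{0})$.

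The main technical obstacle will be arranging the family of connections along the gluing parameter $\sigma$ so that their curvature remains non-positive outside a fixed compact set of $W$ uniformly in $\sigma$; this uniform bound is what allows Lemma \ref{lemma:basic-facts-floers-equation} to deliver an a priori energy estimate throughout the parametric family, and hence the required compactness-up-to-breaking. The hypothesis that $R_{t}$ lies in the positive cone is exactly the feature that makes such a construction possible, since it forces the connection potential contributed by the PSS cap (which interpolates from the trivial system at the cap's interior point to $R_{t}$ at its boundary) to have a non-negative ideal restriction, mirroring the curvature calculation performed in \S\ref{sec:acti-sympl-cohom}.
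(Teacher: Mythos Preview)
Your broad strategy---cap the $R_{t}$-input of a pair-of-pants by the PSS plane, deform, and compare with a continuation cylinder---is the same as the paper's. The gap is at the $\sigma=0$ endpoint. You fix an \emph{arbitrary} flat connection $\mathfrak{H}\in\mathscr{C}(\psi_{t}R_{t};\psi_{t},R_{t})$ and assert that, after capping, ``the amalgamated connection puts us squarely in the framework of \S\ref{sec:cont-cylind}.'' It does not: what you obtain is a general Hamiltonian connection on the cylinder, non-flat (the PSS cap carries curvature) and not of the specific form $\rho(t)K_{s,t}\d s+H_{s,t}\d t$ coming from continuation data $\psi_{s,t}$. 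To identify the resulting chain map with the continuation morphism $\mathfrak{c}'$ you would need to exhibit a path of non-positively-curved connections joining it to a standard continuation connection, and nothing in the paper guarantees that such a path exists for a general $\mathfrak{H}$.

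The paper closes this gap by constructing the \emph{initial} pair-of-pants connection $\mathfrak{H}_{0}$ in a specific way: it takes the Floer-cylinder connection $H_{t}\d t$ for $\psi_{\beta(3t-1)}$ (supported where $t\in[1/3,2/3]$) and punctures a small disk in the region where $H_{t}=0$, so that $\mathfrak{H}_{0}$ is \emph{literally} a continuation cylinder with a trivial hole (Figure~\ref{fig:floer-cyl-pop}). At this endpoint the PSS cap is the trivial one ($R_{0t}=\id$), so the glued object is on the nose the continuation moduli space for $\psi_{t}\to\psi_{t}R_{t}$. The deformation to a general flat pair-of-pants connection $\mathfrak{H}_{1}$ is then carried out through the family of \emph{flat} connections $\mathfrak{H}_{\sigma}\in\mathscr{C}(\psi_{t}R_{\sigma t};\psi_{t},R_{\sigma t})$ supplied by Lemma~\ref{lemma:pop-structural}\ref{Sc}, while the PSS data is simultaneously split between the $1$-end and a compensating continuation on the $\infty$-end. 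This is what makes the endpoint identification immediate rather than a separate problem.
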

\begin{proof}
  The argument is quite similar to the argument in \S\ref{sec:comm-with-cont} which showed that the pair-of-pants product was compatible with continuation maps. The construction we will use is illustrated in Figure \ref{fig:pop-unital}.

  Fix a Reeb flow $R_{s}$, and consider $R_{st}$, $s\in [0,1]$, as a PSS continuation datum from $\id$ to $R_{t}$. As in \S\ref{sec:comm-with-cont}, for each $\sigma\in [0,1]$, we split this continuation into two pieces:
  \begin{enumerate}
  \item[($-$)] $R_{st}^{\sigma,-}=R_{\sigma s t}$, considered as PSS continuation from $1$ to $R_{\sigma t}$,
  \item[($+$)] $R_{st}^{\sigma,+}=R_{(1-\sigma)st+\sigma t}$ considered as a continuation from $R_{\sigma t}$ to $R_{t}$.
  \end{enumerate}
  As illustrated in Figure \ref{fig:pop-unital}, we construct a Hamiltonian connection on the cylinder by gluing together three pieces:
  \begin{enumerate}
  \item A zero-curvature Hamiltonian connection:
    \begin{equation*}
      \mathfrak{H}_{\sigma}\in \mathscr{C}(\psi_{t}R_{\sigma t};\psi_{t},R_{\sigma t}),
    \end{equation*}
    on the pair-of-pants;
  \item the PSS-continuation $R^{\sigma,-}_{st}$ glued to the $1$-end of the pair-of-pants; note that this makes the $1$-end have a removable singularity;
  \item the continuation $\psi_{t}R^{\sigma,+}_{st}$ glued to the $\infty$-end of the pair-of-pants.
  \end{enumerate}
  The resulting glued connection denoted $\mathfrak{G}_{\sigma}$ is non-positively curved. Because of the removable singularity at the $1$-end, we consider $\mathfrak{G}_{\sigma}$ as a Hamiltonian connection on the cylinder, with negative end asymptotic to $\psi_{t}$ and positive end asymptotic to $\psi_{t}R_{t}$.

  We construct the initial flat connection $\mathfrak{H}^{0}$ in a special way: we let $\mathfrak{H}^{0}$ be the flat connection whose connection potential is $H_{t}\d t$, where $H_{t}$ is the generator for $\psi_{\beta(3t-1)}$. In particular, $H_{t}$ is supported where $t\in [1/3,2/3]$, and so we can embed a small disk in the large region where $H_{t}=0$. There is a biholomorphism of the cylinder and $\C^{\times}$ so that the positive, resp., negative, end of the cylinder is identified with the $\infty$, resp., $0$, end of $\C^{\times}$. Under this biholomorphism, the disk around $1$ is identified with a small disk disjoint from the strip $t\in [1/3,2/3]$; see Figure \ref{fig:popsurface} and \ref{fig:floer-cyl-pop}.

  \begin{figure}[h]
    \centering
    \begin{tikzpicture}
      \draw[draw=none] (-2,0)coordinate(A0)--(0,0)coordinate(B0)--(2,0)--+(1,1)coordinate(X0)--+(1,-1)coordinate(Y0)--+(3,-1)coordinate(C0);

      \foreach \x in {A,B,X,Y,C} {
        \draw (\x0) circle (0.2 and 0.4);
        \path (\x0)+(0,0.4)coordinate(\x1) +(0,-0.4)coordinate(\x2);
      }
      \draw (B1)to[out=0,in=180](X1) (X2)to[out=180,in=180](Y1) (B2)to[out=0,in=180](Y2);
      \draw (A1)--coordinate(AB1)(B1) (A2)--(B2);
      \draw (Y1)--coordinate(YC1)(C1) (Y2)--(C2);
      
      \node at (X0) [right,shift={(0.2,0)}] {$\psi_{t}$};
      \node at (YC1) [above] {$R_{st}^{\sigma,-}$};
      \node at (AB1) [above] {$\psi_{t}R_{st}^{\sigma,+}$};
      
    \end{tikzpicture}
    \caption{Configuration of a continuation cylinder, a PSS cylinder, and a pair-of-pants used to prove the unit element is unital for the pair-of-pants product.}
    \label{fig:pop-unital}
  \end{figure}
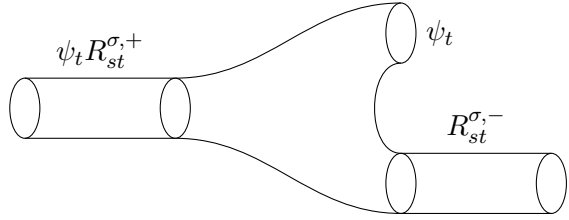

  \begin{figure}[h]
    \centering
    \begin{tikzpicture}
      \def\L{7}
      \def\H{0.7}
      \def\W{0.2}
      \draw (0,0) circle ({\W} and \H) +(0,\H) coordinate(X1)+(0,-\H)coordinate(X2)+(-\W,0)node[left]{$\psi_{t}$} (\L,0) circle ({\W} and \H) +(0,\H) coordinate(Y1)+(0,-\H)coordinate(Y2)+(\W,0)node[right]{$\psi_{t}$};
      \draw (0,0) +(40:{\W} and \H) coordinate(X3) +(10:{\W} and \H) coordinate(X4) (\L,0) +(40:{\W} and \H) coordinate(Y3) +(10:{\W} and \H) coordinate(Y4);
      \fill[pattern=north west lines] (X3)--(Y3)--(Y4)--(X4)--cycle;
      \draw (X1)--(Y1) (X2)--(Y2) (X3)--(Y3) (X4)--(Y4);
      \draw[fill,pattern=north west lines] ({\L/2},{-\H/2}) circle ({\H/3}) node[draw,circle,inner sep=1pt,fill=black]{};
    \end{tikzpicture}
    \caption{The initial pair-of-pants $\mathfrak{H}^{0}$ is built using a Floer cylinder. The connection potential $\mathfrak{a}=H_{t}\d t$ is supported on the shaded strip $t\in [1/3,2/3]$, and is identically zero on the shaded circle; $H_{t}$ is the generator of $\psi_{\beta(3t-1)}$.}
    \label{fig:floer-cyl-pop}
  \end{figure}
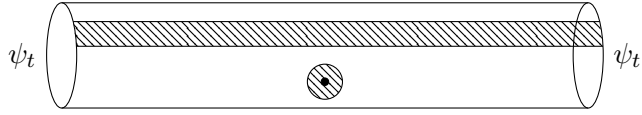
  
  By this construction, it follows easily that the rigid Floer theoretic count of elements in $\mathscr{M}(\mathfrak{G}^{0})$ is precisely the continuation map $$\mathfrak{c}':\mathrm{HF}(\psi_{t})\to \mathrm{HF}(\psi_{t}R_{t}).$$ Here we note that $R^{0,-}_{st}=\id$ is the identity, and hence gluing the PSS-continuation to the $1$-end does nothing in this case.

  Applying part \ref{Sc} of Lemma \ref{lemma:pop-structural}, we can find a smooth family $\mathfrak{H}_{\sigma}$ extending our $\mathfrak{H}_{0}$. By the usual gluing arguments, as in \S\ref{sec:comm-with-cont}, the rigid count of elements in $\mathscr{M}(\mathfrak{G}_{1})$ is equal to $\mathrm{P}(\mathfrak{H}_{1})(-,\mathrm{PSS}(R_{st};W))$. Then by the deformation argument, it follows that:
  \begin{equation*}
    \mathfrak{c}'(-)=\mathrm{P}(\mathfrak{H}_{1})(-,\mathrm{PSS}(R_{st};W)),
  \end{equation*}
  and the desired result holds after postcomposing with the maps to $\mathrm{SH}$.
\end{proof}

\subsubsection{Criterion for the unit element to be eternal}
\label{sec:crit-unit-elem}

We prove Theorem \ref{theorem:when-unit-eternal}. If the ideal restriction of $\varphi_{t}$ lies in the negative cone, then there is a continuation data $\varphi_{s,t}$ so that $\varphi_{0,t}=\varphi_{t}$ and $\varphi_{1,t}=R_{-\epsilon t}^{\alpha}$, where $\epsilon$ is sufficiently small. Thus, if the unit lies in the image of the structure map $\mathrm{HF}(\varphi_{t})\to \mathrm{SH}$, then the unit also lies in the image of $\mathrm{HF}(R_{-\epsilon t}^{\alpha})\to \mathrm{SH}$.

Applying the pair-of-pants product, it follows that $1=1^{k}$ lies in the image of $\mathrm{HF}(R_{-k\epsilon t}^{\alpha})\to \mathrm{SH}$. Since $k\epsilon$ can be made arbitrarily large, it follows from Lemma \ref{lemma:barcode-dcomp} that the unit is eternal. Thus we conclude Theorem \ref{theorem:when-unit-eternal}.\hfill$\square$

\subsection{Geometry of the spectral invariant of the unit}
\label{sec:geom-spec-invar-unit}

In this section we use the properties of $c_{\alpha}(1;\varphi_{t})$ established in the previous sections to explore its relation to the geometry of the universal cover of the contactomorphism group.

\subsubsection{Proof of the comparison with order measurements}
\label{sec:comp-with-order-1}

This section contains the proof of Theorem \ref{theorem:order-comp} on the comparison with the measurements of \cite{allais-arlove}. First, if $R_{st}\le \varphi_{t}$, there is a factorization:
\begin{equation*}
  \mathrm{HF}(\varphi_{t}^{-1}\circ R_{(s-\epsilon)t})\to \mathrm{HF}(R_{-\epsilon t})\to \mathrm{SH}.
\end{equation*}
Since $R_{-\epsilon t}$ lies in the negative cone, the rightmost structure map does not hit $1\in \mathrm{SH}$ by Theorem \ref{theorem:when-unit-eternal}. Thus $c_{\alpha}(\varphi_{t})\ge s-\epsilon$. Taking the limit $\epsilon \to 0$ and the supremum over $s$ yields $c_{-}^{\alpha}(\varphi_{t})\le c_{\alpha}(\varphi_{t})$.

On the other hand, if $\varphi_{t}\le R_{st}$, then there is a factorization:
\begin{equation*}
  \mathrm{HF}(R_{+\epsilon t})\to \mathrm{HF}(\varphi_{t}^{-1}\circ R_{(s+\epsilon )t})\to \mathrm{SH}.
\end{equation*}
Since the unit always lies in the image of $\mathrm{HF}(R_{+\epsilon t})\to \mathrm{SH}$, by its definition in terms of rigid PSS cylinders in \S\ref{sec:unit-elem-mathrmsh}, it follows that $c_{\alpha}(\varphi_{t})\le s+\epsilon$. Taking the limits as above yields $c_{\alpha}(\varphi_{t})\le c_{+}^{\alpha}(\varphi_{t})$.

The finiteness statement follows from \cite{allais-arlove} and the fact that ideal boundaries of manifolds $W$ satisfying $1\not\in \mathrm{SH}_{e}$ are orderable.\hfill$\square$

\subsubsection{Spectral oscillation and the shape invariant}
\label{sec:example-cont-isot}

Before we begin the proof of Theorem \ref{theorem:shape-invar}, we state a lemma concerning the interaction between free homotopy classes and the Floer cohomology groups.

\begin{lemma}\label{lemma:technical-contractible}
  Let $\psi_{t}\in \mathscr{C}^{\times}$ be a contact-at-infinity isotopy of $W$ so that: $$\mathrm{HF}(\psi_{t})\to \mathrm{SH}$$ hits a non-zero unit element $1$. Then $\psi_{t}$ has contractible orbits. Similarly, if $\psi_{s,t}$ is continuation data with $\psi_{i,t}\in \mathscr{C}^{\times}$ for $i=0,1$, and $\psi_{s,t}$ never develops any contractible orbits outside of some compact set, then:
  \begin{equation*}
    \mathrm{HF}(\psi_{0,t})\to \mathrm{SH}\text{ hits the unit}\iff\mathrm{HF}(\psi_{1,t})\to \mathrm{SH}\text{ hits the unit}.
  \end{equation*}
\end{lemma}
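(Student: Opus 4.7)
The plan is to decompose Floer cohomology by free homotopy classes of orbits and exploit the fact that PSS cycles are supported on contractible orbits. For any $\psi_{t}\in \mathscr{C}^{\times}$, each fixed point $x$ of $\psi_{1}$ yields an orbit $\gamma_{x}(t)=\psi_{t}(x)$ with a well-defined class $[\gamma_{x}]\in \pi_{0}(LW)$, giving a splitting $\mathrm{CF}(\psi_{t})=\bigoplus_{\eta}\mathrm{CF}(\psi_{t})_{\eta}$. The Floer differential, continuation maps, PSS operation, and pair-of-pants product all respect this splitting because every relevant Floer-theoretic cylinder or pair-of-pants solution is itself a homotopy between its asymptotic loops. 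Consequently $\mathrm{SH}$ splits as $\bigoplus_{\eta}\mathrm{SH}_{\eta}$, and the unit $1\in\mathrm{SH}$ lies in the contractible summand $\mathrm{SH}_{0}$, since any PSS cylinder $v:\C\to W$ caps its asymptotic orbit by a disk via the removable singularity at $z=0$.

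For the first assertion, suppose $\xi\in \mathrm{HF}(\psi_{t})$ satisfies $\mathfrak{c}(\xi)=1$. Decompose $\xi=\sum_{\eta}\xi_{\eta}$; since $\mathfrak{c}$ preserves the splitting, the contractible part $\xi_{0}$ maps to $1\ne 0$, so $\xi_{0}\ne 0$. In particular $\mathrm{CF}(\psi_{t})_{0}\ne 0$, and so $\psi_{t}$ admits a contractible orbit.

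For the second assertion, one direction is functoriality of $\mathfrak{c}$: if $\xi_{0}\in \mathrm{HF}(\psi_{0,t})$ hits $1$, so does its image under the continuation map. For the converse, the plan is to show that under the hypothesis the continuation map restricted to the contractible summands
\begin{equation*}
\mathrm{HF}(\psi_{0,t})_{0}\to \mathrm{HF}(\psi_{1,t})_{0}
\end{equation*}
is an isomorphism; then any $\xi_{1}\in \mathrm{HF}(\psi_{1,t})$ hitting $1$ has a contractible component that pulls back to an element of $\mathrm{HF}(\psi_{0,t})_{0}$ also hitting $1$. To establish the isomorphism, fix the compact set $K\subset W$ provided by the hypothesis that contains every contractible fixed point of $\psi_{s,1}$ for all $s\in [0,1]$. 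Lemma \ref{lemma:energy-estimate} combined with the standard contact-at-infinity maximum principle confines every continuation cylinder with contractible asymptotes to a slightly larger compact set $K'$. Hence the restricted continuation map is insensitive to modifications of $\psi_{s,t}$ outside any fixed compact neighborhood $K''$ of $K'$. Replacing $\psi_{s,t}$ by a homotopy which equals the identity outside $K''$, the time-reversed homotopy is again valid continuation data (trivially, since it is compactly supported), and the resulting reverse continuation furnishes a chain-homotopy inverse on the contractible subcomplex via the usual Floer-theoretic gluing argument.

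The main obstacle is making this cut-off argument rigorous. One must verify that continuation cylinders with contractible asymptotes genuinely stay in a fixed compact set, combining the energy bound of Lemma \ref{lemma:energy-estimate} with a contact-at-infinity maximum principle that explicitly uses the hypothesis, and confirm that the count defining the restricted continuation is unaltered by modifying $\psi_{s,t}$ outside a compact set. The subtlety is that $\psi_{s,t}$ is only required to have non-negative ideal restriction, so the naive reversal $\psi_{1-s,t}$ is not valid continuation data in general; the hypothesis on contractible orbits is precisely what renders this difficulty invisible to the contractible subcomplex.
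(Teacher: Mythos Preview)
Your decomposition by free homotopy classes and the observation that the PSS cycle lies in the contractible summand are exactly the paper's approach, and your treatment of the first assertion is correct. For the second assertion, you and the paper both reduce to showing that the continuation map restricted to the contractible summand is an isomorphism; the paper simply invokes \cite{uljarevic-zhang-JFPTA-2022} for this (cf.\ the proof of Theorem~\ref{theorem:spectral}) rather than arguing directly.

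Your cut-off argument for this isomorphism has a real gap, which your final paragraph identifies but does not close. The endpoints $\psi_{0,t}$ and $\psi_{1,t}$ generally have different ideal restrictions, so no path between them can be $s$-independent at infinity; replacing $\psi_{s,t}$ by something that ``equals the identity outside $K''$'' necessarily changes the endpoints and hence the Floer groups under discussion. If instead you only make the $s$-dependence compactly supported (keeping $\psi_{0,t}$ fixed), the new right endpoint $\tilde\psi_{1,t}$ differs from $\psi_{1,t}$ outside $K''$, and you must then still identify $\mathrm{HF}(\tilde\psi_{1,t})_{0}$ with $\mathrm{HF}(\psi_{1,t})_{0}$ --- a statement of the same type you are trying to prove. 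Note also that the maximum principle you invoke confines \emph{all} forward continuation cylinders to a compact set, using only the non-negativity of the ideal restriction; it does not single out the contractible class, so the hypothesis on contractible orbits is not actually being used at that step. The argument in \cite{uljarevic-zhang-JFPTA-2022} proceeds differently: the absence of (contractible) orbits at infinity along the entire family is precisely what allows one to construct an honest inverse continuation on that summand, without attempting to make the data compactly supported.
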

\begin{proof}
  The first part is a straightforward application of the natural direct sum decomposition: $$\mathrm{HF}(\psi_{t})=\mathrm{HF}(\psi_{t};\kappa_{1})\oplus \mathrm{HF}(\psi_{t};\kappa_{2})\oplus \dots,$$ where the sum ranges over all free homotopy classes of loops in $W$, and:
  \begin{equation*}
    \mathrm{HF}(\psi_{t};\kappa)=\text{homology of }\mathrm{CF}(\psi_{t};\kappa),
  \end{equation*}
  where the right-hand side is the subcomplex generated by fixed points of $\psi_{1}$ so that the orbit $t\mapsto \psi_{t}(x)$ is in the class $\kappa$.

  One then observes that $\mathrm{PSS}(R^{\alpha}_{st};W)\in \mathrm{HF}(R^{\alpha}_{st})$ has vanishing projection to $\mathrm{HF}(R^{\alpha}_{st};\kappa)$ for every non-trivial class $\kappa$. By the argument in \S\ref{sec:naturality-unit-elem}, if the unit lies in the image $\mathrm{HF}(\psi_{t})\to \mathrm{SH}$, then the image of $1$ in $\mathrm{HF}(R^{\alpha}_{st})$ equals the PSS class $\mathrm{PSS}(R^{\alpha}_{st};W)$ for $s$ sufficiently large.

  If $\psi_{t}$ has no contractible orbits, then we conclude by naturality of the direct sum decomposition that $\mathrm{PSS}(R^{\alpha}_{st};W)$ has a vanishing projection to: $$\mathrm{HF}(\psi_{t};\text{contractible class}),$$ and hence $\mathrm{PSS}(R^{\alpha}_{st};W)$ is zero, contradicting the assumption that the unit was non-zero.

  The second part of the theorem follows from the arguments in \cite{uljarevic-zhang-JFPTA-2022}, similarly to Theorem \ref{theorem:spectral}. One shows that the continuation map:
  \begin{equation*}
    \mathrm{HF}(\psi_{0,t})\to \mathrm{HF}(\psi_{1,t})
  \end{equation*}
  acts isomorphically on the summand generated by contractible loops, and the same arguments used in the first part of the proof implies the stated result.
\end{proof}

\begin{proof}[Proof of Theorem \ref{theorem:shape-invar}]
  If $G(p)$ is negative for at least one $p\in S^{n-1}$, we claim there is a non-constant affine function $\ell(p)=a\cdot p+b$ with $\abs{a}>b>0$, and a number $\delta$ so that for all $p\in S^{n-1}$:
  \begin{enumerate}
  \item $G(p)<\delta\implies G(p)<\ell(p),$
  \item $G(p)\ge 0\implies \ell(p)\ge \delta$,
  \end{enumerate}
  one can pick $\ell$ so that $\set{G<0}$ contains $\ell^{-1}((-\infty,0])$. Replacing $\ell$ by $\epsilon \ell$, we may suppose: $$G(p)\le 0\implies G(p)<\ell,$$ and then the claim holds for small enough $\delta$.

  Let $f$ be a function so that $f(x)=x$ for $x\le 0$ and $f(x)\le \max\set{x,\delta}$. Then it is clear that $f(G(p))\le \ell(p)$ holds for $p\in S^{n-1}$.

  Given a function $G:S^{n-1}\to \R$, introduce the notation $\mathrm{HF}(G(p))$ for the Floer cohomology of time-1 map of any contact-at-infinity system $\psi_{t}$ on $T^{*}T^{n}$ whose generating Hamiltonian restricts to $G(p)$ on the unit sphere bundle, and is $1$-homogeneous outside the unit disk bundle. To be precise, as in \S\ref{sec:funct-struct-floer}, we actually define:
  \begin{equation*}
    \mathrm{HF}(G(p))=\lim_{\epsilon \to 0+}\mathrm{HF}(G(p)+\epsilon),
  \end{equation*}
  and $\mathrm{HF}(G(p)+\epsilon)$ is technically defined as the category theory limit of $\mathrm{HF}(\psi_{t})$ over the contractible choice of a non-degenerate extension $\psi_{t}$.

  Then we have the following diagram where the morphisms are continuation maps associated to the continuation data given by linear interpolation:
  \begin{equation*}
    \begin{tikzcd}
      {\mathrm{HF}(f(G(p)))}\arrow[dr,"{}",out=-40,in=140]\arrow[r,"{}"] &{\mathrm{HF}(G(p))}\\
      {\mathrm{HF}(a\cdot p)}\arrow[r,"{}"] &{\mathrm{HF}(a\cdot p+b)}
    \end{tikzcd}
  \end{equation*}
  The idea for the rest of the proof is simple:
  \begin{enumerate}[label=(\arabic*)]
  \item\label{sh1} apply the first part of Lemma \ref{lemma:technical-contractible} to conclude the unit does not lie in the image of $\mathrm{HF}(a\cdot p)\to \mathrm{SH}$,
  \item\label{sh2} apply the second part of Lemma \ref{lemma:technical-contractible} to conclude the unit does not lie in the image of $\mathrm{HF}(a\cdot p+b)\to \mathrm{SH}$, provided $b<\abs{a}$,
  \item\label{sh3} use the previous item and the above diagram to conclude the unit does not lie in the image of $\mathrm{HF}(f(G(p)))\to \mathrm{SH}$.
  \item\label{sh4} apply the second part of Lemma \ref{lemma:technical-contractible} once again to conclude the unit does not lie in the image of $\mathrm{HF}(G(p))\to \mathrm{SH}$.
  \end{enumerate}
  Before establishing these items, let us explain how it proves the theorem. It is clear that, with the notation of the proof:
  \begin{equation}\label{eq:shape-c}
    c_{\alpha}(1;\varphi_{t})=\inf\set{s:\text{unit lies in image of }\mathrm{HF}(s-H(p))\to \mathrm{SH}}.
  \end{equation}
  It is clear that, if $s>\max_{\abs{p}=1} H(p)$, then the unit will lie in the image by definition. On the other hand, if $s<\max_{\abs{p}=1} H(p)$, then $s-H(p)$ is negative in at least one point, and so \ref{sh4} with $G=s-H(p)$ implies the unit does not lie in the image. The combination of these two observations establishes \eqref{eq:shape-c}.

  Items \ref{sh1} follows from the fact that $a\cdot p$ already defines a smooth one-homogeneous Hamiltonian function on all of $T^{*}T^{n}$, and generates the canonical lift of a translation. Thus the flow cannot have any contractible orbits.

  Items \ref{sh2} and \ref{sh4} follow the same argument. In general, if $G:\R^{n}\to \R$ is one-homogeneous, then the Hamiltonian vector field associated to $G(p)$ equals $\bd_{p_{i}}G(p)\bd_{q_{i}}$. This has a contractible orbit if and only if $G$ has a critical point, which must occur with critical value zero since $G$ is one-homogeneous. This is equivalent to $G|_{S^{n-1}}$ having zero as a critical value.

  It is clear that $a\cdot p+b$ has no critical points with value zero if $0<b<\abs{a}$, and thus \ref{sh2} holds.

  Similarly, since $G|_{S^{n-1}}$ is assumed to have zero as a regular value (otherwise the recipe forces us to replace $G$ by $G+\epsilon$), and $f(x)=x$ for $x\le 0$, the linear deformation $G_{s}=(1-s)f(G)+sG$ also never has zero as a critical value. Thus \ref{sh4} holds. Item \ref{sh3} is obvious, and so the proof is complete.
\end{proof}

\emph{Remark.} Let $\phi_{t},\psi_{t}$ be generated by $H(p),G(p)$, respectively. If there is a morphism from $\phi_{t}$ to $\psi_{t}$ in $\mathscr{C}$, there is a morphism from $\id$ to $\phi_{t}^{-1}\psi_{t}$ in $\mathscr{C}$.

It then follows from the PSS construction of \S\ref{sec:defin-unit-elem} that the unit lies in the image of $\mathrm{HF}(\phi_{t}^{-1}\psi_{t})\to \mathrm{SH}$, and hence:
\begin{equation*}
  c_{\alpha}(1;\psi_{t}^{-1}\phi_{t})\le 0
\end{equation*}
However, $\psi_{t}^{-1}\phi_{t}$ is generated by $H(p)-G(p)$, and hence our result shows:
\begin{equation*}
  \max_{p}H(p)-G(p)\le 0\implies H(p)\le G(p)\text{ for all $p\in S^{n-1}$}.
\end{equation*}
Thus we obtain a Floer theory proof of the result of \cite{eliashberg-polterovich-GAFA-2000} that $\phi_{t}\le \psi_{t}$ (in the order relation on the universal cover of the contactomorphism group) if and only if $H(p)\le G(p)$ holds for all $p\in S^{n-1}$ (the ``if'' direction is obvious and we have just proved the ``only if'' direction).

\subsubsection{Proof of Theorem \ref{theorem:systole-osc}}
\label{sec:proof-of-systole-osc}

In this section we bound from below the spectral invariant of loop $\phi_{t}$ generated by the Hamiltonian $H=p$ on $W\times T^{*}S^{1}$, where $W$ is a Liouville manifold.

Let $k\ne 0$. Then $\phi_{t}^{-k}$ generates a system without any contractible orbits, and so Lemma \ref{lemma:technical-contractible} implies that $1$ does not lie in the image of $\mathrm{HF}(\phi_{t}^{-k})\to \mathrm{SH}$.

Now consider the systems $\phi_{t}^{-k}\circ R_{st}^{\alpha}$, $s>0$, where $\alpha$ is an arbitrary contact form on the ideal boundary $W\times T^{*}S^{1}$ (with the correct coorientation, of course). If $s$ is a speed for which this system develops a contractible orbit at infinity, then $R_{st}^{\alpha}$ must have an orbit in the free homotopy class of the $k$th iterate of $\set{w}\times S^{1}$. To see why, observe that, if there is a point $x$ so that $t\mapsto \gamma(t)=\phi_{t}^{-k}(R_{st}^{\alpha}(x))$ is a contractible loop, the loop $\gamma$ is homotopic with fixed endpoints to the loop:
\begin{equation*}
  \eta(t)=\left\{
    \begin{aligned}
      R_{s2t}^{\alpha}(x)\text{ for }t\le 1/2,\\
      \phi_{(2t-1)}^{k}(x)\text{ for }t\ge 1/2,\\
    \end{aligned}
  \right.
\end{equation*}
If $\gamma$ are contractible, then $t\mapsto R^{\alpha}_{st}(x)$ and $t\mapsto \phi_{t}^{-k}(x)$ must be inverse elements of $\pi_{1}(W\times T^{*}S^{1},x)$. Hence $s$ must be the speed of a Reeb orbit lying in the free homotopy class of the $k$th iterate of $\set{w}\times S^{1}$, as desired.

Thus, the continuation data from $\phi_{t}^{-k}$ to $\phi_{t}^{-k}\circ R^{\alpha}_{st}$ obtained by linearly increasing the speed develops no contractible orbits at infinity if $s<\mathrm{sys}_{k}(R^{\alpha})$. Thus Lemma \ref{lemma:technical-contractible} implies that the unit does not lie in the image of:
\begin{equation*}
  \mathrm{HF}(\phi_{t}^{-k}\circ R^{\alpha}_{st})\to \mathrm{SH}
\end{equation*}
if $s<\mathrm{sys}_{k}(R^{\alpha})$. It therefore follows that:
\begin{equation*}
  c_{\alpha}(1;\phi_{t}^{k})\ge \mathrm{sys}_{k}(R^{\alpha}).
\end{equation*}
This completes the proof of Theorem \ref{theorem:systole-osc}.\hfill$\square$

\subsubsection{Displacement and spectral invariants}
\label{sec:displ-spectr-invar-1}

We prove Theorem \ref{theorem:unit-displace}. The arguments are quite similar to analogous arguments in the setting of symplectic spectral invariants; see, e.g., \cite{schwarz-PJM-2000}.

Let $U\subset Y$ be the open set displaced by the ideal restriction of $\psi_{t}$. Suppose that $\psi_{t}$ has no orbits at infinity; this can be achieved by a small perturbation $\psi_{t}=R_{-\epsilon t}^{\alpha}\psi_{t}$ so that $\psi_{t}\in \mathscr{C}^{\times}$. By the same small perturbation, we may assume that $c_{\alpha}(1;\psi_{t})<0$. Moreover, for any compact set $K\subset U$, we can pick the perturbation small enough that $\psi_{t}$ still displaces $K$ from itself.

Let $\varphi_{t}$ be a contact isotopy supported in $U$. Let $\rho_{t}$ be a non-negative contact isotopy supported in $U$, chosen large enough that there is a continuation datum from $\varphi_{t}$ to $\rho_{t}$. Thus:
\begin{equation}\label{eq:non-neg-trick}
  c_{\alpha}(1;\varphi_{t})\le c_{\alpha}(1;\rho_{t}).
\end{equation}
Apply the subadditivity to conclude:
\begin{equation}\label{eq:displace-trick}
  c_{\alpha}(1;\rho_{t})\le c_{\alpha}(1;\psi_{t}\rho_{t})+c_{\alpha}(1;\psi_{t}^{-1}).
\end{equation}
Since $c_{\alpha}(1;\psi_{t})<0$, the unit lies in the image of $\mathrm{HF}(\psi_{t}^{-1})\to \mathrm{SH}$. Consider the continuation data:
\begin{equation*}
  s\mapsto (\psi_{t}\circ \rho_{(1-s)t})^{-1}.
\end{equation*}
We claim that, for each $s$, this system never develops orbits at infinity. If it did, then $\psi_{t}\circ \rho_{(1-s)t}$ would have a orbit. However this cannot happen since $\psi_{1}$ displaces the support of $\rho_{(1-s)t}$ and $\psi_{t}$ has no orbits at infinity. Thus the claim is established.

Applying the second part of Lemma \ref{lemma:technical-contractible}, we conclude that the unit lies in the image of:
\begin{equation*}
  \mathrm{HF}((\psi_{t}\rho_{t})^{-1})\to \mathrm{SH}.
\end{equation*}
Hence $c_{\alpha}(1;\psi_{t}\rho_{t})\le 0$. Thus \eqref{eq:non-neg-trick} and \eqref{eq:displace-trick}  yield $c_{\alpha}(1;\varphi_{t})\le c_{\alpha}(1;\psi_{t}^{-1})$, as desired.

It remains only to prove that:
\begin{equation*}
  c_{\alpha}(U)=\sup\set{c_{\alpha}(1;\varphi_{t}):\text{$\varphi_{t}$ is supported in $U$}}
\end{equation*}
is strictly positive if $U$ is non-empty. Take $\varphi_{t}$ autonomous and supported in $U$ so that its contact Hamiltonian is non-negative and strictly positive in at least one point. It is then clear (e.g., by the ergodic trick of \cite{eliashberg-polterovich-GAFA-2000}) that there exist a product of conjugates $g\varphi_{t}g^{-1}$ whose contact Hamiltonian is strictly positive everywhere. Thus:
\begin{equation*}
  0<c_{\alpha}(1;\prod_{i=1}^{k}g_{i}\varphi_{t}g_{i}^{-1})\le \sum_{i=1}^{k}c_{\alpha}(1;g_{i}\varphi_{t}g_{i}^{-1}).
\end{equation*}
The proof is complete provided we can show:
\begin{equation}
  \label{eq:conju-pos}
  c_{\alpha}(1;g\varphi_{t}g^{-1})>0\iff c_{\alpha}(1;\varphi_{t})>0.
\end{equation}
for all $g\in \mathrm{Cont}_{0}(Y)$.

Replace $\varphi_{t}$ by $R^{\alpha}_{-\epsilon t}\varphi_{t}\in \mathscr{C}^{\times}$. Then there is a zero curvature connection: $$\mathfrak{H}\in \mathscr{C}(g\varphi^{-1}_{t}R^{\alpha}_{\epsilon t}g^{-1},\varphi_{t}^{-1}R_{\epsilon t}^{\alpha}).$$ Using the operation $\mathrm{C}(\mathfrak{H})$ from \S\ref{sec:definition-map}, and its inverse, we conclude that:
\begin{enumerate}
\item the unit lies in the image of $\mathrm{HF}(g\varphi^{-1}_{t}R^{\alpha}_{\epsilon t}g^{-1})\to \mathrm{SH}$, if and only if,
\item the unit lies in the image of $\mathrm{HF}(\varphi^{-1}_{t}R^{\alpha}_{\epsilon t})\to \mathrm{SH}$.
\end{enumerate}
In other words:
\begin{equation*}
  c(1;gR^{\alpha}_{-\epsilon t}\varphi_{t}g^{-1})\le 0\iff c(1;R^{\alpha}_{-\epsilon t}\varphi_{t})\le 0.
\end{equation*}
Taking the limit $\epsilon\to 0$, we conclude that:
\begin{equation*}
  c(1;g\varphi_{t}g^{-1})\le 0\iff c(1;\varphi_{t})\le 0,
\end{equation*}
which is equivalent to \eqref{eq:conju-pos}. This completes the proof. \hfill$\square$

\subsubsection{Conjugation invariant measurements}
\label{sec:conjugation-invariant-proof}

We prove Theorem \ref{theorem:conjugation} on the conjugation invariance of the measurement $\ell$. The argument is quite similar to the argument given in \S\ref{sec:displ-spectr-invar-1}.

Indeed, $g\varphi_{t}^{-1}g^{-1}\phi_{t}^{k}$ and $g\varphi_{t}^{-1}\phi_{t}^{k}g^{-1}$ have the same time-1 maps in $\text{UH}$, since $\phi_{t}^{k}$ lies in the center of $\mathrm{UH}$, and the same argument given above proves that:
\begin{equation*}
  \mathrm{HF}(g\varphi_{t}^{-1}\phi_{t}^{k}g^{-1})\to \mathrm{SH}\text{ hits }\zeta\iff \mathrm{HF}(\varphi_{t}^{-1}\phi_{t}^{k})\to \mathrm{SH}\text{ hits }\zeta.
\end{equation*}
This we conclude that $\ell(g\varphi_{t}g^{-1})=\ell(\varphi_{t})$, as desired.\hfill$\square$

\section{Vanishing of eternal classes and smooth displaceability}
\label{sec:vanish-etern-class}

The strategy used to prove Theorem \ref{theorem:smooth-displaceability} is to analyze the chain homotopy class of continuation maps:
\begin{equation}\label{eq:continuation-map}
  \mathrm{CF}(\psi_{0,t})\to \mathrm{CF}(\psi_{1,t}),
\end{equation}
when $\psi_{0,t}$ admits a morphism to $\id$ and $\psi_{1,t}$ admits a morphism from $\id$, in the category $\mathscr{C}$, and \eqref{eq:continuation-map} is obtained by concatenating these morphism.

The idea is to deform the data used to define the continuation map to replace it by an intersection theory problem; briefly, \eqref{eq:continuation-map} is chain homotopic to a map counting intersections between the PSS moduli spaces introduced in \cite{piunikhin-salamon-schwarz-1996}; we have seen these moduli spaces already in \S\ref{sec:defin-unit-elem} when defining the unit element and will review them in greater details setting below. The upshot of this intersection theoretic approach is that, when the compact part of $W$ is smoothly displaceable, the intersection number (and hence the continuation map) is equal to zero.

\subsection{PSS and the positive cone}
\label{sec:pss-positive-cone}

Let $\psi_{s,t}$ be PSS continuation data with $\psi_{0,t}=\id$ and $\psi_{1,t}\in \mathscr{C}^{\times}$, as in \S\ref{sec:defin-unit-elem}.

Let $\delta_{s,t}$ be a perturbation term with compact support in $W$ and so $\delta_{s,t}=0$ for $s=0,1$ and $t=0$. Then $\kappa_{s,t}=\psi_{s,t}\delta_{s,t}$ is again valid continuation data with the same ideal restriction as $\psi_{s,t}$. As in \S\ref{sec:cont-cylind} and \S\ref{sec:defin-unit-elem}, define the reparametrization:
\begin{equation*}
  \xi_{s,t}=\kappa_{\beta(1-s),\beta(3t-1)},
\end{equation*}
where $\beta$ is the standard cut-off function, and consider the moduli space $\mathscr{M}_{+}(\kappa_{s,t})$ of solutions to:
\begin{equation}\label{eq:PSS-positive}
  \left\{
    \begin{aligned}
      &v:\C\to W\text{ smooth},\\
      &u(s,t)=v(e^{-2\pi(s+it)}),\\
      &\bd_{s}u-\rho(t)Y_{s,t}(u)+J(u)(\bd_{t}u-X_{s,t}(u))=0,\\
    \end{aligned}
  \right.
\end{equation}
where $Y_{s,t},X_{s,t}$ and $\rho(t)=\beta(3-3t)$ are as in \S\ref{sec:cont-cylind}. The plus sign signifies that the removable singularity is at the $s=+\infty$ end; see Figure \ref{fig:PSS-positive}. The important thing to notice is that:
\begin{equation*}
  \left\{\begin{aligned}
    &X_{s,t}=Y_{s,t}=0\text{ for $s\ge 1$},\\
    &X_{s,t}=X_{t}\text{ and }Y_{s,t}=0\text{ for $s\le 0$},\\
  \end{aligned}\right.
\end{equation*}
where $X_{t}$ is the generator of $\psi_{1,\beta(3t-1)}$.

\begin{figure}[h]
  \centering
  \begin{tikzpicture}
    \draw (0,0.5) circle (0.2 and 0.5);
    \node at (-0.2,0.5) [left] {$\gamma$};
    \draw (0,1)--(3,1)to[out=0,in=0,looseness=2]coordinate[pos=0.5](X)(3,0)--(0,0);
    \node at (X)[draw,circle,fill,inner sep=1pt]{};
  \end{tikzpicture}
  \caption{An element in $\mathscr{M}_{+}(\kappa_{s,t})$ asymptotic to an orbit $\gamma$.}
  \label{fig:PSS-positive}
\end{figure}
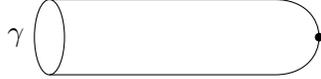

\begin{lemma}\label{lemma:PSS-genericity}
  For generic perturbation term $\delta_{s,t}$, the moduli space $\mathscr{M}_{+}(\kappa_{s,t})$ is cut transversally and is a smooth manifold. The local dimension of $\mathscr{M}_{+}$ near a point $v$ is given by:
  \begin{equation*}
    \dim_{\mathscr{M}_{+}}(v)=n-\mathrm{CZ}_{\mathfrak{s}}(\gamma)+2\mathfrak{s}^{-1}(0)\cdot v,
  \end{equation*}
  where $\mathrm{CZ}_{\mathfrak{s}}$ is as in \S\ref{sec:chern-classes-conley}.

  Moreover the total evaluation map $\mathscr{M}_{+}\times \C\to W$ and also the evaluation at zero $\mathscr{M}_{+}\to W$ can be assumed to be transverse to any countable collection of smooth maps $f:X\to W$.
\end{lemma}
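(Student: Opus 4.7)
The plan is to follow the standard Floer--Hofer--Salamon transversality scheme, adapted as in Lemmas \ref{lemma:transversality-floer} and \ref{lemma:transversality-continuation}. The setting here is in fact simpler than in Lemma \ref{lemma:transversality-floer} because equation \eqref{eq:PSS-positive} has no $\R$-translation symmetry: the generators $X_{s,t}$ and $Y_{s,t}$ are genuinely $s$-dependent in the transition strip $s\in(0,1)$, so the pathological case $\bd_{s}u\equiv 0$ that required care in Lemma \ref{lemma:transversality-floer} does not arise.

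First I would introduce a Banach space $\mathscr{P}$ of perturbation terms $\delta_{s,t}$, compactly supported in $W$ and vanishing on $\set{s=0,1}$ and $\set{t=0}$ (of Floer's $C^{\varepsilon}$ type, say), and form the universal moduli space $\mathscr{M}_{+}^{\mathrm{univ}}$ of pairs $(v,\delta)$ solving \eqref{eq:PSS-positive}. The linearization of the defining equation at $(v,\delta)$ has the shape
\begin{equation*}
  (\eta,Y)\longmapsto D_{v}\eta-Y_{s,t}(\psi_{s,t}^{-1}(u)),
\end{equation*}
analogous to \eqref{eq:total-lin-op}. To show this is surjective, it suffices (by the usual $L^{2}$-duality trick) to show any putative cokernel element $\zeta$ must vanish identically. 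At any point $(s_{0},t_{0})$ with $s_{0}\in(0,1)$ the perturbation is unconstrained; constructing bump-function perturbations $Y$ localized near $(s_{0},t_{0},u(s_{0},t_{0}))$ therefore forces $\zeta(s_{0},t_{0})=0$. Unique continuation for the Cauchy--Riemann operator then yields $\zeta\equiv 0$. Applying Sard--Smale to the projection $\mathscr{M}_{+}^{\mathrm{univ}}\to \mathscr{P}$ produces a comeagre set of $\delta$ for which $\mathscr{M}_{+}(\kappa_{s,t})$ is cut transversally.

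The dimension formula is a Riemann--Roch index computation for a Cauchy--Riemann type operator on the once-punctured sphere with non-degenerate asymptotic $\gamma$: the Euler-characteristic contribution is $n\cdot\chi(\C)=n$, the asymptotic contribution is $-\mathrm{CZ}_{\mathfrak{s}}(\gamma)$, and the Chern-class contribution relative to the trivialization induced by $\mathfrak{s}$ is $2\mathfrak{s}^{-1}(0)\cdot v$. This matches the index convention used for the Floer differential in \S\ref{sec:floer-differential-1} and the continuation map in \S\ref{sec:continuation-maps}; see also \cite{cant-thesis-2022}.

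For the evaluation transversality I would re-run the parametric argument from the end of Lemma \ref{lemma:transversality-floer}. The universal total evaluation $\mathrm{ev}(v,\delta,z)=v(z)$ is a submersion on $\mathscr{M}_{+}^{\mathrm{univ}}\times\C$: for $z$ with $s(z)\in(0,1)$ one wiggles $\delta$ near $(s(z),t(z),u(s(z),t(z)))$ to move $v(z)$ in any direction, and this already suffices. Transversality to any countable collection $\set{f_{i}:X_{i}\to W}$ is then a comeagre condition on $\delta$ by Baire. The main obstacle is, as usual, the transversality-of-evaluation at the special point $z=0$ (corresponding to $s=+\infty$), where the perturbation vanishes and $v$ is merely $J$-holomorphic; but this is handled as in Lemma \ref{lemma:transversality-floer}, by noting that the $z=0$ evaluation factors through the interior evaluation on a slightly smaller annular region where $\delta$ is still free to vary, and then applying \cite[Theorem 3.1]{hofer-salamon-95}.
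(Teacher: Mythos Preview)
Your proposal is correct and follows the same approach as the paper, which simply defers to Lemma~\ref{lemma:transversality-floer} and cites \cite{schwarz-thesis,cant-thesis-2022} for the index formula; you have merely supplied more of the details. One small quibble: your claim that the case $\partial_{s}u\equiv 0$ ``does not arise'' is not literally true (a constant solution at a fixed point of $\psi_{1}$ could in principle occur), but this is harmless since your subsequent bump-function argument in $(s,t,w)$-space works uniformly without needing to separate out that case --- the real simplification over Lemma~\ref{lemma:transversality-floer} is that the perturbation is $(s,t)$-dependent rather than only $t$-dependent, so no somewhere-injectivity is required.
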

\begin{proof}
  The dimension formula is well-known; see, e.g., \cite{schwarz-thesis,cant-thesis-2022}. The argument for transversality is the same as Lemma \ref{lemma:transversality-floer}.
\end{proof}

The evaluation at zero $v\mapsto v(0)$ is used in the aforementioned intersection theory interpretation of \eqref{eq:continuation-map}. As usual in the semipositive framework, the total evaluation will be assumed to be transverse to the pseudocycle of simple $J$-holomorphic spheres.

It will be important to observe that:
\begin{lemma}\label{lemma:PSS-compactness-1}
  The set of points of the form $v(0)$ where $v\in \mathscr{M}_{+}(\kappa_{s,t})$ and which satisfy $\omega(v)\le A$ is contained in a compact set (depending on $\psi_{s,t}$ and $A$).
\end{lemma}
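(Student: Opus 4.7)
The approach is the standard Floer-theoretic maximum principle argument. In fact I would aim to prove slightly more: that the entire image of $v$ (not just $v(0)$) lies in a compact subset $K \subset W$ depending only on $\kappa_{s,t}$ and $A$; the conclusion then follows trivially, since $v(0)\in K$.

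First, I would collect two a priori bounds. Adapting Lemma \ref{lemma:energy-estimate} from the continuation-cylinder setting to the present PSS setting (treating the positive end as a removable singularity with no asymptotic contribution), one obtains a Floer-energy bound $E(v)\le \omega(v)+C(\kappa_{s,t})\le A+C$, where $C$ depends only on the continuation data and the compactly supported perturbation $\delta_{s,t}$. Second, the left asymptotic orbit $\gamma$ of the associated cylinder $u$ is a fixed point of $\psi_{1,1}$; since $\psi_{1,t}\in \mathscr{C}^{\times}$, these fixed points form a finite set and so lie in a fixed compact region $K_{0}\subset W$.

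Next, I would apply the maximum principle in the convex-at-infinity end of $W$. The relevant facts are: $J$ is Liouville-equivariant outside a compact set (\S\ref{sec:choice-almost-compl}); the perturbation $\delta_{s,t}$ has compact support in $W$; on the disk region $s\ge 1$ (i.e.\ $\abs{z}\le e^{-2\pi}$) the equation reduces to $\bd_{s}u+J(u)\bd_{t}u=0$; and the defining non-negativity of the ideal restriction $s\mapsto \psi_{s,1}$ (condition (2) in the definition of continuation data), together with the contact-at-infinity property of the generators $X_{s,t}$, $Y_{s,t}$, gives the correct sign for the Hamiltonian term in the standard no-escape maximum principle on the transition region $s\in [0,1]$ and on the Floer cylinder $s\le 0$. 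Combining the three regional bounds with the control on $\gamma$ and on the energy shows that $u(\R\times \R/\Z)\subset K$ for a compact $K$ depending only on $\kappa_{s,t}$ and $A$.

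The main obstacle is arranging the three different regimes (the $J$-holomorphic cap at $s\ge 1$, the continuation-type equation with cut-off $\rho(t)$ and perturbation $\delta_{s,t}$ on $s\in[0,1]$, and the Floer cylinder at $s\le 0$) into a single subharmonic-function argument on the full cylindrical domain. This is handled in the usual way by working with an appropriate plurisubharmonic function of the Liouville coordinate $r$ and checking that the sign conditions provided by the non-negativity of $\psi_{s,1}$ and the compact support of $\delta_{s,t}$ are preserved by the cut-off $\rho(t)$; the computation is essentially the one carried out in \cite[\S2.2.5]{cant-hedicke-kilgore} and \cite[\S A]{alizadeh-atallah-cant}, to which I would defer for details.
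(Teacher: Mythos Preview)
Your approach is correct but takes a genuinely different route from the paper. You argue via a direct global maximum principle: the relevant radial function is subharmonic along $v$ in the convex end, the domain $\C$ has a single asymptotic end at $s\to-\infty$ where $v$ converges to one of the finitely many orbits $\gamma$ of $\psi_{1,t}$, and hence the entire image of $v$ lies in a fixed compact set. This is both simpler and stronger than the paper's conclusion --- your compact set does not depend on $A$, and in fact the energy bound you collect in your first step is never used in your argument and could be dropped.

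The paper instead argues by contradiction using bubbling analysis. Assuming $v_{n}(0)\to\infty$ with $\omega(v_{n})\le A$, it passes to a subsequence, isolates small bubble regions $\Sigma_{n}\subset\C$ (where holomorphic spheres form, hence lie in a fixed compact set), and obtains a uniform $C^{1}$ bound on $v_{n}$ outside $\Sigma_{n}$. It then applies the maximum principle only \emph{locally}, on an annulus $D(2R)\setminus D(R)$ where the equation is the plain Floer equation for $\psi_{1,t}$ with no $\rho(t)Y_{s,t}$ term. Finally $v_{n}(0)$ is joined to this controlled region by a bounded-length arc avoiding $\Sigma_{n}$, along which the derivative bound gives a distance estimate. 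The trade-off: the paper never needs the maximum principle on the transition region $s\in[0,1]$ with the cutoff $\rho(t)$ (which you defer to \cite{cant-hedicke-kilgore,alizadeh-atallah-cant}), at the cost of a longer soft-analysis argument that genuinely uses the energy bound $A$.
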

\begin{proof}
  Suppose not, so there is a sequence $v_{n}\in \mathscr{M}_{+}(\kappa_{s,t})$ with $\omega(v_{n})\le A$ and where $v_{n}(0)$ leaves every compact set. This sequence has finite energy (here we use that $\psi_{s,t}$ is continuation data, so that the a priori energy estimate of \S\ref{sec:cont-cylind} holds). Hence bubbling analysis can be applied: after passing to a subsequence, there exist a sequence of small subdomains $\Sigma_{n}\subset \C$, contained in a fixed compact set, such that $v_{n}$ has a uniformly bounded derivative on the complement of $\Sigma_{n}$, and the restriction of $v_{n}$ to appropriate rescalings of $\Sigma_{n}$ converges uniformly to a collection of holomorphic spheres (we are describing the formation of bubble trees). Here we measure sizes using a Riemannian metric on $W$ which is invariant under the Liouville flow in the convex end.

  To be more precise about the formation of bubbles, there are embeddings $i_{n}:\Sigma_{n}\to \mathbb{C}P^{1}$ and limit holomorphic maps $w_{\infty}:\mathbb{C}P^{1}\to W$ so that the sup-distance between $v_{n}|_{\Sigma_{n}}$ and $w_{\infty}\circ i_{n}$ converges to zero. See, e.g., \cite{cant-chen-arXiv-2023}. Since all holomorphic spheres are contained in a fixed compact set, it follows that $v_{n}(\Sigma_{n})$ is contained in a fixed compact set, independent of $n$.

  If $D(R)$ is large enough, then $v_{n}|_{D(2R)\setminus D(R)}$ is a reparametrization of a sequence of finite length Floer cylinders for the Hamiltonian vector field $X_{t}$, with bounded derivative and bounded modulus. The maximum principle from, e.g., \cite[\S2.2.5]{brocic-cant-JFPTA-2024}, proves that $v_{n}(D(2R)\setminus D(R))$ must also be contained in a fixed compact set. The point $0$ can be joined to $\bd D(R)$ or $\bd \Sigma_{n}$ by an arc $a_{n}(t)$, of length at most $R$, contained in the complement of $\Sigma_{n}$. Since the derivative is uniformly bounded along $a_{n}(t)$, it follows that $v_{n}(0)$ remains a uniformly bounded distance from $v_{n}(D(2R)\setminus D(R))\cup v_{n}(\Sigma_{n})$; the latter set is contained in a fixed compact set, and hence $v_{n}(0)$ cannot diverge to infinity. This completes the proof.
\end{proof}

\subsubsection{PSS elements in symplectic cohomology}
\label{sec:pss-elements}

The usual application of the moduli spaces considered in \S\ref{sec:pss-positive-cone} is constructing PSS elements in $\mathrm{HF}(\psi_{1,t})$.

Fix a smooth proper map $f:P\to W$ (e.g., $P=W$ and $f=\id$, as in \S\ref{sec:defin-unit-elem}), and consider the fiber product of pairs $(p,v)\subset X\times \mathscr{M}_{+}(\kappa_{s,t})$ satisfying the incidence condition $f(p)=v(0)$.

Pick a generic perturbation term $\delta_{s,t}$ so that the evaluation of $\mathscr{M}_{+}(\kappa_{s,t})$ at zero is transverse to $f$ and the total evaluation is transverse to the simple $J$-holomorphic spheres. Consider the component $\mathscr{M}_{+,d}(x;a;\kappa_{s,t};f)$ of the parametric moduli space consisting of pairs $(x,v)$ satisfying:
\begin{enumerate}
\item $\gamma(0)=x,$ where $\gamma$ is the left asymptotic of $u$,
\item $\omega(v)=a$,
\item $\dim(P)+\mathrm{CZ}_{\mathfrak{s}}(\gamma)+2\mathfrak{s}^{-1}(0)\cdot [v]=d,$
\end{enumerate}

Then $\mathscr{M}_{+,0}(x;a;\kappa_{s,t};f)$ is a compact zero-dimensional manifold, and we define:
\begin{equation*}
  \mathrm{PSS}(\kappa_{s,t};f)=\sum_{x,a}\#\mathscr{M}_{+,0}(x;a;\kappa_{s,t};f) \tau^{a}x.
\end{equation*}
Then, following \cite{schwarz-thesis,piunikhin-salamon-schwarz-1996}, this count defines a Floer cycle in $\mathrm{CF}(\psi_{1,t})$. The usual deformation arguments, as in Lemma \ref{lemma:deformation}, prove the homology class of the cycle is invariant under deformations of the continuation data $\psi_{s,t}$ and is independent of the perturbation term $\delta_{s,t}$ used (recall $\kappa_{s,t}=\psi_{s,t}\delta_{s,t}$).

The resulting element of $\mathrm{HF}(\psi_{1,t})$ is denoted $\mathrm{PSS}(\psi_{s,t};f)$. The most important example of a PSS element is the unit element which is defined using $X=W$ and $f=\id$, in which case we denote it $\mathrm{PSS}(\psi_{s,t};W)$, as in \S\ref{sec:defin-unit-elem}. Another important example in this paper is the PSS element associated to a Lagrangian $L$. We simply take $X=L$ and $f$ equal to the inclusion. We denote this element by $\mathrm{PSS}(\psi_{s,t};L)$.

\subsection{PSS and the negative cone}
\label{sec:pss-negative-cone}

Let $\psi_{s,t}$ be continuation data with $\psi_{0,t}\in \mathscr{C}^{\times}$ and $\psi_{1,t}=\id$. In other words, $\psi_{s,t}$ represents a morphism from $\psi_{0,t}$ to $\id$. The moduli spaces considered in this section is a reflected version of moduli space \S\ref{sec:pss-positive-cone}. Let $\kappa_{s,t}=\psi_{s,t}\delta_{s,t}$. We consider:
\begin{equation*}
  \xi_{s,t}=\kappa_{\beta(1-s),\beta(3t-1)}.
\end{equation*}
The perturbation term $\delta_{s,t}$ is as in \S\ref{sec:pss-positive-cone}.

Define the moduli space $\mathscr{M}_{-}(\kappa_{s,t})$ of solutions to:
\begin{equation}\label{eq:PSS-negative}
  \left\{
    \begin{aligned}
      &v:\C\to W\text{ smooth},\\
      &u(s,t)=v(e^{2\pi(s+it)}),\\
      &\bd_{s}u-\rho(t)Y_{s,t}(u)+J(u)(\bd_{t}u-X_{s,t}(u))=0,\\
    \end{aligned}
  \right.
\end{equation}
very similarly to \eqref{eq:PSS-positive}. The minus sign signifies that the removable singularity is at the $s=-\infty$ end. The important thing to notice is that:
\begin{equation*}
  \left\{\begin{aligned}
    &X_{s,t}=Y_{s,t}=0\text{ for $s\le 0$},\\
    &X_{s,t}=X_{t}\text{ and }Y_{s,t}=0\text{ for $s\ge 1$},\\
  \end{aligned}\right.
\end{equation*}
and hence any solution $v$ to \eqref{eq:PSS-negative} is holomorphic on a small disk around $0$.

\begin{lemma}\label{lemma:PSS-genericity-2}
  For generic perturbation of $\delta_{s,t}$, $\mathscr{M}_{-}(\kappa_{s,t})$ is cut transversally and is a smooth manifold. The local dimension of $\mathscr{M}_{-}$ at a solution $v$, asymptotic to an orbit $\gamma$ at the positive end, is given by:
  \begin{equation*}
    \dim_{\mathscr{M}_{-}}(v)=n+\mathrm{CZ}_{\mathfrak{s}}(\gamma)+2\mathfrak{s}^{-1}(0)\cdot v.
  \end{equation*}
  The evaluation-at-zero map and total evaluation map can be assumed to be transverse to any countable collection of smooth maps $f:X\to W$.
\end{lemma}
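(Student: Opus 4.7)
The plan is to mirror the proof of Lemma \ref{lemma:PSS-genericity} essentially verbatim, with the roles of the two cylindrical ends reversed. Elements $v\in\mathscr{M}_{-}(\kappa_{s,t})$ are holomorphic on a neighborhood of $0\in\C$ (since $X_{s,t}=Y_{s,t}=0$ for $s\le 0$, equivalently near the removable singularity) and, written in cylindrical coordinates $z=e^{2\pi(s+it)}$, solve Floer's equation for $X_{t}$ as $s\to+\infty$, asymptotic to an orbit $\gamma$ of $\psi_{1,\beta(3t-1)}$. Thus the Fredholm setup is a Cauchy--Riemann type operator on the once-punctured plane, and the only genuine difference from \S\ref{sec:pss-positive-cone} is that the puncture at infinity is now a \emph{positive} puncture.

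For the dimension formula I would invoke the standard index computation for Cauchy--Riemann operators on punctured Riemann surfaces (e.g.\ \cite{schwarz-thesis,cant-thesis-2022}). The once-punctured plane contributes $n$ via its Euler characteristic (trivialized using $\mathfrak{s}$), the relative first Chern class contributes $2\mathfrak{s}^{-1}(0)\cdot v$, and the nondegenerate asymptotic $\gamma$ at a \emph{positive} puncture contributes $+\mathrm{CZ}_{\mathfrak{s}}(\gamma)$; this is precisely the sign flip from Lemma \ref{lemma:PSS-genericity}, where the asymptotic was at a negative puncture. Summing gives the claimed formula $n+\mathrm{CZ}_{\mathfrak{s}}(\gamma)+2\mathfrak{s}^{-1}(0)\cdot v$.

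For transversality I would run the argument of Lemma \ref{lemma:transversality-floer} in this setting. The perturbation $\delta_{s,t}$ is compactly supported where $s\in(0,1)$ (the interpolating region), so the linearization at $(v,\delta)$ has the form $(\eta,Y)\mapsto D_{v}\eta - Y_{t}(\psi_{t}^{-1}(v))$. Surjectivity follows by finding a point $(s_{0},t_{0})$ in the interpolating region at which $\bd_{s}v(s_{0},t_{0})\neq 0$ and $v(\cdot,t_{0})$ is injective at $s_{0}$, then localizing a variation $Y_{t}$ near $\psi_{t_{0}}^{-1}(v(s_{0},t_{0}))$. The finite energy hypothesis excludes translation-periodicity in $s$ (the would-be obstruction), and the alternative case $\bd_{s}v\equiv 0$ on the interpolating region forces $v$ to be a stationary configuration already handled by non-degeneracy of $\gamma$ and generic choice of $J$. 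A standard Sard--Smale/Baire argument then yields a comeager set of $\delta_{s,t}$ for which the moduli space is cut transversally. For the evaluation statement, the universal evaluation $(v,\delta,z)\mapsto v(z)$ is a submersion at every $z$ (including $z=0$) by an argument identical to the one at the end of Lemma \ref{lemma:transversality-floer}, so genericity of $\delta_{s,t}$ achieves transversality to any one smooth map, and a Baire intersection handles any countable collection.

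The main obstacle is purely bookkeeping: keeping track of the sign conventions in the Conley--Zehnder contribution when the cylindrical coordinate is reversed. There is no essentially new analytic input beyond Lemmas \ref{lemma:transversality-floer} and \ref{lemma:PSS-genericity}.
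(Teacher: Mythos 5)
Your proposal is correct and follows essentially the same route as the paper, which proves this lemma by citing the standard index formula and repeating the transversality argument of Lemma \ref{lemma:transversality-floer} (via Lemma \ref{lemma:PSS-genericity}); the sign flip $-\mathrm{CZ}_{\mathfrak{s}}(\gamma)\to+\mathrm{CZ}_{\mathfrak{s}}(\gamma)$ coming from the asymptotic now sitting at a positive puncture is exactly the point, and you handle it correctly.
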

\begin{proof}
  The argument is the same as Lemma \ref{lemma:PSS-genericity}.
\end{proof}
\begin{lemma}\label{lemma:PSS-compactness-2}
  The set of points of the form $v(0)$ where $v\in \mathscr{M}_{-}(\kappa_{s,t})$ and which satisfy $\omega(v)\le A$ is contained in a compact set depending on $\psi_{s,t}$ and $A$.
\end{lemma}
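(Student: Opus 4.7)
The plan is to mirror the proof of Lemma \ref{lemma:PSS-compactness-1} essentially verbatim, since solutions $v\in \mathscr{M}_{-}(\kappa_{s,t})$ have the same qualitative geometry as those in $\mathscr{M}_{+}$: the origin $z=0$ lies in a disk $\set{\abs{z}\le 1}$ on which $v$ is $J$-holomorphic (because $X_{s,t}=Y_{s,t}=0$ for $s\le 0$, which corresponds to $\abs{z}\le 1$ under $u(s,t)=v(e^{2\pi(s+it)})$), while for $\abs{z}\ge e^{2\pi}$ the map $v$ satisfies Floer's equation with generator $X_{t}$ of $\psi_{0,\beta(3t-1)}$, and the asymptotic orbit lies at $\abs{z}\to\infty$. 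Hence we are again in a plane with a removable singularity at the origin and Floer behavior in an outer cylindrical region, with only the role of $\psi_{1,t}$ at the asymptotic end replaced by $\psi_{0,t}$.

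First I would confirm the a priori energy bound: because $\psi_{s,t}$ is continuation data, solutions to \eqref{eq:PSS-negative} satisfy $E(v)\le \omega(v)+\mathrm{const}(\psi_{s,t})$ by the analog of Lemma \ref{lemma:energy-estimate}, so the assumption $\omega(v_{n})\le A$ yields uniform total energy along the sequence.

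Next I would run the contradiction argument: suppose $v_{n}\in \mathscr{M}_{-}(\kappa_{s,t})$ has $\omega(v_{n})\le A$ but $v_{n}(0)$ leaves every compact set. By the bubbling analysis invoked in the proof of Lemma \ref{lemma:PSS-compactness-1} (as in \cite{cant-chen-arXiv-2023}), after passing to a subsequence there exist small subdomains $\Sigma_{n}\subset \C$ contained in a fixed compact set of $\C$, embeddings $i_{n}:\Sigma_{n}\to \mathbb{C}P^{1}$, and a holomorphic limit $w_{\infty}:\mathbb{C}P^{1}\to W$ so that $v_{n}|_{\Sigma_{n}}$ is uniformly close to $w_{\infty}\circ i_{n}$, while $v_{n}$ has uniformly bounded derivative on $\C\setminus \Sigma_{n}$. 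Admissibility of $J$ confines all non-constant $J$-holomorphic spheres to a fixed compact set $K_{1}\subset W$, so $v_{n}(\Sigma_{n})\subset K_{1}$. Choosing $R$ large enough that $\Sigma_{n}\subset \set{\abs{z}<R}$ for all $n$, on the annulus $\set{R\le \abs{z}\le 2R}$ the map $v_{n}$ is the reparametrization of a bounded-derivative, bounded-modulus Floer cylinder for $X_{t}$, so the maximum principle from \cite[\S2.2.5]{brocic-cant-JFPTA-2024} places this image inside another fixed compact set $K_{2}\subset W$. Finally $0$ can be joined to $\Sigma_{n}\cup\set{R\le \abs{z}\le 2R}$ by an arc of length at most $2R$ lying in $\C\setminus \Sigma_{n}$, along which $v_{n}$ has bounded derivative, so $v_{n}(0)$ remains a uniformly bounded distance from $K_{1}\cup K_{2}$, contradicting divergence.

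The only step requiring honest verification is that the reparametrization $u(s,t)=v(e^{2\pi(s+it)})$ places the Floer region at large $\abs{z}$ rather than small, and that the maximum principle still applies on the resulting outer annulus; this is immediate since the $\mathscr{M}_{-}$ geometry is the reflection $z\mapsto 1/\bar z$ of the $\mathscr{M}_{+}$ geometry, and all the relevant a priori estimates (energy, maximum principle, bubble formation) are invariant under this reflection. Thus no new analytic difficulty arises beyond what is already handled in Lemma \ref{lemma:PSS-compactness-1}.
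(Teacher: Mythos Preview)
Your proposal is correct and follows exactly the paper's own approach: the paper's proof of Lemma~\ref{lemma:PSS-compactness-2} is the one-line statement ``The argument is exactly the same as Lemma~\ref{lemma:PSS-compactness-1},'' and you have simply spelled out that same argument in detail, correctly noting that in the $\mathscr{M}_{-}$ set-up the holomorphic region still lies near $z=0$ and the Floer region still lies at large $\abs{z}$. One minor remark: your final comment about the reflection $z\mapsto 1/\bar z$ is not quite the right picture---in both $\mathscr{M}_{+}$ and $\mathscr{M}_{-}$ the $z$-plane geometry is the same (holomorphic near the origin, Floer near infinity); what is reflected is only the $s$-coordinate on the cylinder, and the parametrizations $z=e^{\mp 2\pi(s+it)}$ absorb that reflection so the plane pictures coincide---but this does not affect the validity of your argument.
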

\begin{proof}
  The argument is exactly the same as Lemma \ref{lemma:PSS-compactness-1}.
\end{proof}

\subsection{Fiber products of PSS moduli spaces}
\label{sec:fiber-products-pss}

Let $q_{\tau}$, $\tau\in [0,\infty)$, be an arbitrary homotopy of smooth maps $W\to W$, with $q_{0}=\id$. Let $\psi_{0,s,t},\psi_{1,s,t}$ be PSS continuation data to $\id$ and from $\id$, respectively, with $\psi_{0,0,t},\psi_{1,1,t}\in \mathscr{C}^{\times}$. Let $\kappa_{0,s,t}=\psi_{0,s,t}\delta_{0,s,t}$ and $\kappa_{1,s,t}=\psi_{1,s,t}\delta_{1,s,t}$ be perturbed data, as in the previous sections. Define:
\begin{equation*}
  \mathscr{M}_{\mathrm{fp}}=\mathscr{M}_{\mathrm{fp}}(q,\kappa_{0,s,t},\kappa_{1,s,t})
\end{equation*}
to be the fiber product of triples $(u_{0},u_{1},\tau)$ such that:
\begin{enumerate}
\item $u_{0}\in \mathscr{M}_{-}(\kappa_{0,s,t})$ as in \S\ref{sec:pss-negative-cone},
\item $u_{1}\in \mathscr{M}_{+}(\kappa_{1,s,t})$ as in \S\ref{sec:pss-positive-cone},
\item $q_{\tau}(u_{0}(-\infty))=u_{1}(\infty)$.
\end{enumerate}
See Figure \ref{fig:fp}. The projection $\mathscr{M}_{\mathrm{fp}}\to [0,\infty)$ sending $(u_{0},u_{1},\tau)\mapsto \tau$ will play an important role in subsequence arguments.

\subsubsection{Continuation via the fiber product moduli space}
\label{sec:cont-via-fiber}

For generic $\delta_{0,s,t},\delta_{1,s,t}$, the fiber product moduli space $\mathscr{M}_{\mathrm{fp}}$ is cut transversally and has local dimension at a solution $(u_{0},u_{1},\tau)$ equal to:
\begin{equation*}
  \dim_{\mathscr{M}_{-}}(u_{0})+\dim_{\mathscr{M}_{+}}(u_{1})+1-2n=\mathrm{CZ}_{\mathfrak{s}}(\gamma_{+})-\mathrm{CZ}_{\mathfrak{s}}(\gamma_{-})+2\mathfrak{s}^{-1}(0)\cdot [u]+1,
\end{equation*}
where $[u]=[u_{0}]+[u_{1}]$. Let $\tau$ be a regular value of $\mathscr{M}_{\mathrm{fp}}\to [0,\infty)$ and consider the zero dimensional component of the fiber, denoted:
\begin{equation*}
  \mathscr{M}_{\mathrm{fp},0}^{\tau}(x_{-},a,x_{+};q,\kappa_{1,s,t};\kappa_{0,s,t}),
\end{equation*}
consisting of solutions $(u_{0},u_{1},\tau)$ satisfying:
\begin{enumerate}
\item $\gamma_{\pm}(0)=x_{\pm}$,
\item $\omega\cdot [u]=a$,
\item $\mathrm{CZ}_{\mathfrak{s}}(\gamma_{+})-\mathrm{CZ}_{\mathfrak{s}}(\gamma_{-})+2\mathfrak{s}^{-1}(0)\cdot [u]=0$
\end{enumerate}
This defines a morphism $\mathfrak{c}_{\mathrm{fp},\tau}:\mathrm{CF}(\psi_{0,t})\to \mathrm{CF}(\psi_{1,t})$ by the formula:
\begin{equation*}
  \mathfrak{c}_{\mathrm{fp},\tau}(\tau^{b}x_{+})=\sum_{a,x_{-}}\#\mathscr{M}_{\mathrm{fp},0}^{\tau}(x_{-},a,x_{+};q,\kappa_{1,s,t};\kappa_{0,s,t}) \tau^{a+b}x_{-}.
\end{equation*}
\emph{Warning.} One should not confused the Novikov variable $\tau$ appearing in $\tau^{b}$ with the time parameter $\tau$ appearing in $q_{\tau}$.

The key result about this map is the following:
\begin{lemma}
  For generic $\delta_{0,s,t},\delta_{1,s,t}$ there is a generic set of regular values $\tau$ containing $\tau=0$ so that $\mathfrak{c}_{\mathrm{fp},\tau}$ is a chain map. Moreover, the chain homotopy class of $\mathfrak{c}_{\mathrm{fp},\tau}$ is independent of $\tau$.
\end{lemma}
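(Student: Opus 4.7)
The plan is to apply the standard Floer-theoretic recipe of analyzing the boundaries of $1$-dimensional moduli spaces, now using the parametric fiber product $\mathscr{M}_{\mathrm{fp}} \to [0,\infty)$ in place of the usual parametric moduli space. First, for generic $\delta_{0,s,t}, \delta_{1,s,t}$, the transversality conclusions of Lemmas \ref{lemma:PSS-genericity} and \ref{lemma:PSS-genericity-2} allow one to arrange that the combined evaluation map $(u_0, u_1, \tau) \mapsto (q_\tau(u_0(-\infty)), u_1(+\infty)) \in W\times W$ is transverse to the diagonal; in particular $\mathscr{M}_{\mathrm{fp}}$ is cut transversally, and by Sard's theorem the regular values of the projection $\mathscr{M}_{\mathrm{fp}} \to [0,\infty)$ form a comeagre subset of $[0,\infty)$. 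One arranges $\tau=0$ to be a regular value by taking $\mathrm{ev}_0^+:\mathscr{M}_+\to W$ itself to be a target in the transversality statement for $\mathrm{ev}_0^-$, which is possible since $q_0=\id$. The dimension count at a regular $\tau$ then matches the formula stated above.

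Second, compactness is controlled by the a priori energy estimate of Lemma \ref{lemma:energy-estimate} for each component, $E(u_i) \le \omega(u_i) + c_i$; since energies are non-negative and the total area $\omega(u_0) + \omega(u_1) = a$ is fixed, the individual energies are bounded. Lemmas \ref{lemma:PSS-compactness-1} and \ref{lemma:PSS-compactness-2} confine the removable-singularity evaluation points to a fixed compact region of $W$, while sphere bubbling away from the ends is ruled out by the semipositivity framework combined with the total-evaluation transversality. Consequently $\mathscr{M}_{\mathrm{fp},0}^{\tau}$ is a finite set and $\mathscr{M}_{\mathrm{fp},1}^{\tau}$ is compact up to Floer cylinder breaking at the two asymptotic ends $\gamma_\pm$; the usual signed boundary count then yields the chain map identity $d\circ\mathfrak{c}_{\mathrm{fp},\tau} = \mathfrak{c}_{\mathrm{fp},\tau}\circ d$.

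For the independence of $\tau$, I would consider the parametric moduli space $\mathscr{M}_{\mathrm{fp}}|_{[\tau_0,\tau_1]}$ between two regular values. Its $1$-dimensional component is a compact $1$-manifold whose boundary decomposes into Floer cylinder breakings at the two asymptotic ends plus the two endpoint slices $\mathscr{M}_{\mathrm{fp},0}^{\tau_i}$ for $i=0,1$. Counting the breakings defines a map $K:\mathrm{CF}(\psi_{0,t})\to\mathrm{CF}(\psi_{1,t})$ with $\mathfrak{c}_{\mathrm{fp},\tau_0} - \mathfrak{c}_{\mathrm{fp},\tau_1} = dK + Kd$ on the chain level. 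The same parametric trick handles independence of the perturbations $\delta_{0,s,t},\delta_{1,s,t}$, exactly as in Lemma \ref{lemma:deformation}.

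The main obstacle, I expect, is uniform control over sequences $(u_{0,n}, u_{1,n}, \tau_n) \in \mathscr{M}_{\mathrm{fp}}|_{[\tau_0,\tau_1]}$: beyond the asymptotic Floer cylinder breaking, one must exclude the formation of sphere bubbles at the incidence points $v_0(0), v_1(0)$ as $\tau$ varies, since such bubbles would produce spurious boundary strata not accounted for by the proposed chain homotopy. This is handled by upgrading the total-evaluation transversality of Lemmas \ref{lemma:PSS-genericity} and \ref{lemma:PSS-genericity-2} to the parametric setting, where allowing $\tau$ to vary adds a single dimension to the parameter space, so that the locus of bubble formation remains of codimension at least $2$ and therefore cannot appear on $1$-dimensional strata.
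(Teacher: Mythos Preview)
Your proposal is correct and follows essentially the same approach as the paper: establish transversality for the parametric fiber product, use semipositivity and total-evaluation transversality to rule out sphere bubbling so that the only degeneration on the relevant low-dimensional components is Floer-cylinder breaking, and then read off the chain-map and chain-homotopy identities from the one-dimensional components. The paper's own proof is a brief outline saying exactly this; your version simply fills in more of the standard details. One minor wording issue: in your chain-homotopy paragraph, $K$ should be defined as the count of rigid points in the $0$-dimensional component of the parametric space $\mathscr{M}_{\mathrm{fp}}|_{[\tau_0,\tau_1]}$, not as a count of the breakings themselves; the breakings are then identified with $dK$ and $Kd$.
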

\begin{proof}
  This is fairly standard Floer theory, and we only outline the argument. The first step is to pick the data generically so that the total evaluation is transverse to the simple holomorphic spheres. This ensures properness of the $\tau$ coordinate on the one-dimensional components of $\mathscr{M}_{\mathrm{fp}}$, up to breaking of Floer cylinders. The fibers of this one-dimensional manifold over generic values defines the morphism. Standard Floer theory explains that the counts obtained from different fibers $\tau=\tau_{0}$ and $\tau=\tau_{1}$ define chain homotopic maps; the argument is similar to the proof of Lemma \ref{lemma:deformation} that the chain homotopy classes of the continuation maps are invariant under homotopies of continuation data.
\end{proof}
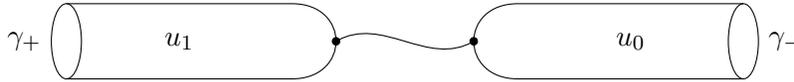
\begin{figure}[h]
  \centering
  \begin{tikzpicture}
    \begin{scope}[shift={(-6,0)}]
      \draw (0,0.5) circle (0.2 and 0.5);
      \node at (-0.2,0.5) [left] {$\gamma_{+}$};
      \draw (0,1)--(3,1)to[out=0,in=0,looseness=2]coordinate[pos=0.5](X)(3,0)--(0,0);
      \node at (1.5,0.5) {$u_{1}$};
      \node at (X)[draw,circle,fill,inner sep=1pt]{};
    \end{scope}
    \draw (3,0.5) circle (0.2 and 0.5);
    \node at (3+0.2,0.5) [right] {$\gamma_{-}$};
    \node at (1.5,0.5) {$u_{0}$};
    \draw (3,1)--(0,1)to[out=180,in=180,looseness=2]coordinate[pos=0.5](Y)(0,0)--(3,0);
    \node[draw,circle,fill,inner sep=1pt] at (Y){};

    \draw(X)to[out=30,in=210](Y);
  \end{tikzpicture}
  \caption{An element $(u_{0},u_{1},\tau)$ in the fiber product moduli space $\mathscr{M}_{\mathrm{fp}}$. The connecting trajectory is a flow line of the smooth isotopy $q$.}
  \label{fig:fp}
\end{figure}

\subsubsection{Smooth displaceability and the fiber product continuation map}
\label{sec:smooth-displ-fiber}

In \S\ref{sec:cont-maps-are} we show that $\mathfrak{c}_{\mathrm{fp},0}$ equals the usual continuation map. First we will show:
\begin{lemma}
  If every compact set in $W$ is smoothly displaceable, then the chain homotopy class of $\mathfrak{c}_{\mathrm{fp},\tau}$ is trivial.
\end{lemma}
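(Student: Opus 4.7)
The plan is to produce an explicit null-homotopy of $\mathfrak{c}_{\mathrm{fp},0}$ by running the parametric argument over the entire ray $\tau \in [0,\infty)$ and exploiting smooth displaceability to ensure the parametric moduli space has no ends at $\tau = +\infty$. Since the chain homotopy class of $\mathfrak{c}_{\mathrm{fp},\tau}$ is independent of $\tau$ by the previous lemma, it suffices to handle $\tau = 0$.

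First, I would use the smooth displaceability hypothesis to construct a specific homotopy $q_\tau$ with $q_0 = \id$ having the following \emph{exhaustion displacement} property: for every compact set $K \subset W$ there is a threshold $T(K)$ such that $q_\tau(K) \cap K = \emptyset$ whenever $\tau \ge T(K)$. Such a $q_\tau$ is built by choosing a compact exhaustion $K_1 \subset K_2 \subset \cdots$ of $W$, picking a smooth isotopy displacing each $K_j$, and concatenating these in sequence so that after time $T_j$ the flow displaces $K_j$. A deformation-of-continuation-data argument like the one in the preceding lemma shows the chain homotopy class of $\mathfrak{c}_{\mathrm{fp},\tau}$ is also insensitive to replacing one such $q$ with another, so there is no loss in using this particular choice.

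Second, I would consider the total parametric moduli space $\widehat{\mathscr{M}} = \bigcup_{\tau \in [0,\infty)} \mathscr{M}_{\mathrm{fp}}^{\tau}$ for this $q$. The key observation is that Lemmas \ref{lemma:PSS-compactness-1} and \ref{lemma:PSS-compactness-2} bound the marked points $u_0(-\infty)$ and $u_1(\infty)$ of any solution with $\omega(u_0)+\omega(u_1) \le A$ inside a fixed compact set $K_A$ depending only on $A$ and the continuation data. The matching equation $q_\tau(u_0(-\infty)) = u_1(\infty)$ then forces $\tau \le T(K_A)$, so on the bounded-area slice $\widehat{\mathscr{M}}$ is proper in $\tau$. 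For generic data, the virtual-dimension-$1$ stratum of $\widehat{\mathscr{M}}$ is a smooth one-manifold whose only ends are (i) breaking of a Floer cylinder at one of the asymptotic ends, and (ii) points at $\tau = 0$. Counting these ends modulo $2$ yields the algebraic identity $\mathfrak{c}_{\mathrm{fp},0} = d \mathrm{K} + \mathrm{K} d$ where $\mathrm{K}$ is the count of rigid triples $(u_0,u_1,\tau)$ with $\tau \in (0,\infty)$.

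The main obstacle I anticipate is verifying that $\mathrm{K}$ is well-defined as an operator on the Novikov completion. The threshold $T(K_A)$ grows with $A$, so the sum defining $\mathrm{K}(\tau^b x_+)$ ranges a priori over all areas; however, for each fixed Novikov exponent $c$ in the output, only terms with $a \le c-b$ contribute, and these correspond to parametric solutions with $\omega(u) \le c-b$ and hence with $\tau \le T(K_{c-b})$. On this truncated parametric slice, standard compactness-up-to-breaking, together with transversality of the total evaluation to the pseudocycle of simple $J$-holomorphic spheres (as in Lemmas \ref{lemma:PSS-genericity} and \ref{lemma:PSS-genericity-2} and the semipositivity framework), yields finiteness of the count. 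Apart from this Novikov bookkeeping, the argument is standard parametric Floer theory, parallel in spirit to Lemma \ref{lemma:deformation}.
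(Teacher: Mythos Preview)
Your approach differs from the paper's, and there is a gap in the first step. The ``concatenation'' construction of the exhaustion-displacing homotopy $q_\tau$ does not work as stated: if $g_j$ is a homotopy with $g_j^1(K_j)\cap K_j=\emptyset$, concatenating the $g_j$ in sequence gives no control over $q_\tau(K_i)\cap K_i$ once $\tau$ has moved past $T_i$, since the later homotopies $g_j$ (for $j>i$) may well carry points back into $K_i$. Your properness argument in the second paragraph requires displacement for \emph{all} $\tau\ge T(K_A)$, not merely at a single time, so this is essential. Whether a $q_\tau$ with this exhaustion-displacement property exists under the bare hypothesis is a nontrivial topological question; it is easy to write down in the paper's main examples ($T^*L$ with $\chi(L)=0$, or $W\times T^*S^1$), but you have not justified it in general.

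The paper sidesteps this entirely. Rather than a single universal $q$, it uses a \emph{different} $q$ for each area threshold $A$: choose $q$ so that at some regular $\tau$ the fiber product has no solutions of area $\le A$. This represents the fixed chain-homotopy class of $\mathfrak{c}_{\mathrm{fp}}$ by a chain map whose nonzero matrix entries all raise the Novikov exponent by more than $A$. Were the induced map on homology nonzero, the nonarchimedean valuation of any chain-level representative of the image class would be bounded below (by Usher's spectrality for Floer-type complexes over the Novikov field), contradicting the existence of representatives of arbitrarily low valuation. This argument needs neither a single exhaustion-displacing $q_\tau$ nor any convergence analysis for the would-be chain-homotopy operator $K$.
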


\begin{proof}
  Lemmas \ref{lemma:PSS-compactness-1} and \ref{lemma:PSS-compactness-2} imply that:
  \begin{equation*}
    \begin{aligned}
      S_{0,a}&=\set{u_{0}(-\infty):(u_{0},u_{1},\tau)\in \mathscr{M}_{\mathrm{fp}}^{\tau}(x_{-},a,x_{+};q,\kappa_{1,s,t},\kappa_{0,s,t})},\\
      S_{1,a}&=\set{u_{1}(+\infty):(u_{0},u_{1},\tau)\in \mathscr{M}_{\mathrm{fp}}^{\tau}(x_{-},a,x_{+};q,\kappa_{1,s,t},\kappa_{0,s,t})},\\
    \end{aligned}
  \end{equation*}
  are both contained in a compact set. Moreover, this compact set is independent of $\tau$ or $q$, and depends only on $\kappa_{i,s,t}$ and an upper bound for $a$.

  If every compact set in $W$ is smoothly displaceable, then, for any $A$, we can pick $q$ so that $q_{\tau}(S_{0,a})\cap S_{1,a}=\emptyset$ holds for sufficiently large $\tau$ and for $a\le A$.

  Now suppose that $c_{\mathrm{fp},\tau}$ acts non-trivially on homology. Then there is an element $\zeta\in \mathrm{HF}(\psi_{0,0,t})$ such that $c_{\mathrm{fp},\tau}(\zeta)$ is non-zero in $\mathrm{HF}(\psi_{1,1,t})$.

  Define a non-archimedean filtration on $\mathrm{CF}(\psi_{1,1,t})$ by the formula:
  \begin{equation*}
    \ell(\sum \tau^{b_{i}}x_{i})=-\inf_{i}b_{i},\text{ with $\ell(0)=-\infty$.}
  \end{equation*}
  By the work of \cite[Theorem 1.3]{usher-compositio-2008} and \cite{usher-zhang-GT-2016} on the nonarchimedean Gram-Schmidt process, the following quantity is finite:
  \begin{equation}\label{eq:inf}
    \inf\set{\ell(z):z\in \mathrm{CF}(\psi_{1,1,t})\text{ and }[z]=c_{\mathrm{fp},\tau}(\zeta)\in \mathrm{HF}(\psi_{1,1,t})},
  \end{equation}
  since $c_{\mathrm{fp},\tau}(\zeta)$ is non-zero in the homology group. This quantity can be interpreted as the (log of the) non-archimedean distance from any representative cycle $z$ to the subspace of exact cycles.
  
  On the other hand, for any desired $A$, the above analysis shows that $c_{\mathrm{fp},\tau}$ can be represented by a chain map which sends $\tau^{b}x_{-}$ to $\tau^{b+a}x_{+}$ where $a>A$. It then follows easily that the infimum in \eqref{eq:inf} is $-\infty$, a contradiction. This proves that $c_{\mathrm{fp},\tau}$ must act trivially on homology, as desired.
\end{proof}

\subsection{Continuation maps are fiber product continuation maps}
\label{sec:cont-maps-are}

In this section we prove that the chain homotopy class of $\mathfrak{c}_{\mathrm{fp},\tau}:\mathrm{CF}(\psi_{0,0,t})\to \mathrm{CF}(\psi_{1,1,t})$ equals the chain homotopy class of the continuation map of \S\ref{sec:continuation-maps} associated to the continuation data obtained by concatenating $\psi_{0,s,t}$ and $\psi_{1,s,t}$. The idea is to deform the continuation cylinder by requiring $u$ to be holomorphic on a cylinder with large modulus in between the continuation for $\psi_{0,s,t}$ and $\psi_{1,s,t}$, as illustrated in Figure \ref{fig:deforming-cont-map-fp}.

\subsubsection{The parametric moduli space}
\label{sec:param-moduli-space-1}

As above, let $\xi_{i,s,t}=\kappa_{i,\beta(-s),\beta(3t-1)}$, and let $X_{i,s,t}$, $Y_{i,s,t}$ be the generating vector fields. Consider the parametric moduli space $\mathscr{M}(\kappa_{1,s,t},\kappa_{0,s,t})$ of pairs $(\sigma,u)$ to:
\begin{equation*}
  \left\{
    \begin{aligned}
      &\sigma\ge 0,\\
      &u:\R\times \R/\Z\to W,\\
      &\bd_{s}u-\rho(t)(Y_{1,s,t}+Y_{0,s-1-\sigma,t})+J(u)(\bd_{t}u-X_{1,s,t}-X_{0,s-1-\sigma,t})=0,\\
    \end{aligned}
  \right.
\end{equation*}
where $\rho(t)=\beta(3-3t)$. It is important to note that:
\begin{enumerate}
\item $u$ solves \eqref{eq:cont-map} for $\kappa_{1,s,t}$ on $(-\infty,1+\sigma]\times \R/\Z$,
\item $u(s+\sigma,t)$ solves \eqref{eq:cont-map} for $\kappa_{0,s,t}$ on $[1-\sigma,\infty)\times \R/\Z$,
\item $u$ is $J$-holomorphic on the cylinder $s\in [1,1+\sigma]$.
\end{enumerate}
As shown in Figure \ref{fig:compression}, we extend the moduli space to $\sigma\in [-1,\infty)$ via the following equation; for $\sigma\in [-1,0]$, consider the pairs $(\sigma,u)$ to:
\begin{equation*}
  \left\{
    \begin{aligned}
      &u:\R\times \R/\Z\to W,\\
      &\bd_{s}u-\rho(t)(Y_{1,f(\sigma)s,t}+Y_{0,f(\sigma)s-1,t})\\
      &\hspace{2cm}+J(u)(\bd_{t}u-X_{1,f(\sigma)s,t}-X_{0,f(\sigma)s-1,t})=0,\\
    \end{aligned}
  \right.
\end{equation*}
where $f:[-1,0]\to [1,2]$ is such that $f(x)=1$ holds for $x$ near $0$, and $f(x)=2$ for $x$ near $-1$, e.g., $f(x)=2-\beta(x+1)$. The purpose is so when $\sigma=-1$ the equation is exactly the continuation cylinder for the concatenated data.

We let $\mathscr{M}^{\sigma}(\kappa_{1,s,t},\kappa_{0,s,t})\subset \mathscr{M}(\kappa_{1,s,t},\kappa_{0,s,t})$ denote the fiber over $\sigma$. Furthermore, we let:
\begin{equation*}
  \mathscr{M}(a;\kappa_{1,s,t},\kappa_{0,s,t})\subset \mathscr{M}(\kappa_{1,s,t},\kappa_{0,s,t})
\end{equation*}
be the component consisting of those solutions $u$ with $\omega(u)=a$.

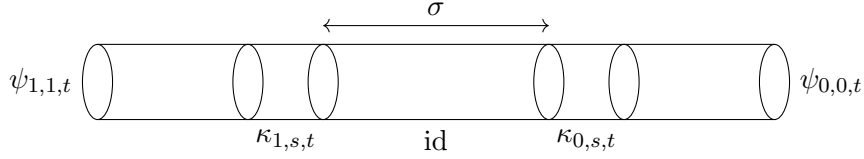
\begin{figure}[h]
  \centering
    \begin{tikzpicture}
    \draw (0,0) coordinate(X) --+ (9,0) +(0,1) --+ (9,1);
    \draw (X)+(0,0.5) circle(0.2 and 0.5) +(-0.2,0.5)node[left]{$\psi_{1,1,t}$} +(2,0.5) circle (0.2 and 0.5) +(3,0.5) circle (0.2 and 0.5)
    +(6,0.5) circle (0.2 and 0.5) +(7,0.5) circle (0.2 and 0.5) +(9,0.5) circle (0.2 and 0.5) +(9.2,0.5) node[right]{$\psi_{0,0,t}$};
    \draw (X)+(2.5,0)node[below]{$\kappa_{1,s,t}$} +(6.5,0)node[below]{$\kappa_{0,s,t}$} +(4.5,0)node[below]{$\id\vphantom{\psi_{0,s,t}}$};
    \draw[<->] (3,1.25)--node[above]{$\sigma$}(6,1.25);
  \end{tikzpicture}
  \caption{Deforming the continuation map; in the limit, solutions break into a configuration of two PSS solutions in the fiber product moduli space.}
  \label{fig:deforming-cont-map-fp}
\end{figure}

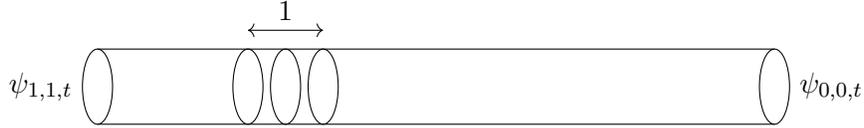
\begin{figure}[h]
  \centering
    \begin{tikzpicture}
    \draw (0,0) coordinate(X) --+ (9,0) +(0,1) --+ (9,1);
    \draw (X)+(0,0.5) circle(0.2 and 0.5) +(-0.2,0.5)node[left]{$\psi_{1,1,t}$} +(2,0.5) circle (0.2 and 0.5) +(2.5,0.5) circle (0.2 and 0.5)
    +(3,0.5) circle (0.2 and 0.5) +(9,0.5) circle (0.2 and 0.5) +(9.2,0.5) node[right]{$\psi_{0,0,t}$};    
    \draw[<->] (2,1.25)--node[above]{$1$}(3,1.25);
  \end{tikzpicture}
  \caption{Compressing the continuation data.}
  \label{fig:compression}
\end{figure}

\begin{lemma}\label{lemma:fancy-compactness}
  For generic $\kappa_{0,s,t},\kappa_{1,s,t}$,
  \begin{enumerate}[label=(\alph*)]
  \item\label{kka} The moduli space $\mathscr{M}(\kappa_{1,s,t},\kappa_{0,s,t})$ is cut transversally, its boundary is $\mathscr{M}^{-1}(\kappa_{1,s,t},\kappa_{0,s,t})$, which is also cut transversally, and its dimension at a solution $(\sigma,u)$ with asymptotics $\gamma_{\pm}$ is given by the formula:
    \begin{equation*}
      d=\mathrm{CZ}_{\mathfrak{s}}(\gamma_{+})-\mathrm{CZ}_{\mathfrak{s}}(\gamma_{-})+2\mathfrak{s}^{-1}(0)+1;
    \end{equation*}
    the associated collection of components is denoted $\mathscr{M}_{d}$.
  \item\label{kkb} The connected components of $\mathscr{M}_{1}$ are $1$-manifolds whose projection to the parameter space $[-1,\infty)$ is proper up to non-compact ends limiting to a configuration of a a Floer cylinders (at either end) joined to a solution in the component in $\mathscr{M}_{0}$.
  \item\label{kkc} For given $A\in \R$, the union of $\mathscr{M}_{0}(a)$ over all $a\le A$ is finite.
  \item\label{kkd} Any sequence $(\sigma_{n},u_{n})$ in the union of $\mathscr{M}_{1}(a)$, over all $a\le A$, has a subsequence which converges in the Floer theory sense to a configuration $(u_{+},u_{-})$ in $\mathscr{M}_{\mathrm{fp}}$ where $u_{+}\in \mathscr{M}_{+}(\kappa_{1,s,t})$ and $u_{-}\in \mathscr{M}_{-}(\kappa_{0,s,t})$ with $u_{+}(+\infty)=u_{-}(-\infty)$.
  \end{enumerate}
\end{lemma}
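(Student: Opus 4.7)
The plan is to handle parts \ref{kka}--\ref{kkc} by direct adaptation of transversality and compactness arguments already deployed in the paper, and to devote the bulk of the effort to part \ref{kkd}, which is a neck-stretching statement as $\sigma \to \infty$.

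For \ref{kka}, the transversality argument runs parallel to Lemmas \ref{lemma:transversality-floer} and \ref{lemma:transversality-continuation}: because $\kappa_{1,s,t}$ and $\kappa_{0,s,t}$ are supported on disjoint shifted halves of the cylinder when $\sigma\ge 0$, the two perturbations $\delta_{1,s,t}$ and $\delta_{0,s,t}$ can be chosen independently, and a generic choice makes the universal linearized operator surjective. The parameter $\sigma$ contributes an extra dimension to the parametric Fredholm index, yielding the asserted formula. Restriction to $\sigma=-1$ recovers the continuation-map moduli space of \S\ref{sec:continuation-maps}, which is transversally cut out by the same argument.

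For \ref{kkb} and \ref{kkc}, the energy identity of Lemma \ref{lemma:energy-estimate}, applied on each continuation-data half (the $J$-holomorphic neck contributes only non-negative energy), gives a uniform bound $E(u)\le \omega(u)+\mathrm{const}$ depending only on $\psi_{i,s,t}$. The maximum-principle argument from \cite[\S2.2.5]{brocic-cant-JFPTA-2024} confines solutions of bounded $\omega$-energy to a fixed compact subset of $W$. Semipositivity together with generic transversality of the total evaluation map against the pseudocycle of simple $J$-holomorphic spheres then precludes bubbling and produces the standard compactness-up-to-Floer-breaking of $\mathscr{M}_1$, as well as finiteness of $\mathscr{M}_0(a)$ for $a\le A$.

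The main obstacle is \ref{kkd}. Given $(\sigma_n,u_n)\in \bigcup_{a\le A}\mathscr{M}_1(a)$ with $\sigma_n\to\infty$, I would split $u_n$ into $u_n^+ := u_n$ on $(-\infty,1+\sigma_n/2]$ and $u_n^-(s,t):=u_n(s+\sigma_n,t)$ on $[1-\sigma_n/2,\infty)$, then reparametrize conformally to maps $v_n^\pm:\C\to W$ exactly as in \S\ref{sec:pss-positive-cone} and \S\ref{sec:pss-negative-cone}; the long $J$-holomorphic neck is absorbed into enlarging disks surrounding the would-be removable singularity points of the limits. The uniform energy bound, the target-compactness of $v_n^\pm(0)$ furnished by Lemmas \ref{lemma:PSS-compactness-1} and \ref{lemma:PSS-compactness-2}, standard Gromov compactness, semipositivity, and transversality of the evaluation against the simple-sphere pseudocycle combine to yield subsequential limits $u_+\in \mathscr{M}_+(\kappa_{1,s,t})$ and $u_-\in \mathscr{M}_-(\kappa_{0,s,t})$ solving \eqref{eq:PSS-positive} and \eqref{eq:PSS-negative} respectively, with finite energy supplying removability at the new interior ends. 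The matching $u_+(+\infty)=u_-(-\infty)$ comes out automatically because both limits absorb the same midpoint of the long neck, i.e., both equal the subsequential limit of $u_n(1+\sigma_n/2,0)$. The subtle step is excluding sphere or Floer-cylinder bubbling concentrated in the neck so as to preserve total energy in the limit; this is handled by the usual monotonicity lemma combined with the semipositivity hypothesis, as in \S\ref{sec:floer-differential-moduli} and \S\ref{sec:cont-cylind}.
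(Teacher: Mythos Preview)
Your treatment of parts \ref{kka}--\ref{kkc} is fine and matches the paper's approach. The genuine gap is in \ref{kkd}.

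When $\sigma_n\to\infty$, the long $J$-holomorphic neck can bubble off a \emph{chain} of holomorphic spheres sitting between the two PSS pieces. Gromov compactness yields a broken configuration
\[
  v_{1},\dots,v_{p},\;u_{+},\;w_{1},\dots,w_{q},\;u_{-},\;v_{1}',\dots,v_{r}'
\]
with incidences $u_{+}(+\infty)=w_{1}(-\infty)$, $w_{i}(+\infty)=w_{i+1}(-\infty)$, $w_{q}(+\infty)=u_{-}(-\infty)$. Your claim that $u_{+}(+\infty)=u_{-}(-\infty)$ ``comes out automatically'' from the shared neck midpoint is only valid once $q=0$ has been established; with intermediate spheres present, the two removable singularities match different points of the sphere chain. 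The phrase ``usual monotonicity lemma combined with semipositivity'' does not exclude this: semipositivity only tells you the spheres have non-negative Chern number, it does not by itself force $q=0$.

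The paper's argument to rule out $q\ge 1$ (and simultaneously the Floer breakings $v_i,v_j'$) is a dimension count on the fiber product. One passes to the underlying simple sphere configuration, considers the total evaluation
\[
  \mathscr{M}_{+,d_{1}}\times \mathscr{M}^{*}(\mathbb{C}P^{1}\sqcup\dots\sqcup\mathbb{C}P^{1},J)\times \mathscr{M}_{-,d_{0}}\to (W\times W)^{q+1},
\]
and makes it transverse to the iterated diagonal $\Delta^{q+1}$. The preimage then has dimension at most $d-1$. For $d=0$ this is already empty (giving \ref{kkc} in the unbounded-$\sigma$ case). For $d=1$ the preimage is at most $0$-dimensional, but the $2q$-dimensional group $\mathrm{Aut}(\C^{\times})^{q}$ reparametrizing the spheres acts freely on it, forcing $q=0$; the same dimension inequality forces $p=r=0$. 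This reparametrization-action step is the missing idea in your proposal.
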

\begin{proof}
  Part \ref{kka} is similar to the other transversality statements encountered so far, e.g., Lemma \ref{lemma:transversality-continuation}, and the dimension formula follows from the general index formula for Cauchy-Riemann operators with non-degenerate asymptotics, as in, e.g., \cite{cant-thesis-2022}. One should pick $\kappa_{i,s,t}$ so that the $\sigma=-1$ fiber $\mathscr{M}^{-1}$ is cut transversally, and also so that the parametric moduli space is cut transversally; then the statement about $\bd \mathscr{M}=\mathscr{M}^{-1}$ follows from an implicit function theorem type argument.

  The crucial step in establishing \ref{kkb} is to exclude the bubbling of holomorphic spheres. For this, we observe that, if sphere bubbling happens, then we conclude the existence of a configuration of $u\in \mathscr{M}_{d}(\kappa_{1,s,t},\kappa_{0,s,t})$ attached to a simple holomorphic sphere $w\in \mathscr{M}^{*}(J)$. Because we asssume semipositivity and choose $J$ generically, we know that $\mathfrak{s}^{-1}(0)\cdot [w]\ge 0$, i.e., holomorphic spheres which bubble always decrease the dimension.

  Thus, let us also pick $\kappa_{i,s,t}$ generically so that the total evaluation: $$\mathscr{M}\times \R\times \R/\Z\to W$$ is transverse to the evaluation of the simple holomorphic spheres, which can be achieved by the same argument implying Lemma \ref{lemma:transversality-floer}. The total evaluation is defined on a manifold with $3-2\mathfrak{s}^{-1}(0)\cdot [w]$ dimensions or less, while $w$ lives in a manifold of dimension $2n-4+2\mathfrak{s}^{-1}(0)\cdot [w]$. The sum of these dimensions is $2n-1$, and hence generically the two evaluations do not intersect; thus bubbling does not happen for generic $\kappa_{i,s,t}$. Standard Floer theory implies the only other failure of compactness is the bubbling of a Floer cylinder at either end, and \ref{kkb} follows.

  For \ref{kkc}, take a sequence $(\sigma_{n},u_{n})\in \mathscr{M}_{0}$ with $\omega(u_{n})\le A$. First suppose that $\sigma_{n}$ remains bounded, and so converges after taking a subsequence. Then the PDE which $u_{n}$ solves converges, and so either $u_{n}$ converges to a solution for the limit PDE, after passing to a subsequence, or a Floer cylinder breaks, or a holomorphic sphere bubbles. The breaking of a Floer cylinder can be obstructed since $\mathscr{M}_{-1}=\emptyset$, because $\mathscr{M}$ is cut transversally. The bubbling of a holomorphic sphere can excluded by the same argument used in the proof of \ref{kkb}. Thus we conclude in this case that $(\sigma_{n},u_{n})$ has a convergent subsequence.

  Second, let us consider the case when $\sigma_{n}$ is unbounded, i.e., $\sigma_{n}\to\infty$ after passing to a subsequence. In this case, it will convenient to analyze the general case $(\sigma_{n},u_{n})\in \mathscr{M}_{d}$ rather than only $d=0$, as we will simultaneously be able to establish \ref{kkd}.

  The usual Floer compactness statements, similar to those in \cite{cant-chen-arXiv-2023}, imply $u_{n}$ breaks, giving rise to configuration:
  \begin{equation*}
    v_{1},\dots,v_{p},u_{+},w_{1},\dots,w_{q},u_{-},v_{1}',\dots,v_{r}',
  \end{equation*}
  where:
  \begin{enumerate}
  \item $v_{i}$ are non-stationary Floer cylinders,
  \item $w_{i}:\R\times \R/\Z\to W$ are simple holomorphic spheres,
  \item $u_{+}\in \mathscr{M}_{+,d_{1}}(\kappa_{1,s,t})$,
  \item $u_{-}\in \mathscr{M}_{-,d_{0}}(\kappa_{0,s,t})$,
  \end{enumerate}
  and so that the removable singularities satisfy the incidence:
  \begin{equation*}
    \text{$u_{+}(+\infty)=w_{1}(-\infty)$, $w_{i}(+\infty)=w_{i+1}(-\infty)$, $w_{q}(+\infty)=u_{-}(-\infty)$,}
  \end{equation*}
  and the asymptotic orbits of the non-degenerate systems satisfy a similar incidence. Moreover, we may also suppose that the total holomorphic map:
  \begin{equation*}
    w_{1}\sqcup w_{2}\sqcup \dots \sqcup w_{q}:\mathbb{C}P^{1}\sqcup \dots \sqcup \mathbb{C}P^{1}\to W
  \end{equation*}
  is simple. Otherwise two of the simple spheres are reparametrizations of each other, by the same argument given in \cite[Chapter 2]{mcduffsalamon}, and we can take a shortcut in the list, maintaining the above incidence conditions.

  Of course, there are may also be additional bubbling in the limit, and some of the limit spheres which form are multiply covered, but we always conclude an underlying configuration of the above form, by passing to underlying simple curves and ignoring bubbles. Since all simple holomorphic spheres have non-negative Chern number, and the Floer cylinders have positive index, we conclude that:
  \begin{equation}\label{eq:dim-ineq}
    d_{1}+d_{0}\le 2n+d-1-2\mathfrak{s}^{-1}(0)\cdot ([w_{1}]+\dots+[w_{q}]).
  \end{equation}
  with equality holding only if there are no Floer cylinders $v_{i},v_{j}'$.

  Consider the total evaluation map:
  \begin{equation*}
    \mathscr{M}_{+,d_{1}}(\kappa_{1,s,t})\times \mathscr{M}^{*}(\mathbb{C}P^{1}\sqcup \dots \sqcup \mathbb{C}P^{1},J)\times \mathscr{M}_{-,d_{0}}(\kappa_{0,s,t})\to (W\times W)^{q+1},
  \end{equation*}
  where we send $(u_{+},w_{1},\dots,w_{q},u_{-})$ to the evaluations at $+\infty,-\infty$, in the written order. Here $\mathscr{M}^{*}(\mathbb{C}P^{1}\sqcup \dots \sqcup \mathbb{C}P^{1},J)$ denotes the moduli space of simple maps $\mathbb{C}P^{1}\sqcup \dots \sqcup \mathbb{C}P^{1}\to W$.
  
  The existence of the above configuration implies that the diagonal $\Delta^{d+1}$ has a non-empty preimage. Moreover, for generic $J$ and generic $\kappa_{0,s,t},\kappa_{1,s,t}$, we can ensure that the above total evaluation is transverse to $\Delta^{q+1}$.

  The total dimension is of inverse image of $\Delta^{q+1}$ is at most:
  \begin{equation*}
    2n+d-1+2nq-2n(q+1)=d-1,
  \end{equation*}
  where we have used \eqref{eq:dim-ineq}. Thus the inverse image when $d=0$ is empty, concluding \ref{kkc}. Moreover, no Floer cylinders could break off in the $d=1$ case.

  In the case $d=1$, with no Floer cylinders breaking, we can still say something: the automorphism group of the Riemann surface $\mathbb{C}P^{1}\sqcup \dots \sqcup \mathbb{C}P^{1}$, fixing the points $+\infty,-\infty$ (one pair for each sphere) is $2q$ dimensional and acts freely on the inverse image of $\Delta^{q+1}$, and hence if $q\ge 1$ then the inverse image is empty when $d=1$. See Figure \ref{fig:fp-obstruction}.

  Finally, in the case $d=1$, we conclude that $u_{n}$ converges to $(u_{+},u_{-})$ up to the formation of bubbles. However, the bubbles can again be excluded by the same argument as in \ref{kkd}. This completes the proof.
\end{proof}

\begin{figure}[h]
  \centering
  \begin{tikzpicture}
    \begin{scope}[shift={(-5,0)}]
      \draw (0,0.5) circle (0.2 and 0.5);
      \node at (-0.2,0.5) [left] {$\gamma_{+}$};
      \draw (0,1)--(3,1)to[out=0,in=0,looseness=2]coordinate[pos=0.5](X)(3,0)--(0,0);
      \node at (1.5,0.5) {$u_{1}$};
      \node at (X)[draw,circle,fill,inner sep=1pt]{};
    \end{scope}
    \draw (3,0.5) circle (0.2 and 0.5);
    \node at (3+0.2,0.5) [right] {$\gamma_{-}$};
    \node at (1.5,0.5) {$u_{0}$};
    \draw (3,1)--(0,1)to[out=180,in=180,looseness=2]coordinate[pos=0.5](Y)(0,0)--(3,0);
    \node[draw,circle,fill,inner sep=1pt] at (Y){};

    \draw (X) arc (180:-180:.42 and 0.1) arc (180:-180:.42);
    
  \end{tikzpicture}
  \caption{A hypothetical configuration in the limit $\sigma\to\infty$ which can be excluded using general position arguments and the rotation action on the central holomorphic sphere by reparametrizations.}
  \label{fig:fp-obstruction}
\end{figure}
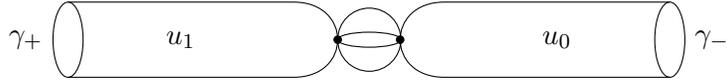

\subsubsection{Chain homotopy}
\label{sec:chain-homotopy}

Let $\kappa_{0,s,t},\kappa_{1,s,t}$ be generic. In particular, suppose they are generic enough that:
\begin{equation*}
  \mathfrak{c}_{\mathrm{fp},0}:\mathrm{CF}(\psi_{0,0,t})\to \mathrm{CF}(\psi_{1,1,t})
\end{equation*}
is well-defined and represents the chain homotopy class of the fiber product continuation map (here the parameter is $\tau=0$, so $\mathfrak{c}_{\mathrm{fp},0}$ counts PSS cylinders which are incident at their punctures). Then:
\begin{lemma}
  The chain homotopy class of $\mathrm{c}_{\mathrm{fp},0}$ equals the chain homotopy class of the continuation map $\mathrm{CF}(\psi_{0,0,t})\to \mathrm{CF}(\psi_{1,1,t})$ using the concatenation of the continuation data $\psi_{0,s,t}\#\psi_{1,s,t}$.
\end{lemma}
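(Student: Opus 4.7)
The plan is to use the 1-dimensional parametric moduli space $\mathscr{M}_{1}(\kappa_{1,s,t},\kappa_{0,s,t})$ from Lemma \ref{lemma:fancy-compactness} as the source of a chain homotopy between the two maps in question. Part \ref{kkb} of that lemma says that each connected component of $\mathscr{M}_{1}$ is a 1-manifold properly parametrized by $\sigma\in[-1,\infty)$, up to breakings of Floer cylinders at either puncture. Its ``boundary'' consists of three types of contributions:
\begin{enumerate}
\item the $\sigma=-1$ fiber, where (by the definition in \S\ref{sec:param-moduli-space-1} using the reparametrization $f$) the PDE becomes exactly that of a continuation cylinder for the concatenated data $\psi_{0,s,t}\#\psi_{1,s,t}$;
\item the $\sigma\to\infty$ ends, which by part \ref{kkd} of Lemma \ref{lemma:fancy-compactness} converge to broken configurations $(u_{+},u_{-})\in\mathscr{M}_{\mathrm{fp}}$ with matching removable singularities, i.e.,\@ precisely the rigid elements counted by $\mathfrak{c}_{\mathrm{fp},0}$;
\item Floer-cylinder breakings at the two punctures, which contribute the usual $d\mathrm{K}+\mathrm{K}d$ terms.
\end{enumerate}

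First I would carefully check that the $\sigma=-1$ fiber indeed produces the concatenated continuation map. The issue is that our $\sigma=-1$ equation uses the ``compressed'' data depicted in Figure \ref{fig:compression}, where the two pieces of continuation data sit on a single unit interval, rather than the standard concatenation on a length-2 interval. These differ only by a reparametrization of the $s$-coordinate which is homotopic to the identity through continuation data (one can interpolate $f(\sigma)$ from $2$ down to $1$ while preserving continuation-data axioms). Lemma \ref{lemma:deformation} then guarantees that the count of rigid elements in the compressed fiber is chain-homotopic to the usual continuation map for $\psi_{0,s,t}\#\psi_{1,s,t}$.

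Second, I would define the chain homotopy $\mathrm{K}:\mathrm{CF}(\psi_{0,0,t})\to\mathrm{CF}(\psi_{1,1,t})$ by the formula
\begin{equation*}
  \mathrm{K}(\tau^{b}x_{+})=\sum_{a,x_{-}}\#\mathscr{M}_{0,1}(x_{-},a,x_{+})\,\tau^{a+b}x_{-},
\end{equation*}
where $\mathscr{M}_{0,1}$ denotes the zero-dimensional part of the moduli space of $(\sigma,u)$ with $\sigma>-1$ in the index-$1$ component (so the parametric moduli space is a collection of arcs and intervals). Since the ends of the 1-dimensional arcs correspond exactly to the three contributions above, the standard signed count argument yields
\begin{equation*}
  \mathfrak{c}_{\mathrm{fp},0}-\mathfrak{c}_{\psi_{0,s,t}\#\psi_{1,s,t}}=d\mathrm{K}+\mathrm{K}d.
\end{equation*}

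The main obstacle is the compactness/bubbling analysis hiding in step (2), namely the convergence of solutions as $\sigma\to\infty$ to a pair $(u_{+},u_{-})\in\mathscr{M}_{\mathrm{fp}}^{\tau=0}$ with no intermediate holomorphic spheres. This is precisely the content of part \ref{kkd} of Lemma \ref{lemma:fancy-compactness}, which was proved using the dimension-counting/free rotation argument on the moduli space of simple sphere chains. In particular, generic choice of $J$ and of $\kappa_{i,s,t}$ ensures that any intermediate chain of simple holomorphic spheres in the broken configuration drops the expected dimension by enough that no such configurations arise in dimension $\le 1$. Once this compactness is secured, the remaining verification is the standard TQFT/cobordism bookkeeping.
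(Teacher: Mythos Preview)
Your proposal is correct and follows the same cobordism argument as the paper: the parametric moduli space $\mathscr{M}(\kappa_{1,s,t},\kappa_{0,s,t})$ interpolates between the $\sigma=-1$ boundary (the concatenated continuation map) and the $\sigma\to\infty$ ends (the fiber-product count $\mathfrak{c}_{\mathrm{fp},0}$), with the rigid parametric elements supplying the chain homotopy $\mathrm{K}$. Your description of $\mathrm{K}$ is slightly garbled --- the chain homotopy counts the zero-dimensional parametric component $\mathscr{M}_{0}$ (finitely many isolated pairs $(\sigma,u)$ with parametric index $0$, as guaranteed by part~\ref{kkc} of Lemma~\ref{lemma:fancy-compactness}), not a subset of the one-dimensional component --- but the overall argument is sound and matches the paper's.
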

\begin{proof}
  We will be brief, since the argument is just standard Floer theory ideas. The one-dimensional components of the parametric moduli space $\mathscr{M}(\kappa_{1,s,t},\kappa_{0,s,t})$ has a boundary whose Floer theoretic count represents the regular continuation map, by construction. On the other hand, the analysis in Lemma \ref{lemma:fancy-compactness} shows that the one-dimensional components converge as $\sigma\to\infty$ to exactly the configurations whose Floer theoretic count gives $\mathfrak{c}_{\mathrm{fp},0}$. The rigid elements in $\mathscr{M}_{0}(a;\kappa_{1,s,t},\kappa_{0,s,t})$, have a well-defined Floer theoretic count which gives the chain homotopy term. This completes the proof.
\end{proof}

Combining this result with the result in \S\ref{sec:smooth-displ-fiber} completes the proof of Theorem \ref{theorem:smooth-displaceability}.\hfill$\square$

\section{Non-zero eternal classes and Lagrangians}
\label{sec:non-zero-eternal}

In this section we prove Theorem \ref{theorem:odd-euler-char-1} which states the PSS construction applied to a compact monotone Lagrangian $L$ with minimal Maslov number equal to 2 defines a non-zero class in $\mathrm{SH}_{e}$, provided there is another Lagrangian $L'$ of the same type (compact, monotone, minimal Maslov number at least two) so that $L'\cap L$ is transverse and $\#(L'\cap L)$ is odd.

\subsection{The Lagrangian element}
\label{sec:lagr-element}

We begin by constructing an element in the Floer cohomology $\mathrm{HF}(\psi_{t})$ using the Lagrangian $L$. The construction is inspired by the open-closed map in the context of Lagrangian quantum topology of \cite{biran-cornea-GT-2009}.

\subsubsection{Moduli space of half-infinite Floer cylinders}
\label{sec:moduli-space-half-infinite}

Given a generic contact-at-infinity system $\psi_{t}$, consider the moduli space $\mathscr{M}(\psi_{t},L)$ of solutions to:
\begin{equation*}
\left\{
  \begin{aligned}
    &u:(-\infty,0]\times \R/\Z\to W,\\
    &\bd_{s}u+J(u)(\bd_{t}u-X_{t}(u))=0,\\
    &u(s,0)\in L.
  \end{aligned}
\right.
\end{equation*}
As in \S\ref{sec:floer-differential-moduli}, $X_{t}$ is the generator of $\psi_{\beta(3t-1)}$. This moduli space decomposes into components of varying dimensions.

\begin{figure}[h]
  \centering
  \begin{tikzpicture}
    \draw (0,0) coordinate(X) --+ (5,0) +(0,1) --+ (5,1);
    \draw (X)+(0,0.5) circle(0.2 and 0.5) +(-0.2,0.5) node[left]{$\gamma$} +(5,0.5) circle (0.2 and 0.5) +(5.2,0.5) node[right]{$L$};
  \end{tikzpicture}
  \caption{The moduli space $\mathscr{M}(\psi_{t},L)$ of half-infinite Floer cylinders.}
\end{figure}
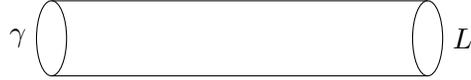

\subsubsection{Dimension formula}
\label{sec:dimension-formula}

Pick a generic section $\mathfrak{m}$ of $\det_{\C}(TW)^{\otimes2}$ so that $\mathfrak{m}|_{L}$ directs the canonical ray $\det_{\R}(TL)^{\otimes 2}$. In particular, $\mathfrak{m}$ is non-vanishing along $L$. We also require that along each orbit $\gamma$ of $X_{t}$ the section $\mathfrak{m}|_{\gamma}$ is non-vanishing.

Define the Conley-Zehnder index of $\gamma$ relative $\mathfrak{m}$ by the formula:\footnote{The \emph{winding number} between two non-vanishing $\mathfrak{y}_{0},\mathfrak{y}_{1}$ sections of a complex line bundle $E\to S^{1}$ is defined to be the signed count of zeros of a generic section of the pullback bundle $E\to \R\times S^{1}$ which agrees with $\mathfrak{y}_{0}$ for $s<0$ and with $\mathfrak{y}_{1}$ for $s>1$.}
\begin{equation*}
  \mathrm{CZ}_{\mathfrak{m}}(\gamma)=\mathrm{CZ}_{\mathfrak{s}}(\gamma)+\mathrm{wind}(\mathfrak{m}|_{\gamma},\mathfrak{s}\otimes\mathfrak{s}),
\end{equation*}
where $\mathfrak{s}$ is any non-vanishing section of $\det_{\C}(TW)|_{\gamma}$.

Introduce the Maslov class $\mu_{\mathfrak{m}}(L)$ as the (transverse) zero set $\mathfrak{m}^{-1}(0)$. Then the Fredholm index of the linearization of \S\ref{sec:moduli-space-half-infinite} at a solution $u$ whose negative asymptotic is $\gamma$ is equal to:
\begin{equation}\label{eq:dim-half-infinite}
  \mu_{\mathfrak{m}}(L)\cdot [u]-\mathrm{CZ}_{\mathfrak{m}}(\gamma).
\end{equation}
This follows from a mild variation of the formulas proved in \cite{cant-thesis-2022}.

\emph{Remark}. It is important to note that the local dimension near a Floer differential cylinder $u$ with asymptotics $\gamma_{-},\gamma_{+}$ (as in \S\ref{sec:floer-differential-moduli}) is equal to:
\begin{equation*}
  \text{index of Floer differential}=\mathrm{CZ}_{\mathfrak{m}}(\gamma_{+})-\mathrm{CZ}_{\mathfrak{m}}(\gamma_{-})+\mu_{\mathfrak{m}}(L)\cdot [u];
\end{equation*}
i.e., the usual dimension formula continues to hold with the generalized Conley-Zehnder indices provided $2c_{1}^{\mathfrak{s}}$ is replaced by $\mu_{\mathfrak{m}}(L)$. One proves this first when $\mathfrak{m}=\mathfrak{s}\otimes \mathfrak{s}$ and then shows that the quantity is invariant under changes in the section $\mathfrak{m}$.

\subsubsection{Transversality and generic compactness}
\label{sec:generic-compactness}

Recall that Theorem \ref{theorem:odd-euler-char-1} assumes that $L$ is monotone with minimal Maslov number at least two. As we show in this section, for generic $\psi_{t}$, the zero-dimensional component of the moduli space from \S\ref{sec:moduli-space-half-infinite}, with symplectic area $a$, is compact (and hence a finite set) and the one-dimensional component is compact up to the breaking of Floer cylinders at the left end.

To be precise about the genericity, we state the following lemma:
\begin{lemma}
  Suppose $\psi_{1}$ has non-degenerate fixed points. For generic compactly supported systems $\delta_{t}$, every solution to $\mathscr{M}(\psi_{t}\delta_{t},L)$ is cut transversally, and in particular, the local dimension in \eqref{eq:dim-half-infinite} is non-negative.
\end{lemma}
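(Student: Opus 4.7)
The plan is to adapt the universal-moduli-space argument of Lemma \ref{lemma:transversality-floer} to the half-infinite cylinder with Lagrangian boundary condition on $L$. I will work with a Banach space $\mathscr{P}$ of compactly supported perturbations $\delta_{t}$ chosen so that the perturbed equation remains compatible with the Lagrangian boundary condition (concretely, require $\delta_{t}=\id$ in a small neighbourhood of the prescribed boundary slice). Form the universal moduli space of pairs $(u,\delta_{t})$ solving
\[
\bd_{s}u+J(u)(\bd_{t}u-X_{t}(u)-Y_{t}(\psi_{t}^{-1}(u)))=0
\]
with the prescribed Lagrangian constraint, and show it is a Banach manifold by proving surjectivity of the universal linearised operator $(\eta,Y)\mapsto D_{u}\eta-Y_{t}(\psi_{t}^{-1}(u))$. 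A Sard-Smale argument then delivers the conclusion for generic $\delta_{t}$.

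The surjectivity argument splits by whether $u$ is stationary. When $\bd_{s}u\not\equiv 0$, the finite-energy condition rules out $s$-periodicity of $u$, and together with unique continuation this produces injective points $(s_{0},t_{0})$ in the interior of the domain where $\bd_{s}u(s_{0},t_{0})\ne 0$ and $u(s,t_{0})=u(s_{0},t_{0})$ only for $s=s_{0}$; a bump-function perturbation $Y_{t}$ concentrated near $(t_{0},\psi_{t_{0}}^{-1}(u(s_{0},t_{0})))$ yields surjectivity by the Floer-Hofer-Salamon scheme, exactly as in Lemma \ref{lemma:transversality-floer}. The stationary case $u(s,t)=\gamma(t)$, where $\gamma$ is a $\psi_{t}$-orbit meeting $L$ along the prescribed slice, is where the Lagrangian boundary genuinely intervenes: surjectivity of $D_{u}$ at such a solution is equivalent (via the standard doubling argument from Lagrangian Floer theory) to the transverse intersection of $L$ with $\psi_{1}(L)$ at the point $\gamma(0)$, which can be arranged by a further generic perturbation of $\psi_{1}$ supported in a small $t$-subinterval disjoint from the boundary slice.

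The main obstacle I anticipate is reconciling these two perturbation mechanisms: the interior perturbation $Y_{t}$ used for non-stationary solutions must vanish near the boundary slice, and so cannot on its own enforce transversality for stationary solutions. The remedy is to split $\mathscr{P}=\mathscr{P}_{\mathrm{int}}\oplus \mathscr{P}_{\mathrm{bdy}}$, where $\mathscr{P}_{\mathrm{int}}$ generates the interior perturbations handling the non-stationary case while $\mathscr{P}_{\mathrm{bdy}}$ realises small Hamiltonian isotopies of the time-one map that generically make $L$ transverse to $\psi_{1}(L)$; applying Sard-Smale to each factor in turn produces a common generic perturbation achieving transversality for all solutions simultaneously. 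The non-negativity of the local dimension in \eqref{eq:dim-half-infinite} is then automatic from the transverse cut-out, via the index formula for Cauchy-Riemann operators with Lagrangian boundary conditions as in \cite{cant-thesis-2022}.
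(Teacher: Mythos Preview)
Your universal-moduli-space and Sard--Smale outline for the non-stationary case is correct and is exactly what the paper intends by ``same argument as Lemma~\ref{lemma:transversality-floer}''.

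However, your treatment of the stationary case reveals a misreading of the boundary condition. The domain is the half-cylinder $(-\infty,0]\times\R/\Z$ and the Lagrangian constraint is on the \emph{circle} $\{0\}\times\R/\Z$ (the written $u(s,0)\in L$ is a typo for $u(0,t)\in L$, as is clear from the figure and from the parametric versions in \S\ref{sec:param-moduli-space}--\S\ref{sec:moduli-space-finite}). Consequently a stationary solution $u(s,t)=\gamma(t)$ requires the \emph{entire orbit} $\gamma(\R/\Z)$ to lie in $L$, not merely $\gamma(0)\in L$. The doubling argument and the condition $L\pitchfork\psi_{1}(L)$ you invoke belong to the strip geometry of Lagrangian intersection Floer theory and do not govern surjectivity of the linearisation on a half-cylinder with circle boundary. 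Relatedly, there is no need to restrict $\delta_{t}$ near any ``boundary slice'': the perturbation is purely $t$-dependent and the constraint $u(0,t)\in L$ does not involve the Hamiltonian at all, so every compactly supported $\delta_{t}$ is automatically compatible with the boundary condition.

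The correct disposal of the stationary case is much simpler than your $\mathscr{P}_{\mathrm{int}}\oplus\mathscr{P}_{\mathrm{bdy}}$ scheme: the fixed points of $\psi_{1}\delta_{1}$ form a finite set, and a generic compactly supported perturbation moves them off $L$ (indeed moves their entire orbits off $L$), so stationary solutions simply do not occur. After this, only the non-stationary case remains, and your FHS injective-point argument handles it.
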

\begin{proof}
  The proof follows the same argument as Lemma \ref{lemma:transversality-floer}.
\end{proof}

Let $\mathscr{M}(a)=\mathscr{M}(a;\psi_{t}\delta_{t},L)$ be the component of consisting of solutions whose symplectic area is $a$. Then solutions in $\mathscr{M}(a)$ satisfy an a priori energy bound in terms of $a$. This implies $\mathscr{M}(a)$ is compact up to breaking of Floer cylinders at the negative end, or bubbling of holomorphic spheres or disks. However, any holomorphic sphere or disk has $\mu$ at least $2$ (by assumption), and hence one cannot have bubbling in the zero or one-dimensional components of $\mathscr{M}(a)$; otherwise the dimension in \eqref{eq:dim-half-infinite} would be negative for some solution. Consideration of Fredholm indices shows that:
\begin{enumerate}
\item Any sequence in the zero-dimensional component $\mathscr{M}_{0}(a)$ has a convergent subsequence; no Floer differential can bubble off since the index $-1$ components $\mathscr{M}_{-1}$ are empty.
\item Any sequence in the 1-dimensional component $\mathscr{M}_{1}(a)$ has a convergent subsequence or converges (in the Floer theory sense) to an Floer differential cylinder of index $1$ connected to an element of $\mathscr{M}_{0}$.
\end{enumerate}

\subsubsection{Definition of the Lagrangian element}
\label{sec:defin-natur-elem}

Let $a\in \R$ and let $x$ be a fixed point of $\psi_{1}$. Let $\mathscr{M}_{0}(x;a)=\mathscr{M}_{0}(x;a;\psi_{t},L)\subset \mathscr{M}(\psi_{t},L)$ be the component consisting of solutions $u$ whose left asymptotic $\gamma$ satisfies $\gamma(0)=x$, satisfy $\omega(u)=a$, and for which the local dimension in \eqref{eq:dim-half-infinite} is $0$.

As in \S\ref{sec:generic-compactness}, $\mathscr{M}_{0}(x;a)$ is a finite set of points (indeed, the union over all $a\le A$ is still finite). Define:
\begin{equation*}
  \mathrm{LE}(\psi_{t},L):=\sum_{a,x}\#\mathscr{M}_{0}(x;a) \tau^{a}x\in \mathrm{CF}(\psi_{t}).
\end{equation*}
Consideration of the one-dimensional components $\mathscr{M}_{1}(x;a)$ whose solutions satisfy the above, except with local dimension equal to $1$, proves $\mathrm{LE}(\psi_{t},L)$ is a cycle with respect to the differential from \S\ref{sec:floer-differential-1}.

\subsection{Naturality of the Lagrangian element}
\label{sec:natur-lagr-elem}

The goal in this section is to prove the assignment:
\begin{equation*}
  \psi_{t}\mapsto \mathrm{LE}(\psi_{t},L)\in \mathrm{HF}(\psi_{t}).
\end{equation*}
defines a natural transformation from the constant $\Z/2$-valued functor to the restriction of $\mathrm{HF}$ to the subcategory $\mathscr{C}^{\times}\subset \mathscr{C}$ of systems satisfying the genericity conditions of \S\ref{sec:lagr-element}. To do so, we will consider in \S\ref{sec:param-moduli-space} a 1-parametric moduli space which combines continuation maps with the half-infinite cylinders used to define $\mathrm{LE}(\psi_{t};L)$. In \S\ref{sec:defin-chain-homot} we explain how to use the parametric moduli space to prove continuation maps preserve the Lagrangian elements.

\subsubsection{The 1-parametric moduli space}
\label{sec:param-moduli-space}

Let $\psi_{0,t},\psi_{1,t}$ be systems which satisfy the requisite transversality for defining their Lagrangian elements (and therefore also the Floer complexes), and pick some continuation data $\psi_{s,t}$ between them.

We recall the set-up of \S\ref{sec:cont-cylind}; we let:
\begin{equation*}
  \xi_{s,t}=\psi_{\beta(1-s),\beta(3t-1)},
\end{equation*}
and let $Y_{s,t},X_{s,t}$ be its infinitesimal generators with respect to $s,t$.

Define $\mathscr{M}(\psi_{s,t},L)$ to be the moduli space of pairs $(\sigma,u)$ such that:
\begin{equation*}
  \left\{
    \begin{aligned}
      &u:(-\infty,\sigma]\times \R/\Z\to W,\\
      &\bd_{s}u-\rho(t)Y_{s,t}+J(u)(\bd_{t}u-X_{s,t})=0,\\
      &u(\sigma,t)\in L,
    \end{aligned}
  \right.
\end{equation*}
where $\rho(t)=\beta(3-3t)$. As always, we assume that $u$ has finite energy:
\begin{equation*}
  \textstyle\int \omega(\bd_{s}u-\rho(3t-3)Y_{s,t},\bd_{t}u-X_{s,t})dsdt.
\end{equation*}
Note that if $(\sigma,u)$ is a solution then $v(s,t)=u(s+\sigma,t)$ is defined on the domain $(-\infty,0]\times \R/\Z$. It is important to observe that the equation for $u$ agrees Floer's equation for $\psi_{1,t}$ on the region $s\le 0$.

As usual, it is convenient to introduce the component:
\begin{equation*}
  \mathscr{M}_{d}(a;\psi_{s,t},L)\subset \mathscr{M}(\psi_{s,t},L)
\end{equation*}
of solutions $(\sigma,u)$ so that $\omega(u)=a$ and:
\begin{equation*}
  1+\mu_{\mathfrak{m}}(L)\cdot [u]-\mathrm{CZ}(\gamma)=d,
\end{equation*}
where $\gamma$ is the asymptotic of $u$. Then:

\begin{lemma}\label{lemma:proper-natur-LE}
  For a generic perturbation $\delta_{s,t}$ so that $\delta_{s,t}=\id$ for $s=0,1$ and $t=0$, the moduli space $\mathscr{M}_{d}(a)=\mathscr{M}_{d}(a;\psi_{s,t}\delta_{s,t},L)$ is a $d$-dimensional manifold. Moreover, the space $\cup_{a\le A}\mathscr{M}_{0}(a)$ is a finite set and the map:
  \begin{equation*}
    (\sigma,u)\in \mathscr{M}_{1}(a)\to \sigma\in \R
  \end{equation*}
  is a proper map up to the breaking of index 1 Floer cylinders at the asymptotic end.
\end{lemma}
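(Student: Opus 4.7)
The plan is to treat $\sigma$ as an additional parameter in the moduli problem and adapt the standard Floer-theoretic transversality-and-compactness recipe used throughout \S\ref{sec:revi-floer-cohom}, accounting for the moving Lagrangian boundary condition. After the reparametrization $v(s,t)=u(s+\sigma,t)$ on the fixed half-cylinder $(-\infty,0]\times \R/\Z$, the problem becomes a $\sigma$-parameterized PDE for $v$ with boundary condition $v(0,t)\in L$, and the $\sigma$-direction contributes a $+1$ to the Fredholm index relative to \S\ref{sec:dimension-formula}, matching the stated dimension formula $d=1+\mu_{\mathfrak{m}}(L)\cdot [u]-\mathrm{CZ}_{\mathfrak{m}}(\gamma)$.

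Transversality proceeds exactly as in Lemma \ref{lemma:transversality-floer} and Lemma \ref{lemma:transversality-continuation}: a generic compactly supported perturbation $\delta_{s,t}$ (with $\delta_{s,t}=\id$ for $s=0,1$ and $t=0$) regularizes the interior equation and makes the universal boundary evaluation submersive onto $W$, so the fiber product with $L$ cuts out a smooth manifold of the expected dimension. Simultaneously, by the bump-function argument underlying Lemma \ref{lemma:fancy-compactness}, one arranges that the total evaluation $\mathscr{M}\times (-\infty,0]\times \R/\Z\to W$ is transverse to the pseudocycles of simple $J$-holomorphic spheres and simple $J$-holomorphic disks on $L$; this will be used to exclude bubbling in low-dimensional strata.

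The a priori energy estimate from Lemma \ref{lemma:energy-estimate} bounds the energy of $u$ in terms of $\omega(u)=a$. The hypothesis that $L$ is monotone with $\mu_{\min}(L)\ge 2$, combined with semipositivity and the above transversality of the total evaluation, rules out sphere and disk bubbling in $\mathscr{M}_0$ and $\mathscr{M}_1$: each bubble drops the expected dimension by at least $2$, forcing the principal component into an empty stratum of index $\le -1$. This dimension-counting exclusion of disk bubbling in the presence of the moving boundary is the principal technical point, and mirrors the argument of Lemma \ref{lemma:fancy-compactness}.

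The two assertions of the lemma then follow. For the finiteness of $\cup_{a\le A}\mathscr{M}_0(a)$, I additionally need $\sigma$ to remain bounded on the index-$0$ stratum: as $\sigma\to -\infty$ the reparametrized equation for $v$ converges to the LE equation for $\psi_{1,t}$, while as $\sigma\to +\infty$ solutions break into an LE configuration for $\psi_{0,t}$ joined to a continuation cylinder; in either case the limit configuration has non-parametric index one less than the parametric index, hence $\le -1$, which is generically empty. Thus $\mathscr{M}_0(a)$ stays in a bounded $\sigma$-window, is compact (no bubbling, and no Floer breaking since $\mathscr{M}_{-1}=\emptyset$), and hence the discrete set $\cup_{a\le A}\mathscr{M}_0(a)$ is finite. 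For the properness statement, given a compact $K\subset \R$, any sequence $(\sigma_n,u_n)\in \mathscr{M}_1(a)$ with $\sigma_n\in K$ has uniformly bounded energy, so by the bubbling exclusions it converges in the Floer sense to a limit configuration whose only possible degeneration is the breaking of a single index-$1$ Floer differential cylinder for $\psi_{1,t}$ at the negative asymptote.
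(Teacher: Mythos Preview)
Your proposal is correct and follows essentially the same route as the paper's proof: transversality via a generic compactly supported perturbation, an a priori energy bound, exclusion of sphere and disk bubbling in $\mathscr{M}_0$ and $\mathscr{M}_1$ by the dimension drop of at least $2$ coming from $\mu_{\min}(L)\ge 2$, and the two-sided bound on $\sigma$ in $\mathscr{M}_0$ (for $\sigma\le 0$ the equation is literally the non-parametric LE equation for $\psi_{1,t}$, hence has index $-1$; for $\sigma\to\infty$ the limit configuration forces a negative-index solution somewhere).

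One point deserves care: you cite Lemma~\ref{lemma:energy-estimate} for the energy bound, but that lemma is stated for continuation cylinders on the full cylinder $\R\times\R/\Z$ and does not literally apply to the half-cylinder with Lagrangian boundary. The paper singles this out as the most novel step and redoes the computation explicitly, obtaining
\[
E(u)=\omega(u)+\int_0^1\!\big(H_{-\infty,t}(\gamma(t))-H_{\sigma,t}(u(\sigma,t))\big)\,dt+\int_0^1\!\!\int_0^\sigma\!-\rho'(t)K_{s,t}(u)\,ds\,dt,
\]
where the boundary term $H_{\sigma,t}(u(\sigma,t))$ is bounded because $u(\sigma,\cdot)$ lies on the compact Lagrangian $L$, and the last integral is bounded above since $K_{s,t}\le 0$ outside a compact set (this is where the non-negativity of the continuation data enters). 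Your phrase ``accounting for the moving Lagrangian boundary condition'' suggests awareness of this, but the adaptation should be spelled out rather than absorbed into the citation.
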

\begin{proof}
  The result is fairly standard Floer theory. We review the salient points below. We abuse notation and relabel $\psi_{s,t}=\psi_{s,t}\delta_{s,t}$. One shows:
  \begin{enumerate}
  \item\label{item:LECM1} any sequence $(\sigma_{n},u_{n})$ in $\mathscr{M}(\psi_{s,t},L)$ with $\omega(u_{n})\le A$ satisfies an a priori energy bound in terms of $A$;
  \item the energy bound implies convergence up to bubbling of holomorphic spheres and disks, and breaking of Floer cylinders at the negative end;
  \item assuming transversality holds (so $\mathscr{M}_{d}(\psi_{s,t},L)=\emptyset$ for $d<0$) one cannot have bubbling of spheres or disks along any sequence in $\mathscr{M}_{1}$ or $\mathscr{M}_{0}$; moreover, one cannot have breaking of Floer cylinders along any sequence in $\mathscr{M}_{0}$;
  \item a sequence $(\sigma_{n},u_{n})\in \mathscr{M}_{0}$ cannot have $\sigma_{n}\to\infty$ because otherwise one would conclude a solution of negative index to either the Floer differential equation for $\psi_{1,t}$, the continuation map equation for $\psi_{s,t}$, or the Lagrangian element equation for $\psi_{0,t}$;
  \item the part of $\mathscr{M}_{0}$ over the region $\sigma<0$ is empty, because on there the parametric equation is $\sigma$-independent and agrees with the Lagrangian element equation for $\psi_{1,t}$ (which has no solutions of negative index).
  \end{enumerate}
  These points and standard arguments imply the statement. Perhaps the most novel point is the energy estimate in \eqref{item:LECM1}. We recall the computation:
  \begin{equation*}
    \begin{aligned}
      &\omega(\bd_{s}u-\rho(t)Y_{s,t}(u),\bd_{t}u-X_{s,t}(u))\\
      &\hspace{1cm}=\omega(\bd_{s}u,\bd_{t}u)+\rho(t)\bd_{t}(K_{s,t}(u))-\bd_{s}(H_{s,t}(u))+R,
    \end{aligned}
  \end{equation*}
  where $K_{s,t},H_{s,t}$ are the normalized Hamiltonian generators for $Y_{s,t},X_{s,t}$, using that the curvature term vanishes:
  \begin{equation*}
    R=\rho(t)(\bd_{s}H_{s,t}-\bd_{t}K_{s,t}+\omega(Y_{s,t},X_{s,t}))=0.
  \end{equation*}
  Integrate this over the cylinder yields:
  \begin{equation*}
    \omega(u)+\int_{0}^{1}H_{-\infty,t}(\gamma(t))-H_{\sigma,t}(u(\sigma,t))dt+\int_{0}^{1}\int_{0}^{\sigma}-\rho'(t)K_{s,t}(u)dsdt.
  \end{equation*}
  The first two terms are uniformly bounded along any sequence (there are only finitely many possibilities for $\gamma$, since we assume $\psi_{1,t}$ is non-degenerate, and $u(\sigma,t)$ lies on the compact Lagrangian). On the other hand, the last term is bounded above along sequence, because $K_{s,t}=0$ for $s\ge 1$ and continuation data satisfy $K_{s,t}\le 0$ outside a compact set in $W$. This completes the proof of the energy bound. We leave the details of the other points to the reader.
\end{proof}

\subsubsection{Definition of the chain homotopy}
\label{sec:defin-chain-homot}

Let $\mathfrak{c}:\mathrm{HF}(\psi_{0,t})\to \mathrm{HF}(\psi_{1,t})$ be the continuation map associated to $\psi_{s,t}$.
\begin{lemma}\label{lemma:CH-lag}
  It holds that $\mathfrak{c}(\mathrm{LE}(\psi_{0,t},L))=\mathrm{LE}(\psi_{1,t},L)$.
\end{lemma}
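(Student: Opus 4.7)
The plan is to use the parametric moduli space $\mathscr{M}(\psi_{s,t}, L)$ from \S\ref{sec:param-moduli-space} as a cobordism exhibiting $\mathfrak{c}(\mathrm{LE}(\psi_{0,t}, L)) + \mathrm{LE}(\psi_{1,t}, L)$ as a Floer boundary in $\mathrm{CF}(\psi_{1,t})$. Concretely, I will define the primitive
\begin{equation*}
K := \sum_{x,a} \#\mathscr{M}_{0}(x;a;\psi_{s,t}\delta_{s,t}, L)\,\tau^{a} x \;\in\; \mathrm{CF}(\psi_{1,t}),
\end{equation*}
where the count is over the rigid $(d=0)$ part of $\mathscr{M}_{0}(a)$ asymptotic to an orbit through $x$. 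Lemma \ref{lemma:proper-natur-LE} ensures that this sum is finite (in each Novikov-degree range) and so defines a genuine element of $\mathrm{CF}(\psi_{1,t})$.

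The next step is to compute $\bd K$ by examining the compactification of the 1-dimensional component $\mathscr{M}_{1}(a)$, truncated to $\sigma \ge \sigma_{0}$ for some fixed $\sigma_{0}<0$. By Lemma \ref{lemma:proper-natur-LE} this truncated moduli space is a compact 1-manifold with boundary and ends of three kinds. First, the breaking of an index-1 Floer cylinder for $\psi_{1,t}$ at the asymptotic end of a rigid element of $\mathscr{M}_{0}$; these contribute $\bd K$ to the signed count. Second, the fiber $\sigma = \sigma_{0}$: since $\sigma_{0}<0$, on this fiber the parametric equation is $\sigma$-independent and agrees with the defining equation for $\mathrm{LE}(\psi_{1,t},L)$ after translation by $-\sigma_{0}$, so the count of this fiber equals $\mathrm{LE}(\psi_{1,t}, L)$. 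Third, the non-compact end as $\sigma \to +\infty$: standard Floer neck-stretching degenerates the solution into a rigid continuation cylinder for $\psi_{s,t}$ (\S\ref{sec:cont-cylind}) concatenated at a common asymptotic orbit with a rigid half-infinite Floer cylinder for $\psi_{0,t}$ with boundary on $L$, and the count of such broken configurations is precisely $\mathfrak{c}(\mathrm{LE}(\psi_{0,t}, L))$.

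Summing these contributions in $\Z/2$ yields the chain-level cobordism identity
\begin{equation*}
\bd K \;=\; \mathrm{LE}(\psi_{1,t}, L) + \mathfrak{c}(\mathrm{LE}(\psi_{0,t}, L)),
\end{equation*}
which immediately gives the desired equality in $\mathrm{HF}(\psi_{1,t})$.

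The main obstacle will be the $\sigma \to \infty$ compactness/gluing analysis: one must rule out sphere and disk bubbling along sequences $(\sigma_{n}, u_{n}) \in \mathscr{M}_{1}(a)$ with $\sigma_{n}\to\infty$, and verify that the limit configurations bijectively correspond to pairs contributing to $\mathfrak{c}(\mathrm{LE}(\psi_{0,t}, L))$. The monotonicity of $L$ together with the minimal Maslov number hypothesis and the semipositivity framework excludes bubbling from the zero- and one-dimensional strata via a dimension count analogous to part (\ref{kkb}) of Lemma \ref{lemma:fancy-compactness}, and the gluing-to-broken-configurations step proceeds in the usual way, as in Lemma \ref{lemma:gluing-lemma}. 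Once these standard but technical points are in place, the cobordism argument sketched above closes out the proof.
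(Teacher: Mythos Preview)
Your proposal is correct and follows essentially the same approach as the paper's proof: both define $K$ by counting the rigid ($d=0$) elements of the parametric moduli space $\mathscr{M}(\psi_{s,t}\delta_{s,t},L)$, and both analyze the ends of the $1$-dimensional component $\mathscr{M}_{1}$ to obtain the chain-level identity $\mathfrak{c}(\mathrm{LE}(\psi_{0,t},L))=\mathrm{LE}(\psi_{1,t},L)+dK$. The only cosmetic difference is that the paper cuts at $\sigma=0$ (and further decomposes $[0,\infty)$ into $[0,\sigma_{0}]\cup[\sigma_{0},\infty)$ to separate the Floer-breaking ends from the $\sigma\to\infty$ ends), whereas you cut at some $\sigma_{0}<0$; since the equation is $\sigma$-independent and coincides with the $\mathrm{LE}(\psi_{1,t},L)$ equation for $\sigma\le 0$, these choices are equivalent.
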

\begin{proof}
  Let $\mathfrak{c}$ denote the chain level representative for generic perturbation of $\psi_{s,t}$. We will prove that:
  \begin{equation*}
    \mathfrak{c}(\mathrm{LE}(\psi_{0,t},L))=\mathrm{LE}(\psi_{1,t},L)+d\mathrm{K}
  \end{equation*}
  where $\mathrm{K}\in \mathrm{CF}(\psi_{1,t})$ is defined by:
  \begin{equation*}
    \mathrm{K}=\sum_{a,x}\#\mathscr{M}_{0}(x;a;\psi_{s,t},L)\tau^{a}x.
  \end{equation*}
  In other words, the rigid elements in the 1-parametric moduli space is interpreted as a chain in $\mathrm{CF}(\psi_{1,t})$. This sum is well-defined by the compactness results in \S\ref{sec:param-moduli-space}.

  Consider the inverse image of $[0,\sigma)$ under the projection map:
  \begin{equation*}
    \sigma:\mathscr{M}_{1}(x;a;\psi_{s,t},L)\to \R.
  \end{equation*}
  This forms a manifold $P$ with boundary equal to: $$\bd P=\sigma^{-1}(0)=\mathscr{M}_{0}(x;a;\psi_{1,t},L),$$
  where here we use the moduli space from \S\ref{sec:defin-natur-elem} (note that $\psi_{1,t}$ is not considered as continuation data).

  Let us further decompose $P$ as $P_{1}=\sigma^{-1}([0,\sigma_{0}])$ and $P_{2}=\sigma^{-1}([\sigma_{0},\infty))$. If we pick $\sigma_{0}$ large enough, then \S\ref{sec:param-moduli-space} implies that $\sigma:P_{2}\to [\sigma_{0},\infty)$ is proper.

  The non-compact ends of $P_{2}$ are therefore only the Floer theoretic breakings which happen as $\sigma\to\infty$. By the usual compactness/gluing analysis, the count of such breakings is encoded as the term appearing in $\mathfrak{c}(\mathrm{LE}(\psi_{0,t},L))$ with coefficient $\tau^{a}x$ (this coefficient is simply a number $0$ or $1$).

  Since the count of non-compact ends of a proper map over $[0,\infty)$ equals the count of boundary points, we conclude:
  \begin{equation*}
    \#\bd P_{2}=\ip{\mathfrak{c}(\mathrm{LE}(\psi_{0,t},L)),\tau^{a}x}.
  \end{equation*}
  We now claim:
  \begin{equation*}
    \#\bd P=\# P_{1}+\#\bd P_{2},
  \end{equation*}
  and hence:
  \begin{equation*}
    \ip{\mathrm{LE}(\psi_{1,t},L)-\mathfrak{c}(\mathrm{LE}(\psi_{1,t},L)),\tau^{a}x}=\#P_{1}.
  \end{equation*}
  On the other hand, $\tau:P_{1}\to [0,\sigma_{0}]$ is not proper. However, it is proper (and a submersion) near $0$ and $\sigma_{0}$, and so $\#\bd P_{1}$ is equal to the count of the non-compact ends which lie over $(0,\sigma_{0})$. By \S\ref{sec:param-moduli-space}, the count of non-compact ends are due to the breaking of Floer differential cylinders. Keeping track of the indices, and applying standard gluing results, one concludes that:
  \begin{equation*}
    \#\bd P_{1}=\ip{d\mathrm{K},\tau^{a}x}.
  \end{equation*}
  Thus:
  \begin{equation*}
    \ip{\mathrm{LE}(\psi_{1,t},L)-\mathfrak{c}(\mathrm{LE}(\psi_{1,t},L))-dK,\tau^{a}x}=0.
  \end{equation*}
  Since $a,x$ were arbitrary, we conclude the desired result.
\end{proof}

\subsubsection{Extension to the full category}
\label{sec:extens-full-categ}

We have only defined a natural transformation from $\Z/2$ to $\mathrm{HF}|_{\mathscr{C}^{\times}}$ where $\mathscr{C}^{\times}$ is a full subcategory of $\mathscr{C}$ (see \S\ref{sec:funct-struct-floer} for the notation). We can formally extend to the full category using the property that the natural isomorphism:
\begin{equation*}
  \mathrm{HF}(\psi_{t})\to \lim \mathrm{HF}(\varphi_{t})
\end{equation*}
where the limit is over the category of morphisms $\psi_{t}\to \varphi_{t}$ in $\mathscr{C}$ such that $\varphi_{t}$ lies in $\mathscr{C}^{\times}$. That the natural map is an isomorphism can be proved by the same argument as in \S\ref{sec:funct-struct-floer}.

Inverting the natural map gives induced elements $\mathrm{LE}(\psi_{t})$ for all systems. Standard abstract nonsense implies the extended elements $\mathrm{LE}(\psi_{t})$ are themselves natural.

\subsection{The Lagrangian element extends the PSS class of $L$}
\label{sec:extends-PSS}

In this section, we will explain that $\mathrm{LE}(\psi_{t},L)=\mathrm{PSS}(\psi_{t},L)$ whenever $\psi_{t}$ lies in the positive cone. The moduli space used is quite similar to the one used in \S\ref{sec:natur-lagr-elem}. Instead of using standard continuation data between two non-degenerate systems, one instead uses the continuation from the identity system as in \S\ref{sec:pss-positive-cone}.

\subsubsection{The 1-parametric moduli space}
\label{sec:1-parametric-moduli}

As in \S\ref{sec:pss-positive-cone}, let:
\begin{equation*}
  \kappa_{s,t}=\psi_{st}\delta_{s,t}\text{ and }\xi_{s,t}=\kappa_{\beta(1-s),\beta(3t-1)},
\end{equation*}
where $\delta_{s,t}$ is a perturbation term compactly supported in $W$ and so $\delta_{s,t}=\id$ for $s=0,1$ and $t=0$. Let $Y_{s,t},X_{s,t}$ be the generators of $\xi_{s,t}$, and consider the moduli space $\mathscr{M}(\kappa_{s,t},L)$ of pairs $(\sigma,u)$ satisfying:
\begin{equation*}
  \left\{
    \begin{aligned}
      &u:(-\infty,\sigma]\times \R/\Z\to W,\\
      &\bd_{s}u-\rho(t)Y_{s,t}+J(u)(\bd_{t}u-X_{s,t})=0,\\
      &u(\sigma,t)\in L,
    \end{aligned}
  \right.
\end{equation*}
where (as always) $u$ has finite energy. This is the same equation as \S\ref{sec:param-moduli-space}.

Let $\mathscr{M}_{d}(x;a;\kappa_{s,t},L)$ be the component where $\omega(u)=a$, the asymptotic satisfies $\gamma(0)=x$, and
\begin{equation*}
  1+\mu_{\mathfrak{m}}(L)\cdot [u]-\mathrm{CZ}_{\mathfrak{m}}(\gamma)=d.
\end{equation*}
The same arguments given in \S\ref{sec:natur-lagr-elem} prove that $\sigma:\mathscr{M}_{1}\to \R$ is proper except over some interval $[0,\sigma_{0}]$. Let:
\begin{equation*}
  P_{1}=\sigma^{-1}([0,\sigma_{0}])\text{ and }P_{2}=\sigma^{-1}([\sigma_{0},\infty)).
\end{equation*}
We conclude by the same argument as \S\ref{sec:defin-chain-homot} that:
\begin{equation*}
  \ip{\mathrm{LE}(\psi_{t},L),\tau^{a}x}=\#\bd P_{1}+\#\bd P_{2}.
\end{equation*}
Counting $\mathscr{M}_{0}(x;a;\kappa_{s,t},L)$ defines an element $\mathrm{K}$ so that:
\begin{equation*}
  \#\bd P_{1}=\ip{d\mathrm{K},\tau^{a}x}.
\end{equation*}
The following lemma will complete the proof:
\begin{lemma}\label{lemma:LE-PSS}
  For a generic almost complex structure $J$ and a generic $\delta_{s,t}$, both depending on $L$, it holds that: $$\# \bd P_{2}=\ip{\mathrm{PSS}(\psi_{t},L),\tau^{a}x}$$ where $\mathrm{PSS}(\psi_{t},L)$ is the PSS cycle defined in \S\ref{sec:pss-elements}. In particular, $\mathrm{PSS}(\psi_{t},L)$ and $\mathrm{LE}(\psi_{t},L)$ are the same element in $\mathrm{HF}(\psi_{t})$.
\end{lemma}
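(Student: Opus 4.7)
The strategy is to analyze the non-compact end at $\sigma \to \infty$ of the one-parameter moduli space $\mathscr{M}_1(x; a; \kappa_{s,t}, L)$, identify its contribution $\#\bd P_2$ with the count of rigid configurations defining $\mathrm{PSS}(\psi_t, L)$, and then combine with the identity $\langle \mathrm{LE}(\psi_t, L), \tau^a x\rangle = \#\bd P_1 + \#\bd P_2$ with $\#\bd P_1 = \langle d\mathrm{K}, \tau^a x\rangle$ established earlier in \S\ref{sec:1-parametric-moduli} to conclude.

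First I would pass to the conformal parametrization $z = e^{-2\pi(s+it)}$, which identifies the domain $(-\infty, \sigma] \times \R/\Z$ with $\set{z \in \C : |z| \geq e^{-2\pi\sigma}} \cup \set{\infty}$, places the Lagrangian boundary condition on the small circle $\set{|z| = e^{-2\pi\sigma}}$, puts the asymptotic orbit $\gamma$ at $z = \infty$, and turns the PDE into the equation considered in \S\ref{sec:pss-positive-cone} with Hamiltonian perturbation supported in a fixed annulus corresponding to $s \in [0,1]$. A sequence $u_n \in \mathscr{M}_1$ with $\sigma_n \to \infty$ thereby becomes a sequence $v_n$ on subdomains exhausting $\C \setminus \set{0}$ whose Lagrangian boundary collapses onto the single point $z = 0$.

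Second, using the a priori energy bound from Lemma \ref{lemma:proper-natur-LE} together with Gromov compactness, a subsequence converges, away from a finite bubbling set, to a map $v \colon \C \to W$ solving the equation of $\mathscr{M}_+(\kappa_{s,t})$ with asymptotic $\gamma$ at $z = \infty$. By continuity, the collapse of the Lagrangian boundary to $z = 0$ forces $v(0) \in L$ in the limit, so $v$ is exactly an element of the moduli space defining $\mathrm{PSS}(\psi_t, L)$. Interior sphere bubbles are excluded by semipositivity together with generic transversality of the total evaluation map against the pseudocycle of simple holomorphic spheres, as in Lemma \ref{lemma:PSS-genericity}.

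The hard part is ruling out disk bubbles that could form as the Lagrangian boundary collapses toward $z = 0$. Rescaling near such a bubbling point produces, in the limit, a non-constant simple $J$-holomorphic disk $w \colon (\bar D, \bd D) \to (W, L)$ attached to $v$ at a boundary marked point $z_0 \in \bd D$ with $w(z_0) = v(0)$. A dimension count for the relevant fiber product --- with $v$ contributing $n + \mu_{\mathfrak{m}}(L) \cdot v - \mathrm{CZ}_{\mathfrak{m}}(\gamma)$ dimensions in $\mathscr{M}_+$, the unparametrized disk with marked boundary point contributing $n + \mu(w) - 2$ dimensions (after quotienting by the $2$-dimensional subgroup of $\mathrm{Aut}(\bar D)$ stabilizing a boundary point), and the incidence $v(0) = w(z_0)$ imposing codimension $2n$ --- combined with the constraint $\mu_{\mathfrak{m}}(L) \cdot [v] + \mu(w) = \mathrm{CZ}_{\mathfrak{m}}(\gamma)$ imposed by $\dim \mathscr{M}_1 = 1$ and $[u] = [v] + [w]$, yields total expected dimension $-2$. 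Crucially this uses $\mu(w) \geq 2$, guaranteed by the minimal Maslov number hypothesis. So for generic $J$ (making the moduli space of simple $J$-holomorphic disks on $L$ transversally cut out, as is allowed by the footnote to \S\ref{sec:choice-almost-compl}) and generic $\delta_{s,t}$ no such bubble configuration arises; monotonicity of $L$ bounds the set of homotopy classes of disks needing to be considered below a fixed area, so genericity is attainable uniformly.

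With bubbling and Floer breaking accounted for, standard gluing produces, for each rigid configuration counted by $\mathrm{PSS}(\psi_t, L)$, a corresponding non-compact end of $P_2$, yielding the equality $\#\bd P_2 = \langle \mathrm{PSS}(\psi_t, L), \tau^a x\rangle$. Substituting into $\langle \mathrm{LE}(\psi_t, L), \tau^a x\rangle = \langle d\mathrm{K}, \tau^a x\rangle + \#\bd P_2$ gives $\mathrm{LE}(\psi_t, L) = \mathrm{PSS}(\psi_t, L) + d\mathrm{K}$ at the chain level, establishing equality of the two elements in $\mathrm{HF}(\psi_t)$.
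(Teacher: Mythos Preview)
Your approach is the same as the paper's: analyze the $\sigma\to\infty$ end of $\mathscr{M}_{1}$, show by dimension counting that no nontrivial bubbles survive, and conclude that the limit is a rigid PSS plane with $v(0)\in L$. Your dimension count for a single simple disk bubble is correct and gives $-2$.

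The gap is that the degeneration is more complicated than a single simple disk. As the boundary circle collapses, the limiting nodal curve can consist of Floer differential cylinders $v_{1},\dots,v_{p}$, the PSS plane $u_{+}$, a chain of holomorphic spheres $w_{1},\dots,w_{q}$, a holomorphic disk $w_{-}$ on $L$, and further bubble trees; moreover the disk (and spheres) need not be simple. The paper passes to an underlying simple nodal disk using Lazzarini's decomposition theorem (stated as Lemma~\ref{lemma:lazzarini}), then does the dimension count for the full configuration $(v_{1},\dots,v_{p},u_{+},w_{1},\dots,w_{q},w_{-})$ with the chain of incidences, obtaining expected dimension at most zero; the free action of the reparametrization group $\R^{p}\times\mathrm{Aut}(\C^{\times})^{q}\times\mathrm{Aut}(D(1),i)$ then forces $p=q=0$ and $w_{-}$ constant. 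Your sentence ``rescaling produces a non-constant simple disk'' skips the Lazzarini step (which is where the genericity of $J$ with respect to $L$ is genuinely used), and your treatment of Floer breaking and sphere chains is only by assertion. These omissions are fillable along the lines above, but as written your argument does not exclude, e.g., a multiply covered disk of Maslov number $2$ whose underlying simple disk has Maslov $2$ attached through a sphere.
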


The genericity on the almost complex structure is used to achieve transversality for somewhere injective $J$-holomorphic disks with boundary on $L$, among other things. However, the equality $\mathrm{PSS}(\psi_{t},L)=\mathrm{LE}(\psi_{t},L)$ holds for all admissible almost complex structures $J$, as can be proved using similar continuation arguments as in, e.g., \cite{hofer-salamon-95}.

\subsubsection{Proof of Lemma \ref{lemma:LE-PSS}}
\label{sec:proof-lemma-le-pss}

First we will examine the possible failures in compactness along sequences $(\sigma_{n},u_{n})\in P_{2}$ as $\sigma_{n}\to\infty$. Standard compactness theory, as in, e.g., \cite{cant-chen-arXiv-2023}, imply that $u_{n}$ breaks/bubbles into:
\begin{enumerate}
\item some number of Floer differential cylinders $v_{1},\dots,v_{p}$,
\item a solution $u_{+}$ to $\mathscr{M}_{+}(\kappa_{s,t})$,
\item a sequence of holomorphic spheres $w_{1},\dots,w_{q}$,
\item a $J$-holomorphic disk $w_{-}$ with boundary on $L$,
\item some number of other holomorphic disks on $L$ or spheres.
\end{enumerate}
This limit objects satisfy the incidence condition that:
\begin{equation*}
  \text{$u_{+}(\infty)=w_{1}(-\infty)$, $w_{i}(+\infty)=w_{i+1}(-\infty)$, $w_{k}(+\infty)=w_{-}(-\infty)$.}
\end{equation*}
Moreover, the total intersection number of all the components with $\mu_{\mathfrak{m}}(L)$ equals $\mathrm{CZ}(\gamma)$, where $\gamma$ is the orbit starting at $x$.

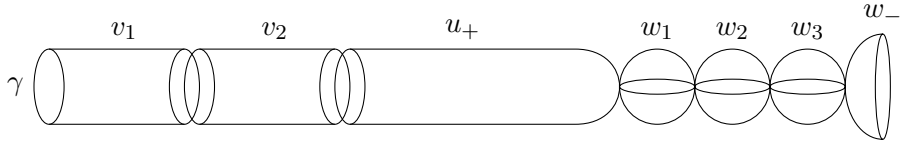
\begin{figure}[h]
  \centering
  \begin{tikzpicture}
    \begin{scope}[shift={(-4,0)}]
      \node at (-0.2,0.5) [left] {$\gamma$};
      \draw (0,0.5) circle (0.2 and 0.5);
      \draw (0,1)--+(1.8,0) (0,0)--+(1.8,0);
      \draw (1.8,0.5) circle (0.2 and 0.5);
    \end{scope}
    \begin{scope}[shift={(-2,0)}]
      \draw (0,0.5) circle (0.2 and 0.5);
      \draw (0,1)--+(1.8,0) (0,0)--+(1.8,0);
      \draw (1.8,0.5) circle (0.2 and 0.5);
    \end{scope}
    \begin{scope}
      \draw (0,0.5) circle (0.2 and 0.5);
      \draw (0,1)--(3,1)to[out=0,in=0,looseness=2]coordinate[pos=0.5](X)(3,0)--(0,0);
      \node at (X){};
    \end{scope}
    \draw (X) +(0.5,0) circle (0.5 and 0.1) circle (0.5 and 0.5);
    \draw (X) +(1.5,0) circle (0.5 and 0.1) circle (0.5 and 0.5);
    \draw (X) +(2.5,0) circle (0.5 and 0.1) circle (0.5 and 0.5);
    \draw (X) +(3.5,0) circle (0.1 and 0.7) +(3.5,0.7) arc (90:270:0.5 and 0.7);
    \path[every node/.style={above}] (-1,1)node{$v_{2}$}(-3,1)node{$v_{1}$}(1.5,1)node{$u_{+}$} (X)--+(0.5,0.5)node{$w_{1}$}--+(1.5,0.5)node{$w_{2}$}--+(2.5,0.5)node{$w_{3}$}--+(3.5,0.7)node{$w_{-}$};
  \end{tikzpicture}
  \caption{Limit of the sequence $u_{n}$ as $n\to\infty$. Not shown are any bubble trees which may have formed.}
\end{figure}

Similarly to Lemma \ref{lemma:fancy-compactness}, we claim that there exists an underlying simple nodal disk:
\begin{equation*}
  w:(\Sigma,\bd \Sigma)\to (W,L),
\end{equation*}
where $\Sigma=\mathbb{C}P^{1}\sqcup \dots \sqcup \mathbb{C}P^{1}\sqcup D(1)$, whose image contains the point $u_{+}(+\infty)$. Moreover, the intersection number of $w$ with $\mathfrak{m}^{-1}(0)$ is at most the total intersection of $w_{1},\dots,w_{q},w_{-}$ with $\mathfrak{m}^{-1}(0)$. The existence of this underlying simple curve essentially follows from the results of \cite{lazzarini-GAFA-2000,biran-cornea-GT-2009} which generalize the case of the closed case in, e.g., \cite[Chapter 2]{mcduffsalamon}. The case when there are no holomorphic spheres is literally proved in \cite{lazzarini-GAFA-2000,biran-cornea-GT-2009}.

To handle the case when there are holomorphic spheres, the key idea is:
\begin{lemma}\label{lemma:lazzarini}
  If a union of a simple sphere $w_{1}$ and a holomorphic disk $w_{-}$ with boundary on $L$ is \emph{not} simple, then every point in the image $w_{1}$ is contained in the image of a simple holomorphic disk with boundary on $L$.
\end{lemma}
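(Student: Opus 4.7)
The plan is to reduce the statement to an application of Lazzarini's decomposition theorem for $J$-holomorphic disks with Lagrangian boundary, together with unique continuation. First I would unpack what ``not simple'' means for the configuration $w_{1}\cup w_{-}$. Since $w_{1}:\mathbb{C}P^{1}\to W$ is simple and $w_{-}:(D,\bd D)\to(W,L)$ has a different domain topology, neither can be a reparametrization of the other. The only remaining obstruction to simplicity of the pair is an image containment, and I would show that non-simplicity forces
\begin{equation*}
  \mathrm{image}(w_{1})\subseteq \mathrm{image}(w_{-}).
\end{equation*}

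To establish this inclusion, I would use that $w_{1}$ is somewhere injective: there is an open dense $U\subset \mathbb{C}P^{1}$ on which $dw_{1}$ is injective and $w_{1}^{-1}(w_{1}(z))=\set{z}$ for $z\in U$. If some point $p\in w_{1}(U)$ lay outside the closed set $\mathrm{image}(w_{-})$, then a small neighborhood of a preimage $z_{0}\in U$ would be mapped homeomorphically by $w_{1}$ onto a neighborhood of $p$ disjoint from $\mathrm{image}(w_{-})$. On that open set the combined map $w_{1}\sqcup w_{-}$ would be injective with multiplicity one, contradicting non-simplicity of the pair. Hence $w_{1}(U)\subseteq \mathrm{image}(w_{-})$, and by density $\mathrm{image}(w_{1})\subseteq \mathrm{image}(w_{-})$.

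With the inclusion in hand, I would invoke the Lazzarini decomposition theorem (in the form used by Biran--Cornea \cite{lazzarini-GAFA-2000,biran-cornea-GT-2009}): there exist finitely many simple $J$-holomorphic disks $v_{1},\dots,v_{k}:(D,\bd D)\to(W,L)$ and positive integer multiplicities $m_{i}$ such that $w_{-}$ factors through the $v_{i}$ in the appropriate weak sense, and in particular
\begin{equation*}
  \mathrm{image}(w_{-})=\bigcup_{i=1}^{k}\mathrm{image}(v_{i}).
\end{equation*}
Combining with the inclusion, every $p\in\mathrm{image}(w_{1})$ lies in some $\mathrm{image}(v_{i})$, which is the claim. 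An additional bookkeeping remark I would make is that the Maslov pairing of the collection $\set{v_{i}}$ is bounded by that of $w_{-}$, which is what is needed for the dimension-counting application in Lemma \ref{lemma:fancy-compactness}.

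The main obstacle is the first step: pinning down the relevant notion of ``simple'' for a mixed sphere--disk nodal configuration (the paper is somewhat informal on this point) and verifying that the unique-continuation argument correctly identifies non-simplicity with the image containment. Once that is settled, the Lazzarini theorem, which is already the black box used elsewhere in the paper for the pure-disk case, completes the proof essentially for free.
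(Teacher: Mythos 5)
Your overall route---reduce the statement to the containment $\mathrm{image}(w_{1})\subseteq \mathrm{image}(w_{-})$ and then quote Lazzarini's decomposition of $w_{-}$ into simple disks---is what the paper has in mind for the main case (its own proof is nothing more than the citation to \cite{lazzarini-GAFA-2000,biran-cornea-GT-2009}), but your first step has a genuine gap. The dichotomy ``neither component is a reparametrization of the other, so the only remaining obstruction to simplicity is an image containment'' is not justified. First, the lemma does not assume $w_{-}$ is simple, and in the application (the limit configurations in the proof of Lemma \ref{lemma:LE-PSS}) it typically is not: a multiply covered disk whose image is disjoint from $w_{1}$ makes the union non-simple under any component-wise notion of simplicity, yet your claimed containment---and with it your conclusion---fails in that case. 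Second, even when $w_{-}$ is simple, a simple sphere and a disk with boundary on $L$ can overlap in a set with non-empty interior in both images without either image containing the other (for instance the disk can cover ``half'' of the sphere, with boundary on $L\cap \mathrm{image}(w_{1})$): unlike the closed case, distinct images do not force a finite intersection, because the unique-continuation argument stops at the disk boundary. Your argument silently replaces ``simple'' by ``the combined map is somewhere injective''; under that weak reading your implication is correct, but then the lemma you prove is too weak for the purpose it serves, since the dimension count in the proof of Lemma \ref{lemma:LE-PSS} needs each component to carry injective points off the union of the other components (this is what makes the total evaluation map transversal to the incidence conditions, exactly as in Lemma \ref{lemma:fancy-compactness}), and configurations with $\mathrm{image}(w_{-})\subseteq \mathrm{image}(w_{1})$ or with infinite partial overlap would be declared simple by your definition.

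In other words, the cases your trichotomy misses are precisely where the frame/decomposition theorems of Lazzarini (and their use in \cite{biran-cornea-GT-2009}) do real work; the soft somewhere-injectivity argument cannot substitute for them. A complete proof would have to fix one notion of simplicity consistent with its later use, and then treat the remaining failure modes: replace a non-simple $w_{-}$ by a simple Lazzarini piece through the relevant nodal point, and handle the situations where the disk (or part of it) is absorbed by the sphere or where the overlap is infinite but proper, which requires the actual decomposition machinery rather than unique continuation alone. Your final bookkeeping remark---that the Maslov pairing of the resulting simple pieces is bounded by that of the original configuration---is correct and is indeed the property needed in Lemma \ref{lemma:fancy-compactness}-style dimension counting, but it too comes out of the decomposition theorem rather than out of the containment argument.
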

\begin{proof}
  This follows from Lazzarini's decomposition theorems \cite{lazzarini-GAFA-2000}. See also \cite[Theorem 3.3.1]{biran-cornea-GT-2009}.
\end{proof}
To use this lemma, first replace $w_{1}\sqcup \dots\sqcup w_{q}$ by an underlying simple curve, as in Lemma \ref{lemma:fancy-compactness}. Then use Lemma \ref{lemma:lazzarini} to take shortcuts in the sequence if we the curve obtained by adding $w_{-}$ is not simple. Henceforth, let us therefore suppose that the coproduct:
\begin{equation*}
  w_{1}\sqcup \cdots \sqcup w_{q}\sqcup w_{-}
\end{equation*}
is a simple map. Note that we may need to replace the incidence condition by $w_{q}(+\infty)=w_{-}(i)$ where $i=1$ (or $i=0$) when passing to the underlying simple nodal disk.

Now consider the moduli space of tuples $(v_{1},\dots,v_{p},u_{+},w_{1},\dots,w_{q},w_{-})$ of the above type. We pick $J,\delta_{s,t}$ so that the total evaluation map:
\begin{equation*}
  (u_{+}(+\infty),w_{1}(-\infty),w_{1}(\infty),\dots,w_{q}(-\infty),w_{q}(\infty),w_{+}(i)),
\end{equation*}
is tranverse to the set $\Sigma$ of tuples $(y_{0},x_{1},y_{1},\dots,x_{k},y_{k},x_{k+1})$ satisfying the incidence $y_{i}=x_{i+1}$. The map is valued in either $W\times \dots \times W\times W$ or $W\times \dots \times W\times L$ depending on whether $i=0,1$.

The dimension of this moduli space (without the incidence condition) is:
\begin{equation*}
  2n(q+1)+\mu_{\mathfrak{m}}(L)\cdot([v_{1}]+\dots+[v_{p}]+[u_{+}]+[w_{1}]+\dots+[w_{-}])-\mathrm{CZ}_{\mathfrak{m}}(\gamma).
\end{equation*}
In particular, the inverse image of $\Sigma$ has dimension equal to:
\begin{equation*}
  \mu_{\mathfrak{m}}(L)\cdot [u]-\mathrm{CZ}_{\mathfrak{m}}(\gamma),
\end{equation*}
where $[u]$ is the class obtained by summing all of the components. If such a configuration occurs in the limit of $(\sigma_{n},u_{n})\in P_{2}$ as $\sigma_{n}\to\infty$, then the expected dimension of the inverse image of $\Sigma$ is therefore at most \emph{zero}. If there were any bubble trees which formed during the process, then the dimension would strictly negative, and hence the inverse image of $\Sigma$ would be empty.

The reparametrization group:
\begin{equation*}
  \R^{p}\times \mathrm{Aut}(\C^{\times})^{k}\times \mathrm{Aut}(D(1)\text{ fixing point }z=i)
\end{equation*}
acts on the inverse image of $\Sigma$. This implies the inverse image of $\Sigma$ is empty, unless $p=k=0$ and $w_{+}$ is constant. Thus we conclude that $u_{n}$ converges to $v_{-}$ and $v_{-}(0)$ lies on the Lagrangian.

Standard gluing results imply that any such PSS solution $u_{+}$ with area $a$, asymptotic $\gamma$ satisfying $\gamma(0)=x$, and such that $u_{+}(+\infty)$ lies on $L$ can be glued to form a non-compact end of solutions $(\sigma,u)$ in $\mathscr{M}_{1}(x;a;\kappa_{s,t},L)$ which converges to $v_{-}$. In this manner we conclude that the count of the non-compact ends of $P_{2}$ equals $\ip{\mathrm{PSS}(L),\tau^{a}x}$, as desired.\hfill$\square$

\subsection{Non-vanishing of the Lagrangian element}
\label{sec:non-vanish-lagr}

So far, everything in \S\ref{sec:lagr-element}, \S\ref{sec:natur-lagr-elem}, and \S\ref{sec:extends-PSS} has only used that $L$ is a compact monotone Lagrangian with minimal Maslov number at least two. In this section, we will use the existence of the other Lagrangian $L'$ to conclude that the colimit of $\mathrm{LE}(\psi_{t},L)$ over $\psi_{t}\in \mathscr{C}$ is non-vanishing in $\mathrm{SH}_{e}$.

\subsubsection{The Lagrangian evaluation map}
\label{sec:lagrangian-evaluation-map}

Given an input system $\psi_{t}$, consider the moduli space $\mathscr{M}(L',\psi_{t})$ of (finite energy) solutions $u$ to:
\begin{equation}\label{eq:moduli-space-EL}
  \left\{
    \begin{aligned}
      &u:[0,\infty)\times \R/\Z\to W,\\
      &\bd_{s}u+J(u)(\bd_{t}u-X_{t}(u))=0,\\
      &u(0,t)\in L',
    \end{aligned}
  \right.
\end{equation}
where $X_{t}$ is the generator of $\psi_{\beta(3t-1)}$ as usual. This moduli space satisfies the same tranversality/compactness properties as the moduli space used to define the Lagrangian element (indeed, it is simply the $s\mapsto-s$ reflected version, with $L$ replaced by $L'$).

\begin{figure}[h]
  \centering
  \begin{tikzpicture}[xscale=-1]
    \draw (0,0) coordinate(X) --+ (5,0) +(0,1) --+ (5,1);
    \draw (X)+(0,0.5) circle(0.2 and 0.5) +(-0.2,0.5) node[right]{$\gamma$} +(5,0.5) circle (0.2 and 0.5) +(5.2,0.5) node[left]{$L'$};
  \end{tikzpicture}
  \caption{The moduli space $\mathscr{M}(L',\psi_{t})$ used to define the Lagrangian evaluation map.}
\end{figure}
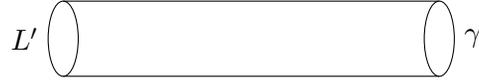

Let us pick $\mathfrak{m}$ similarly to \S\ref{sec:lagr-element} so that $\mathfrak{m}|_{\gamma}$ is non-vanishing for all orbits of $\psi_{t}$, and so that $\mathfrak{m}|_{L'}$ directs the canonical direction of $L'$.

Let $\mathscr{M}_{d}(x;a;L',\psi_{t})$ be the component of solutions $u$ so that:
\begin{enumerate}
\item $\gamma(0)=x$ where $\gamma$ is the asymptotic orbit of $u$,
\item $\omega(u)=a$,
\item $\mathrm{CZ}(\gamma)+\mu_{\mathfrak{m}}(L')\cdot [u]=d$.
\end{enumerate}

Similarly to \S\ref{sec:lagr-element}, for generic perturbations (replacing $\psi_{t}=\psi_{t}\delta_{t}$), the moduli space $\mathscr{M}_{d}$ is a $d$-dimensional manifold, and the zero dimensional component with $\omega(u)\le A$ is a finite set of points.

We can therefore define a map $\mathrm{CF}(\psi_{t})\to \Z/2$ by the formula:
\begin{equation*}
  \mathrm{EL}(L',\psi_{t})(\tau^{a}x)=\#\mathscr{M}_{0}(x;-a,L',\psi_{t}).
\end{equation*}
This map is well-defined, i.e., if it is applied to a semi-infinite sum $\sum c_{i}\tau^{a_{i}}x_{i}$ then:
\begin{equation*}
  \mathrm{EL}(L',\psi_{t})(\sum c_{i}\tau^{a_{i}}x_{i})=\sum_{c_{i}\ne 0}\#\mathscr{M}_{0}(x_{i};-a_{i};L',\psi_{t}).
\end{equation*}
has only finitely many non-zero terms appearing in the right-hand side. Indeed, the energy of a solution $u$ to \eqref{eq:moduli-space-EL} is equal to:
\begin{equation*}
  \omega(u)+(\text{bounded quantity depending on $H_{t}|_{L'}$, $H_{t}|_{\gamma}$}).
\end{equation*}
Thus, $\mathscr{M}_{0}(x_{i};-a_{i};L',\psi_{t})=\emptyset$ if $a_{i}$ is too large. On the other hand, $a_{i}$ cannot be too negative, because such sums are not allowed in $\mathrm{CF}(\psi_{t})$.

Arguments similar to those used to prove $\mathrm{LE}(\psi_{t},L)$ is a cycle prove that $\mathrm{EL}(L',\psi_{t})$ is a chain map. The rest of this section is devoted to proving:
\begin{lemma}\label{lemma:EL-LE-1}
  For generic system $\psi_{t}$, it holds that:
  \begin{equation*}
    \mathrm{EL}(L',\psi_{t})(\mathrm{LE}(\psi_{t},L))=\#(L\cap L')=1;
  \end{equation*}
  where the intersection number is computed mod 2.
\end{lemma}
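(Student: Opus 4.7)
The plan is to use a standard Floer-theoretic degeneration that interpolates between the composition $\mathrm{EL}(\mathrm{LE})$ at one end and a direct count of $L\cap L'$ at the other. For each parameter $R\in[0,\infty)$ I would consider the moduli space $\mathscr{N}^R$ of finite-energy maps
\begin{equation*}
  u:[-R,R]\times\R/\Z\to W,\qquad \bd_s u+J(u)(\bd_t u-X_t^R(u))=0,
\end{equation*}
with Lagrangian boundary conditions $u(-R,t)\in L$ and $u(R,t)\in L'$, where $X_t^R$ equals the generator of $\psi_{\beta(3t-1)}$ for $R$ large and vanishes identically for $R\le 1$, with smooth interpolation on $[1,2]$. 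Standard transversality arguments, patterned on Lemmas \ref{lemma:transversality-floer} and \ref{lemma:proper-natur-LE}, then yield a transversely cut parametric moduli for generic perturbations; sphere and disk bubbling are controlled by the semipositivity of $W$ together with the monotonicity and minimal-Maslov-$\ge 2$ hypotheses on $L$ and $L'$, exactly as in the proof that $\mathrm{LE}(\psi_t,L)$ is well-defined.

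The composition $\mathrm{EL}(\mathrm{LE})$ pairs $\mathscr{M}_0(x;a;\psi_t,L)$ with $\mathscr{M}_0(x;-a;L',\psi_t)$, so only configurations of vanishing total symplectic area contribute. I would therefore restrict attention to the component $\mathscr{N}^R_{[0]}$ consisting of solutions with $\omega(u)=0$, which is preserved along the one-parameter family. At large $R$, standard neck-stretching (as in \S\ref{sec:cont-maps-are} and Lemma \ref{lemma:fancy-compactness}) shows that rigid elements of $\mathscr{N}^R_{[0]}$ break as $R\to\infty$ into pairs $(u_-,u_+)$ with $u_-\in\mathscr{M}_0(x;a;\psi_t,L)$ and $u_+\in\mathscr{M}_0(x;-a;L',\psi_t)$ sharing the asymptotic orbit $\gamma$ through $x$; the count recovers $\mathrm{EL}(L',\psi_t)(\mathrm{LE}(\psi_t,L))$.

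At the other end, for $R\in[0,1]$ the equation is purely $\bar\partial u=0$, and the symplectic area of a $J$-holomorphic map equals its $L^2$-energy, so the restriction $\omega(u)=0$ forces $u$ to be constant. The two Lagrangian boundary conditions then require the constant value to lie in $L\cap L'$, and conversely each intersection point contributes one rigid solution. Hence the rigid count at every $R\in[0,1]$ equals $\#(L\cap L')=1\pmod 2$. The one-dimensional component of $\mathscr{N}^R_{[0]}$, as $R$ ranges over $[0,\infty)$, furnishes the required cobordism: the standard Floer-cylinder breakings in its interior cancel in pairs over $\Z/2$, leaving only the two end counts, which must therefore agree.

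The main technical obstacle is establishing parametric transversality and uniform bubble exclusion across the transition regime $R\in[1,2]$ where the Hamiltonian is being switched on. The arguments are of the same flavour as those already developed in \S\ref{sec:lagr-element} and \S\ref{sec:natur-lagr-elem}: generic perturbations of the Hamiltonian profile kill all non-transverse solutions, the monotonicity and minimal-Maslov hypotheses forbid nontrivial disk bubbles on $L$ and $L'$, and semipositivity combined with general-position arguments on the total evaluation map precludes sphere bubbling in the relevant index range.
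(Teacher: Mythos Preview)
Your approach is essentially the paper's: both set up a one-parameter family of finite-length Floer cylinders with the two Lagrangian boundary conditions, identify the long end with $\mathrm{EL}(L',\psi_t)(\mathrm{LE}(\psi_t,L))$ via neck-stretching, and identify the short end with $\#(L\cap L')$. The one genuine variation is that you switch the Hamiltonian off for small $R$, so the short end is literally $\bar\partial u=0$ with $\omega(u)=0$ and hence constant; the paper instead keeps $X_t$ fixed throughout and analyses the $\sigma\to 0$ limit directly (citing \cite{brocic-cant-JFPTA-2024}). Your trick is a legitimate and perhaps cleaner way to handle that end.

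There is, however, a transversality subtlety you gloss over and which the paper does confront. For large $R$ your equation has an $s$-independent Hamiltonian on a \emph{finite} cylinder, so one cannot use finite energy to exclude solutions satisfying $u(s+s_0,t)=u(s,t)$ for some $s_0>0$; the injective-point argument from Lemma~\ref{lemma:transversality-floer} then does not immediately apply. The lemmas you cite (\ref{lemma:transversality-floer} and \ref{lemma:proper-natur-LE}) are both for half-infinite domains where this issue is absent. The paper's Lemma~\ref{lemma:ELLE-proper} handles this by a separate argument using monotonicity and the $\omega(u)=0$ constraint: a periodic piece has positive area, hence positive Maslov number, forcing the remaining simple piece to have Maslov number $\le -2$, which is then excluded by transversality for simple cylinders. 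You need the same argument (or else genuinely $s$-dependent perturbations, which would change the statement you are proving, since the lemma asserts the identity for generic $\psi_t$).

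A minor remark: your sentence about ``standard Floer-cylinder breakings in its interior'' cancelling in pairs is misplaced. The domain has no asymptotic cylindrical ends, so there are no interior Floer breakings; the only non-compactness is bubbling (which you correctly exclude) and the behaviour at the two ends of the $R$-interval.
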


Consequently, $\mathrm{LE}(\psi_{t},L)\ne 0$ in $\mathrm{HF}(\psi_{t})$, for every generic $\psi_{t}$ (in fact, this holds for all $\psi_{t}$ by a limiting process). It then follows easily that the projection of $\mathrm{LE}(\psi_{t},L)$ to $\mathrm{SH}$ is non-zero. Thus, to prove Theorem \ref{theorem:odd-euler-char-1} it remains only to prove Lemma \ref{lemma:EL-LE-1}.

\subsubsection{Moduli space of finite length Floer cylinders}
\label{sec:moduli-space-finite}

The proof of Lemma \ref{lemma:EL-LE-1} uses the moduli space $\mathscr{M}(L',\psi_{t},L)$ of pairs $(\sigma,u)$ such that:
\begin{equation*}
  \left\{
    \begin{aligned}
      &u:[0,\sigma]\times \R/\Z\to W,\\
      &\bd_{s}u+J(u)(\bd_{t}u-X_{t}(u))=0,\\
      &u(0,t)\in L'\text{ and }u(\sigma,t)\in L,
    \end{aligned}
  \right.
\end{equation*}
where $X_{t}$ is exactly as in \S\ref{sec:lagrangian-evaluation-map}. Here $\sigma$ should be thought of as varying over $(0,\infty)$. See \cite{brocic-cant-JFPTA-2024} for detailed analysis of a similar moduli space in the case when $W$ is a cotangent bundle.

Let $\mathfrak{m}$ be a section of $\det_{\C}(TW)^{\otimes 2}$ so that:
\begin{enumerate}
\item $\mathfrak{m}|_{\gamma}$ is non-vanishing along any orbit $\gamma$ of $\psi_{t}$,
\item $\mathfrak{m}|_{L}$ and $\mathfrak{m}|_{L'}$ are non-vanishing and never point opposite the canonical direction.
\end{enumerate}
The second condition implies that $\mathfrak{m}|_{L},\mathfrak{m}|_{L'}$ can be (separately) homotoped through non-vanishing sections to ones which point in the canonical direction. The second property can be achieved by working in local coordinates near the transverse intersections of $L$ and $L'$. Define:
\begin{equation*}
  \mu_{\mathfrak{m}}(L',L)=\mathfrak{m}^{-1}(0).
\end{equation*}
The Poincaré dual of this cycle represents the Maslov classes of $L'$ and $L$.

Introduce the component $\mathscr{M}_{d}(0;L',\psi_{t},L)$ of solutions $(\sigma,u)$ such that:
\begin{enumerate}
\item $\omega(u)=0$,
\item $1+\mu_{\mathfrak{m}}(L',L)\cdot [u]=d$.
\end{enumerate}
Similar (but simpler) arguments to the ones in \S\ref{sec:natur-lagr-elem} imply that:
\begin{lemma}\label{lemma:ELLE-proper}
  For generic $\psi_{t}$, the parametric moduli space $\mathscr{M}_{1}(0;L',\psi_{t},L)$ is a smooth $1$-manifold and the map $\sigma:\mathscr{M}_{1}(0;L',\psi_{t},L)\to(0,\infty)$ is proper.
\end{lemma}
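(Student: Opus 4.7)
The plan is to establish two things: transversality (so that $\mathscr{M}_{1}(0;L',\psi_{t},L)$ is a smooth $1$-manifold) and then the desired compactness, namely that sequences with $\sigma_{n}$ bounded away from $0$ and $\infty$ subconverge in the moduli space. Transversality follows by essentially the same argument as in Lemma \ref{lemma:transversality-floer}: we replace $\psi_{t}$ by a small compactly supported perturbation $\psi_{t}\delta_{t}$, noting that since neither end is asymptotic to an orbit and the boundary components are distinct Lagrangians, the linearized operator can be made surjective at every solution by varying the perturbation near any non-constant slice.

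The core of the argument is the a priori control on sequences $(\sigma_{n},u_{n})$ with $\sigma_{n}\in [\epsilon,N]\subset (0,\infty)$. First, since $\omega(u_{n})=0$ and both Lagrangians are compact, Floer's energy identity bounds the energy uniformly in terms of $\sup_{L\cup L'}|H_{t}|$. Second, the $C^{0}$ image of $u_{n}$ stays in a fixed compact set of $W$: this follows from the standard maximum principle for contact-at-infinity Hamiltonians (e.g., as in \cite[\S2.2.5]{brocic-cant-JFPTA-2024}) combined with compactness of $L,L'$. With these a priori bounds and $\sigma_{n}$ in a compact interval, Gromov compactness applies: after passing to a subsequence, either $u_{n}$ converges in $C^{\infty}_{\mathrm{loc}}$ to a limit $u_{\infty}$ on the cylinder $[0,\sigma_{\infty}]\times\R/\Z$, or energy concentrates producing a bubble tree of holomorphic spheres in $W$ or holomorphic disks with boundary on $L$ or on $L'$.

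The main obstacle is ruling out bubbling, which is handled by the monotonicity and the minimal Maslov number assumption via dimension counting, in the same spirit as in \S\ref{sec:generic-compactness} and Lemma \ref{lemma:fancy-compactness}. Any holomorphic sphere has Chern number $\ge 1$ (semipositivity plus genericity of $J$), and any non-constant holomorphic disk on either Lagrangian has Maslov index $\ge 2$ (minimal Maslov assumption). Passing to underlying simple curves (using Lazzarini's theorem, cf.\ Lemma \ref{lemma:lazzarini}, to handle disks), the limit configuration consists of a principal component $u_{\infty}$ of expected dimension $\le 1$, with one or more simple bubbles attached, whose total image is constrained to meet $u_{\infty}$ at a prescribed node. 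For generic $J$ and $\delta_{t}$ the total evaluation of principal-plus-bubble components is transverse to the diagonal, and the resulting expected dimension becomes strictly negative because each sphere bubble costs $2n-2c_{1}\le 2n-2$ dimensions at the diagonal while contributing $2c_{1}\ge 2$ to the bubble, and analogously for disks with $\mu\ge 2$. Hence no such configuration exists generically, and $u_{n}$ must converge to an element $u_{\infty}$ of $\mathscr{M}_{1}(0;L',\psi_{t},L)$ with modulus $\sigma_{\infty}\in[\epsilon,N]$.

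Finally, since no breaking into asymptotic orbits can occur (both ends carry Lagrangian boundary conditions rather than orbit asymptotics) and the domain itself does not degenerate in the interior of $(0,\infty)$, the only failures of properness would have to occur as $\sigma_{n}\to 0$ or $\sigma_{n}\to\infty$, both of which are excluded by the compact interval $[\epsilon,N]$. This establishes properness, completing the proof.
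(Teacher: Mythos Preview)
Your compactness/properness discussion is essentially fine and matches the spirit of the paper's argument (which defers to Lemma \ref{lemma:proper-natur-LE}). The gap is in your transversality argument.

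You assert that transversality follows ``by essentially the same argument as in Lemma \ref{lemma:transversality-floer}'' and that surjectivity can be achieved ``by varying the perturbation near any non-constant slice.'' But the argument in Lemma \ref{lemma:transversality-floer} requires an \emph{injective} point $(s_{0},t_{0})$, i.e., one where $u(s,t_{0})=u(s_{0},t_{0})\implies s=s_{0}$, not merely a non-constant one. In the setting of Lemma \ref{lemma:transversality-floer} such points exist because a solution $u(s+s_{0},t)=u(s,t)$ on the infinite cylinder would have infinite energy. For finite-length cylinders this fails: nothing a priori prevents a solution in $\mathscr{M}(L',\psi_{t},L)$ from being $s_{0}$-periodic, and for such $u$ the perturbation $Y_{t}$ (which depends only on $t$) cannot be localized to a single $s$-slice. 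Your remark that the two boundary Lagrangians are distinct does not resolve this.

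The paper handles this explicitly. Suppose $u\in\mathscr{M}_{1}$ satisfies $u(s+s_{0},t)=u(s,t)$ with $s_{0}$ the minimal period, and decompose $[0,\sigma]$ into full periods $\Sigma_{1},\dots,\Sigma_{k}$ and a remainder $\Sigma_{k+1}$. The restriction $u|_{\Sigma_{1}}$ has both boundary circles on $L'$, and its energy equals its symplectic area, which is positive; monotonicity then forces $\mu_{\mathfrak{m}}\cdot[u|_{\Sigma_{1}}]\ge 2$. If $\Sigma_{k+1}=\emptyset$ the total Maslov number is positive, contradicting $\mu_{\mathfrak{m}}\cdot[u]=0$ for index $1$. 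If $\Sigma_{k+1}\ne\emptyset$, then $u|_{\Sigma_{k+1}}$ is simple (by minimality of $s_{0}$), so the standard argument achieves transversality for it; but its Maslov number is at most $-2$, forcing negative index, a contradiction. Thus periodic solutions are excluded from $\mathscr{M}_{1}$ for generic data, and the usual transversality argument applies to the remaining solutions. This step is the crux of the paper's proof and is missing from yours.
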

\begin{proof}
  The transversality mostly follows Lemma \ref{lemma:transversality-floer}, however there is one subtle difference. In Lemma \ref{lemma:transversality-floer} we had to exclude the possibility that: $$u(s+s_{0},t)=u(s,t)$$ held for some $s_{0}$. Typically this is prevented by appealing to finiteness of energy. However, in the case of a finite length cylinder one cannot argue in the same way. Let us continue the argument assuming that $u$ is periodic in this fashion, and pick $s_{0}$ to be the minimal period. This decomposes the domain into subcylinders:
  \begin{equation*}
    \Sigma_{1}\cup \dots \cup \Sigma_{d+1}=[0,s_{0}]\times \R/\Z \cup \cdots \cup [ds_{0},\sigma]\times \R/\Z.
  \end{equation*}
  where $\abs{ds_{0}-\sigma}<s_{0}$; we adopt the convention that $\Sigma_{d+1}=\emptyset$ if $ds_{0}=\sigma$.

  First we claim that $\Sigma_{d+1}\ne \emptyset$. By the energy identity, and our assumption of monotonicity on the Lagrangian $L'$, the restriction of $u$ to $\Sigma_{1}$ has energy equal to $\omega(u)>0$ so $u|_{\Sigma_{1}}$ intersects the Maslov class $\mu_{\mathfrak{m}}(L',L)$ a positive number of times. In particular, the restriction of $u$ to $\Sigma_{1}\cup \dots\cup \Sigma_{d}$ has a strictly positive Maslov number and hence does not lie in $\mathscr{M}_{1}$.

  Thus $\Sigma_{d+1}\ne \emptyset$. The restriction of $u$ to $\Sigma_{d+1}$ is not periodic, and hence we can use \cite{floer_hofer_salamon_transversality} (see also Lemma \ref{lemma:transversality-floer}) to achieve generic transversality for the restriction of $u$ to $\Sigma_{d+1}$. However, by the above argument, the Maslov number of the restriction to $\Sigma_{d+1}$ must be strictly negative. In fact the Maslov number the restriction to $\Sigma_{d+1}$ is at most $-2$. Since we have achieved transversality for these ``simple'' cylinders, $u$ with such an index cannot exist. This contradiction proves a solution for generic data cannot be multiply covered in the above sense, and we achieve transversality for all solutions in $\mathscr{M}_{1}$.

  The rest of the argument follows the same lines as Lemma \ref{lemma:proper-natur-LE} and so we skip the proof. In fact, the argument is simpler because there is no asymptotic end on the domain to worry about.
\end{proof}

\subsubsection{Proof of Lemma \ref{lemma:EL-LE-1}}
\label{sec:proof-lemma-refl}

It follows from Lemma \ref{lemma:ELLE-proper} that the number of non-compact ends of $\mathscr{M}_{1}(0;L',\psi_{t},L)$ is even, and moreover divides into two kinds of ends:
\begin{enumerate}[label=(E\arabic*)]
\item\label{item:E1} ends containing sequences $(\sigma_{n},u_{n})$ with $\sigma_{n}\to 0$,
\item\label{item:E2} ends containing sequences $(\sigma_{n},u_{n})$ with $\sigma_{n}\to \infty$.
\end{enumerate}
Standard gluing arguments, quite similar to those in \cite{brocic-cant-JFPTA-2024} prove that the number of ends of type \ref{item:E1} is equal to $\#(L'\cap L)$. On the other hand, analysis of the breakings along ends of type \ref{item:E2} shows that $u_{n}$ must converge in the Floer theory sense to a configuration of $(u_{-},u_{+})$ such that:
\begin{enumerate}
\item $u_{-}\in \mathscr{M}_{0}(x,-a;L',\psi_{t})$,
\item $u_{+}\in \mathscr{M}_{0}(x,a;\psi_{t},L)$,
\end{enumerate}
for some $x$ in the fixed point set. There are only finitely many such possible breakings. The count of such hypothetical breakings is clearly equal to:
\begin{equation*}
  \mathrm{EL}(L',\psi_{t})(\mathrm{LE}(\psi_{t},L)),
\end{equation*}
and gluing analysis proves each configuration $(u_{-},u_{+})$ of the above type arises as a genuine non-compact end of $\mathscr{M}_{1}(L',\psi_{t},L)$ of type \ref{item:E2}. Since the number of ends of type \ref{item:E1} equals the number of ends of type \ref{item:E2} we conclude the desired result.\hfill$\square$

\bibliographystyle{alpha}
\bibliography{citations}
\end{document}